\documentclass[a4paper,10pt,leqno]{amsart}
\title[On  Brown's Problem,s,and Nielsen Realization]{On  Brown's Problem,
   Poincar\'e models for the classifying spaces for proper actions and Nielsen Realization}
              \author{Wolfgang L\"uck}
        \address{Mathematical Institute of the University at Bonn\\
                Endenicher Allee 60\\
                53115 Bonn, Germany}
         \email{wolfgang.lueck@him.uni-bonn.de}
          \urladdr{http://www.him.uni-bonn.de/lueck}
         \date{January, 2022}
     \keywords{Poincar\'e complexes, Brown's Problem, classifying spaces for proper actions}

\makeatletter
\@namedef{subjclassname@2020}{%
  \textup{2020} Mathematics Subject Classification}
\makeatother
\subjclass[2020]{55R35,57P10}

\usepackage{hyperref}
\usepackage{color}
\usepackage{pdfsync}
\usepackage{calc}
\usepackage{enumerate,amssymb}
\usepackage{graphicx}
\usepackage[arrow,curve,matrix,tips,2cell]{xy}
\usepackage{amssymb}
\SelectTips{eu}{10}\UseTips\UseAllTwocells
\DeclareMathAlphabet{\matheurm}{U}{eur}{m}{n}

\DeclareMathOperator{\aut}{aut}
\DeclareMathOperator{\Aut}{Aut}

\DeclareMathOperator{\cone}{cone}

\DeclareMathOperator{\id}{id}

\DeclareMathOperator{\inte}{int}

\DeclareMathOperator{\mor}{mor}

\DeclareMathOperator{\Out}{Out}

\DeclareMathOperator{\pr}{pr}

\DeclareMathOperator{\tors}{tors}

\DeclareMathOperator{\trf}{trf}
\DeclareMathOperator{\trunc}{trunc}

\DeclareMathOperator{\untw}{untw}

\DeclareMathOperator{\Wh}{Wh}

  \newcommand{\IZ}{\mathbb{Z}}

  \newcommand{\calc}{\mathcal{C}}

  \newcommand{\calf}{\mathcal{F}}

  \newcommand{\calm}{\mathcal{M}}

  \newcommand{\cals}{\mathcal{S}}
  
  \newcommand{\caltr}{\mathcal{TR}}

  \newcommand{\pt}{\{\bullet\}}

  \newcommand{\bub}[1]{\underline{B}#1}        
  \newcommand{\eub}[1]{\underline{E}#1}

  \newcommand{\OrGF}[2]{\matheurm{Or}_{#2}(#1)}

  \newcommand{\calfin}{{\mathcal F}{\mathcal I}{\mathcal N}}

     \newcounter{commentcounter}


     \theoremstyle{plain} \newtheorem{theorem}{Theorem}[section]
      \newtheorem{lemma}[theorem]{Lemma}
     
     \newtheorem{proposition}[theorem]{Proposition}
     \newtheorem{conjecture}[theorem]{Conjecture}
     \newtheorem{assumption}[theorem]{Assumption}
      \newtheorem*{theorem*}{Theorem}
     \newtheorem*{theoremA*}{Theorem A} \newtheorem*{theoremB*}{Theorem B}

     \theoremstyle{definition} \newtheorem{definition}[theorem]{Definition}
      \newtheorem{example}[theorem]{Example}
     \newtheorem{question}[theorem]{Question} \newtheorem{problem}[theorem]{Problem}
      \newtheorem{remark}[theorem]{Remark}
     \newtheorem{notation}[theorem]{Notation} 
      
     \newtheorem*{definition*}{Definition}

     \theoremstyle{remark}

     \makeatletter\let\c@equation=\c@theorem\makeatother

     \hyphenation{equi-variant}

     \newcommand{\CATzero}{\operatorname{CAT(0)}}


     \newcommand{\version}[1] 
     {\begin{center} last edited on #1\\
         last compiled on \today\\
         name of tex-file: \jobname
       \end{center}}


     \begin{document}

     \begin{abstract}
       There is the problem, whether for a given virtually torsionfree discrete group
       $\Gamma$ there exists a cocompact proper topological $\Gamma$-manifold, which is
       equivariantly homotopy equivalent to the classifying space for proper actions.  It
       is related to Nielsen's Realization and to the problem of Brown, whether there is a
       $d$-dimensional model for the classifying space for proper actions, if the
       underlying group has virtually cohomological dimension $d$.  Assuming 
       that the expected manifold model has a zero-dimensional singular set, we solve the
       problem in the Poincar\'e category and obtain new results about Brown's problem
       under certain conditions concerning the underlying group, for instance if it 
       is hyperbolic. In a sequel paper together with James Davis we will deal with this
       on the level of topological manifolds.
     \end{abstract}

\maketitle

\newlength{\origlabelwidth}\setlength\origlabelwidth\labelwidth


\typeout{------------------- Introduction -----------------}
\section{Introduction}%
\label{sec:introduction}


\subsection{Manifold models for the classifying space for proper actions}%
\label{subsec:Manifold_models_for_the_classifying_space_for_proper_actions_intro}

Let $\Gamma$ be a discrete group. One can associate to it the \emph{classifying space for
  proper $\Gamma$-actions} $\eub{\Gamma}$.  This is a $\Gamma$-$CW$-complex, whose
isotropy groups are finite and whose $H$-fixed point set $\eub{\Gamma}^H$ is contractible
for every finite subgroup $H \subseteq \Gamma$.  Two such models are $\Gamma$-homotopy
equivalent. These $\Gamma$-$CW$-complexes $\eub{\Gamma}$ are interesting in their own
right and have many applications to group theory and equivariant homotopy theory and play
a prominent role for the Baum-Connes Conjecture and the Farrell-Jones Conjecture. For a
survey on $\eub{\Gamma}$ we refer for instance to~\cite{Lueck(2005s)}.

A natural question is, whether there are nice models for $\eub{\Gamma}$. One may ask for
finiteness properties up to $\Gamma$-homotopy equivalence such as being finite or finite
dimensional. One may also try to address the much harder question, whether there is a
\emph{cocompact manifold model} for $\eub{\Gamma}$, i.e., whether there is a cocompact
proper $\Gamma$-manifold without boundary, which is homotopy equivalent to
$\eub{\Gamma}$. If the answer is yes, one of course encounters the problem, in which sense
such a manifold model is unique. In general it makes a difference, whether one is asking
this question in the smooth or in the topological category.  We will mainly deal with the
topological category.

In general there is no cocompact manifold model. There are some well-known prominent
examples, where cocompact manifold models exist. For instance, if $\Gamma$ acts properly,
isometrically and cocompactly on a Riemannian manifold $M$ with non-positive sectional
curvature, then $M$ is a cocompact model for $\eub{\Gamma}$. Let $L$ be a connected Lie
group.  Then $L$ contains a maximal compact subgroup $K$, which is unique up to
conjugation. If $\Gamma \subseteq L$ is a discrete subgroup such that $L/\Gamma$ is
compact, then $M = L/K$ is a smooth manifold with smooth $\Gamma$-action and a cocompact
model for $\eub{\Gamma}$.

Our long term goal is to answer the question about a manifold model in a specific but
interesting situation.  Namely, we will assume throughout this paper that $\Gamma$ is
virtually torsionfree, in other words that there is a group extension

\begin{equation}\label{group_extension_intro}
  1 \to \pi \xrightarrow{i} \Gamma \xrightarrow{p} G \to 1
\end{equation}
such that $\pi$ is torsionfree and $G$ is finite. Constructing a manifold model $M$ for
$\eub{\Gamma}$ is very hard, since one has to deal with equivariant surgery. We will
confine ourselves to the case, where $M$ is \emph{pseudo-free}, i.e., the singular set
$M^{> 1} = \{x \in M \mid \Gamma_x \not= \{1\}\}$ is discrete, or, equivalently,  the
$\Gamma$-space $M^{>1}$ is the disjoint union of its $\Gamma$-orbits. Here and in the
sequel $\Gamma_x$ denotes the isotropy group
$\{\gamma \in \Gamma \mid \gamma \cdot x = x\}$ of $x \in M$.

Let $d$ be the dimension of $M$.  Moreover, we will assume that $M^{> 1}$ nicely embeds
into $M$, namely, for every $x \in M^{>1}$ we assume the existence of a $d$-dimensional
disk $D_x$ with $\Gamma_x$-action and an $\Gamma_x$-equivariant embedding
$D_x \subseteq M$ such that the origin of $D_x$ is mapped to $x$ and the $\Gamma_x$-action
on $D_x$ is free outside the origin.  Let $S_x$ be the boundary of $D_x$, which is a
$(d-1)$-dimensional sphere with free $\Gamma_x$-action. We will assume  that $S_x$ carries
a $\Gamma_x$-$CW$-structure, which is automatically the case, if $d \not = 5$, just
apply~\cite[Section~9.2]{Freedman-Quinn(1990)} or~\cite[Theorem~2.2 in III.2 on
page~107]{Kirby-Siebenmann(1977)} to $S_x/\Gamma_x$. Note that this implies that
$(D_x,S_x)$ is a finite $\Gamma_x$-$CW$-pair, since $D_x$ is the cone over $S_x$.

Let $I \subset M^{>1}$ be a set-theoretic transversal of the projection $M^{>1} \to M^{>1}/\Gamma$. Put
\begin{eqnarray}
  \partial X
  &  := &
          \coprod_{x \in I} \Gamma \times_{\Gamma_x} S_x;
          \label{partial_X_intro}         
  \\
  C(\partial X)
  & := &
         \coprod_{x \in I} \Gamma \times_{\Gamma_x} D_x.
         \label{C(partial_X)_intro}                
\end{eqnarray}
Then we get a free cocompact proper $\Gamma$-manifold $X$ with boundary $\partial X$, if we
put
\begin{equation}
  X : = M \setminus \left(\coprod_{x \in I} \Gamma \times _{\Gamma_x} (D_x \setminus S_x)\right)
  \label{def_Of_X_intro}
\end{equation}
and $M$ becomes the $\Gamma$-pushout
\begin{equation}
  \xymatrix{\partial X = \coprod_{x \in I} \Gamma \times_{\Gamma_x}  S_x 
    \ar[r] \ar[d]
    &
    X \ar[d]
    \\
    C(\partial X) := \coprod_{x \in I} \Gamma \times_{\Gamma_x} D_x \ar[r]
    &
    M.
  }
  \label{Gamma_pushout_for_M_intro}
\end{equation}
Note that the existence of the disks $D_x$ and the existence of the
diagram~\eqref{Gamma_pushout_for_M_intro} is guaranteed, if $M$ is a smooth
$\Gamma$-manifold with discrete $M^{>1}$, since we can choose $C(\partial X)$ to be  a tubular
neighbourhood with boundary $\partial X$ of the zero-dimensional $\Gamma$-invariant smooth
submanifold $M^{>1}$ of $M$. Our main concern is

\begin{problem}[Main Problem]\label{prob:main_problem_intro}
  Does there exist a proper cocompact $\Gamma$-manifold $M$ of the shape described
  in~\eqref{Gamma_pushout_for_M_intro}, which is $\Gamma$-homotopy equivalent to
  $\eub{\Gamma}$?

  If yes, are two such $\Gamma$-manifolds $\Gamma$-homeomorphic?
\end{problem}

The collection $\{S_x \mid x \in I\}$ is an example of a so called \emph{free
  $d$-dimensional slice system}, and this notion will be analyzed in
Section~\ref{sec:free_slice-systems}. The pair $(X,\partial X)$ is an example of a so
called \emph{slice complement model for $\eub{\Gamma}$}, and this notion will be investigated in
Section~\ref{subsec:Slice-models}.


\subsection{Some necessary conditions}%
\label{subsec:Some_necessary_conditions_intro}

Next we figure out some necessary conditions for the existence of a $\Gamma$-manifold $M$,
which is $\Gamma$-homotopy equivalent to $\eub{\Gamma}$ and is of the shape described
in~\eqref{Gamma_pushout_for_M_intro}.

Firstly we prove that any closed topological manifold $M$ of the shape described
in~\eqref{Gamma_pushout_for_M_intro} has the $\Gamma$-homotopy type of a
$\Gamma$-$CW$-complex.

The pair $(C(\partial X),\partial X)$ is a finite proper $\Gamma$-$CW$-pair of dimension
$d$.  The pair $(X/\Gamma, \partial X/\Gamma)$ is a topological compact manifold with
boundary and hence is homotopy equivalent relative $\partial X$ to a finite relative
$CW$-complex $(Y,\partial X/\Gamma)$ of relative dimension $d$
see~\cite[Section~9.2]{Freedman-Quinn(1990)} or~\cite[Theorem~2.2 in III.2 on
page~107]{Kirby-Siebenmann(1977)}. Hence we can find a finite $d$-dimensional
$\Gamma$-$CW$-pair $(\overline{Y},\partial X)$, which is relatively free and is $\Gamma$-homotopy equivalent
relative $\partial X$ to $(X, \partial X)$. Take
$Z = C(\partial X) \cup_{\partial X} \overline{Y}$.  Then $Z$ is a finite proper
$d$-dimensional $\Gamma$-$CW$-complex with $Z^{>1} = M^{>1}$ such that $M$ and $Z$ are
$\Gamma$-homotopy equivalent.

Note that this implies that $M$ is $\Gamma$-homotopy equivalent to $\eub{\Gamma}$, if and
only if $M$ is contractible.

\begin{notation}
  Let $\calm$ be a complete system of representatives of the conjugacy classes of maximal
  finite subgroups of $\Gamma$.  Put
  \begin{eqnarray*}
    \partial E\Gamma & := & \coprod_{F \in \calm} \Gamma \times_F EF;
    \\
    \partial \eub{\Gamma} & := & \coprod_{F \in \calm} \Gamma /F;
    \\
    \partial B\Gamma & := & \coprod_{F \in \calm} BF;
    \\
    \bub{\Gamma} & := &\eub{\Gamma/\Gamma}.
  \end{eqnarray*}            
\end{notation}
  
In the sequel $H_*(Z) = H_*(Z,\IZ)$ for a space or $CW$-complex $Z$ denotes its singular
or cellular homology with coefficients in $\IZ$.
  
We will consider the following conditions for the group $\Gamma$, where
$H_d(B\Gamma,\partial B\Gamma)$ is  the homology
of the canonical map $\partial B\Gamma \to B\Gamma$.

\begin{notation}\label{not:(M)_and_(NM)_intro}\

  \begin{itemize}

  \item[(M)] Every non-trivial finite subgroup of $\Gamma$ is contained in a unique
    maximal finite subgroup;

  \item[(NM)] If $F$ is a maximal finite subgroup, then $N_{\Gamma}F = F$;

  \item[(H)] For the homomorphism $w \colon \Gamma \to \{\pm 1\}$ of~Notation~\ref{not:w}  the composite
      \begin{multline*}
        \quad \quad \quad H_d^{\Gamma}(E\Gamma,\partial E\Gamma;\IZ^w)
        \xrightarrow{\partial} H_{d-1}^{\Gamma}(\partial E\Gamma;\IZ^w)
      \xrightarrow{\cong} \bigoplus_{F \in \calm} H^F_{d-1}(EF;\IZ^{w|_F}) \\
      \to H^F_{d-1}(EF;\IZ^{w|_F})
    \end{multline*}
    of the boundary map, the inverse of the obvious isomorphism, and the projection to the
    summand of $F \in \calm$ is surjective for all $F \in \calm$.

  \end{itemize}

\end{notation}

For simplicity we will assume in the remainder of this Subsection~\ref{subsec:Some_necessary_conditions_intro}
that $\Gamma$-acts orientation
preservingly on $M$, or, equivalently, that the homomorphism
$w \colon \Gamma \to \{\pm 1\}$ of Notation~\ref{not:w}  is trivial,
and that $d = \dim(M)$ is even. These condition will be dropped in the
main body of the paper, for instance the case, where $d$ is odd and $d\ge 3$ is discussed in
Section~\ref{sec:The_case_of_odd_d}.  Note that then the composite appearing in condition (H) above reduces to
\[
H_d(B\Gamma,\partial B\Gamma) \xrightarrow{\partial} H_{d-1}(\partial B\Gamma)
\xrightarrow{\cong} \bigoplus_{F \in \calm} H_{d-1}(BF) \to H_{d-1}(BF).
\]

  \begin{lemma}\label{lem:necessary_condition_for_the_existence_of_M_intro}
    Suppose that the topological $\Gamma$-manifold $M$ of the shape described
    in~\eqref{Gamma_pushout_for_M_intro} is $\Gamma$-homotopy equivalent to $\eub{\Gamma}$
    and $\Gamma$ acts orientation preserving on $M$. Suppose that $d = \dim(M)$ is even and $d \ge 4$. 

    Then the following conditions are satisfied:

    \begin{enumerate}
    \item\label{lem:necessary_condition_for_the_existence_of_M_intro:eub(Gamma)} There is
      a finite $d$-dimensional $\Gamma$-$CW$-model for $\eub{\Gamma}$ such that
      $\eub{\Gamma}^{>1}$ is $0$-dimensional;

    \item\label{lem:necessary_condition_for_the_existence_of_M_intro:(M)_and_(NM)_intro}
      The group $\Gamma$ satisfies (M) and (NM). Moreover, we get a well-defined bijection
      \[
        I \xrightarrow{\cong} \calm
      \]
      by sending $x \in I$ to the element $F \in \calm$, which is conjugated to $\Gamma_x$;
    
    \item\label{lem:necessary_condition_for_the_existence_of_M_intro:Bpi} The quotient
      $M/\pi$ is an orientable aspherical closed $d$-dimensional manifold. In particular there is a
      finite $CW$-model for $B\pi$ and $H_d(B\pi)$ is infinite cyclic;

    \item\label{lem:necessary_condition_for_the_existence_of_M_intro:group_homology} The
      group $H_d(B\Gamma,\partial B\Gamma )$ is infinite cyclic. For each
      $F \in \calm$ the Tate cohomology of $F$ is $d$-periodic and $H_{d-1}(BF)$ is finite
      cyclic of order $|F|$;

    \item\label{lem:necessary_condition_for_the_existence_of_M_intro:(H)} The group
      $\Gamma$-satisfies (H).
    
    \end{enumerate}
  \end{lemma}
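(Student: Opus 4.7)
My approach rests on two main observations: that $M$ and every fixed set $M^H$ (for $H$ a non-trivial finite subgroup of $\Gamma$) are contractible, as $M\simeq_{\Gamma}\eub{\Gamma}$; and that applying the Borel construction $E\Gamma\times_{\Gamma}(-)$ to the $\Gamma$-pushout~\eqref{Gamma_pushout_for_M_intro} yields a homotopy pushout presentation of $B\Gamma$ well suited to computing $H_d(B\Gamma,\partial B\Gamma)$. I would dispatch~\ref{lem:necessary_condition_for_the_existence_of_M_intro:eub(Gamma)} at once by citing the finite $d$-dimensional $\Gamma$-$CW$-complex $Z$ with $Z^{>1}=M^{>1}$ constructed in the discussion preceding the lemma together with the fact that $M^{>1}$ is discrete.

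For~\ref{lem:necessary_condition_for_the_existence_of_M_intro:(M)_and_(NM)_intro}, the key observation is that for any non-trivial finite subgroup $H\subseteq\Gamma$, the set $M^H$ is contractible and contained in the discrete set $M^{>1}$, hence equals a single point $x_H$. This forces $\Gamma_{x_H}$ to be the unique maximal finite subgroup containing $H$, which is (M); for (NM), if $\gamma\in N_\Gamma F$ then $\gamma F\gamma^{-1}=F$ fixes $\gamma x_F$, so $\gamma x_F=x_F$ and thus $\gamma\in F$. The assignment $x\mapsto[\Gamma_x]$ yields the required bijection $I\xrightarrow{\cong}\calm$. For~\ref{lem:necessary_condition_for_the_existence_of_M_intro:Bpi}, the group $\pi$ acts freely on the contractible manifold $M$ (by torsion-freeness of $\pi$), cocompactly (since $[\Gamma:\pi]=|G|<\infty$), and orientation-preservingly (inherited from $\Gamma$), so $M/\pi$ is a closed orientable aspherical $d$-manifold, hence a finite $CW$-model for $B\pi$ with $H_d(B\pi)=\IZ$.

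For~\ref{lem:necessary_condition_for_the_existence_of_M_intro:group_homology} and as preparation for~\ref{lem:necessary_condition_for_the_existence_of_M_intro:(H)}, I would apply $E\Gamma\times_{\Gamma}(-)$ to~\eqref{Gamma_pushout_for_M_intro}. Since $M$, each $D_x$, and $E\Gamma$ are contractible while $X$ and $\partial X$ carry free $\Gamma$-actions, the four corners of the resulting homotopy pushout are weakly equivalent to $X/\Gamma$, $\partial X/\Gamma$, $\partial B\Gamma$, and $B\Gamma$ respectively, where the identification $E\Gamma\times_{\Gamma}C(\partial X)\simeq\partial B\Gamma$ uses the bijection from~\ref{lem:necessary_condition_for_the_existence_of_M_intro:(M)_and_(NM)_intro}. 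Excision then gives $H_*(B\Gamma,\partial B\Gamma)\cong H_*(X/\Gamma,\partial X/\Gamma)$. Since $\Gamma$ acts freely and orientation-preservingly on the manifold $X$ with boundary $\partial X$, the quotient $X/\Gamma$ is itself a compact oriented $d$-dimensional manifold with boundary, so its fundamental class generates $H_d(X/\Gamma,\partial X/\Gamma)=\IZ$, which is the first assertion in~\ref{lem:necessary_condition_for_the_existence_of_M_intro:group_homology}. For each $F=\Gamma_x\in\calm$, the free action on the $(d-1)$-sphere $S_x$ extends its augmented cellular chain complex to an exact sequence of free $\IZ F$-modules of length $d$ between two trivial copies of $\IZ$; splicing this gives a $d$-periodic free resolution of $\IZ$, proving that the Tate cohomology of $F$ is $d$-periodic, and the standard computation $\widehat{H}^{-d}(F;\IZ)=\widehat{H}^{0}(F;\IZ)=\IZ/|F|$ yields $H_{d-1}(BF)=\IZ/|F|$.

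For~\ref{lem:necessary_condition_for_the_existence_of_M_intro:(H)}, the connecting homomorphism sends the fundamental class $[X/\Gamma,\partial X/\Gamma]$ to $\sum_x[S_x/\Gamma_x]\in\bigoplus_x H_{d-1}(S_x/\Gamma_x)$, and each classifying map $S_x/\Gamma_x\to B\Gamma_x$ takes a generator of $H_{d-1}(S_x/\Gamma_x)=\IZ$ to a generator of $H_{d-1}(B\Gamma_x)=\IZ/|\Gamma_x|$, since $B\Gamma_x$ is obtained from $S_x/\Gamma_x$ by attaching cells of dimension $\ge d$ whose top boundary realises multiplication by $|\Gamma_x|$. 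Projecting onto any chosen factor $F\in\calm$ then yields the required surjectivity. The main obstacle is pure bookkeeping: one must check that the various corner identifications in the Borel-construction pushout are compatible with the boundary homomorphism appearing in~(H), so that the manifold fundamental class of $X/\Gamma$ is indeed mapped to a generator on each boundary component.
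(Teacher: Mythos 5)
Your proposal is correct and follows essentially the same route as the paper: single-point fixed sets give (M), (NM) and the bijection $I\cong\calm$; $X/\Gamma$ is a compact connected oriented manifold with boundary whose fundamental class maps to the boundary fundamental classes; and the $(d-2)$-connectivity of $S_x$ makes the classifying maps surjective on $H_{d-1}$. The only cosmetic difference is that you establish the identification $H_*(B\Gamma,\partial B\Gamma)\cong H_*(X/\Gamma,\partial X/\Gamma)$ directly via the Borel construction and excision, whereas the paper records the same isomorphisms in a commutative diagram whose bijectivity it defers to the transfer discussion of Section~\ref{sec:Fixing_the_orientation_homomorphisms_and_the_fundamental_classes}.
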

  \begin{proof}~\eqref{lem:necessary_condition_for_the_existence_of_M_intro:eub(Gamma)}
    This follows from the consideration in the beginning of this
    Subsection~\ref{subsec:Some_necessary_conditions_intro}.
    \\[1mm]~\eqref{lem:necessary_condition_for_the_existence_of_M_intro:(M)_and_(NM)_intro}
    Consider a non-trivial finite subgroup $H \subseteq \Gamma$. Then $H$ is contained in
    some maximal finite subgroup, since $\Gamma$ is virtually torsionfree.
  
    Let $F_0$ and $F_1$ be two maximal finite subgroups with $H \subseteq F_0$ and
    $H \subseteq F_1$.  Since $M^{F_0} \subseteq M^{H}$ and $M^{F_1} \subseteq M^{H}$
    holds and $M^H$, $M^{F_0} $ and $M^{F_1}$ are contractible zero-dimensional manifolds
    and hence consist of precisely one point $x$, we get
    $M^{F_0} = M^{H} = M^{F_0} =\{x\}$ and hence $F_0 \subseteq \Gamma_x$ and
    $F_1 \subseteq \Gamma_x$. Since $F_0$ and $F_1$ are maximal finite and $\Gamma_x$ is
    finite, we conclude $F_0 = F_1 = \Gamma_x$.  Hence (M) is satisfied, the isotropy
    group of any point in $M$ is either trivial or maximal finite, and any maximal finite
    subgroup occurs as isotropy group $\Gamma_x$ of some element $x$ in $M^{>1}$. Hence we
    get the desired bijection $I \xrightarrow{\cong} \calm$, which we will use in the
    sequel as an identification.
  
    Consider $F \in \calm$. Then $M^F$ consists of precisely one point $x$.  Since
    $N_\Gamma F$ leaves $M^F$ invariant, we conclude $N_\Gamma F \subseteq \Gamma_x =
    F$. Hence (NM) is satisfied.
    \\[1mm]~\eqref{lem:necessary_condition_for_the_existence_of_M_intro:Bpi} Since $\pi$
    is torsionfree and has finite index in $\Gamma$, it acts freely, properly, and
    cocompactly on $M$.  Hence $M/\pi$ is a closed orientable manifold and a finite
    $CW$-complex. This implies that $H_d(B\pi)$ is infinite cyclic.  Since $M$ is
    contractible, $M/\pi$ is a model for $B\pi$.
    \\[1mm]~\eqref{lem:necessary_condition_for_the_existence_of_M_intro:group_homology}
    and~\eqref{lem:necessary_condition_for_the_existence_of_M_intro:(H)} There is the
    following commutative diagram
    \[
      \xymatrix{H_d(M/\Gamma,\partial M/\Gamma) \ar[r]^-{\partial} \ar[d]^-{\cong} &
        H_d(\partial M/\Gamma) & \bigoplus_{F \in\calm} H_{d-1}(S_F/F) \ar[l]_-{\cong}
        \ar[d]
        \\
        H_d(B\Gamma,\partial B\Gamma)\ar[r]^-{\partial} \ar[d]^-{\cong} & H_{d-1}(\partial
        B\Gamma) & \bigoplus _{F \in\calm} H_{d-1}(BF) \ar[l]_-{\cong}
        \\
        H_d(\bub{\Gamma}, \partial \bub{\Gamma}) & &
        \\
        H_d(\bub{\Gamma}), \ar[u]_{\cong} & & }
    \]
    whose maps are given by the obvious maps on space level, boundary homomorphisms, or
    the classifying maps $M/\Gamma \to B\Gamma$ and $S_F/F \to BF$.  We will show using a
    transfer argument and conditions (M) and (NM) that the left vertical arrows are all
    bijective and that the inclusion $i \colon \pi \to \Gamma$ induces an injection of
    infinite cyclic groups $H_d(B\pi) \to H_d(\bub{\Gamma})$,
    see~\eqref{diagram_of_the_H_d-s} and Lemma~\ref{lem;everything_is_infinite_cyclic}.

    Since $F$ acts freely on the $(d-1)$-dimensional sphere $S_F$, the finite group $F$
    has periodic cohomology, see~\cite[page~154]{Brown(1982)}. Moreover $H_{d-1}(F)$ is
    cyclic of order $|F|$, see~\cite[Theorem~9.1 in Section~VI.9 on
    page~154]{Brown(1982)}. Each map $H_{d-1}(S_F/F) \to H_{d-1}(BF)$ is surjective, as
    $S_F$ is $(d-2)$-connected. Since $(M/\Gamma, \partial M/\Gamma)$ is an orientable connected 
    compact manifold, $H_d(M/\Gamma,\partial M/\Gamma)$  is infinite cyclic
    and the image of the  fundamental class in $H_d(M/\Gamma,\partial M/\Gamma)$
    under
    \[
      H_d(M/\Gamma,\partial M/\Gamma) \xrightarrow{\partial_d} H_{d-1}(\partial M/\Gamma)
      \cong \bigoplus_{C \in \pi_0(\partial M/\Gamma)} H_{d-1}(C)
     \]
     is given by the collection of the fundamental classes in $H_{d-1}(C)$ for each path component
     $C$ of $\partial M$. Hence the composite          
    \[H_d(M/\Gamma,\partial M/\Gamma) \xrightarrow{\partial_d} H_{d-1}(\partial M/\Gamma)
      \xrightarrow{\cong} \bigoplus_{F \in \calm} H_{d-1}(S_F/F) \to H_{d-1}(S_F/F)
    \]
    of the boundary map, the inverse of the obvious isomorphism, and the projection to the
    summand of $F \in \calm$ is surjective for all $F \in \calm$. This finishes the proof
    of Lemma~\ref{lem:necessary_condition_for_the_existence_of_M_intro}.
  \end{proof}

  \begin{example}\label{exa:kremer_and_Kirstein}
    The following example is due to Dominik Kirstein and Christian Kremer.  If
    $G$ is cyclic of order $p$ for a prime number $p$, and
    $H_k(B\pi;\IZ)_{(p)}$ vanishes for $1 \le k \le (d-1)$, then (H) is automatically
    satisfied. Its proof is left to the reader.
  \end{example}


  \subsection{Brown's problem}\label{subsec:Browns_problem_intro}

  The condition appearing in
  Theorem~\ref{lem:necessary_condition_for_the_existence_of_M_intro}~%
\eqref{lem:necessary_condition_for_the_existence_of_M_intro:eub(Gamma)} is hard to check
  and looks very restrictive.  In particular we have to find a finite $d$-dimensional
  $\Gamma$-$CW$-complex model for $\underline{E}\Gamma$.  This is a necessary condition,
  which is not at all obvious.  In view of
  Theorem~\ref{lem:necessary_condition_for_the_existence_of_M_intro}~%
\eqref{lem:necessary_condition_for_the_existence_of_M_intro:Bpi} the assumption that
  there is a finite $d$-dimensional model for $B\pi$ is reasonable and will be made.  So
  we are dealing with a special case of the following problem due to Brown~\cite[page~32]{Brown(1979)}.

\begin{problem}[Brown's problem]\label{prob:Browns_problem_intro}
  For which discrete groups $\Gamma$, which contain a torsionfree subgroup $\pi$ of finite
  index and have virtual cohomological dimension $\le d$, does there exist a
  $d$-dimensional $\Gamma$-$CW$-model for $\underline{E}\Gamma$?
\end{problem}

Meanwhile there are examples, where the answer is negative for Brown's
problem~\ref{prob:Browns_problem_intro}
see~\cite{Leary-Nucinkis(2003)},~\cite{Leary-Petrosyan(2017)}. So in order to prove that
the condition appearing in
Theorem~\ref{lem:necessary_condition_for_the_existence_of_M_intro}~%
\eqref{lem:necessary_condition_for_the_existence_of_M_intro:eub(Gamma)} is satisfied, we
must give a proof of a positive answer to Brown's problem~\ref{prob:Browns_problem_intro}
in our special case, and actually much more.  We will show

\begin{theorem}[Models for the classifying space for proper $\Gamma$-actions]%
\label{the:Models_for_the_classifying_space_for_proper_Gamma-actions_intro}
  Assume  that the following conditions are satisfied:

  \begin{enumerate}

    \item\label{the:Nielsen_Realization_Problem:d_intro}
      The natural number $d$ satisfies $d \ge 3$;

      \item\label{the:Nielsen_Realization_Problem:(M)_and)NM)_intro} The group $\Gamma$ satisfies
    conditions (M) and (NM), see Notation~\ref{not:(M)_and_(NM)_intro};

  \item\label{the:Nielsen_Realization_Problem:eub}

    The group $\Gamma$ satisfies one of the following conditions:

    \begin{enumerate}

    \item\label{the:Nielsen_Realization_Problem:eub_direct} There exists a finite
      $\Gamma$-$CW$-model for $\eub{\Gamma}$;

    \item\label{the:Nielsen_Realization_Problem:eub_hyperbolic} The group $\Gamma$ is
      hyperbolic;

    \item\label{the:Nielsen_Realization_Problem_eub_CATzero} The group $\Gamma$ acts
      cocompactly properly and isometrically on a proper $\CATzero$-space;

    \end{enumerate}
   
  \item\label{the:Nielsen_Realization_Problem_vcd} There is a finite $CW$-complex model
    of dimension $d$ for $B\pi$.

  \end{enumerate}

  Then there exists a finite $\Gamma$-$CW$-model $X$ for $\eub{\Gamma}$ of dimension $d$
  such that its singular $\Gamma$-subspace $X^{> 1}$ is $\coprod_{F\in\calm} \Gamma/F$.
\end{theorem}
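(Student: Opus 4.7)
The plan is to proceed in three stages: produce an initial finite $\Gamma$-$CW$-model for $\eub{\Gamma}$, collapse its singular locus to the discrete set $\coprod_{F\in\calm}\Gamma/F$, and reduce the overall dimension to $d$.

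Under the first alternative in hypothesis (3) a finite model for $\eub{\Gamma}$ is given directly. Under the hyperbolicity alternative, the Rips complex $P_s(\Gamma)$ for sufficiently large $s$ is a finite $\Gamma$-$CW$-model via the Meintrup--Schick construction. Under the $\CATzero$ alternative, the proper cocompact isometric $\Gamma$-action on the $\CATzero$-space yields a finite $\Gamma$-$CW$-model after equivariant triangulation of a fundamental domain. Call the resulting model $Y$; its dimension may exceed $d$.

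Using (M) and (NM), I analyze $Y^{>1}$. Condition (M) guarantees a decomposition $Y^{>1}=\coprod_{F\in\calm}\Gamma\cdot Y^F$ with each $Y^F$ contractible (since non-trivial isotropies lie in a unique maximal finite subgroup and fixed sets are contractible as $Y$ models $\eub{\Gamma}$). Condition (NM) then gives $\Gamma\cdot Y^F=\Gamma\times_F Y^F$ equivariantly. Equivariantly collapsing each contractible $Y^F$ to a point produces a new finite $\Gamma$-$CW$-model $Y'$, $\Gamma$-homotopy equivalent to $Y$, with $(Y')^{>1}=\coprod_F\Gamma/F$.

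The main obstacle is reducing $\dim(Y')$ to $d$ while keeping the singular part zero-dimensional. Here the hypothesis that $B\pi$ admits a finite $d$-dimensional $CW$-model is essential, as it provides a finite free $\pi$-$CW$-model for $E\pi$ of dimension $d$. Viewing $(Y',\coprod_F\Gamma/F)$ as a finite relative free $\Gamma$-$CW$-pair, its free part is governed by the cellular $\IZ\pi$-chain complex of a finite free $\pi$-$CW$-complex. A Swan/Eilenberg--Ganea-style chain-level simplification, driven by the $d$-dimensional model for $B\pi$, deforms this complex to one concentrated in degrees $\le d$. Realizing the reduction geometrically, cell by cell from the top dimension downward, yields the desired finite $d$-dimensional $X$; equivariant cellular approximation guarantees that the new attaching maps land in the free $\Gamma$-part, so the singular locus remains $\coprod_F\Gamma/F$. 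The assumption $d\ge 3$ controls the low-dimensional Eilenberg--Ganea exceptional case, although dimension $d=3$ may deserve separate bookkeeping.
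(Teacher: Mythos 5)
Your overall strategy (initial finite model, make the singular locus discrete, then reduce the dimension to $d$ using the $d$-dimensional model for $B\pi$) matches the paper's, but two steps have genuine problems. First, in the collapsing step you collapse the fixed sets $Y^F$, but under (M) the singular locus is $Y^{>1}=\bigcup_{H\neq 1}Y^H$, which decomposes as $\coprod_{F\in\calm}\Gamma\times_F Y(F)$ with $Y(F)=\{y: 1\neq \Gamma_y\subseteq F\}\supsetneq Y^F$ in general; collapsing only $Y^F$ leaves points whose isotropy is a proper non-trivial subgroup of $F$, so the resulting singular set is not $\coprod_F\Gamma/F$. One must collapse all of $Y(F)$ and then still argue that $Y(F)$ is contractible and that the quotient has contractible $H$-fixed sets. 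The paper sidesteps this by building the model from below as the $\Gamma$-pushout of $\coprod_F\Gamma\times_F EF\to E\Gamma$ over $\coprod_F\Gamma/F$ (Proposition~\ref{pro:constructing_underline(E)Gamma)}, via L\"uck--Weiermann), which is where (M) and (NM) are actually used.

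Second, and more seriously, the dimension reduction is asserted rather than proved. The free part of $(Y',\coprod_F\Gamma/F)$ is a chain complex over the orbit category $\OrGF{\Gamma}{\calfin}$ (equivalently over $\IZ\Gamma$), not over $\IZ\pi$, and a $d$-dimensional model for $E\pi$ does not directly yield a $d$-dimensional resolution over $\IZ\Gamma$: the naive coinduction bound is $d\cdot[\Gamma:\pi]$. The paper's Lemma~\ref{lem:Brown_small_singular_set} closes this gap by a downward induction on the dimension, showing the relevant cohomology obstruction groups $H^{m+1}_{\IZ\OrGF{\Gamma}{\calfin}}(C_*^c(\underline{E}\Gamma);F)$ vanish by passing through the adjunctions $(j_*,j^*)$ and $(i^*,i_!)$ to $\pi$-cohomology, where the $d$-dimensional $E\pi$ is finally used; it also requires replacing $\pi$ by a normal finite-index subgroup. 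Moreover, combining ``finite'' with ``$d$-dimensional'' is not automatic: the chain-level truncation produces a top module that is a priori only stably free, and the paper's Lemma~\ref{lem:finite_d-dimensional_models} needs the equivariant finiteness obstruction over $\Pi(\Gamma,X)$, the splitting functors, and the Equivariant Realization Theorem to stabilize it to a free module and realize the result by a finite $d$-dimensional complex with unchanged singular set. Your appeal to a ``Swan/Eilenberg--Ganea-style simplification'' and ``cell-by-cell realization'' does not identify either mechanism, and these are the core of the proof.
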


The proof of
Theorem~\ref{the:Models_for_the_classifying_space_for_proper_Gamma-actions_intro} will be
given in Section~\ref{sec:Some_results_about_the_classifying_space_for_proper_actions}.

\begin{remark}\label{Theorem_models_and_necessary_conditions}
The conditions~\eqref{the:Nielsen_Realization_Problem:(M)_and)NM)_intro},~%
\eqref{the:Nielsen_Realization_Problem:eub_direct},
and~\eqref{the:Nielsen_Realization_Problem_vcd}  appearing in
Theorem~\ref{the:Models_for_the_classifying_space_for_proper_Gamma-actions_intro} are
necessary.  This follows for~\eqref{the:Nielsen_Realization_Problem:(M)_and)NM)_intro},
since the argument appearing in the proof of
Lemma~\ref{lem:necessary_condition_for_the_existence_of_M_intro:(M)_and_(NM)_intro}
carries over directly, and is obvious
for~\eqref{the:Nielsen_Realization_Problem:eub_direct} and
and~\eqref{the:Nielsen_Realization_Problem_vcd}.
\end{remark}

\begin{remark}\label{rem:fin_dom_intro} Suppose that
  assumptions~\eqref{the:Nielsen_Realization_Problem:(M)_and)NM)_intro}
  and~\eqref{the:Nielsen_Realization_Problem_vcd}
  appearing in Theorem~\ref{the:Models_for_the_classifying_space_for_proper_Gamma-actions_intro}
  hold and $\calm$ is
  finite.  Then we do get a finitely dominated $\Gamma$-$CW$-model for $\eub{\Gamma}$
  by the following argument.  We obtain  some finite-dimensional $\Gamma$-$CW$-model for
  $\eub{\Gamma}$ from~\cite[Theorem~2.4]{Lueck(2000a)}. We get a $\Gamma$-$CW$-model of
  finite type for $\eub{\Gamma}$ from~\cite[Theorem~4.2]{Lueck(2000a)}, since $W_{\Gamma}H$
  is finite for every non-trivial finite subgroup of $\Gamma$ and there are only finitely
  many conjugacy classes of finite subgroups in $\Gamma$ by condition (M) and (NM),
  see~\cite[Lemma~2.1]{Davis-Lueck(2022_manifold_models)}, and $B\Gamma$ has a 
  $CW$-model of finite type by~\cite[Lemma~7.2]{Lueck(1997a)} applied to the fibration
  $B\pi \to B\Gamma \to BG$. Hence we get a finitely dominated $\Gamma$-$CW$-model for
  $\eub{\Gamma}$ by~\cite[Proposition~14.9 on page~282]{Lueck(1989)}.

  If we want to turn this model into a finite $\Gamma$-$CW$-model, we have to compute its
  equivariant finiteness obstruction, see~\cite[Section~11]{Lueck(1989)}.  Since there
  exists a $\Gamma$-$CW$-model for $\eub{\Gamma}$ with finite $\eub{\Gamma}^{> 1}$,
  see Proposition~\ref{pro:constructing_underline(E)Gamma)}, only
  the top component of the equivariant finiteness obstruction associated to the trivial
  subgroup may be non-trivial. It takes values in $\widetilde{K}_0(\IZ\Gamma)$.  Hence
  there exists a finite $\Gamma$-$CW$-model for $\eub{\Gamma}$ if
  assumptions~\eqref{the:Nielsen_Realization_Problem:(M)_and)NM)_intro}
  and~\eqref{the:Nielsen_Realization_Problem_vcd} hold, $\calm$ is finite,
  and $\widetilde{K}_0(\IZ\Gamma)$ vanishes. Suppose that $\pi$ satisfies the Full
  Farrell-Jones Conjecture. Then the canoncial map
  $\bigoplus_{F \in \calm} \widetilde{K}_0(\IZ F) \xrightarrow{\cong}
  \widetilde{K}_0(\IZ\Gamma)$ is an isomorphism,
  see~\cite[Theorem~5.1~(d)]{Davis-Lueck(2003)}
  or~\cite[Theorem~5.1]{Davis-Lueck(2022_manifold_models)}.  Hence
  $\widetilde{K}_0(\IZ\Gamma)$ vanishes, if and only if $\widetilde{K}_0(\IZ F)$ vanishes
  for all $F \in \calm$.  If $F$ is cyclic of order $n$, then $\widetilde{K}_0(\IZ F)$
  vanishes, if and only if $n \le 11$ or $n \in \{13,14,17,19\}$.
\end{remark}

\begin{remark}[The torsionfree case]\label{rem:torsion_free_case_Brown_and_Poincare_intro}
  Now suppose that $\Gamma$ is torsionfree, there is a finite model for $B\pi$, and that
  $\pi$ satisfies the Full Farrell-Jones Conjecture in the sense
  of~\cite[Section~12.5]{Lueck(2022book)}.  Remark~\ref{rem:fin_dom_intro} implies that
  there is a finitely dominated $CW$-complex model for $B\Gamma$. Since
  $\widetilde{K}_0(\IZ\Gamma)$ vanishes by the Full Farrell-Jones Conjecture, there is a
  finite $CW$-model for $B\Gamma$. Moreover, we conclude
  from~\cite[Theorem~H]{Klein-Qin_Su(2019)} that there is a finite Poincar\'e complex
  homotopy equivalent to $B\Gamma$, if and only if there is a finite Poincar\'e complex
  homotopy equivalent to $B\pi$.
\end{remark}

In view of Remark~\ref{rem:torsion_free_case_Brown_and_Poincare_intro} we we will often
and tacitly assume throughout the remainder of this paper that $\Gamma$ is not
torsionfree.


\subsection{Poincar\'e models}\label{subsec:Poincare_models_intro}

Recall that we want to construct the desired $\Gamma$-manifold $M$ described
in~\eqref{Gamma_pushout_for_M_intro} such that $M$ is $\Gamma$-homotopy equivalent to
$\eub{\Gamma}$, or, equivalently such that $M$ is contractible.  For this purpose it
suffices to find a free proper cocompact $d$-dimensional $\Gamma$-manifold $X$ with
boundary $\partial X$ such that the space $M$ defined in~\eqref{Gamma_pushout_for_M_intro}
is contractible.  If we divide out the $\Gamma$-action, we get a compact manifold
$X/\Gamma$ with boundary $\partial X/ \Gamma$. Hence it suffices to construct a compact
$d$-dimensional manifold $Y$ with fundamental group $\Gamma$ and boundary
$\partial X/\Gamma$ such that $Y \cup_{\partial X/\Gamma} C(\partial X)/\Gamma$ is
aspherical, since then we can define $X$ to be the universal covering of $Y$. Recall that
any compact manifold with boundary is a finite Poincar\'e pair. Hence our first task is to
construct a finite $d$-dimensional Poincar\'e pair $(Y,\partial X/\Gamma)$ such that the
fundamental group of $Y$ is $\Gamma$ and $Y \cup_{\partial X/\Gamma} C(\partial X)/\Gamma$
is aspherical.

The next result will be a direct consequence of
Theorem~\ref{the:Models_for_the_classifying_space_for_proper_Gamma-actions_intro} and 
Theorem~\ref{the:existence_Poincare_slice_models}.

  \begin{theorem}[Poincar\'e models]\label{the:Poincare_models_intro}
    Suppose that the following conditions are satisfied:

    \begin{itemize}

    \item The natural number $d$ is even and satisfies $d \ge 4$;

     \item The group $\Gamma$ satisfies conditions (M), (NM), and (H), see
       Notation~\ref{not:(M)_and_(NM)_intro};

      \item The homomorphism $w \colon \Gamma \to \{\pm 1\}$ of Notation~\ref{not:w} has the
  property that $w|_F$ is trivial for every $F \in \calm$;

    \item One of the following assertions holds:

      \begin{itemize}

      \item There exists a finite $\Gamma$-$CW$-model for $\eub{\Gamma}$;

      \item The group $\pi$ is hyperbolic;

      \item The group $\Gamma$ acts cocompactly, properly, and isometrically on a proper
        $\CATzero$-space;

      \end{itemize}
      
    \item There is a finite $d$-dimensional Poincar\'e complex, which is homotopy
      equivalent to $B\pi$;

    \item There exists an oriented free $d$-dimensional slice system $\cals$,
      see Definition~\ref{def:free_d-dimensional_slide_system},
      satisfying condition (S), see Definition~\ref{def:condition_(S)}.
  
    \end{itemize}

    Put $\partial X = \coprod_{F \in \calm} \Gamma \times_F S_F$ and
    $C(\partial X) = \coprod_{F \in \calm} \Gamma \times_F D_F$ for $D_F$ the cone over
    $S_F$.

    Then there exists a finite free $\Gamma$-$CW$-pair $(X,\partial X)$ such that
    $X \cup_{\partial X} C(\partial X)$ is a model for $\eub{\Gamma}$ and
    $(X/\Gamma,\partial X/\Gamma)$ is a finite $d$-dimensional Poincar\'e pair.
  \end{theorem}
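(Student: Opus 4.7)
The plan is to combine Theorem \ref{the:Models_for_the_classifying_space_for_proper_Gamma-actions_intro} (which furnishes the desired finite $\Gamma$-$CW$-model for $\eub{\Gamma}$) with Theorem \ref{the:existence_Poincare_slice_models} (which upgrades a slice complement model to a Poincar\'e pair on the quotient). The argument falls into three steps.

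First, I would verify the hypotheses of Theorem \ref{the:Models_for_the_classifying_space_for_proper_Gamma-actions_intro} and apply it to obtain a finite $d$-dimensional $\Gamma$-$CW$-model $Z$ for $\eub{\Gamma}$ whose singular part satisfies $Z^{>1} = \coprod_{F \in \calm} \Gamma/F$. Conditions (M) and (NM) are assumed, $d \ge 4 \ge 3$, and the alternatives on $\Gamma$ match directly, once one observes that $\pi$ hyperbolic is equivalent to $\Gamma$ hyperbolic (since $\pi$ has finite index in $\Gamma$, they are quasi-isometric). The requirement of a finite $d$-dimensional $CW$-model for $B\pi$ is satisfied by the underlying $CW$-complex of the given finite $d$-dimensional Poincar\'e complex homotopy equivalent to $B\pi$.

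Second, I would use the given oriented free $d$-dimensional slice system $\cals = \{S_F \mid F \in \calm\}$ to reshape $Z$ into a slice complement model. Because $Z^{>1}$ is the zero-dimensional $\Gamma$-set $\coprod_F \Gamma/F$, one may equivariantly cut out cone neighbourhoods of the singular orbits in $Z$ and glue in the prescribed cones $\Gamma \times_F D_F$ instead, producing a free $\Gamma$-$CW$-complex $X$ and a $\Gamma$-pushout of the shape~\eqref{Gamma_pushout_for_M_intro} with $\partial X = \coprod_F \Gamma \times_F S_F$, $C(\partial X) = \coprod_F \Gamma \times_F D_F$, and total space $\Gamma$-homotopy equivalent to $\eub{\Gamma}$. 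Condition (S) on $\cals$ is precisely what permits this equivariant replacement, identifying each link of a singular orbit with the chosen $S_F$ and matching attaching data.

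Third, I would invoke Theorem \ref{the:existence_Poincare_slice_models} to promote $(X/\Gamma, \partial X/\Gamma)$ to a finite $d$-dimensional Poincar\'e pair with orientation character $w$. All of its inputs are in place: the finite Poincar\'e complex for $B\pi$, the hypothesis that $w|_F$ is trivial for every $F \in \calm$ (which orients the individual cones $D_F$), and condition (H). The role of (H) is to guarantee existence of a relative fundamental class in $H_d(X/\Gamma, \partial X/\Gamma; \IZ^w)$ whose boundary restricts to the fundamental class of each component $S_F/F$ of $\partial X/\Gamma$, exactly as in the diagram appearing in the proof of Lemma \ref{lem:necessary_condition_for_the_existence_of_M_intro}.

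I expect the third step to be the main obstacle. The first two steps are essentially bookkeeping, but establishing Poincar\'e--Lefschetz duality for the non-manifold quotient $X/\Gamma$ requires splicing together the duality of $B\pi$ with the duality contributed by each cone $D_F$, with chain-level compatibility enforced by condition (H). Condition (H) is evidently designed for exactly this purpose, so the technical heart of the argument will reside inside Theorem \ref{the:existence_Poincare_slice_models}.
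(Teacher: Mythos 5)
Your overall architecture is exactly the paper's: Theorem~\ref{the:Poincare_models_intro} is stated there to be a direct consequence of Theorem~\ref{the:Models_for_the_classifying_space_for_proper_Gamma-actions_intro} (which supplies the finite $d$-dimensional $\Gamma$-$CW$-model for $\eub{\Gamma}$ with singular set $\coprod_{F\in\calm}\Gamma/F$, i.e.\ the remaining hypothesis of Assumption~\ref{ass:general_assumptions}) and Theorem~\ref{the:existence_Poincare_slice_models}. Your step one is correct, including the observation that hyperbolicity passes between $\pi$ and $\Gamma$ via quasi-isometry. The problem is your step two, which is both redundant and, as described, does not work. A finite $\Gamma$-$CW$-model $Z$ with zero-dimensional singular set carries no preferred ``cone neighbourhoods'' of its singular orbits --- there is no manifold or regular-neighbourhood structure available, and condition (S) is a purely homological constraint ($d(S_F)=\kappa_F$), not a device for identifying links of singular points with the prescribed $S_F$. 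The paper instead builds the slice complement model from scratch in Theorem~\ref{the:constructing_slice_models}: starting from $\partial X=\coprod_F\Gamma\times_F S_F$, one inductively attaches free cells mirroring the skeleta of $\eub{\Gamma}$, and then modifies the top-dimensional attaching maps, using $m_F(X,\partial X)\equiv 1\bmod |F|$ (which is where (S) and (H) enter), to force $m_F(X,\partial X)=1$. This last normalization also shows why your step three, read literally, has a gap: Theorem~\ref{the:existence_Poincare_slice_models} is an existence statement and does not ``promote'' an arbitrary slice complement model handed to it --- indeed not every slice complement model is a Poincar\'e slice complement model, since Poincar\'e duality forces $m_F(X,\partial X)=\pm 1$ (Lemma~\ref{lem:necessary_condition_for_slice_model_to_be_Poincare}), whereas a generic construction only guarantees $m_F\equiv 1\bmod|F|$. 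None of this sinks your proof, because deleting step two and invoking Theorem~\ref{the:existence_Poincare_slice_models} directly yields the conclusion; but the intermediate construction you sketch should not be presented as if it were a working alternative to the inductive argument of Theorem~\ref{the:constructing_slice_models}.
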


  We will also explain that for every $F \in \calm$ there is only one choice for $S_F$ up
  to $F$-homotopy and how we can determine this choice from $\Gamma$, see
  Section~\ref{subsec:Invariants_associated_to_slice_models}, and prove that the free
  $\Gamma$-$CW$-pair $(X,\partial X)$ is unique up to $\Gamma$-homotopy equivalence, see
  Theorem~\ref{the:Uniqueness_of_Poincare_slice_models}. Uniqueness up to
  simple $\Gamma$-homotopy equivalence is investigated in
  Section~\ref{sec:Simple_homotopy_classification_of_slice_models}.


  \subsection{Surgery Theory}\label{subsec:Surgery_Theory_intro}

  The second step is to promote the finite $d$-dimensional Poincar\'e pair
  $(Y,\partial X/\Gamma)$ of Subsection~\ref{subsec:Poincare_models_intro} up to homotopy
  to a compact manifold with boundary. This will be presented in a different paper, namely
  in~\cite{Davis-Lueck(2022_manifold_models)}.  In that paper surgery theory will come in, whereas in this
  paper our main techniques stem from algebraic topology and equivariant homotopy
  theory. In general further surgery obstructions, notably splitting obstructions taking
  values in UNil-groups, will appear.

  As an illustration we mention that our ultimate result in~\cite{Davis-Lueck(2022_manifold_models)}
  will imply the following result:

  \begin{theorem}[Manifold models]\label{the:special_case_of_ultiimate_theorem_intro}
    Suppose that the following conditions are satisfied:

    \begin{itemize}

    \item The natural number $d$ is even and satisfies $d \ge 6$;

     \item The group $\Gamma$ satisfies conditions (M), (NM), and (H), see
      Notation~\ref{not:(M)_and_(NM)_intro};
      
    \item The group $\pi$ is hyperbolic;

    \item There exists a closed $d$-dimensional manifold, which is homotopy equivalent to
      $B\pi$;

    \item The group $F$ is cyclic of odd order for every $F \in \calm$.

    \end{itemize}

    Then there exists a proper cocompact $d$-dimensional topological $\Gamma$-manifold $M$ of the
    shape described in~\eqref{Gamma_pushout_for_M_intro}, where each $S_x$ is homeomorphic
    to $S^{d-1}$, such that $M$ is $\Gamma$-homotopy  equivalent to $\eub{\Gamma}$.  
  \end{theorem}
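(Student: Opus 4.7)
The plan is to deduce Theorem~\ref{the:special_case_of_ultiimate_theorem_intro} from Theorem~\ref{the:Poincare_models_intro} by promoting the Poincaré pair it produces to a topological manifold pair via surgery theory, and then reassembling the equivariant pushout of~\eqref{Gamma_pushout_for_M_intro}.

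First, I would verify the hypotheses of Theorem~\ref{the:Poincare_models_intro}. The conditions $d$ even, $d \geq 4$, (M), (NM), and (H) are immediate from the assumptions. Since every $F \in \calm$ is cyclic of odd order, every homomorphism $F \to \{\pm 1\}$ is trivial, so $w|_F$ is trivial for each $F \in \calm$. Hyperbolicity of $\pi$ and the finite-index inclusion $\pi \subseteq \Gamma$ imply $\Gamma$ is hyperbolic, supplying a finite $\Gamma$-$CW$-model for $\eub{\Gamma}$; a closed $d$-manifold model for $B\pi$ is in particular a finite $d$-dimensional Poincaré complex. For the oriented free slice system I would take $S_F = S^{d-1}$ equipped with a free orthogonal $F$-action, available because $F$ is cyclic of odd order (view $\IR^d = \IC^{d/2}$ and rotate by an $|F|$-th root of unity); this choice forces $S_x \cong S^{d-1}$ as required by the conclusion, and condition (S) then follows from (H). Theorem~\ref{the:Poincare_models_intro} produces a finite free $\Gamma$-$CW$-pair $(X, \partial X)$ with $\partial X = \coprod_{F \in \calm} \Gamma \times_F S^{d-1}$ such that $X \cup_{\partial X} C(\partial X)$ is a $\Gamma$-model for $\eub{\Gamma}$ and $(X/\Gamma, \partial X/\Gamma)$ is a finite $d$-dimensional Poincaré pair whose boundary $\coprod_F S^{d-1}/F$ is already a disjoint union of closed topological manifolds.

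The core step is to find a compact topological manifold $(Y, \partial Y = \partial X/\Gamma)$ homotopy equivalent to $(X/\Gamma, \partial X/\Gamma)$ rel boundary. This requires a TOP reduction of the Spivak normal fibration of $(X/\Gamma, \partial X/\Gamma)$ together with the vanishing of a surgery obstruction in $L_d(\IZ\Gamma, w)$ with the appropriate decoration. The TOP reduction is built from the given closed manifold model for $B\pi$ via transfer and assembly, using that the boundary already carries a preferred manifold structure. For the obstruction, the Full Farrell-Jones Conjecture for hyperbolic groups applies to $\Gamma$, reducing $L$-theoretic computations to virtually cyclic subgroups; the odd-order hypothesis on each $F \in \calm$ forces the $\UNil$-summands arising from amalgamated-product decompositions along the maximal finite subgroups to vanish, and controls the decoration terms governed by $\widetilde{K}_0(\IZ\Gamma)$. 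Given such $(Y, \partial Y)$, let $X$ denote its universal cover: a free cocompact proper topological $\Gamma$-manifold with boundary $\coprod_F \Gamma \times_F S^{d-1}$. Form the equivariant pushout with $C(\partial X) = \coprod_F \Gamma \times_F D^d$, extending each free orthogonal $F$-action on $S^{d-1}$ linearly to $D^d$. The resulting space $M$ is a proper cocompact topological $\Gamma$-manifold of the shape~\eqref{Gamma_pushout_for_M_intro} with $S_x \cong S^{d-1}$, and $M \simeq_\Gamma X \cup_{\partial X} C(\partial X) \simeq_\Gamma \eub{\Gamma}$ by construction.

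The main obstacle is the surgery step of promoting the Poincaré pair to a topological manifold pair. Even granted the Farrell-Jones Conjecture, one must carefully track decorations, avoid $\UNil$-contributions by invoking the odd-order hypothesis, and check that no splitting obstruction prevents realising the boundary correctly as $\coprod_F S^{d-1}/F$. This is precisely where the companion paper~\cite{Davis-Lueck(2022_manifold_models)} does its heavy lifting; everything outside that step, in particular the choice of slice system and the formal pushout assembly once $Y$ is in hand, is comparatively routine given Theorem~\ref{the:Poincare_models_intro}.
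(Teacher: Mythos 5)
Your proposal follows the paper's own route: within this paper only the Poincar\'e-pair statement (Theorem~\ref{the:Poincare_models_intro}, via Theorem~\ref{the:Models_for_the_classifying_space_for_proper_Gamma-actions_intro} and Theorem~\ref{the:existence_Poincare_slice_models}) is actually proved, and the surgery step --- normal invariant, $L$-theoretic obstruction, $\UNil$- and decoration issues --- is explicitly deferred to the companion paper~\cite{Davis-Lueck(2022_manifold_models)}, exactly as you do. The one imprecision is the claim that condition (S) ``follows from (H)'' for the diagonal rotation action: condition (H) only guarantees that $\kappa_F$ generates $H_{d-1}(BF)$, so you must choose the weights of the free linear $F$-action on $S^{d-1}$ so that $d(S_F)=\kappa_F$, which is possible because lens-space actions realize every generator of $H_{d-1}(BF)$.
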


  In the paper~\cite{Davis-Lueck(2022_manifold_models)} we will also discuss the uniqueness problem
  extending the results of~\cite{Connolly-Davis-Khan(2015)}.


  \subsection{Nielsen Realization}\label{subsec:Nielsen_Realization}

  One motivation for searching for manifold models for $\eub{\Gamma}$ comes from the
  following classical

\begin{question}[Nielsen Realization Problem]\label{que:Nielsen_realization_problem}
  Let $M$ be an aspherical closed manifold with fundamental group $\pi$. Let
  $j \colon G \to \Out(\pi)$ be an embedding of a finite group $G$ into the outer
  automorphism group of $\pi$.

  Is there an effective $G$-action $\rho \colon G \to \Aut(M)$ of $G$ on $M$ such that the
  composite of $\rho$ with the canonical map $\nu \colon \Aut(M) \to \Out(\pi)$ is $j$?
\end{question}

In the smooth category it is easy to give examples using exotic spheres, where the answer
is negative to Question~\ref{que:Nielsen_realization_problem}, see for
instance~\cite[page~22]{Block-Weinberger(2008)}.  Therefore we will only consider the
topological category.

\begin{remark}[Original version]\label{rem:original_version}
  Question~\ref{que:Nielsen_realization_problem} was originally formulated for closed
  orientable hyperbolic surfaces of genus $\ge 1$ by Nielsen and was proved by
  Kerckhoff~\cite{Kerckhoff(1983)}.  Subsequently, Tromba~\cite{Tromba(1996Nielsen)},
  Gabai~\cite{Gabai(1992Fuchsian)}, and Wolpert~\cite{Wolpert(1987)} gave new proofs.
\end{remark}
 
 \begin{remark}[Counterexamples]\label{rem:counterexamples}
   There do exist examples, where the answer to
   Question~\ref{que:Nielsen_realization_problem} is negative.  A necessary condition is
   that there exists an extension $1 \to \pi \to \Gamma \to G \to 1$ such that the
   conjugation action of $G$ on $\pi$ is the given map $j$, see for
   instance~\cite[Theorem~8.1 on page~138]{Lueck(1989)}.  This condition is automatically
   satisfied, if $\pi$ is centerless, see~\cite[Corollary~6.8 in Chapter~IV
   page~106]{Brown(1982)}, but not in general, see~\cite{Raymond-Scott(1977)}. Even for
   centerless $\pi$ there are examples, where the answer is negative for the
   Question~\ref{que:Nielsen_realization_problem}, see~\cite[Theorem~1.5 and
   Theorem~1.6]{Block-Weinberger(2008)}.
 \end{remark}

 Some positive results about Question~\ref{que:Nielsen_realization_problem} have been
 obtained by Farrell-Jones, see for instance~\cite[page~282ff]{Farrell(2002)}.
  
  \begin{remark}[Nielsen Realization for torsionfree $\Gamma$]\label{rem:torsionfree_Gamma_Nielsen_intro}
    Suppose that the group $\Gamma$ appearing in the
    extension~\eqref{group_extension_intro}
    is torsionfree, $\dim(M) \not= 3,4$, and $\pi$
    satisfies the Full Farrell-Jones Conjecture in the sense
    of~\cite[Section~12.5]{Lueck(2022book)}.  This means that the $K$-and $L$-theoretic
    Farrell-Jones Conjecture with coefficients in additive $\pi$-categories and with
    finite wreath products is satisfied for $\pi$. Examples for such $\pi$ are hyperbolic
    groups or CAT(0)-groups.  Then also $\Gamma$ satisfies the Full Farrell-Jones
    Conjecture. Supppose that $B\pi$ is homotopy equivalent to a closed manifold $M$.
    Since $\Gamma$ is   torsionfree,  $B\Gamma$ can be  realized as a finite Poincar\'e complex
    by Remark~\ref{rem:torsion_free_case_Brown_and_Poincare_intro}.  We conclude
    from~\cite[Theorem~1.2]{Bartels-Lueck-Weinberger(2010)}, that $B\Gamma$ has the
    homotopy type of a closed ANR-homology manifold $N$ with fundamental group
    $\Gamma$. We have the finite covering $\overline{N} \to N$ associated to
    $1 \to \pi \to \Gamma \to G \to 1$. Note that $\overline{N}$ comes with a
    $G$-action. It suffices to show that $\overline{N}$ is homeomorphic to $M$. Since
    $\overline{N}$ and $M$ have the same Quinn obstruction
    by~\cite[Theorem~5.28]{Ferry-Lueck-Weinberger(2019)} and $M$ is manifold, the Quinn
    obstruction of $\overline{N}$ is trivial and hence $\overline{N}$ is an aspherical
    closed manifold, whose fundamental group is identified with $\pi = \pi_1(M)$.  Thanks
    to the Borel Conjecture, which follows from the Farrell-Jones Conjecture,
    $\overline{N}$ is homeomorphic to $M$.
  \end{remark}

  To the authors' knowledge, there is no counterexample to the following 

  \begin{conjecture}[Nielsen Realization for aspherical closed manifolds with hyperbolic
    fundamental group]%
\label{con:Nielsen_Realization_for_aspherical_closed_manifolds_with_hyperbolic_fundamental_group}
    Let $M$ be an aspherical closed manifold with hyperbolic fundamental group $\pi$.  Let
    $j \colon G \to \Out(\pi)$ be an embedding of a finite group $G$ into the outer
    automorphism group of $\pi$.

    Then there is an effective topological $G$-action $\rho \colon G \to \Aut(M)$ of $G$
    on $M$ such that the composite of $\rho$ with the canonical map
    $\nu \colon \Aut(M) \to \Out(\pi)$ is $j$.
  \end{conjecture}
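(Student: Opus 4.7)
The plan is to reduce the Nielsen Realization problem to the problem of constructing a proper cocompact topological $\Gamma$-manifold model for $\eub{\Gamma}$, where $\Gamma$ is the extension of $\pi$ by $G$ classified by $j$, and then to divide out the $\pi$-action on that manifold.

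First, I would build the group $\Gamma$. Since $B\pi$ is an aspherical closed manifold of positive dimension and $\pi$ is hyperbolic, $\pi$ is automatically torsionfree (any finite subgroup would have to act freely on the contractible universal cover, forcing it to be trivial) and non-elementary (the only torsionfree elementary hyperbolic groups are $\{1\}$ and $\IZ$, so only $\dim M=1$ is excluded, and that case is classical). A torsionfree non-elementary hyperbolic group is centerless, so the obstruction in $H^3(G;Z(\pi))$ vanishes and $j$ is induced by a group extension
$$1 \to \pi \xrightarrow{i} \Gamma \xrightarrow{p} G \to 1$$
unique up to equivalence. Then $\Gamma$ is virtually torsionfree with torsionfree part $\pi$, so our general setup applies.

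Second, I would verify the hypotheses of the ultimate manifold existence result to be proved in \cite{Davis-Lueck(2022_manifold_models)}, of which Theorem~\ref{the:special_case_of_ultiimate_theorem_intro} is a special case, for this $\Gamma$ and $d = \dim(M)$. The hypothesis that $B\pi$ is a closed $d$-manifold is given, and $\pi$ is hyperbolic. What remains is to check conditions (M), (NM), (H) of Notation~\ref{not:(M)_and_(NM)_intro} for $\Gamma$: here (M) and (NM) follow from the structure of finite subgroups inside extensions of torsionfree hyperbolic groups, while (H) has to be extracted from the $G$-action on $H_\ast(B\pi;\IZ)$ in the spirit of Example~\ref{exa:kremer_and_Kirstein}. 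Applying the theorem produces a cocompact proper topological $\Gamma$-manifold $\widetilde{N}$ of the shape~\eqref{Gamma_pushout_for_M_intro} that is $\Gamma$-homotopy equivalent to $\eub{\Gamma}$, hence contractible.

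Third, because $\pi$ is torsionfree of finite index in $\Gamma$, it acts freely, properly and cocompactly on $\widetilde{N}$, so $N := \widetilde{N}/\pi$ is an aspherical closed topological $d$-manifold with $\pi_1(N) \cong \pi$ carrying a residual action $\rho_N \colon G \to \Aut(N)$ whose outer class is $j$ by construction. To identify $N$ with $M$ I would invoke the topological Borel Conjecture, which holds in the dimensions relevant here because $\pi$ is hyperbolic and therefore satisfies the Full Farrell-Jones Conjecture in the sense of~\cite[Section~12.5]{Lueck(2022book)}; any homotopy equivalence $N \to M$ (which exists since both are $K(\pi,1)$-spaces) is then homotopic to a homeomorphism. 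Transporting $\rho_N$ along a chosen homeomorphism yields the desired $G$-action on $M$, and it is effective because its outer class is the injection $j$.

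The hard part will be verifying condition (H) in full generality, as well as treating the dimensions not directly covered by Theorem~\ref{the:special_case_of_ultiimate_theorem_intro}: for $d\le 4$ one needs ad hoc input from three-manifold topology and Freedman--Quinn four-manifold surgery, and for odd $d\ge 5$ one would combine Section~\ref{sec:The_case_of_odd_d} of the present paper with the odd-dimensional surgery step in~\cite{Davis-Lueck(2022_manifold_models)}. A further obstacle is the hypothesis in Theorem~\ref{the:special_case_of_ultiimate_theorem_intro} that each $F\in\calm$ be cyclic of odd order; removing this restriction for arbitrary $G$ requires controlling the UNil-type splitting obstructions and the full equivariant structure set computation, which is the main technical content of the sequel.
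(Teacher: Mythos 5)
The statement you are attempting to prove is stated in the paper as a \emph{conjecture}; the paper offers no proof of it and explicitly says only that no counterexample is known. The only link the paper establishes is Remark~\ref{rem:connection:to:Nielsen}: the Nielsen Realization Problem for $1 \to \pi \to \Gamma \to G \to 1$ has a positive answer \emph{when} the hypotheses of Theorem~\ref{the:special_case_of_ultiimate_theorem_intro} happen to hold. Your proposal is essentially that program, but the steps you defer to ``verification'' are exactly the reasons the statement is open, and several of them are false as stated. Conditions (M) and (NM) of Notation~\ref{not:(M)_and_(NM)_intro} do \emph{not} ``follow from the structure of finite subgroups inside extensions of torsionfree hyperbolic groups'': although $\Gamma$ is again hyperbolic, a hyperbolic group can perfectly well contain a non-trivial finite subgroup lying in two distinct maximal finite subgroups, or a maximal finite subgroup with strictly larger normalizer; these are genuine restrictions on the pair $(G,j)$, not consequences of hyperbolicity. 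Likewise (H) is a nontrivial homological condition that can fail.

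More fundamentally, the entire machinery of the paper is built on the standing assumption that the sought $\Gamma$-manifold is \emph{pseudo-free}, i.e.\ of the shape~\eqref{Gamma_pushout_for_M_intro} with zero-dimensional singular set. For a general embedding $j \colon G \to \Out(\pi)$ a realizing action will typically have positive-dimensional fixed point sets, and then the slice-system approach does not apply; already the requirement that each maximal finite subgroup $F$ act freely on a homotopy $(d-1)$-sphere forces $F$ to have periodic cohomology, which excludes, e.g., $G \supseteq \IZ/p \times \IZ/p$. On top of this, Theorem~\ref{the:special_case_of_ultiimate_theorem_intro} needs $d$ even, $d \ge 6$, and each $F \in \calm$ cyclic of odd order to control the UNil-type splitting obstructions in the surgery step of the sequel; none of this is removed by your argument. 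Finally, note that your outline uses hyperbolicity of $\pi$ only to get the Farrell--Jones Conjecture and centerlessness; since Remark~\ref{rem:counterexamples} records counterexamples to realization for centerless (non-hyperbolic) $\pi$ satisfying these properties in relevant cases, any correct proof of the conjecture must exploit hyperbolicity in an essential way that your proposal does not identify. As it stands, the proposal is a restatement of the paper's (incomplete) program rather than a proof.
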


  \begin{remark}\label{rem:out(F)_finite} If in
    Conjecture~\ref{con:Nielsen_Realization_for_aspherical_closed_manifolds_with_hyperbolic_fundamental_group}
    the dimension of $M$ is greater or equal to $3$, then $\Out(\pi)$ is known to be finite,
    see~\cite[\S~5, 5.4.A]{Gromov(1987)}, and  hence one can take and it suffices to consider $G = \Out(\pi)$.
  \end{remark}
  
\begin{remark}\label{rem:connection:to:Nielsen}
  The proper cocompact $\Gamma$-manifold $M$ appearing in
  Problem~\ref{prob:main_problem_intro}
  yields a solution to the Nielsen Realization Problem for
  $1 \to \pi \to \Gamma \to G \to 1$, provided that the Borel Conjecture holds for $\pi$
  and $M$ is contractible.  Namely, $M/\pi$ is aspherical with fundamental group $\pi$ and
  hence any closed aspherical manifold with $\pi$ as fundamental group admits a
  homeomorphism to $M/\pi$ inducing the identity on the fundamental groups and $G$ acts on
  $M/\pi$ in the obvious way.  Hence the Nielsen Realization problem has a positive answer,
  if the conditions appearing in Theorem~\ref{the:special_case_of_ultiimate_theorem_intro}
  are satisfied.
  
\end{remark}


\subsection{Acknowledgments}\label{subsec:Acknowledgements}

The paper is funded by the Deutsche Forschungsgemeinschaft (DFG, German Research
Foundation) under Germany's Excellence Strategy \--- GZ 2047/1, Projekt-ID 390685813,
Hausdorff Center for Mathematics at Bonn.

The author is grateful especially to  James Davis and also to Christian Kremer  and Kevin Li
for fruitful discussions and useful comments. James Davis has informed the author
that he has independent solutions to Brown's Problem and to the Nielsen Realization Problem in the case
that all finite subgroups of $\Gamma$ are of order two,  which have not appeared as preprint so far.


The paper is organized as follows:

\tableofcontents


\typeout{------ Section 2: Some results about the classifying space for proper actions
  -----------------------}


\section{Some results about the classifying space for proper actions}%
\label{sec:Some_results_about_the_classifying_space_for_proper_actions}

This section is devoted to the proof of
Theorem~\ref{the:Models_for_the_classifying_space_for_proper_Gamma-actions_intro}

\begin{proposition}\label{pro:constructing_underline(E)Gamma)}
  Suppose that $\Gamma$ satisfies (M) and (NM).  Let $\calm$ be a complete system of
  representatives of the conjugacy classes of maximal finite subgroups
  $F\subseteq \Gamma$.  Consider the cellular $\Gamma$-pushout
  \[
    \xymatrix{\partial E\Gamma =\coprod_{F\in\calm} \Gamma \times_{F}EF
      \ar[d]_{\coprod_{F\in\calm}p_F} \ar[r]^-i & E\Gamma \ar[d] \\
      \partial \eub{\Gamma} =\coprod_{F\in\calm} \Gamma/F \ar[r] & Z}
  \]
  where the map $p_F$ comes from the projection $EF \to \pt$, and $i$ is an inclusion of
  $\Gamma$-$CW$-complexes.

  Then $Z$ is a model for $\underline{E}\Gamma$.
\end{proposition}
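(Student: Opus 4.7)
The plan is to verify that $Z$ has the two defining properties of a model for $\eub{\Gamma}$: (a) all isotropy groups of $Z$ are finite, and (b) $Z^H$ is contractible (equivalently, weakly contractible) for every finite subgroup $H \subseteq \Gamma$. The essential technical input throughout is that $i$ is a $\Gamma$-$CW$-inclusion and in particular a $\Gamma$-cofibration, so that the functor $(-)^H$ commutes with the pushout for every subgroup $H \subseteq \Gamma$.

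Property (a) is immediate: isotropy groups in $E\Gamma$ are trivial, isotropy groups in $\partial \eub{\Gamma} = \coprod_{F \in \calm} \Gamma/F$ are conjugates of $F \in \calm$ and hence finite, and the pushout description shows every isotropy group of $Z$ is one of these. For property (b) with $H = \{1\}$, I would argue that the left vertical map $\coprod_F p_F \colon \coprod_F \Gamma \times_F EF \to \coprod_F \Gamma/F$ is a non-equivariant weak equivalence, because over each coset in $\Gamma/F$ the fiber $EF$ is contractible. By the gluing lemma applied to the $\Gamma$-cofibration $i$, the induced map $E\Gamma \to Z$ is then a weak equivalence, so $Z$ is contractible.

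The core of the argument is the computation of $Z^H$ for a non-trivial finite subgroup $H$. Taking $H$-fixed points of the pushout, I would compute each piece in turn. Since $H \neq \{1\}$ acts freely on $E\Gamma$, one has $(E\Gamma)^H = \emptyset$. A point $[\gamma,x] \in (\Gamma \times_F EF)^H$ would require $\gamma^{-1}H\gamma \subseteq F$ with $x$ fixed by the non-trivial subgroup $\gamma^{-1}H\gamma \subseteq F$, which is impossible as $F$ acts freely on $EF$; hence $(\partial E\Gamma)^H = \emptyset$. Finally, $(\Gamma/F)^H$ consists of cosets $\gamma F$ with $\gamma^{-1}H\gamma \subseteq F$, equivalently $H \subseteq \gamma F \gamma^{-1}$. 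By condition (M), $H$ is contained in a unique maximal finite subgroup $F_H$, and $\gamma F \gamma^{-1}$ is maximal finite, so $\gamma F \gamma^{-1} = F_H$; since $\calm$ has exactly one representative per conjugacy class, this forces $F = F_H$, and then $\gamma \in N_\Gamma F_H$, which by condition (NM) equals $F_H$. Hence $(\Gamma/F_H)^H = \{F_H\}$ is a single point and all other components contribute nothing, so $(\partial \eub{\Gamma})^H$ is a single point.

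Consequently $Z^H$ is the pushout of $\emptyset \leftarrow \emptyset \to \pt$, which is a point, and therefore contractible. The main delicate point is justifying the interchange of $(-)^H$ with the pushout and the weak equivalence in the case $H = \{1\}$; once the cofibration property of $i$ is invoked to legitimize these, the rest is a direct bookkeeping argument from (M) and (NM).
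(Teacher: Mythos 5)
Your argument is correct, but it follows a genuinely different route from the paper: the paper disposes of this proposition in one line by citing Corollary~2.11 of L\"uck--Weiermann, a general result on building $E_{\mathcal G}(\Gamma)$ from $E_{\mathcal F}(\Gamma)$ for a pair of families via a pushout over normalizers of maximal elements, whereas you verify the defining properties of $\eub{\Gamma}$ directly from the pushout. Your fixed-point bookkeeping is exactly the content hidden in that citation: $(E\Gamma)^H=(\partial E\Gamma)^H=\emptyset$ for $H\neq\{1\}$ by freeness, $(\coprod_{F\in\calm}\Gamma/F)^H$ is a single point by (M) and (NM), and the case $H=\{1\}$ follows from the gluing lemma applied to the homotopy equivalence $\coprod_F p_F$ and the cofibration $i$. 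The direct proof buys self-containedness and makes visible exactly where (M) and (NM) enter; the citation buys brevity and places the construction in the general framework the authors use elsewhere (e.g.\ for $\underline{\underline{E}}\Gamma$). One small imprecision: from $H\subseteq\gamma F\gamma^{-1}$ and uniqueness in (M) you get that $F$ is \emph{conjugate} to $F_H$, not literally equal to it unless $F_H$ happens to be the chosen representative in $\calm$; the fixed-point set of the relevant component $\Gamma/F$ is then a coset of $N_\Gamma F$, which (NM) collapses to a single point. This does not affect the conclusion, but the sentence ``this forces $F=F_H$'' should be phrased in terms of the representative in $\calm$ of the conjugacy class of $F_H$.
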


\begin{proof} This follows from~\cite[Corollary~2.11]{Lueck-Weiermann(2012)}.
\end{proof}

\begin{proposition}\label{pro:underline(E)Gamma_for_hyperbolic_Gamma}
  Let $\Gamma$ be a hyperbolic group.  Then there is a finite $\Gamma$-$CW$-model for the
  classifying space for proper actions $\underline{E}\Gamma$.
\end{proposition}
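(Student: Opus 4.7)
The plan is to realize $\eub{\Gamma}$ by the Rips complex. Fix a finite symmetric generating set $S$ of $\Gamma$ with associated word metric $d_S$, and for $R \ge 0$ let $P_R(\Gamma)$ denote the simplicial complex whose $k$-simplices are the $(k+1)$-element subsets of $\Gamma$ of $d_S$-diameter at most $R$, equipped with the simplicial left $\Gamma$-action by translation.

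The first task is the formal one. Each simplex of $P_R(\Gamma)$ is contained in a translate of the finite ball $B_R(e)$, so $P_R(\Gamma)$ is finite-dimensional. Stabilizers of simplices inject into symmetric groups on finite vertex sets, hence the action is proper with finite isotropy. A fundamental set of simplices sits inside $B_R(e)$, which is finite, so there are only finitely many $\Gamma$-orbits of simplices and the action is cocompact. After a $\Gamma$-equivariant barycentric subdivision (to ensure that cell stabilizers fix their cells pointwise), $P_R(\Gamma)$ becomes a finite $\Gamma$-$CW$-complex.

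The nontrivial input is that for $R$ sufficiently large (depending on the hyperbolicity constant $\delta$ of $\Gamma$ and on the maximum order of a finite subgroup of $\Gamma$, which is finite since hyperbolic groups contain only finitely many conjugacy classes of finite subgroups and these have bounded orders), the fixed point set $P_R(\Gamma)^H$ is nonempty and contractible for every finite subgroup $H \le \Gamma$. The case $H = \{1\}$ is Rips's classical theorem. The general statement, due to Meintrup--Schick, is the main obstacle: one uses that every $H$-orbit in $\Gamma$ has uniformly bounded diameter to produce an $H$-quasi-center $c_H$, then builds an $H$-equivariant conical retraction of $P_R(\Gamma)^H$ onto the $H$-orbit of $c_H$ using $\delta$-thinness of geodesic triangles and a careful choice of $R$ relative to $\delta$ and to the bounded orbit sizes.

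Granting the Meintrup--Schick step, $P_R(\Gamma)$ satisfies the defining property of $\eub{\Gamma}$ and is simultaneously a finite $\Gamma$-$CW$-complex, yielding the claim. I would therefore structure the proof as: (i) define $P_R(\Gamma)$ and establish finiteness, properness and cocompactness (elementary); (ii) cite or reproduce the contractibility of fixed point sets via the Meintrup--Schick argument (the hard part); (iii) conclude by uniqueness of $\eub{\Gamma}$ up to $\Gamma$-homotopy.
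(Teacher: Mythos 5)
Your proposal is correct and follows exactly the route the paper takes: the paper's proof is a bare citation of Meintrup--Schick, whose result is precisely the Rips-complex construction you sketch. Your outline simply unpacks that reference, so there is nothing to add.
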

\begin{proof}
  See~\cite{Meintrup-Schick(2002)}.
\end{proof}

\begin{proposition}\label{pro:CAT(0)_and_eub_Ontaneda}
  Suppose that $\Gamma$ acts cocompactly, properly, and isometrically on a proper
  $\CATzero$-space $X$ Then there is a finite $\Gamma$-$CW$-model for $\eub{\Gamma}$.
\end{proposition}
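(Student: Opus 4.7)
The plan is to show that $X$ itself serves as a model for $\eub{\Gamma}$ up to $\Gamma$-homotopy equivalence, and then to replace this model by a finite $\Gamma$-$CW$-complex. These two tasks are essentially independent: the first is a standard CAT(0) argument, while the second requires an equivariant polyhedral approximation.

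First, I would verify the defining properties of $\eub{\Gamma}$ for $X$. Since $\Gamma$ is discrete and the action is proper, every isotropy group $\Gamma_x$ is finite. For any finite subgroup $H \subseteq \Gamma$, the orbit $H \cdot x$ of any $x \in X$ is bounded, so the Bruhat--Tits circumcenter fixed point theorem produces a point in $X^H$. Since the fixed set of an isometry of a CAT(0)-space is convex and finite intersections of convex sets are convex, $X^H$ is a nonempty closed convex subspace of $X$, hence contractible via geodesic retraction to any chosen fixed point.

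Second, I would promote $X$ to a finite $\Gamma$-$CW$-complex. The natural route is to choose a $\Gamma$-invariant locally finite cover of $X$ by open convex sets arising from $\Gamma$-translates of a suitable open neighborhood of a compact fundamental domain, and to form the nerve of this cover. Convexity forces all nonempty finite intersections to be convex, and hence contractible, so an equivariant nerve theorem identifies the resulting simplicial $\Gamma$-complex with $X$ up to $\Gamma$-homotopy equivalence. Cocompactness of the $\Gamma$-action forces the cover, and therefore the nerve, to consist of finitely many $\Gamma$-orbits of simplices, which gives a finite $\Gamma$-$CW$-structure.

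The main obstacle lies in the second step: a general proper CAT(0)-space carries no intrinsic polyhedral structure, so one must manufacture an equivariant cover satisfying simultaneously three conditions: (i) $\Gamma$-invariance and cocompactness, (ii) convexity (hence contractibility) of every nonempty finite intersection, and (iii) that the stabilizers arising as isotropy groups of simplices in the nerve remain finite, so that no new infinite isotropy is introduced. Arranging all three compatibly is precisely the content of the construction due to Ontaneda, which I would invoke rather than reprove here.
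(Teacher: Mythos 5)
Your argument is essentially the paper's own proof: the paper likewise establishes that $X^H$ is a non-empty convex (hence contractible) subset via the CAT(0) fixed point theorem, notes that properness forces finite isotropy, and then invokes Ontaneda's Proposition~A to replace $X$ by a $\Gamma$-homotopy equivalent finite $\Gamma$-$CW$-complex. Your sketch of the nerve-of-convex-covers mechanism behind Ontaneda's result is a reasonable gloss, but since both you and the paper ultimately cite Ontaneda for that step, the two proofs coincide.
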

\begin{proof}
  By~\cite[Corollary II.2.8 on page 179]{Bridson-Haefliger(1999)} the $H$-fixed point set
  $X^H$ of $X$ is a non-empty convex subset of $X$ and hence contractible for any finite
  subgroup $H \subset G$.  Since the action is proper, all isotropy groups are finite.  We
  conclude from~\cite[Proposition~A]{Ontaneda(2005)} that $X$ is $\Gamma$-homotopy
  equivalent to a finite $\Gamma$-$CW$-complex $Y$, since for a proper $\Gamma$-action
  each $\Gamma$-orbit is discrete. Hence $Y$ is a finite $\Gamma$-$CW$-model for
  $\eub{\Gamma}$.
\end{proof}

\begin{lemma}\label{lem:finite_d-dimensional_models}
  Let $\Gamma$ be a group and $d$ be a natural number satisfying $d \ge 3$.  Let $X$ be a
  $\Gamma$-$CW$-complex of finite type.  Suppose that $X$ is $\Gamma$-homotopy equivalent
  to some (not necessarily finite) $d$-dimensional $\Gamma$-$CW$-complex and to some
  finite $\Gamma$-$CW$-complex (of arbitrary dimension).  Then there exists a
  $\Gamma$-$CW$-complex $\widehat{X}$ satisfying
  \begin{enumerate}
    \item $\widehat{X}$ is finite and $d$-dimensional;
   \item  The $(d-2)$-skeleton of $X$ and the
    $(d-2)$-skeleton of $\widehat{X}$ agree;
  \item If $\widehat{X}$ has an equivariant  cell of the type $\Gamma/H \times D^{d-1}$, then $X$ contains
    an equivariant  cell of the type $\Gamma/H \times D^k$ for some $k \in \{d-1,d\}$.
    If $\widehat{X}$ has an equivariant  cell of the type $\Gamma/H \times D^d$, then $X$ contains
    an equivariant  cell of the type $\Gamma/H \times D^d$. In particular $Y^{>1} = X^{>1}$,
    if $X^{>1}$ is contained in the $(d-2)$-skeleton of $X$;
  \item The $\Gamma$-$CW$-complexes $X$ and $\widehat{X}$ are $\Gamma$-homotopy equivalent.
\end{enumerate}
\end{lemma}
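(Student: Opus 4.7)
The strategy has two stages: (1) first replace $X$ by a $\Gamma$-homotopy equivalent \emph{finite} $\Gamma$-$CW$-complex agreeing with $X$ on the $(d-2)$-skeleton by attaching cancelling pairs of equivariant cells in dimensions $\geq d-1$; (2) inductively eliminate cells of dimension $>d$ by an equivariant cell-trading argument, carefully tracking orbit types at dimensions $d-1$ and $d$.

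\textbf{Step 1 (finitization above the $(d-2)$-skeleton).} Since $X$ has finite type and is $\Gamma$-homotopy equivalent to a finite $\Gamma$-$CW$-complex, $X$ is finitely dominated and its equivariant finiteness obstruction vanishes, by \cite[Chapter~14]{Lueck(1989)}. The standard Wall-type realization attaches finitely many cancelling equivariant cell pairs to turn $X$ into a finite $\Gamma$-$CW$-complex. Because $X$ is also $\Gamma$-homotopy equivalent to a (possibly infinite) $d$-dimensional $\Gamma$-$CW$-complex, the projective chain homotopy type of the cellular chain complex of $X$ over the orbit category $\Or(\Gamma)$ is concentrated in degrees $\leq d$, so the finitization can be carried out entirely above the $(d-2)$-skeleton. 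This produces a finite $\Gamma$-$CW$-complex $X_1$ with $X_1^{(d-2)} = X^{(d-2)}$ and $X_1 \simeq_{\Gamma} X$.

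\textbf{Step 2 (dimension reduction with controlled orbit types).} Let $n = \dim X_1$. If $n \leq d$, take $\widehat{X} = X_1$. Otherwise proceed by descending induction on $n$. Choose a top equivariant $n$-cell of type $\Gamma/H \times D^n$ with attaching map $\phi\colon \Gamma/H \times S^{n-1} \to X_1^{(n-1)}$. Since $n-1 \geq d \geq 3$, an equivariant Hurewicz argument (applied fixed-point-set-wise over $\Or(\Gamma)$, where the relevant sphere is simply-connected) shows that the $\Gamma$-homotopy class of $\phi$ is detected by its image in the Bredon cellular chain complex $C_*^c(X_1)$. The hypothesis $X \simeq_\Gamma Z$ with $\dim Z \leq d$ implies that $C_*^c(X_1)$ is chain homotopy equivalent to a chain complex concentrated in degrees $\leq d$, so the chain-level class of $\phi$ in degree $n-1$ can be rewritten as a boundary involving cells in strictly lower degrees of type $\Gamma/H$. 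Realize this algebraic cancellation geometrically by attaching a cancelling pair of equivariant cells of type $\Gamma/H \times D^{n-1}$ and $\Gamma/H \times D^n$, and then performing an elementary equivariant collapse that annihilates the original top cell against the new $(n-1)$-cell. This decreases the top dimension by one without altering the $(d-2)$-skeleton. Iterate over all top $n$-cells and then over $n$ until $\dim = d$.

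\textbf{Step 3 (bookkeeping) and main obstacle.} By construction every cell introduced during the reduction has the same orbit type $\Gamma/H$ as the top cell being eliminated, and the procedure never raises dimension. Consequently, a $d$-cell of $\widehat{X}$ of type $\Gamma/H$ must originate from a $d$-cell of $X$ of the same type (higher-dimensional cells of type $\Gamma/H$ have been traded strictly downwards through dimensions $d, d-1, \ldots$, landing as $d$-cells only if they started as $d$-cells), and a $(d-1)$-cell of $\widehat{X}$ of type $\Gamma/H$ was either already present in $X^{(d-1)} \setminus X^{(d-2)}$ or arose as the partner of a cancelled $d$-cell of type $\Gamma/H$ of $X$. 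This gives assertion~(3), and hence the assertion $\widehat{X}^{>1} = X^{>1}$ whenever $X^{>1} \subseteq X^{(d-2)}$, since all cells added in dimensions $d-1, d$ are then free. The main obstacle is the rigorous execution of Step 2: one has to transport the algebraic cancellation in the Bredon chain complex to a geometric cancellation equivariantly, which requires the equivariant Hurewicz input (available because $n-1 \geq 3$) and a careful check that the elementary collapses do not touch the $(d-2)$-skeleton, while simultaneously preserving the cell-type bookkeeping needed for~(3).
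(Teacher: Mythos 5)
Your overall strategy (vanishing finiteness obstruction, truncation of the cellular chain complex at degree $d$, geometric realization) is in the right spirit, but the execution of Step~2 --- which you yourself flag as the main obstacle --- contains a genuine gap, and the orbit-type bookkeeping of Step~3 rests on it. The elementary operation you describe does not reduce dimension: if you attach a cancelling pair $\Gamma/H \times D^{n-1}$, $\Gamma/H \times D^{n}$ and then collapse the \emph{original} top $n$-cell against the new $(n-1)$-cell, the new $n$-cell is still there, so the number of $n$-cells has not decreased. More fundamentally, the hypothesis that $X$ is $\Gamma$-homotopy equivalent to a $d$-dimensional complex does not let you rewrite the chain-level class of a \emph{single} attaching map as a boundary; what it gives is that the image $B_{n-1}$ of the top differential is (stably) a direct summand of the whole module $C_{n-1}^c(X)$, a statement about all top cells simultaneously, proved via the vanishing of $H^{n}_{\IZ\calc}(C_*^c(X);B_{n-1})$. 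Consequently the reduction must be done algebraically on the chain complex over the fundamental category and then realized in one step by the Equivariant Realization Theorem (\cite[Theorem~13.19]{Lueck(1989)}), not by a sequence of elementary expansions and collapses on individual cells.

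This also undermines your Step~3: when one passes from an $n$-dimensional complex to an $(n-1)$-dimensional one, the new top module is built from a summand of $C_{n-1}$, so its orbit types are not controlled by the $n$-cells alone, and the claim that cells of type $\Gamma/H$ ``land as $d$-cells only if they started as $d$-cells'' is unsubstantiated. The paper avoids the iteration entirely: by \cite[Proposition~11.10]{Lueck(1989)} one finds a $d$-dimensional finitely generated projective subcomplex $D_*$ of $C_*^c(X)$ agreeing with $C_*^c(X)$ in degrees $\le d-1$ and with $D_d$ a direct summand of $C_d^c(X)$; the splitting and extension functors $S_x$, $E_x$ then show that $D_d$, and the elementary complex $E_*$ one adds in degrees $d-1,d$ to make $D_d$ free (using the componentwise vanishing of the finiteness obstruction), involve only the orbit types $\Gamma/H_i$ already occurring among the $d$-cells of $X$. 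Realizing $D_*\oplus E_*$ over the $(d-1)$-skeleton of the stabilized complex yields $\widehat{X}$ with exactly the cell-type control asserted in~(3). If you want to salvage your approach, you should replace the cell-by-cell trading of Step~2 by this module-level splitting argument and invoke the realization theorem once at the end.
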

\begin{proof}
  We use the machinery and notation developed in~\cite{Lueck(1989)}. During this proof we
  abbreviate $\calc : = \Pi(\Gamma,X)$, where $\Pi(\Gamma,X)$ is the fundamental category,
  see~\cite[Definition~8.15 on page~144]{Lueck(1989)}.  Let $C_*^c(X)$ be the cellular
  $\IZ\calc$-chain complex. It is a finitely generated free $\IZ\calc$-chain complex, as
  $X$ is of finite type.  Since $X$ is homotopy equivalent to a $d$-dimensional
  $\Gamma$-$CW$-complex, $C_*^c(X)$ is $\IZ\calc$-chain homotopy equivalent to a free
  $d$-dimensional $\IZ\calc$-chain complex. We conclude from~\cite[Proposition~11.10 on
  page~221]{Lueck(1989)} that we can find a $d$-dimensional finitely generated projective
  $\IZ\calc$-subchain complex $D_*$ of $C_*^c(X)$ such that $D_*$ and $C_*^c(X)$ agree in
  dimensions $\le (d-1)$, $D_d$ is a direct summand of $C_*^c(X)$, and the inclusion
  $j_* \colon D_* \to C_*^c(X)$ is a $\IZ\calc$-chain homotopy equivalence.  Hence the equivariant
  finiteness obstruction $\widetilde{o}^G(X) \in \widetilde{K}_0(\IZ\calc)$ is the
  class of the finitely generated projective $\IZ\calc$-module $D_d$.  Since $X$
  is by assumption $\Gamma$-homotopy equivalent to a finite $\Gamma$-$CW$-complex, the
  equivariant finiteness obstruction
  $\widetilde{o}^G(X) \in \widetilde{K}_0(\IZ\calc)$ vanishes,
  see~\cite[Definition~14.4 and Theorem~14.6 on page~278]{Lueck(1989)}.  This implies that
  $D_d$ is stably free.  We will need a stronger statement about $D_d$ stated and proved
  below.

  There is a $\IZ\calc$-isomorphism
  \begin{eqnarray*}
    C_d^c(X) & = &  \bigoplus_{i= 1}^m \IZ\mor_{\calc}(?,x_i)^{a_i}
  \end{eqnarray*}
  for integers $m \ge 0$ and $a_i \ge 1$ and for a finite set
  $\{x_i \colon \Gamma/H_i \to X\mid i = 1,2, \ldots, m\}$ of objects in $\calc$, whose
  elements are mutually not isomorphic in $\calc$.  Note that $X$ must contain at least
  one equivariant cell of type $\Gamma/H_i \times D^d$ for each
  $i \in \{1,2, \ldots ,m\}$.  Let $S_x$ be the splitting functor and $E_x$ be the
  extension functor associated to an object $x \colon \Gamma/H \to X$, see~\cite[(9.26)
  and (9.28) on page~170]{Lueck(1989)}.  If $S_{x}(C_d^c(X)) \not= 0$ holds, then $x$ is
  isomorphic to precisely one  of the $x_i$-s and in particular $H$ is conjugated to 
  $H_i$.  Since $D_d$ is a direct summand of $C_d(X)$ and $S_x$ is compatible with
  direct sums, the analogous statement holds for $D_d$. Hence $D_d$ is
  $\IZ\calc$-isomorphic to $\bigoplus_{i = 1}^m E_{x_i} \circ S_{x_i}(D_d)$
  by~\cite[Corollary~9.40 page~176]{Lueck(1989)}. We conclude
  from~\cite[Theorem~10.34  page~196]{Lueck(1989)} and the
  vanishing of the equivariant finiteness obstruction
  $\widetilde{o}^G(X) \in \widetilde{K}_0(\IZ\calc)$ that for every
  $i \in \{1,2, \ldots ,m\}$ the class of the finitely generated projective
  $\IZ[\aut_{\calc}(x_)]$-module $S_{x_i}D_d$ in $\widetilde{K}_0(\IZ[\aut_{\calc}(x)])$
  vanishes.  Hence we can find for every $i \in \{1,2, \ldots, m\}$ integers $k_i,l_i$
  with $0 \le k_i \le l_i$ such that $S_{x_i}D_d \oplus \IZ[\aut_{\calc}(x_i)]^{k_i}$ is
  $\IZ[\aut_{\calc}(x_i)]$ isomorphic to $\IZ[\aut_{\calc}(x_i)]^{l_i}$.  This implies the
  existence of an isomorphism of $\IZ\calc$-modules
    \[
      D_d \oplus \bigoplus_{i = 1}^m \IZ\mor_{\calc}(?,x_i)^{k_i}
      \cong  \bigoplus_{i = 1}^n \IZ\mor_{\calc}(?,x_i)^{l_i}.
    \]
    Hence we can add to $D_*$ an elementary $\IZ\calc$-chain complex $E_*$ concentrated in
    dimensions $d$ and $(d-1)$ and associated to the finitely generated free
    $\IZ\calc$-module $\bigoplus_{i = 1}^n \IZ\mor_{\calc}(?,x_i)^{k_i} $.  Denote the
    result by $D'_* := D_* \oplus E_*$. We obtain a $\IZ\calc$-chain homotopy equivalence
    $j_* \oplus \id_{E_*} \colon D_* \oplus E_* \to C_*(X) \oplus E_*$.  By attaching
    equivariant cells of the shape $\Gamma/H_i \times D^{d-1}$ and $\Gamma/H_i \times D^d$
    for $i \in \{1,2, \ldots m\}$ to $X$, we get a finite $\Gamma$-$CW$-complex $X'$ such
    that the inclusion $X \to X'$ is a $\Gamma$-homotopy equivalence and there is an
    identification of $\IZ\calc$-chain complexes $C_*(X') = C_*(X) \oplus E_* $. Then
    $D_*'$ is a $\IZ\calc$-subchain complex of $X'$ such that $C_*(X')$ and $D_*'$ agree
    in dimensions $\le (d-1)$, the inclusion $j_* \colon D_*' \to C_*(X')$ is a
    $\IZ\calc$-chain homotopy equivalence, and
    $D'_d = \bigoplus_{i =1}^n \IZ\mor_{\calc}(?,x_i)^{l_i}$.  Note that $X$ and $X'$ have
    the same $(d-2)$-skeleton. Moreover, if $X'$ has an equivariant cell of the type
    $\Gamma/H \times D^{d-1}$, then $X$ contains an equivariant cell of the type
    $\Gamma/H \times D^k$ for some $k \in \{d-1,d\}$ and if $X'$ has an equivariant cell
    of the type $\Gamma/H \times D^d$, then $X$ contains an equivariant cell of the type
    $\Gamma/H \times D^d$.  Now apply the Equivariant Realization Theorem,
    see~\cite[Theorem~13.19 on page~268]{Lueck(1989)}, in the case $r = (d-1)$,
    $A = B= \emptyset$, $Z = X'_{d-1}$, $Y = X'$, and $h$ the inclusion $X_{d-1} \to X$,
    and $C = D_*'$.  The resulting finite $\Gamma$-$CW$-complex $\widehat{X}$ has the
    desired properties, since by construction its $(d-1)$-skeleton is $X'_{d-1}$ and we have
    $C_*^c(\widehat{X}) = D_*'$.  
\end{proof}

\begin{lemma}\label{lem:Brown_small_singular_set}
  Let $d$ be a natural number with $d \ge 3$. Let $\Gamma$ be a group with a torsionfree
  subgroup $\pi$ of finite index.  Suppose that there is a $\Gamma$-$CW$-model
  $\underline{E}\Gamma$ such that $\dim(\underline{E}\Gamma^{>1})\le (d-1)$ holds for the
  $\Gamma$-$CW$-subcomplex $\underline{E}\Gamma^{>1}$ of $\underline{E}\Gamma$ consisting
  of points with non-trivial isotropy group.  Then the following assertions are
  equivalent:
  \begin{enumerate}
  \item\label{lem:Brown_small_singular_set:Gamma} There is a model $\underline{E}\Gamma$
    with $\dim(\underline{E}\Gamma) \le d$;

  \item\label{lem:Brown_small_singular_set:pi} There is a model $E\pi$ with
    $\dim(E\pi) \le d$.
    
  \end{enumerate}

\end{lemma}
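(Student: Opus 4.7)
For \eqref{lem:Brown_small_singular_set:Gamma} $\Rightarrow$ \eqref{lem:Brown_small_singular_set:pi}, I would restrict the $\Gamma$-action on a $d$-dimensional model for $\underline{E}\Gamma$ to $\pi$. Since every isotropy group of $\underline{E}\Gamma$ is finite and $\pi$ is torsionfree, $\pi$ acts freely; the restricted $\pi$-$CW$-complex is contractible of dimension $\le d$ and hence a model for $E\pi$.

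For \eqref{lem:Brown_small_singular_set:pi} $\Rightarrow$ \eqref{lem:Brown_small_singular_set:Gamma}, I would mimic the strategy of Lemma~\ref{lem:finite_d-dimensional_models}, but directly on the Bredon chain complex and without assuming finite type. Let $X$ be the given $\Gamma$-$CW$-model for $\underline{E}\Gamma$, set $\calc := \Or_{\FIN}(\Gamma)$, and let $D_* := C_*^c(X)$ be the associated free $\IZ\calc$-resolution of the constant functor $\underline{\IZ}$. Since $X^H \subseteq X^{>1}$ has dimension $\le d-1$ for every non-trivial finite subgroup $H\le \Gamma$, we have $D_n(\Gamma/H) = 0$ for $H \ne \{1\}$ and $n \ge d$; equivalently every cell of $X$ in dimension $\ge d$ is free of type $\Gamma/\{1\}\times D^n$. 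By exactness of $D_*(\Gamma/H)$, the $d$-th syzygy $K := \ker(D_{d-1}\to D_{d-2})$ satisfies $K(\Gamma/H) = 0$ for $H \ne \{1\}$, so $K$ is the extension by zero at $\Gamma/\{1\}$ of the $\IZ\Gamma$-module $N := \ker(C_{d-1}(X) \to C_{d-2}(X))$.

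A useful observation is that $\mor_\calc(\Gamma/H,\Gamma/\{1\}) = \emptyset$ for $H\ne \{1\}$, so the free $\IZ\calc$-module $\IZ\mor_\calc(-,\Gamma/\{1\})$ is itself the extension by zero of $\IZ\Gamma$ at $\Gamma/\{1\}$. Consequently, extension by zero at $\Gamma/\{1\}$ of any $\IZ\Gamma$-projective module is projective over $\IZ\calc$. The plan therefore reduces to showing $N$ is $\IZ\Gamma$-projective. Once this is done, one adjoins a complementary projective to $K$ and to $D_{d-1}$ to replace them by free $\IZ\calc$-modules, obtaining a free $\IZ\calc$-chain complex of length $\le d$ chain homotopy equivalent to $D_*$, and then invokes the Equivariant Realization Theorem \cite[Theorem~13.19]{Lueck(1989)}, exactly as in Lemma~\ref{lem:finite_d-dimensional_models}, to realize it geometrically as a $d$-dimensional $\Gamma$-$CW$-complex $\widehat{X}$ that is $\Gamma$-homotopy equivalent to $X$.

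The heart of the argument, and the main obstacle, is producing the $\IZ\Gamma$-projectivity of $N$. Since $\pi$ is torsionfree, for every finite $H\le \Gamma$ and every $\gamma\in\Gamma$ the intersection $\pi\cap \gamma H\gamma^{-1}$ is trivial, so each permutation module $\IZ[\Gamma/H]$ restricts to a free $\IZ\pi$-module; hence $C_*(X)|_\pi$ is a free $\IZ\pi$-resolution of $\IZ$. Hypothesis \eqref{lem:Brown_small_singular_set:pi} (equivalent, for $d \ge 3$, to $\pi$ having cohomological dimension $\le d$ over $\IZ$) then yields that $N|_\pi$ is $\IZ\pi$-projective. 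Promoting this to $\IZ\Gamma$-projectivity should combine the finite index of $\pi$ in $\Gamma$ with the contractibility of $X^F$ for every finite subgroup $F\le \Gamma$, which controls the Tate cohomology $\widehat{H}^*(F;N)$ in the relevant degree by a dimension-shift along the resolution and allows a Rim-style projectivity criterion to apply.
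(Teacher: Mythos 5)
Your first direction and your initial reductions are fine: restricting to $\pi$ gives \eqref{lem:Brown_small_singular_set:Gamma}$\Rightarrow$\eqref{lem:Brown_small_singular_set:pi}; the hypothesis $\dim(\underline{E}\Gamma^{>1})\le d-1$ does force all equivariant cells in dimensions $\ge d$ to be free, so the $d$-th syzygy $K$ of the Bredon complex vanishes at $\Gamma/H$ for $H\ne\{1\}$ and is the extension by zero $j_*N$ of $N=\ker(c_{d-1})$, and $j_*$ does carry $\IZ\Gamma$-projectives to projectives over $\OrGF{\Gamma}{\calfin}$. The gap is the final step, which is also the only hard one: you assert that $\IZ\pi$-projectivity of $N|_\pi$ can be promoted to $\IZ\Gamma$-projectivity by ``a Rim-style projectivity criterion.'' No such criterion is stated, proved, or cited, and for infinite groups it is not an off-the-shelf fact: the needed statement (``$M|_\pi$ projective for a torsionfree finite-index $\pi$ and $M|_F$ projective for every finite $F\le\Gamma$ imply $M$ is $\IZ\Gamma$-projective'') lives in the territory of Moore's conjecture \--- already the sub-case of an intermediate torsionfree extension of $\pi$ by $\IZ/p$ needs Serre's theorem, and the split case needs a separate argument \--- and this local-to-global transfer is precisely where the difficulty of Brown's problem is concentrated; the examples of~\cite{Leary-Nucinkis(2003)} show that some such transfer must fail once the smallness hypothesis on the singular set is dropped. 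In addition, your proposed verification of the finite-subgroup input by ``dimension-shift along the resolution'' does not work as stated: the chain modules of $C_*(X)|_F$ are permutation modules $\IZ[\Gamma/H]|_F$, which are not cohomologically trivial, so shifting along them controls nothing. (This part is repairable: since all cells in dimension $\ge d$ are free and $X$ is $F$-contractible, the augmented complex is $\IZ F$-chain contractible and $N|_F$ is a direct summand of the free module $C_d(X)|_F$; but the local-to-global step itself remains unproved.)

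For comparison, the paper's proof never confronts this question. It runs a descending induction on the dimension $m$ from $d\cdot[\Gamma:\pi]$ (the coinduction $i_!E\pi$ gives the induction beginning) down to $d$. At each stage it shows $B_m\subseteq C^c_m(X)$ is a direct summand by proving $H^{m+1}_{\IZ\OrGF{\Gamma}{\calfin}}(C^c_*(X);B_m)=0$; the coefficients $B_m$ are first traded for the free module $C^c_{m+1}(X)$ \--- this is exactly where the inductive hypothesis that $X$ is already $(m+1)$-dimensional is used \--- and then the smallness of $\underline{E}\Gamma^{>1}$ together with the adjunctions $(j_*,j^*)$ and $(i^*,i_!)$ (free modules being coinduced from $\pi$) reduces the vanishing to cohomology over $\IZ\pi$, where it follows from the existence of a $d$-dimensional $E\pi$. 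To salvage your one-shot syzygy argument you would have to either prove the projectivity-transfer statement in the required generality or reorganize matters so that, as in the paper, every Ext computation has free (hence coinduced) coefficients.
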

\begin{proof} Let $i \colon \pi \to \Gamma$ be the inclusion. Since the restriction
  $i^*\underline{E}\Gamma$ of $\underline{E}\Gamma$ with $i$ is a model for $E\pi$, the
  implication~\eqref{lem:Brown_small_singular_set:Gamma}
  $\implies$~\eqref{lem:Brown_small_singular_set:pi} is true. The hard part is to show the
  implication~\eqref{lem:Brown_small_singular_set:pi}
  $\implies$~\eqref{lem:Brown_small_singular_set:Gamma} what we do next.

  We show by induction for
  $m = d \cdot [\Gamma : \pi], d \cdot [\Gamma : \pi] -1, d \cdot [\Gamma : \pi] -2,
  \ldots, d$ that there  is a model for $\underline{E}\Gamma$ with
  $\dim(\underline{E}\Gamma) \le m$.

  The induction beginning is done as follows. Let $ i \colon \pi \to \Gamma$ be the
  inclusion.  Then the coinduction $i_!E\pi$ is a $\Gamma$-$CW$-model for
  $\underline{E}\Gamma$ of dimension $\dim(E\pi) \cdot [\Gamma : \pi]$,
  see~\cite[Theorem~2.4]{Lueck(2000a)}.

  The induction step from $m+1$ to $m$ for $d \le m \le [\Gamma : \pi] -1$ is described
  next.  Let $X$ be a $\Gamma$-$CW$-complex such that all isotopy groups are finite, $X^H$
  is contractible for every finite subgroup $H \subseteq \Gamma$, and for every
  $\Gamma$-cell $\Gamma/H \times D^k$ with $H \not= \{1\}$ we have $k \le (d-1)$. Note
  that $X$ is a model for $\underline{E}\Gamma$. We have to show that $X$ is
  $\Gamma$-homotopy equivalent to a $\Gamma$-$CW$-complex of dimension $\le m$, provided
  that $X$ is $\Gamma$-homotopy equivalent to a $\Gamma$-$CW$-complex of dimension
  $\le (m+1)$ and $m \ge d$.

  Because of  the Equivariant Realization Theorem, see~\cite[Theorem 13.19 on
  page~268]{Lueck(1989)}, and~\cite[Proposition~11.10 on page~221]{Lueck(1989)},
  it suffices to show that the cellular
  $\IZ\OrGF{\Gamma}{\calfin}$-chain complex $C_*^c(X)$ is
  $\IZ\OrGF{\Gamma}{\calfin}$-chain homotopy equivalent to an $m$-dimensional
  $\IZ\OrGF{\Gamma}{\calfin}$-chain complex.  Note that we can confine ourselves in this
  special case to the $\calfin$-restricted orbit category $\OrGF{\Gamma}{\calfin}$, since all
  isotropy groups are finite and $X^H$ is non-empty and simply connected for every finite
  subgroup $H \subseteq \Gamma$. Namely, the latter implies that the fundamental category
  $\Pi(\Gamma,X)$ appearing in~\cite[Definition~8.15 on page~144]{Lueck(1989)} reduces to
  $\OrGF{\Gamma}{\calfin}$ and the cellular $\IZ\Pi(\Gamma,X)$-chain complex of $X$ appearing
  in~\cite[Definition~8.37 on page~152]{Lueck(1989)} reduces to the corresponding cellular
  chain complex $C_*^c(X)$ over $\OrGF{\Gamma}{\calfin}$, which sends an object $\Gamma/H$ to
  the cellular chain complex of $X^H$. Recall that $\OrGF{\Gamma}{\calfin}$ has as objects
  homogeneous $\Gamma$-spaces $\Gamma/H$ with $|H| < \infty$ as objects and $\Gamma$-maps
  between them as
  morphisms.

  Define the $\IZ\OrGF{G}{\calfin}$-module $B_m$ to be the image of the
  $(m+1)$th-differential $c_{m+1} \colon C^c_{m+1}(X) \to C_m^c(X)$.  Because
  of~\cite[Proposition~11.10 on page~221]{Lueck(1989)} it suffices to show that
  $B_m \subseteq C^c_m(X)$ is a direct summand in $C_m^c(X)$. The following argument shows
  that for this purpose it suffices to show that
  $H^{m+1}_{\IZ\OrGF{\Gamma}{\calfin}}(C_*^c(X),B_m)$ is trivial. Namely, from
  $H^{m+1}_{\IZ\OrGF{\Gamma}{\calfin}}(C_*^c(X);B_m) = 0$ we get the exact sequence
  \begin{multline*}
    \hom_{\IZ\OrGF{\Gamma}{\calfin}}(C^c_m(X),B_m) \xrightarrow{c_{m+1}^*}
    \hom_{\IZ\OrGF{\Gamma}{\calfin}}(C^c_{m+1}(X),B_m)
    \\
    \xrightarrow{c_{m+2}^*} \hom_{\IZ\OrGF{\Gamma}{\calfin}}(C^c_{m+2}(X),B_m).
  \end{multline*}
  The element $c_{m+1} \colon C^c_{m+1}(X) \to B_m$ in
  $\hom_{\IZ\OrGF{\Gamma}{\calfin}}(C^c_{m+1}(X),B_m)$ is sent under the right arrow to
  $c_{m+1} \circ c_{m+2} = 0$ and hence has a preimage $r \colon C^c_m(X) \to B_m$ under
  the first arrow. This implies $r \circ c_{m+1} = c_{m+1}$ and hence
  $r|_{B_m} = \id_{B_m}$.

  We have the short exact sequence
  $0 \to Z_{m+1} \to C^c_{m+1}(X) \xrightarrow{c_{m+1}} B_m \to 0$ of
  $\IZ\OrGF{\Gamma}{\calfin}$-modules, where $Z_{m+1}$ is the kernel of $c_{m+1}$. In the
  sequel we abbreviate $F = C^c_{m+1}(X)$. It yields a long exact sequence
  \begin{multline*}
    \cdots \to H^{m+1}_{\IZ\OrGF{\Gamma}{\calfin}}(C_*^c(X);Z_{m+1}) \to
    H^{m+1}_{\IZ\OrGF{\Gamma}{\calfin}}(C_*^c(X);F)
    \\
    \to H^{m+1}_{\IZ\OrGF{\Gamma}{\calfin}}(C_*^c(X);B_m) \to
    H^{m+2}_{\IZ\OrGF{\Gamma}{\calfin}}(C_*^c(X);Z_{m+1}) \to \cdots.
  \end{multline*}
  Since by the induction hypothesis $X$ is $\Gamma$-homotopy equivalent to an
  $(m+1)$-dimensional $\Gamma$-$CW$-complex,
  $H^{m+2}_{\IZ\OrGF{\Gamma}{\calfin}}(C^c_*(X);Z_{m+1})$ vanishes and hence the map
  $H^{m+1}_{\IZ\OrGF{\Gamma}{\calfin}}(C_*^c(X);F) \to
  H^{m+1}_{\IZ\OrGF{\Gamma}{\calfin}}(C_*^c(X);B_m)$ is surjective.  Therefore it suffices
  to show that $H^{m+1}_{\IZ\OrGF{\Gamma}{\calfin}}(C_*^c(X);F)$ vanishes.

  We have the short exact sequence of $\IZ\OrGF{\Gamma}{\calfin}$-chain complexes
  \begin{equation}
    0 \to C_*^c(\underline{E}\Gamma^{> 1}) \to C_*^c(\underline{E}\Gamma)
    \to C_*^c(\underline{E}\Gamma,\underline{E}\Gamma^{> 1})\to 0.
    \label{def_D_ast}
  \end{equation}
  It yields a long cohomology sequence
  \begin{multline*}
    \cdots \to H^m_{\IZ\OrGF{\Gamma}{\calfin}}(C_*^c(\underline{E}\Gamma^{> 1});F) \to\
     H^{m+1}_{\IZ\OrGF{\Gamma}{\calfin}}(C_*^c(\underline{E}\Gamma,\underline{E}\Gamma^{>1});F)
    \\
    \to  H^{m+1}_{\IZ\OrGF{\Gamma}{\calfin}}(C_*^c(\underline{E}\Gamma);F)
    \to H^{m+1}_{\IZ\OrGF{\Gamma}{\calfin}}(C_*^c(\underline{E}\Gamma^{> 1});F)
    \to \cdots.
  \end{multline*}
  Since $\dim(\underline{E}\Gamma^{> 1}) \le (d-1) \le (m-1)$ holds, we get
  $H^l_{\IZ\OrGF{\Gamma}{\calfin}}(C_*^c(\underline{E}\Gamma^{> 1});F) = 0$ for $l \ge m$.
  Hence we get an isomorphism
  \[
    H^{m+1}_{\IZ\OrGF{\Gamma}{\calfin}}(C_*^c(\underline{E}\Gamma,\underline{E}\Gamma^{> 1});F)    \xrightarrow{\cong}
    H^{m+1}_{\IZ\OrGF{\Gamma}{\calfin}}(C_*^c(\underline{E}\Gamma);F).
  \]
  Note that $C_*^c(\underline{E}\Gamma,\underline{E}\Gamma^{> 1})$ for every $l \ge 0$ and
  $F$ are $\IZ\OrGF{\Gamma}{\calfin}$-modules, which are direct sums of
  $\IZ\OrGF{\Gamma}{\calfin}$-modules of the shape
  $\IZ\mor_{\OrGF{\Gamma}{\calfin}}(?,\Gamma/1)$. Let $\caltr$ be the family of subgroups of
  $\Gamma$ consisting of one elements, namely the trivial subgroup.  We have the inclusion
  $ j \colon \OrGF{\Gamma}{\caltr} \to \OrGF{\Gamma}{\calfin}$ where
  $ \OrGF{\Gamma}{\caltr}$ is the full subcategory of $\OrGF{\Gamma}{\calfin}$ consisting
  of one object $\Gamma/1$.  Note that the category of $\IZ\OrGF{\Gamma}{\caltr}$-modules
  can be identified with the category of $\IZ\Gamma$-modules. Hence
  $j_*j^*C_*^c(\underline{E}\Gamma,\underline{E}\Gamma^{> 1})$ and
  $C_*^c(\underline{E}\Gamma,\underline{E}\Gamma^{> 1})$ are isomorphic
  $\IZ\OrGF{\Gamma}{\calfin}$-chain complexes and $j_*j^*F$ and $F$ are isomorphic
  $\IZ\OrGF{\Gamma}{\calfin}$-modules.  From the adjunction $(j_*,j^*)$, we obtain an
  isomorphism
  \[
    H^{m+1}_{\IZ\OrGF{\Gamma}{\calfin}}(C_*^c(\underline{E}\Gamma,\underline{E}\Gamma^{> 1});F)
    \cong
    H^{m+1}_{\IZ\Gamma}(j^*C_*^c(\underline{E}\Gamma,\underline{E}\Gamma^{> 1});j^*F)
  \]
  Obviously $j^*F$ is a free $\IZ\Gamma$-module. Hence there exists a free $\IZ\pi$-module
  $F'$ with $i_*F'\cong_{\IZ\Gamma} j^*F$.  We conclude from~\cite[Proposition~5.9
  in~III.5 on page~70]{Brown(1982)} that the coinduction $i_{!}F'$ with $i$ of $F'$ is
  $\IZ\Gamma$-isomorphic to $j^*F$.  From the adjunction $(i^*,i_!)$, we obtain an
  isomorphism
  \[
    H^{m+1}_{\IZ\Gamma}(j^*C_*^c(\underline{E}\Gamma,\underline{E}\Gamma^{> 1});j^*F)
    \cong H^{m+1}_{\IZ\pi}(i^*j^*C_*^c(\underline{E}\Gamma,\underline{E}\Gamma^{> 1});F')
  \]
  Hence it suffices to show
  $H^{m+1}_{\IZ\pi}(i^*j^*C_*^c(\underline{E}\Gamma,\underline{E}\Gamma^{> 1});F') = 0$.

  If we apply $i^* \circ j^*$ to the short exact sequence~\eqref{def_D_ast} of
  $\IZ\OrGF{\Gamma}{\calfin}$-chain complexes, we obtain an exact sequence of
  $\IZ\pi$-chain complexes and hence a long exact sequence
  \begin{multline*}
    \cdots \to H^m_{\IZ \pi}(i^*j^*C_*^c(\underline{E}\Gamma^{> 1});F')
    \to H^{m+1}_{\IZ \pi}(i^*j^*C_*^c(j^*\underline{E}\Gamma,\underline{E}\Gamma^{>1});F')
    \\
    \to H^{m+1}_{\IZ\pi}(i^*j^*C_*^c(\underline{E}\Gamma);F')
    \to H^{m+1}_{\IZ \pi}(i^*j^*C_*^c(\underline{E}\Gamma^{> 1});F')\to \cdots.
  \end{multline*}
  Since $\dim(\underline{E}\Gamma^{> 1}) \le (d-1) \le (m-1)$ holds, we get
  $H^k_{\IZ\pi}(C_*^c(i^*\underline{E}\Gamma^{> 1});F') = 0$ for $k \ge m$.  Therefore we
  obtain an isomorphism
  \[ H^{m+1}_{\IZ \pi}(i^*j^*C_*^c(\underline{E}\Gamma,\underline{E}\Gamma^{> 1});F')
   \xrightarrow{\cong}  H^{m+1}_{\IZ \pi}(i^*j^*C_*^c(\underline{E}\Gamma);F').
     \]
  Hence it remains to show $H^{m+1}_{\IZ \pi}(i^*j^*C_*^c(\underline{E}\Gamma);F') =
  0$. This follows from the fact that $i^*\underline{E}\Gamma$ is a model for $E\pi$ and
  up to $\pi$-homotopy there is a $\pi$-$CW$-model for $\underline{E}\pi$ with
  $\dim(\underline{E}\pi) \le d \le m$.
\end{proof}

Now we are ready to give the proof of
Theorem~\ref{the:Models_for_the_classifying_space_for_proper_Gamma-actions_intro}.

\begin{proof}[Proof of
  Theorem~\ref{the:Models_for_the_classifying_space_for_proper_Gamma-actions_intro}]
  We conclude from Proposition~\ref{pro:underline(E)Gamma_for_hyperbolic_Gamma} and
  Proposition~\ref{pro:CAT(0)_and_eub_Ontaneda} that we can assume without loss of
  generality that the condition~\eqref{the:Nielsen_Realization_Problem:eub_direct}
  appearing under~\eqref{the:Nielsen_Realization_Problem:eub} in
  Theorem~\ref{the:Models_for_the_classifying_space_for_proper_Gamma-actions_intro} is
  satisfied, namely, that there is a finite $\Gamma$-$CW$-model for $\underline{E}\Gamma$.

  This implies that there is a $\Gamma$-$CW$-model for $E\Gamma$ of finite type,
  see~\cite[Lemma~4.1]{Lueck(2000a)}.  Moreover, we can arrange in the $\Gamma$-pushout
  appearing in Proposition~\ref{pro:constructing_underline(E)Gamma)} that the
  $\Gamma$-$CW$-complexes $\coprod_{F\in\calm} \Gamma \times_{N_{\Gamma} F}EF$ and $E\Gamma$ are
  of finite type.  Hence we get from Proposition~\ref{pro:constructing_underline(E)Gamma)}
  a $\Gamma$-$CW$-model $X$ of finite type for $\underline{E}\Gamma$ such that $X^{>1}$ is
  $\coprod_{F\in\calm} \Gamma/F$.

  Let $\pi$ be the subgroup of finite index appearing in
  condition~\eqref{the:Nielsen_Realization_Problem_vcd} of
  Theorem~\ref{the:Models_for_the_classifying_space_for_proper_Gamma-actions_intro}. Since
  $\Gamma$ is finitely generated, there exists a normal subgroup $\pi' \subseteq \Gamma$
  of finite index with $\pi' \subseteq \pi$.  Hence we can assume without loss of
  generality that $\pi$ is normal, otherwise replace $\pi$ by $\pi'$.

  There is a $d$-dimensional $\Gamma$-$CW$-model for $\underline{E}\Gamma$ by
  Lemma~\ref{lem:Brown_small_singular_set}.

  Now Theorem~\ref{the:Models_for_the_classifying_space_for_proper_Gamma-actions_intro}
  follows from Lemma~\ref{lem:finite_d-dimensional_models}.
\end{proof}


\typeout{--------------------------------- Section 3: Free slice systems
  -------------------------------}

\section{Free slice systems}%
\label{sec:free_slice-systems}

Let $\calm$ be a complete system of representatives of the conjugacy classes of maxi\-mal
finite subgroups of $\Gamma$.

\begin{definition}\label{def:free_d-dimensional_slide_system}
  A \emph{$d$-dimensional free slice system} $\cals = \{S_F \mid F\in \calm\}$, or shortly
  \emph{slice system}, consists of a free $(d-1)$-dimensional $CW$-complex $S_F$ for every
  $F \in \calm$ such that $S_F$ is after forgetting the $F$-action homotopy equivalent to
  the $(d-1)$-dimensional standard sphere $S^{d-1}$.

  We call $\cals$ \emph{oriented}, if we have chosen a generator $[S_F]$, called
  fundamental class, for the infinite cyclic group $H_{d-1}(S_F)$ for every $F \in \calm$.
\end{definition}

We denote by $D_F$ the cone over $S_F$.  Obviously $(D_F,S_F)$ is $F$-$CW$-pair and
$(D_F,S_F)$ is after forgetting the $F$-action homotopy equivalent to $(D^d,S^{d-1})$.

Let $\cals = \{S_F \mid F\in \calm\}$ and $\cals' = \{S'_F \mid F\in \calm\}$ be
$d$-dimensional free slice systems. Note that this implies that each $F$ in $\calm$ has periodic
cohomology, or, equivalently, that any Sylow subgroup of $F$ is cyclic or a generalized
quaternion group, see~\cite[Section~VI.9 on pages~153--160]{Brown(1982)}.

Assume that $F$ acts orientation preserving on $S_F$.  Then $H_{d-1}(S_F/F)$ is infinite
cyclic and comes with a preferred generator $[S_F/F]$, which is uniquely determined by the
property that the map $H_{d-1}(S_F) \to H_{d-1}(S_F/F)$ induced by the projection
$S_F \to S_F/F$ sends $[S_F]$ to $|F| \cdot [S_F/F]$.  Let $c(S_F) \colon S_F \to EF$ be a
classifying map. Denote by $c_F \colon S_F/F \to BF$ the map $c(S_F)/F$.  If $\cals$ is
oriented, we can define
\begin{equation}
  d(S_F) \in H_{d-1}(BF)
  \label{d(S_F)}
\end{equation}
to be the class given by the image of $[S_F/F] \in H_{d-1}(S_F/F)$ under the homomorphism
$H_{d-1}(c_F) \colon H_{d-1}(S_F/F) \to H_{d-1}(BF)$.  Note that $H_{d-1}(S_F/F)$ is
infinite cyclic.  Since $c(S_F) \colon S_F \to EG$ and hence $c_F \colon S_F/F \to BF$ are
$(d-1)$-connected, the homomorphism $H_{d-1}(c_F) \colon H_{d-1}(S_F/F) \to H_{d-1}(BF)$
is surjective. We conclude that $H_{d-1}(c_F)([S_F/F])$ is a generator of the cyclic group
$H_{d-1}(BF)$. The order of $H_{d-1}(BF)$ is $|F|$, see~\cite[Theorem~9.1 in Chapter VI on
page~154]{Brown(1982)}.

\begin{lemma}\label{lem:basics_about_free_slice_systems}
  
  Let $\cals = \{S_F \mid F\in \calm\}$ and $\cals' = \{S'_F \mid F\in \calm\}$ be slice
  systems.

  \begin{enumerate}
  \item\label{lem:basics_about_free_slice_systems:|F|_ge_3}
    If $d$ is odd, then $F \cong \IZ/2$ for all $F \in \calm$;
  \item\label{lem:basics_about_free_slice_systems:orientation_preserving} 
    If $d$ is even, then the $F$-action on $S_F$ is orientation preserving. If $d$ is odd,
    then the $F$-action on $S_F$ is orientation reversing;

  \item\label{lem:basics_about_free_slice_systems:homotopy_equivalent} Suppose that $F$
    acts orientation preserving on $S_F$ and $S_F'$ and $\cals$ and $\cals'$ are oriented.
    Then there exists an orientation preserving $F$-homotopy equivalence
    $S_F \xrightarrow{\simeq} S'_F$, if and only if $d(S_F) = d(S'_F)$ holds in
    $H_{d-1}(BF)$;

  \item\label{lem:basics_about_free_slice_systems:uniquness_of_homotopy_equivalences} If
    $|F| \ge 3$, then any $F$-selfhomotopy equivalence $S_F \to S_F$ is $F$-homotopic to
    the identity.

  \item\label{lem:basics_about_free_slice_systems:trivial_Whitehead_torsion} Any
    $F$-selfhomotopy equivalence $S_F \to S_F$ is simple
  \end{enumerate}
\end{lemma}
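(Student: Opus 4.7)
The plan is to dispatch the five parts in order: the first two by a direct application of the Lefschetz fixed point theorem, the remaining three by equivariant obstruction theory combined with the invariant $d(S_F)\in H_{d-1}(BF)$.

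For Parts~\eqref{lem:basics_about_free_slice_systems:|F|_ge_3} and~\eqref{lem:basics_about_free_slice_systems:orientation_preserving}, I would compute Lefschetz numbers. A self-map of $S^{n}$ of degree $\delta$ has Lefschetz number $1+(-1)^n\delta$, so any non-identity element of $F$, acting freely on $S_F\simeq S^{d-1}$, must have degree $(-1)^d$. This gives degree $+1$ and orientation preserving actions when $d$ is even, and degree $-1$ and orientation reversing actions when $d$ is odd; in the odd case, running the same argument on squares shows that every element of $F$ has order at most two, and non-triviality then forces $F\cong\IZ/2$.

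Part~\eqref{lem:basics_about_free_slice_systems:homotopy_equivalent} is the heart of the lemma. Necessity is routine: an orientation preserving $F$-equivalence $f$ descends to a degree one map $\bar f\colon S_F/F\to S'_F/F$ satisfying $c'_F\circ\bar f\simeq c_F$, and evaluating $H_{d-1}$ on $[S_F/F]$ yields $d(S_F)=d(S'_F)$. For sufficiency I would view $c'_F\colon S'_F/F\to BF$ as a map whose homotopy fibre is the universal cover $S'_F\simeq S^{d-1}$, and lift $c_F$ through it by equivariant obstruction theory: since $\pi_k(S^{d-1})=0$ for $k\le d-2$ and $\dim(S_F/F)=d-1$, every potential obstruction vanishes, giving a lift $\bar f$ and an $F$-equivariant map $f\colon S_F\to S'_F$. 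The set of lifts modulo homotopy over $BF$ is a torsor under $H^{d-1}(S_F/F;\IZ)\cong\IZ$, and the torsor action changes $\deg(\bar f)$ by the evaluation of the difference cocycle on the fundamental class, so every integer degree is realised. The relation $\deg(f)\cdot d(S'_F)=d(S_F)$ in $H_{d-1}(BF)\cong\IZ/|F|$, combined with the hypothesis that $d(S_F)=d(S'_F)$ is a generator, forces $\deg(f)\equiv 1\pmod{|F|}$, so I can adjust $f$ to have degree one; such a map between free $F$-complexes both homotopy equivalent to $S^{d-1}$ is automatically an $F$-homotopy equivalence.

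Parts~\eqref{lem:basics_about_free_slice_systems:uniquness_of_homotopy_equivalences} and~\eqref{lem:basics_about_free_slice_systems:trivial_Whitehead_torsion} then fall out quickly. For a self-equivalence $f\colon S_F\to S_F$ with $|F|\ge 3$, the degree equation $(\deg(f)-1)\cdot d(S_F)=0$ in $\IZ/|F|$ together with $\deg(f)=\pm 1$ rules out $\deg(f)=-1$, so $\deg(f)=1$, and the uniqueness clause of the lifting argument — two lifts with the same degree differ by a torsor element whose pairing with $[S_F/F]$ vanishes, hence by zero — identifies $f$ with $\id$ in the $F$-homotopy category; this is Part~\eqref{lem:basics_about_free_slice_systems:uniquness_of_homotopy_equivalences}. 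For Part~\eqref{lem:basics_about_free_slice_systems:trivial_Whitehead_torsion}, when $|F|\ge 3$ the Whitehead torsion of $f\simeq_F\id$ vanishes automatically, while when $|F|=2$ the group $\Wh(\IZ/2)$ is trivial by Higman's theorem on units in $\IZ[\IZ/2]$, so every $F$-equivalence is simple. The main technical obstacle will be the obstruction-theoretic book-keeping in Part~\eqref{lem:basics_about_free_slice_systems:homotopy_equivalent}: tracking the torsor structure carefully enough to verify that the degree function surjects onto the arithmetic progression $1+|F|\cdot\IZ$ so that a degree one lift genuinely exists.
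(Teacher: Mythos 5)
Your overall strategy coincides with the paper's: Lefschetz/Euler-characteristic arguments for the first two parts, a degree-modulo-$|F|$ analysis for parts (3) and (4), and $\Wh(\IZ/2)=0$ for part (5). The one genuine difference is that where the paper simply cites the equivariant Hopf theorem (tom Dieck, or L\"uck's equivariant degree) for the two facts it needs --- that the degrees of $F$-maps $S_F \to S'_F$ form a full congruence class modulo $|F|$, and that two $F$-maps are $F$-homotopic iff their degrees agree --- you re-derive them by obstruction theory for lifts through the fibration $S'_F/F \to BF$. That is a legitimate and self-contained route, but two steps as written are wrong.

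First, in part (1) the deduction ``every element of $F$ has order at most two, and non-triviality forces $F\cong\IZ/2$'' is invalid: $(\IZ/2)^n$ also satisfies the hypothesis. You must rule out higher rank, either by the paper's argument (a free action gives $|F| \mid \chi(S_F) = 1+(-1)^{d-1} = 2$ for $d$ odd) or by applying degree multiplicativity to a product $gh$ of two \emph{distinct} non-trivial elements, which forces $gh = 1$ and hence $|F|\le 2$. Second, in part (3) the claim ``so every integer degree is realised'' is false, and it would prove too much: if every degree occurred, a degree-one $F$-map would always exist and the ``only if'' direction of part (3) would fail (distinct lens spaces give a counterexample). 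The correct statement is the one your closing sentence asks for: changing a lift by a difference class $\alpha\in H^{d-1}(S_F/F;\IZ)$ changes the degree by $|F|\cdot\langle\alpha,[S_F/F]\rangle$, not by $\langle\alpha,[S_F/F]\rangle$, because the fibre inclusion induces $H_{d-1}(S'_F)\to H_{d-1}(S'_F/F)$, which is multiplication by $|F|$. Hence the realisable degrees form exactly one coset of $|F|\IZ$, and the hypothesis $d(S_F)=d(S'_F)$ (a generator of $\IZ/|F|$) is what identifies that coset as $1+|F|\IZ$. With these two corrections your argument goes through and, for part (4), the same torsor computation shows that equal degrees force the difference class to vanish, recovering the uniqueness statement.
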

\begin{proof}\eqref{lem:basics_about_free_slice_systems:|F|_ge_3}
  Since $F$ acts freely, we have $1 +  (-1)^{d-1} = \chi(S_F) \equiv 0 \mod |F|$. This implies that $|F| = 2$,
  if $d$ is odd.
  \\[1mm]~\eqref{lem:basics_about_free_slice_systems:orientation_preserving}
  Consider a nontrivial element $g \in F$. Multiplication with $g$ induces a map
  $l_g \colon S_F \to S_F$, which has no fixed points. Hence its Lefschetz number
  $\Lambda(l_g)$ vanishes.  We have $\Lambda(l_g) = 1 +(-1)^{d-1} \cdot \deg(l_g)$, where
  $\deg(l_g)$ is the degree. If $d$ is even, $\deg(l_g) = 1$, and, if $d$ is odd, $\deg(l_g) = -1$;
  \\[1mm]~\eqref{lem:basics_about_free_slice_systems:homotopy_equivalent} Let
  $f \colon S_F \to S'_F$ be an orientation preserving $F$-homotopy equivalence.  Since
  the $F$-maps $c(S'_F) \circ f$ and $c(S_F)$ are $F$-homotopic,
  $H_{d-1}(c(S_F))([S_F/F])$ agrees with $H_{d-1}(c(S'_F))([S'_F/F])$. This implies
  $d(S_F) = d(S'_F)$.

  Now suppose $d(S_F) = d(S'_F)$. By elementary obstruction theory one can find an $F$-map
  $f \colon S_F \to S'_F$.  We get $H_{d-1}(f/F)([S_F/F]) = \deg(f) \cdot [S'_F/F]$, if
  $\deg(f)$ is the degree of $f$.  Since $c(S_F') \circ f$ is $F$-homotopic to $c(S_F)$,
  we conclude
  \[
    H_{d-1}(c_F)([S_F]) = \deg(f) \cdot H_{d-1}(c(S'_F)/F)([S'_F]).
  \]
  Since $H_{d-1}(BF)$ is a finite cyclic of order $|F|$ and both elements
  $H_{d-1}(c_F)([S_F])$ and $H_{d-1}(c(S'_F)/F)([S'_F])$ are generators, we conclude
  $\deg(f) \equiv 1 \mod |F|$ from $d(S_F) = d(S'_F)$.  Since for any integer $m \in \IZ$
  with $m \equiv \deg(f) \mod |F|$ one can find an $F$-map $f' \colon S_F \to S'_F$ with
  $\deg(f') = m$, see~\cite[Theorem~4.11 on page~126]{Dieck(1987)} or~\cite[Theorem~3.5 on
  page 139]{Lueck(1988)}, there exists an $F$-map $f' \colon S_F \to S'_F$ of degree
  $1$. This implies that $f'$ is an orientation preserving $F$-homotopy equivalence.
  \\[1mm]~\eqref{lem:basics_about_free_slice_systems:uniquness_of_homotopy_equivalences}
  If $f \colon S_F \to S_F$ is an $F$-map, then its degree satisfies
  $\deg(f) \equiv 1 \mod |F|$, see~\cite[Theorem~4.11 on page~126]{Dieck(1987)}
  or~\cite[Theorem~3.5 on page 139]{Lueck(1988)}.  Consider an $F$-self homotopy
  equivalence $f \colon S_F \to S_F$. Then $\deg(f) \in \{\pm 1\}$. Since $|F| \ge 3$
  holds by assumption, $\deg(f) = 1$. Since two $F$-maps $S_F \to S_F$ are $F$-homotopic,
  if and only if their degrees agree, see~\cite[Theorem~4.11 on page~126]{Dieck(1987)}
  or~\cite[Theorem~3.5 on page 139]{Lueck(1988)}, $f$ is $F$-homotopic to the identity.
  \\[1mm]~\eqref{lem:basics_about_free_slice_systems:trivial_Whitehead_torsion} If
  $F \ge 3$, this follows from
  assertion~\eqref{lem:basics_about_free_slice_systems:uniquness_of_homotopy_equivalences}.
  If $|F| \le 2$, this follows from $\Wh(F) = \{0\}$.
\end{proof}

\begin{lemma}\label{lem:partial_f_yields_partial_f_F}
  Let $\cals = \{S_F \mid F\in \calm\}$ and $\cals' = \{S'_F \mid F\in \calm\}$ be slice
  systems.  Let
  \[
     v  \colon \coprod_{F \in \calm}\Gamma \times_F S_F \to \coprod_{F \in \calm}
    \Gamma \times_F S_F'
  \]
  be a $\Gamma$-map. Suppose that conditions (M) and (NM) are satisfied.
  Then:

  \begin{enumerate}
     \item\label{lem:partial_f_yields_partial_f_F:S_F_to_S_F_prime}
  The map $v$ induces for each $F \in \calm$ an $F$-map
  $v_F \colon S_F \to S_F'$ such that
  $v = \coprod_{F \in \calm} \id_{\Gamma} \times_F \, v_F$
  holds. Moreover, the collection of the $F$-homotopy classes of the $F$-maps $v_F$
  for $F \in \calm$ is determined by the $\Gamma$-homotopy class of $v$ and vice
  versa;

  \item\label{lem:partial_f_yields_partial_f_F:extension} There exists a $\Gamma$-map
  $V \colon \coprod_{F \in \calm}\Gamma \times_F D_F \to \coprod_{F \in \calm} \Gamma
  \times_F D'_F$ extending $v$;  

\end{enumerate}
\end{lemma}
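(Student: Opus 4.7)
The plan is to analyze $v$ by decomposing source and target into connected components and showing, with the help of (M) and (NM), that $v$ respects a very rigid component structure.

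Since each $S_F$ is connected (being homotopy equivalent to $S^{d-1}$ with $d \ge 2$), the $\Gamma$-set of connected components of $\coprod_{F \in \calm} \Gamma \times_F S_F$ is $\coprod_{F \in \calm} \Gamma/F$. Applying $\pi_0$ to $v$ yields a $\Gamma$-map $\coprod_F \Gamma/F \to \coprod_F \Gamma/F$. A $\Gamma$-map $\Gamma/F \to \Gamma/F'$ exists precisely when $F$ is subconjugate to $F'$; since $F$ is maximal finite and $F'$ is finite, maximality forces $F$ to be conjugate to $F'$, and as $\calm$ picks one representative per conjugacy class, we get $F=F'$. Thus $\pi_0(v)$ restricts to a self-map of each $\Gamma/F$, and by (NM) the group $(\Gamma/F)^F = N_\Gamma F / F$ is trivial, so $\pi_0(v)|_{\Gamma/F} = \id_{\Gamma/F}$.

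Consequently, $v$ sends the component of $\Gamma \times_F S_F$ indexed by $eF$, namely $\{[e,z] \mid z \in S_F\}$, which is $F$-equivariantly identified with $S_F$, to the corresponding component of $\Gamma \times_F S'_F$. Restricting $v$ to this component produces the required $F$-map $v_F \colon S_F \to S'_F$, and $\Gamma$-equivariance forces $v([g,z]) = [g, v_F(z)]$, i.e.\ $v = \coprod_F \id_\Gamma \times_F v_F$. Applying the same component analysis to a $\Gamma$-homotopy $H \colon v \simeq v'$, viewed as a $\Gamma$-map on $\bigl(\coprod \Gamma \times_F S_F\bigr) \times I$ with trivial $\Gamma$-action on $I$, yields $F$-homotopies $H_F \colon v_F \simeq v'_F$; conversely a family $\{H_F\}_{F \in \calm}$ reassembles into a $\Gamma$-homotopy via $\coprod_F \id_\Gamma \times_F H_F$. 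This proves the bijection of homotopy classes asserted in~\eqref{lem:partial_f_yields_partial_f_F:S_F_to_S_F_prime}.

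For~\eqref{lem:partial_f_yields_partial_f_F:extension}, once part~\eqref{lem:partial_f_yields_partial_f_F:S_F_to_S_F_prime} is in hand the extension is immediate. For each $F \in \calm$ take the cone $CV_F \colon D_F \to D'_F$ of the $F$-map $v_F$, which is $F$-equivariant because $F$ acts trivially on the cone coordinate, and set $V := \coprod_{F \in \calm} \id_\Gamma \times_F CV_F$. Then $V$ is a $\Gamma$-map extending $v$. The main obstacle is the rigidity statement $\pi_0(v) = \id$, which is where the hypotheses (M) and (NM) do all the work; the subsequent restriction to components, bookkeeping of homotopy classes, and cone extension are then essentially formal.
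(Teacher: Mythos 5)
Your proof is correct and follows essentially the same route as the paper: pass to $\pi_0$, use maximality of the groups in $\calm$ to force each component $\Gamma/F$ to map to itself, use (NM) to conclude the induced self-map of $\Gamma/F$ is the identity, restrict to the distinguished component to get $v_F$, and extend by coning for part~\eqref{lem:partial_f_yields_partial_f_F:extension}. Your explicit treatment of the homotopy-class bijection and the cone extension only spells out details the paper leaves implicit.
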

\begin{proof}
  We obtain a commutative diagram of $\Gamma$-sets  induced by $v$
  \[
    \xymatrix{\pi_0\bigl(\coprod_{F \in \calm}\Gamma \times_F S_F\bigr) 
      \ar[r]^{\pi_0(v)} \ar[d]^{\cong}
      &
      \pi_0\bigl(\coprod_{F \in \calm}\Gamma \times_F S'_F\bigr) \ar[d]^{\cong}
      \\
      \coprod_{F \in \calm} \Gamma/F \ar[r]^{\overline{v}} 
      &
      \coprod_{F \in \calm} \Gamma/F 
      }
  \]
  whose vertical arrows are the obvious bijections. Consider $F \in \calm$.  Since
  $\overline{v}$ is a $\Gamma$-map, there exists $F' \in F$ such that $\overline{v}$
  induces a $\Gamma$-map $\overline{v}_F \colon \Gamma/F \to \Gamma/F'$.  There exists
  $\gamma \in \Gamma$ such that $\overline{v}_F$ sends $eF$ to $\gamma F'$ for
  $e \in \Gamma$ the unit. Then $\gamma^{-1}F\gamma \subseteq F'$.  Since $F$ and $F'$ are
  maximal finite, we obtain $\gamma^{-1}F\gamma = F'$. Since two elements in $\calm$,
  which are conjugated, are automatically equal, we get $F = \gamma^{-1}F\gamma = F'$.
  Hence we get $\gamma \in N_{\Gamma} F = F = F'$ because of (NM). This implies
  $F = F'$ and $\overline{v}_F = \id_{\Gamma/F}$. Hence $\overline{v}$ is the identity.
  This implies $v(S_F) \subseteq S'_F$, where we identify $S_F$ with the subspace
  $\{(\gamma, x) \mid \gamma \in F, x \in S_F\}$ of $\Gamma \times_F S_F$ and analogously
  for $S_F'$.  This shows
  assertion~\eqref{lem:partial_f_yields_partial_f_F:S_F_to_S_F_prime}.
  Assertion~\eqref{lem:partial_f_yields_partial_f_F:extension} follows from
  assertion~\eqref{lem:partial_f_yields_partial_f_F:S_F_to_S_F_prime}.  
\end{proof}

\begin{remark}\label{rem:slice-systems_and_Gamma}
  Note that the existence of a $d$-dimensional free slice system depends only on $\Gamma$,
  actually only on $\calm$.  If for instance $\Gamma$ contains a normal torsionfree
  subgroup $\pi$ such that $G := \Gamma/\pi$ is finite cyclic and $d$ is even, then
  obviously one can find a $d$-dimensional free slice system, since $G$ acts freely on
  $S^{d-1}$.
\end{remark}


\typeout{-------------------- Section 4: Slice complement models --------}

\section{Slice complement models}~%
\label{subsec:Slice-models}

    \begin{notation}\label{not_C(Z)}
      Given a space $Z$, let $C(Z)$ be its \emph{path componentwise cone}, i.e,
      $C(Z) := \coprod_{C \in \pi_0(Z) } \cone(C)$.
    \end{notation}

    One may describe $C(Z)$ also by the pushout
    \[
      \xymatrix{Z \ar[r]^p \ar[d]_{i_0} & \pi_0(Z) \ar[d]
        \\
        Z \times [0,1] \ar[r] & C(Z) }
    \]
    where $i_0 \colon Z \to Z \times[0,1]$ sends $z$ to $(z,0)$, $p \colon Z \to \pi_0(Z)$
    is the projection, and $\pi_0(Z)$ is equipped with the discrete topology.  If $Z$ is a
    $\Gamma$-$CW$-complex, then $C(Z)$ inherits a $\Gamma$-$CW$-structure. If
    $\cals = \{S_F \mid F \in \calm\}$ is a free $d$-dimension slice system, we get an
    identification of $\Gamma$-$CW$-complexes
    \[
      C(\coprod_{F \in \calm} \Gamma \times_F S_F) = \coprod_{F \in \calm} \Gamma \times_F
      D_F.
    \]
  
\begin{definition}[Slice complement model]\label{def:slice-model_for_eub(Gamma)}
  We call a free $\Gamma$-$CW$-pair $(X,\partial X)$ a \emph{slice complement model for
    $\underline{E}\Gamma$}, or shortly \emph{slice complement model}, with respect to the slice
  system $\cals = \{S_F \mid F \in \calm\}$, if
  $\partial X = \coprod_{F \in \calm} \Gamma \times_F S_F$ and
  $X \cup_{\partial X} C(\partial X)$ is a model for $\eub{\Gamma}$.
\end{definition}

We will frequently use that a slice complement model comes with a canonical homotopy $\Gamma$-pushout
\begin{equation}
  \xymatrix{\partial X \ar[r] \ar[d]
    &
    X
    \ar[d]
    \\
    \partial \eub{\Gamma} \ar[r]
    &
    \eub{\Gamma}.
  }
  \label{canonical_homotopy_Gamma_pushout_for_a_slice_model}
\end{equation}

\begin{lemma}\label{lem:characterization_of_slice-models}
  Consider a free $d$-dimensional $\Gamma$-$CW$-pair $(X,\partial X)$ and a slice system
  $\cals = \{S_F \mid F \in \calm\}$ such that
  $\partial X = \coprod_{F \in \calm} \Gamma \times_F S_F$. Suppose $d \ge 3$ and that
  conditions (M) and (NM) hold, see Notation~\ref{not:(M)_and_(NM)_intro}.  Fix
  $k \in \{2, 3, \ldots, (d-1)\}$.

  Then the following assertions are equivalent:
  \begin{enumerate}
  \item\label{lem:characterization_of_slice-models:slice-model} $(X,\partial X)$ is a
    slice complement model for $\eub{\Gamma}$;
  \item\label{lem:characterization_of_slice-models:contractible}
    $X \cup_{\partial X} C(\partial X)$ is contractible (after forgetting the
    $\Gamma$-action);
  \item\label{lem:characterization_of_slice-models:homology} The space $X$ is
    $(k-1)$-connected and $H_n(X,\partial X)$ vanishes for $k \le n \le d$.
  \end{enumerate}
\end{lemma}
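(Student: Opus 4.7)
Set $Y := X \cup_{\partial X} C(\partial X)$; the proof splits naturally into the two equivalences $(1)\Leftrightarrow(2)$ and $(2)\Leftrightarrow(3)$.

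For $(1)\Leftrightarrow(2)$, my first step would be to note that $Y$ is a $\Gamma$-$CW$-complex with finite isotropy, so it is a model for $\eub{\Gamma}$ iff $Y^H$ is contractible for every finite subgroup $H \subseteq \Gamma$. For $H = \{1\}$ this is exactly $(2)$. For non-trivial finite $H$ I would argue that $Y^H$ is automatically a single point: freeness of $X$ and of $\partial X$ gives $Y^H = C(\partial X)^H$, freeness of the $F$-action on $S_F$ forces the $H$-fixed points in $\Gamma \times_F D_F$ to be cone points, and the combination of (M) and (NM) picks out exactly one coset $\gamma F$ and exactly one $F \in \calm$ contributing. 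Hence $(1)$ reduces to contractibility of $Y$, which is $(2)$.

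For $(2)\Leftrightarrow(3)$ one may forget the $\Gamma$-action. Since $d \ge 3$, each path component of $\partial X$ is homotopy equivalent to $S^{d-1}$, so is simply connected with reduced homology concentrated in degree $d-1$. Van Kampen applied to the pushout defining $Y$ yields $\pi_1(Y) \cong \pi_1(X)$, and Mayer--Vietoris combined with contractibility of the components of $C(\partial X)$ produces isomorphisms $H_n(Y) \cong H_n(X,\partial X)$ for $n \ge 2$; moreover the connecting map $H_0(\partial X) \to H_0(X) \oplus H_0(C(\partial X))$ is injective because $C(\partial X) \to \pi_0(\partial X)$ is a homotopy equivalence, so $H_1(X) = 0$ automatically forces $H_1(Y) = 0$.

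Both directions are then bookkeeping with the long exact sequence of the pair $(X,\partial X)$. For $(2)\Rightarrow(3)$, contractibility of $Y$ gives $\pi_1(X) = 0$ and $H_n(X,\partial X) = 0$ for $n \ge 2$; combining with $H_n(\partial X) = 0$ in the range $1 \le n \le d-2$ propagates this into $H_n(X) = 0$ for $2 \le n \le d-2$, so $X$ is $(d-2)$-connected, hence $(k-1)$-connected. For $(3)\Rightarrow(2)$, $(k-1)$-connectedness with $k \ge 2$ yields $\pi_1(Y) = 0$; the same long exact sequence fills in $H_n(X,\partial X) = 0$ in the missing range $2 \le n \le k-1$ (using vanishing of $H_{n-1}(\partial X)$), while vanishing for $n > d$ is automatic by dimension. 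This forces every $H_n(Y)$ to vanish and $Y$ to be contractible. The only mild obstacle is the degree-one bookkeeping, handled by the injectivity of the Mayer--Vietoris connecting map singled out above.
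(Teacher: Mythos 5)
Your proposal is correct and follows essentially the same route as the paper: the fixed-point analysis via (M) and (NM) for $(1)\Leftrightarrow(2)$, and the identification $H_n(X,\partial X)\cong H_n(X\cup_{\partial X}C(\partial X))$ for $n\ge 2$ together with the Hurewicz theorem for $(2)\Leftrightarrow(3)$. The only cosmetic difference is that the paper transfers $(k-1)$-connectedness between $X$ and the union directly from the $(d-1)$-connectedness of the inclusion $X\to X\cup_{\partial X}C(\partial X)$, whereas you route $\pi_1$ through van Kampen and fill in the low-degree homology by bookkeeping in the long exact sequence of $(X,\partial X)$; both work.
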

\begin{proof}~\eqref{lem:characterization_of_slice-models:slice-model}~$\Longleftrightarrow$~%
\eqref{lem:characterization_of_slice-models:contractible} For every finite non-trivial
  subgroup $H \subseteq G$, there exists precisely one element $F \in \calm$ such that for
  some $\gamma \in \Gamma$ we have $\gamma H \gamma^{-1} \subseteq F$ and hence
  \[
    X \cup_{\partial X} C(\partial X)^{H} \cong X \cup_{\partial X} C(\partial X)^{\gamma
      H \gamma} = X \cup_{\partial X} C(\partial X)^{F} = \pt
  \]
  holds.  Hence $X \cup_{\partial X} C(\partial M)$ is a model for $\eub{\Gamma}$, if and
  only if $X \cup_{\partial X} C(\partial X)$ is contractible.
  \\[1mm]~\eqref{lem:characterization_of_slice-models:contractible}~$\Longleftrightarrow$~%
\eqref{lem:characterization_of_slice-models:homology} Since the map
  $\partial X\to C(\partial X)$ is $(d-1)$-connected, the inclusion
  $X \to X \cup_{\partial X} C(\partial X)$ is $(d-1)$-connected. In particular $X$ is
  $(k-1)$-connected if and only if $X \cup_{\partial X} C(\partial X)$ is $(k-1)$
  connected.  We have $H_n(C(\partial X)) = 0$ for $n \ge 1$.  Hence we get from the long
  exact sequence of the pair $(X \cup_{\partial X} C(\partial X), C(\partial X))$ and by
  excision isomorphisms for $n \ge 2$
  \[
    H_n(X,\partial X) \xrightarrow{\cong} H_n(X \cup_{\partial X} C(\partial X),
    C(\partial X)) \xleftarrow{\cong} H_n(X \cup_{\partial X} C(\partial X)).
  \]
  By the Hurewicz Theorem $X \cup_{\partial X} C(\partial X)$ is contractible, if and only
  if $X \cup_{\partial X} C(\partial X)$ is $(k-1)$-connected and
  $H_n(X \cup_{\partial X} C(\partial X))$ vanishes for $k \le n$.  As
  $X \cup_{\partial X} C(\partial X)$ is $d$-dimensional
  $H_n(X \cup_{\partial X} C(\partial X))$ vanishes for $n \ge (d+1)$.  This finishes the
  proof of Lemma~\ref{lem:characterization_of_slice-models}.
\end{proof}

\begin{lemma}\label{lem:slice-models_and_pushouts}
  Consider two free $d$-dimensional slice systems $\cals = \{S_F \mid F \in \calm\}$ and
  $\cals' = \{S'_F \mid F \in \calm\}$. Put
  $\partial X = \coprod_{F \in \calm} \Gamma \times_F S_F$ and
  $\partial X' = \coprod_{F \in \calm} \Gamma \times_F S_F'$.  Let
  $\partial u \colon \partial X \to \partial X'$ be any cellular $\Gamma$-map. Consider
  the $\Gamma$-pushout
  \[
    \xymatrix{\partial X \ar[r] \ar[d]_{\partial u} & X \ar[d]^{u}
      \\
      \partial X' \ar[r] & X' }
  \]
  Suppose that $(X,\partial X)$ is a finite free $d$-dimensional $\Gamma$-$CW$-pair.
  Equip $(X',\partial X')$ with the induced structure of a finite free $d$-dimensional
  $\Gamma$-$CW$-pair.

  Then $(X',\partial X')$ is a slice complement model for $\eub{\Gamma}$ if $(X,\partial X)$ is a
  slice complement model for $\eub{\Gamma}$ and $d \ge 3$, and $(X,\partial X)$ is a slice complement model for
  $\eub{\Gamma}$ if $(X',\partial X')$ is a slice complement model for $\eub{\Gamma}$ and $d \ge 4$.
\end{lemma}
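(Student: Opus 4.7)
I will apply the characterization of Lemma~\ref{lem:characterization_of_slice-models}: a free $d$-dimensional $\Gamma$-$CW$-pair with boundary $\coprod_{F\in\calm}\Gamma\times_F S_F$ is a slice complement model if and only if $X\cup_{\partial X} C(\partial X)$ is contractible (forgetting the $\Gamma$-action). Setting $M := X\cup_{\partial X} C(\partial X)$ and $M' := X' \cup_{\partial X'} C(\partial X')$, it therefore suffices to exhibit a (non-equivariant) homotopy equivalence $M \simeq M'$, so that $M$ is contractible if and only if $M'$ is.

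By Lemma~\ref{lem:partial_f_yields_partial_f_F}, the cellular $\Gamma$-map $\partial u$ has the form $\coprod_{F\in\calm}\id_\Gamma\times_F u_F$ for $F$-maps $u_F\colon S_F \to S_F'$. Each $u_F$ extends via the cone construction to an $F$-map $D_F\to D_F'$ sending the cone point of $D_F$ to that of $D_F'$, and assembling these yields a $\Gamma$-map $Cu\colon C(\partial X) \to C(\partial X')$ extending $\partial u$. Using the associativity of pushouts and the presentation $X' = X\cup_{\partial X}\partial X'$, we compute
\[
M' = (X\cup_{\partial X}\partial X')\cup_{\partial X'} C(\partial X') = X\cup_{\partial X} C(\partial X'),
\]
where the attaching map $\partial X\to C(\partial X')$ is $\partial u$ followed by the standard inclusion $\partial X'\hookrightarrow C(\partial X')$, which coincides with $Cu|_{\partial X}$.

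The key point is that $Cu$ is a $\Gamma$-homotopy equivalence. Indeed, each $D_F$ retracts $F$-equivariantly onto its (fixed) cone point, so $C(\partial X) = \coprod_{F\in\calm}\Gamma\times_F D_F$ deformation retracts $\Gamma$-equivariantly onto $\partial\eub{\Gamma} = \coprod_{F\in\calm}\Gamma/F$, and likewise for $C(\partial X')$. The decomposition supplied by Lemma~\ref{lem:partial_f_yields_partial_f_F} forces the induced map $\pi_0(\partial u)$ to be the identity under the canonical identifications $\pi_0(\partial X) = \coprod_{F\in\calm}\Gamma/F = \pi_0(\partial X')$, so after these retractions $Cu$ becomes the identity on $\partial\eub{\Gamma}$ and is hence a $\Gamma$-homotopy equivalence. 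Since $\partial X\hookrightarrow X$ is a $\Gamma$-cofibration, the gluing lemma delivers
\[
M = X\cup_{\partial X} C(\partial X)\ \simeq_\Gamma\ X\cup_{\partial X} C(\partial X') = M'.
\]

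Feeding $M\simeq M'$ into the equivalence (1)$\Leftrightarrow$(2) of Lemma~\ref{lem:characterization_of_slice-models}, applied once to $(X',\partial X')$ in the forward direction and once to $(X,\partial X)$ in the backward direction, yields both implications. The stated dimension hypotheses $d\ge 3$ and $d\ge 4$ enter solely through the applicability of Lemma~\ref{lem:characterization_of_slice-models} to the pair whose slice-complement property is to be concluded. The main obstacle is precisely the identification $M' \simeq X\cup_{\partial X}C(\partial X')$ and the verification that $Cu$ is a homotopy equivalence; crucially, neither $\partial u$ nor the individual slice maps $u_F$ need to be equivalences, only the triviality of $\pi_0(\partial u)$ is invoked.
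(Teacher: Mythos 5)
Your proof is correct, but it takes a genuinely different route from the paper's. The paper verifies criterion~(3) of Lemma~\ref{lem:characterization_of_slice-models}: since every component of $\partial X$ and of $\partial X'$ is homotopy equivalent to $S^{d-1}$ and hence $(d-2)$-connected, $\partial u$ and therefore its cobase change $u$ are $(d-2)$-connected, and excision identifies $H_*(X,\partial X)$ with $H_*(X',\partial X')$; the asymmetry between $d \ge 3$ and $d \ge 4$ in the statement is an artifact of that method, because transporting simple connectivity backwards along $u$ requires $u$ to be $2$-connected, i.e.\ $d \ge 4$. You instead verify criterion~(2) by gluing in the cones and applying the gluing lemma to the map of pushout diagrams $(\id_X,\id_{\partial X},Cu)$; the key point that $Cu$ is a homotopy equivalence is automatic because both cones are disjoint unions of contractible pieces and $\pi_0(\partial u)$ is a bijection --- which, like the paper's implicit use of $\pi_0$-bijectivity to get $(d-2)$-connectivity of $\partial u$, rests on conditions (M) and (NM) via Lemma~\ref{lem:partial_f_yields_partial_f_F}, so you are not assuming more than the paper does. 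Your route buys a slightly stronger conclusion: it makes no use of the dimension beyond the applicability of Lemma~\ref{lem:characterization_of_slice-models}, so it yields the backward implication already for $d \ge 3$. One small correction to your closing remark: the hypothesis $d \ge 4$ does not in fact ``enter through the applicability'' of Lemma~\ref{lem:characterization_of_slice-models}, which only requires $d \ge 3$; in your argument it is simply not needed.
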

\begin{proof}
  Since $ \partial u$ is $(d-2)$-connected, the map $u$ is $(d-2)$-connected.  We conclude
  from excision that the map
  $H_n(u,\partial u) \colon H_n(X,\partial X) \xrightarrow{\cong} H_n(X',\partial X')$ is bijective
  for all $n \in \IZ$.  Now the claim follows from
  Lemma~\ref{lem:characterization_of_slice-models}.
\end{proof}


\typeout{--------------------------------- Section 5: Poincar\'e pairs
  -------------------------------}

\section{Poincar\'e pairs}%
\label{sec:Poincare_pairs}

Recall from the introduction that the main goal of this paper is to find a slice complement model
$(X,\partial X)$ such that $(X/\partial X,\partial X/\Gamma)$ is a Poincar\'e pair.  This
is a necessary condition for finding a slice complement model $(X,\partial X)$ such that
$(X/\partial X,\partial X/\Gamma)$ is a compact manifold, which we will finally prove in the
sequel~\cite{Davis-Lueck(2022_manifold_models)} to this paper. Next we give some information about
Poincar\'e pairs in general and prove some results needed later.


\subsection{Review on Poincar\'e pairs}\label{subsec:Poincare_pairs}

We recall some basics about Poincar\'e pairs following~\cite{Wall(1967)} and prove
Lemma~\ref{lem:subtracting_a_Poincare_pair}, which is a mild generalization
of~\cite[Theorem~2.1]{Wall(1967)}.

Given a $CW$-complex $X$, we denote by $p_X \colon \widetilde{X} \to X$ its universal
covering.  If $\pi$ is the fundamental group of $X$, then $\widetilde{X}$ is a free
$\pi$-$CW$-complex and $p_X$ is the quotient map of this $\pi$-action on $\widetilde{X}$.
If $A \subseteq X$ is a $CW$-subcomplex, we denote by $\overline{A} = p_X^{-1}(A)$.  We
get a free $\pi$-$CW$-pair $(\widetilde{X},\overline{A})$. Note that
$p|_{\overline{A}} \colon \overline{A} \to A$ is a $\pi$-covering.  It is the universal covering
of $A$, if and only if $A$ is connected and the inclusion $A \to X$ induces a bijection on
the fundamental groups. Given an element $w \in H^1(\pi,\IZ/2)$, which is the same as a
group homomorphism $w \colon \pi \to \{\pm 1\}$, we denote by $\IZ^w$ the $\IZ\pi$-module,
whose underlying abelian group is $\IZ$ and on which $\omega \in \pi$ acts by
multiplication with $w(\omega)$. We denote by
$H_*^{\pi}(\widetilde{X},\overline{A};\IZ^w)$ the homology of the $\IZ$-chain complex
$\IZ^w \otimes_{\IZ \pi} C_*(\widetilde{X},\overline{A})$.  Consider an element
$u \in H_d^{\pi}(\widetilde{X},\overline{A};\IZ^w)$.  Let $\partial \colon H_d^{\pi}(\widetilde{X},\overline{A};\IZ^w)
\to H_{d-1}^{\pi}(\overline{A};\IZ^w)$ be the boundary homomorphism. We obtain by the cap product
$\IZ\pi$-chain maps, unique up to $\IZ \pi$-chain homotopy
\begin{eqnarray}
  - \cap u \colon C^{d-*}(\widetilde{X},\overline{A})
  & \to &
          C_*(\widetilde{X});
          \label{Poincare_chain_map;rel_to_absolute}
  \\
  - \cap u\colon C^{d-*}(\widetilde{X})
  & \to &
          C_*(\widetilde{X},\overline{A});
          \label{Poincare_chain_map_absolute_rel}
  \\
  - \cap \partial(u) \colon C^{d-1-*}(\overline{A})
  & \to &
          C_*(\overline{A}).
          \label{Poincare_chain_map_boundary}        
\end{eqnarray}
Here the dual chain complexes are to be understood with respect to the $w$-twisted
involution $\IZ\pi \to \IZ\pi$ sending $\sum_{\omega \in \pi} n_{\omega} \cdot \omega$ to
$\sum_{\omega \in \pi} w(\omega) \cdot n_{\omega} \cdot \omega^{-1}$.

\begin{definition}[(Simple) finite $d$-dimensional Poincar\'e pair]%
\label{def:Simple_finite_d-dimensional_Poincare_pair}
  We call a finite $d$-dimensional $CW$-pair $(X,\partial X)$ with connected $X$ a
  \emph{(simple) finite $d$-dimensional Poincar\'e pair} with respect to the
  \emph{orientation homomorphism} $w \in H^1(X,\IZ/2) = \hom(\pi_1(X),\{\pm 1\})$ and
  \emph{fundamental class}
  $[X,\partial X] \in H_d^{\pi}(\widetilde{X},\overline{\partial X};\IZ^w)$, if the
  $\IZ\pi$-chain
  maps~\eqref{Poincare_chain_map;rel_to_absolute},~\eqref{Poincare_chain_map_absolute_rel}
  and~\eqref{Poincare_chain_map_boundary} for $u = [X,\partial X]$ are (simple)
  $\IZ\pi$-chain homotopy equivalences.
\end{definition}

Note that all three $\IZ\pi$-chain maps~%
\eqref{Poincare_chain_map;rel_to_absolute},~\eqref{Poincare_chain_map_absolute_rel},
and~\eqref{Poincare_chain_map_boundary} are (simple) $\IZ\pi$-chain homotopy equivalences,
if two of them are.

Since the $\IZ\pi$-chain map~\eqref{Poincare_chain_map;rel_to_absolute} is a
$\IZ\pi$-chain homotopy equivalence and induces an isomorphism
$H_d^{\pi}(\widetilde{X},\overline{X};\IZ^w) \xrightarrow{\cong} H^0_{\pi}(\widetilde{X})$
and $H^0_{\pi}(\widetilde{X}) \cong H^0(X) \cong \IZ$, the group
$H_d^{\pi}(\widetilde{X},\overline{X};\IZ^w)$ is infinite cyclic and
$[X,\partial X] \in H_d^{\pi_1(X)}(\widetilde{X},\overline{\partial X};\IZ^w)$ is
a generator.

\begin{remark}[Orientation homomorphisms]\label{rem:extension_of_the_orientation_homomorphism_for_pi}
  Part of the definition of a Poincar\'e pair $(X,\partial X)$ is the existence of an
  appropriate orientation homomorphism $w \in H^1(X;\IZ/2)$. Note that $w$ is uniquely
  determined by the mere fact that ($X,\partial X)$ together with the choice of $w$ is a
  finite Poincar\'e pair. There is even a recipe, how to construct $w$ from the underlying
  $CW$-pair $(X,\partial X)$, see~\cite[Lemma~5.46 on page~109]{Crowley-Lueck-Macko(2021)},
  which we will recall next.

  Let the untwisted dual $\IZ\pi$-chain complex
  $C^{d-*}_{\untw}(\widetilde{X},\overline{\partial X})$ be defined with respect to the
  untwisted involution on $\IZ\pi$ sending $\sum_{\omega \in \pi} n_{\omega} \cdot \omega$
  to $\sum_{\omega \in \pi} n_{\omega} \cdot \omega^{-1}$. Note that
  $C^{d-*}_{\untw}(\widetilde{X},\overline{\partial X})$ depends only on the
  $\Gamma$-$CW$-complex $X$ but not on $w$, and that
  $\IZ^w \otimes_{\IZ} C^{d-*}_{\untw}(\widetilde{X},\overline{\partial X})$ equipped with
  the diagonal $\pi$-action is $C^{d-*}(\widetilde{X},\overline{\partial X})$. Now $w$ is
  given by the $\pi$-action on the infinite cyclic group
  $H_0(C^{d-*}_{\untw}(\widetilde{X},\overline{\partial X}))$, or, equivalently by the
  condition that the $\IZ \Gamma$-module
  $H_0(C^{d-*}_{\untw}(\widetilde{X},\overline{\partial X}))$ is $\IZ\Gamma$-isomorphic to
  $\IZ^w$.  This follows from the fact that the $\IZ\pi$-chain
  map~\eqref{Poincare_chain_map;rel_to_absolute} induces a $\IZ\pi$-isomorphism from
  $H_0(C^{d-*}(\widetilde{X},\overline{\partial X}))$ to $H_0^{\pi}(C_*(\widetilde{X}))$
  and the $\pi$-action on $H_0^{\pi}(C_*(\widetilde{X})) \cong \IZ$ is trivial.

  There are only two possible choices for the fundamental class $[X/\pi,\partial X/\pi]$,
  since it has to be a generator of the infinite cyclic group
  $H_d^{\pi}(\widetilde{X},\overline{X};\IZ^w)$.
\end{remark}

This definition extends to not necessarily connected $X$ as follows. We call a finite
$d$-dimensional $CW$-pair $(X,\partial X)$ a finite $d$-dimensional Poincar\'e pair with
respect to $w \in H^1(X;\IZ/2)$ with fundamental class
\[ [X,\partial X] \in \bigoplus_{C \in \pi_0(C)} H_d^{\pi_1(C)}(\widetilde{C},
  \overline{C \cap \partial X};\IZ^{w|_C})
\]
if for each path component $C$ of $X$ the pair $(C,C \cap \partial X)$ is a finite
$d$-dimensional Poincar\'e complex with respect to $w|_C \in H^1(C;\IZ/2)$ coming from $w$
by restriction to $C$ and the fundamental class $[C,(C \cap \partial X)]$ in
$H_d^{\pi_1(C)}(\widetilde{C}, \overline{C \cap \partial X};\IZ^{w|_C})$, which is the
component associated to $C \in \pi_0(X)$ of $[X,\partial X]$.

Note that $\partial X$ inherits the structure of a finite $(d-1)$-dimensional
Poincar\'e-complex, provided that $\pi_1(\partial X,x) \to \pi_1(X,x)$ is injective for
every $x \in \partial X$. Moreover, $\partial X$ inherits the structure of a simple finite
$(d-1)$-dimensional Poincar\'e-complex, provided that $\pi_1(\partial X,x) \to \pi_1(X,x)$
is injective for every $x \in \partial X$ and $\Wh(\partial X) \to \Wh(X)$ is injective.
The last condition is satisfied for instance, if the functor $\Pi(\partial X) \to \Pi(X)$ on the
fundamental groupoids induced by the inclusion $\partial X \to X$ is an equivalence of
categories.  All these claims follow from the following subsection.

\subsection{Arbitrary coverings}\label{subsec:arbitrary_coverings}

We have to deal with arbitrary coverings as well.  Let $(X,A)$ be finite $CW$-pair with
connected $X$. Put $\pi = \pi_1(X)$.  Let $\Gamma$ be a group and
$p \colon \widehat{X} \to X$ be a $\Gamma$-covering.  Put $\widehat{A} = p^{-1}(A)$. Let
$p_X \colon \widetilde{X} \to X$ be the universal covering.  Put
$\overline{A} = p_X^{-1}(\widetilde{X})$. Consider group homomorphisms
$w \colon \pi \to \{\pm 1\}$ and $v \colon \Gamma \to \{\pm 1\}$.  Let
$\phi \colon \pi \to \Gamma$ be the group homomorphism, for which there is a
$\Gamma$-homeomorphism
$F \colon \Gamma \times_{\phi} \widetilde{X} \xrightarrow{\cong} \widehat{X}$.  Note
that $F$ induces a $\Gamma$-homeomorphism
$f \colon \Gamma \times_{\phi} \overline{A} \xrightarrow{\cong}\widehat{A}$.  Suppose that
$v = w \circ \phi$.

Then the $\Gamma$-map $(F,f)$ induces an isomorphism of $\IZ\Gamma$-chain complexes
\[\IZ\Gamma \otimes_{\IZ\phi} C_*(\widetilde{X},\overline{A})
  \xrightarrow{\cong} C_*(\widehat{X},\widehat{A}).
\]
In particular we get an isomorphism
\begin{equation}
  \mu \colon H_d^{\pi}(\widetilde{X},\overline{A},\IZ^w) \xrightarrow{\cong}
  H_d^{\Gamma}(\widehat{X},\widehat{A},\IZ^v).
  \label{iso_mu}
\end{equation}
Given an element $u \in H_d^{\pi}(\widetilde{X},\overline{A},\IZ^w)$, the $\IZ\Gamma$-chain
homotopy equivalence
\[- \cap \mu(u) \colon C^{d-*}(\widehat{X},\widehat{A}) \to C_*(\widehat{X})
\]
is obtained from the $\IZ\pi$-chain map~\eqref{Poincare_chain_map;rel_to_absolute} by
induction with $\phi \colon \pi \to \Gamma$. Note that
$- \cap \mu(u) \colon C^{d-*}(\widehat{X},\widehat{A}) \to C_*(\widehat{X})$ is a
$\IZ\Gamma$-chain homotopy equivalence, if the $\IZ\pi$-chain
map~\eqref{Poincare_chain_map;rel_to_absolute} is a $\IZ\pi$-chain homotopy equivalence.
The converse is true, provided that $\phi$ is injective.  Moreover,
$- \cap\mu(u) \colon C^{d-*}(\widehat{X},\widehat{A}) \to C_*(\widehat{X})$ is a simple
$\IZ\Gamma$-chain homotopy equivalence, if the $\IZ\pi$-chain
map~\eqref{Poincare_chain_map;rel_to_absolute} is a simple $\IZ\pi$-chain homotopy
equivalence.  The converse is true, provided that $\phi$ is injective and induces an
injection $\Wh(\pi) \to \Wh(\Gamma)$.

The analogous statements are true for~\eqref{Poincare_chain_map_absolute_rel}.

    
\subsection{Subtracting a Poincar\'e pair}\label{subsec:Subtracting_a_Poincare_pair}

Let $M$ be a closed manifold. Suppose that we have embedded a codimension zero manifold
$(N,\partial N)$ into $M$.  If we subtract $(N,\partial N)$ from $M$ in the sense that we
delete the interior of $N$ from $M$, then we obtain a manifold with boundary $\partial N$.
We want to prove the analogue for Poincar\'e pairs.

Consider a connected finite $d$-dimensional $CW$-complex $Y$ with fundamental group $\pi$.
Consider (not necessarily connected) $CW$-subcom\-plexes $Y_1$, $Y_2$, and $Y_0$ of $Y$
satisfying $Y = Y_1 \cup Y_2$ and $Y_0 = Y_1 \cap Y_2$ such that
$\dim(Y_1) = \dim(Y_2) = d$ and $\dim(Y_0) = d-1$ hold.  Let
$p_Y \colon \widetilde{Y} \to Y$ be the universal covering of $Y$.  Put
$\overline{Y_i} = p_Y^{-1}(Y_i)$ for $i = 0,1,2$.

Consider elements $w \in H^1(Y;\IZ/2)$ and $w_i \in H^1(Y_i;\IZ/2)$ for $i = 1,2$, such
that $H^1(Y;\IZ/2) \to H^1(Y_i;\IZ/2)$ sends $w$ to $w_i$ for $i = 1,2$.  We conclude
from~\eqref{iso_mu} that there are isomorphisms
\begin{equation}
  \mu_i  \colon \bigoplus_{C \in \pi_0(Y_i)} H_d^{\pi_1(C)}(\widetilde{C},\overline{C \cap Y_0};\IZ^{w_i|_C})
  \to  H_d^{\pi}(\overline{Y_i},\overline{Y_0};\IZ^{w})
  \label{iso_mu_i}
\end{equation}
for $i = 1,2$.

By a Mayer-Vietoris argument we see that the map
\[H_d^{\pi}(\overline{Y_2},\overline{Y_0};\IZ^w) \oplus
  H_d^{\pi}(\overline{Y_1},\overline{Y_0};\IZ^w) \xrightarrow{\cong}
  H_d^{\pi}(\widetilde{Y},\overline{Y_0};\IZ^w)
\]
is bijective. Let $u \in H^{\pi}(\widetilde{Y},\IZ^w)$,
$u_1 \in H_d^{\pi}(\overline{Y_1},\overline{Y_0};\IZ^w)$, and
$u_2 \in H_d^{\pi}(\overline{Y_2},\overline{Y_0};\IZ^w)$ be elements such that the
isomorphism above sends $(u_1,u_2)$ to the image of $u$ under the map
$H_d^{\pi}(\widetilde{Y};\IZ^w) \to H_d^{\pi}(\widetilde{Y},\overline{Y_0};\IZ^w)$.  The
next result is a variation of~\cite[Theorem~2.1]{Wall(1967)}.

\begin{lemma}\label{lem:subtracting_a_Poincare_pair}

  \begin{enumerate}
  \item\label{lem:subtracting_a_Poincare_pair:conclusion_Y} Suppose that for $i = 1,2$
    there are elements $[Y_i,Y_0]$ in \linebreak
    $\bigoplus_{C \in \pi_0(Y_i)} H_d^{\pi_1(C)}(\widetilde{C},\overline{C \cap
      Y_0};\IZ^{w_i|_C})$ such that $\mu_i([Y_i,Y_0]) = u_i$ for the isomorphism $\mu_i$
    of~\eqref{iso_mu_i} holds and $(Y_i,Y_0)$ is a finite $d$-dimensional Poincar\'e
    complex with respect to $w_i$ and $[Y_i,Y_0]$ as fundamental class;
    
    Then $Y$ is a finite $d$-dimensional Poincar\'e complex with respect to $w$ and $u$ as
    fundamental class;

  \item\label{lem:subtracting_a_Poincare_pair:conclusion_Y_simple} If we assume in
    assertion~\eqref{lem:subtracting_a_Poincare_pair:conclusion_Y} additionally that
    $(Y_i,Y_0)$ is a simple finite $d$-dimensional Poincar\'e complex for $i = 1,2$, then
    $Y$ is a simple finite $d$-dimensional Poincar\'e complex;

  \item\label{lem:subtracting_a_Poincare_pair:conclusion_Y_2} Assume that the following
    conditions hold:
    \begin{itemize}

    \item $Y$ is a finite $d$-dimensional Poincar\'e complex with respect to $w$ and $u$
      as fundamental class;

    \item There is an element $[Y_1,Y_0] \in \bigoplus_{C \in \pi_0(Y_1)}$ in
      $H_d^{\pi_1(C)}(\widetilde{C},\overline{C \cap Y_0};\IZ^{w_1|_C})$ such that
      $\mu_1([Y_1, Y_0]) = u_1$ holds and $(Y_1,Y_0)$ is a finite $d$-dimensional
      Poincar\'e complex with respect to $w_1$ and $[Y_1,Y_0]$ as fundamental class;
    
    \item For every $y_2 \in Y_2$ the map $\pi_1(Y_2,y_2) \to \pi_1(Y,y_2)$ is injective.

    \end{itemize}
    
    Then $(Y_2,Y_0)$ is a finite $d$-dimensional Poincar\'e pair with respect to $w_2$ and
    $\mu_2^{-1}(u_2)$ as fundamental class;

  \item\label{lem:subtracting_a_Poincare_pair:conclusion_Y_2_simple} If we assume in
    assertion~\eqref{lem:subtracting_a_Poincare_pair:conclusion_Y_2} additionally that
    $(Y_1,Y_0)$ and $Y$ are simple and the map $\Wh(Y_2) \to \Wh(Y)$ is injective, then
    $(Y_2,Y_0)$ is a simple finite $d$-dimensional Poincar\'e pair.
  
  \end{enumerate}
\end{lemma}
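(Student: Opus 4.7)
The plan is to rephrase each assertion as the (simple) chain homotopy equivalence of a cap product $\IZ\pi$-chain map, and then to extract the desired equivalence from a morphism of Mayer-Vietoris short exact sequences. I would first set up the two short exact sequences of $\IZ\pi$-chain complexes
$$0 \to C_*(\overline{Y_0}) \to C_*(\overline{Y_1}) \oplus C_*(\overline{Y_2}) \to C_*(\widetilde{Y}) \to 0$$
on the chain side and, after dualizing with respect to the $w$-twisted involution,
$$0 \to C^{d-*}(\widetilde{Y}) \to C^{d-*}(\widetilde{Y},\overline{Y_2}) \oplus C^{d-*}(\widetilde{Y},\overline{Y_1}) \to C^{d-*}(\widetilde{Y},\overline{Y_0}) \to 0$$
on the cochain side, using excision to identify $C^{d-*}(\widetilde{Y},\overline{Y_{3-i}}) \simeq C^{d-*}(\overline{Y_i},\overline{Y_0})$. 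The cap products with $u$, $u_1$, $u_2$ and $\pm\partial u_i$ assemble, up to $\IZ\pi$-chain homotopy, into a morphism from the cochain sequence to the chain sequence; compatibility of the boundary pieces follows from the Mayer-Vietoris relation between $u$ and $(u_1,u_2)$, which forces $\partial u_1 + \partial u_2 = 0$ in $H_{d-1}^{\pi}(\overline{Y_0};\IZ^w)$.

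For assertion~\eqref{lem:subtracting_a_Poincare_pair:conclusion_Y}, the Poincar\'e hypothesis on each $(Y_i,Y_0)$ together with the \emph{two-out-of-three} principle for the three cap product maps appearing in Definition~\ref{def:Simple_finite_d-dimensional_Poincare_pair} gives chain equivalences for every cap product in the diagram except the middle one $-\cap u$. A morphism of short exact sequences of chain complexes that is an equivalence on two of the three terms is one on the third (by the five-lemma applied to the associated long exact homology sequences, or directly via a mapping cone argument), forcing $-\cap u$ to be a chain equivalence and yielding $Y$ as a finite $d$-dimensional Poincar\'e complex. Assertion~\eqref{lem:subtracting_a_Poincare_pair:conclusion_Y_2} runs the same diagram in reverse: from chain equivalences for $-\cap u$ and $-\cap u_1$, the same principle produces a chain equivalence $C^{d-*}(\widetilde{Y},\overline{Y_1}) \to C_*(\overline{Y_2})$ on the (possibly disconnected) $\pi$-cover of $Y_2$ pulled back from $\widetilde{Y}$. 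The hypothesis that $\pi_1(Y_2,y_2) \to \pi_1(Y,y_2)$ is injective is then precisely what, via Subsection~\ref{subsec:arbitrary_coverings}, allows this equivalence to be interpreted as a $\IZ\pi_1(Y_2)$-chain equivalence on the universal cover of $Y_2$, equipping $(Y_2,Y_0)$ with the required Poincar\'e structure.

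For the simple versions~\eqref{lem:subtracting_a_Poincare_pair:conclusion_Y_simple} and~\eqref{lem:subtracting_a_Poincare_pair:conclusion_Y_2_simple} I would run the same diagram chase while tracking Whitehead torsion of each cap product chain equivalence. A sum formula for torsion under Mayer-Vietoris pushouts of $\IZ\pi$-chain complexes expresses the torsion of $-\cap u$ as an (alternating) sum of the torsions of the input cap products, whence~\eqref{lem:subtracting_a_Poincare_pair:conclusion_Y_simple} follows from the vanishing of all input torsions. For~\eqref{lem:subtracting_a_Poincare_pair:conclusion_Y_2_simple}, the torsion of the cap product associated to $(Y_2,Y_0)$ lives in $\Wh(\pi_1(Y_2))$ after the descent step, and its vanishing follows from the vanishing of its image in $\Wh(Y)$ together with the assumed injectivity of $\Wh(Y_2) \to \Wh(Y)$. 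The main obstacle I expect is the covering-descent step underpinning~\eqref{lem:subtracting_a_Poincare_pair:conclusion_Y_2} and~\eqref{lem:subtracting_a_Poincare_pair:conclusion_Y_2_simple}: transferring a $\IZ\pi$-chain equivalence on the pulled-back cover down to a $\IZ\pi_1(Y_2)$-chain equivalence on the universal cover of $Y_2$ must be executed via the induction adjunction along $\phi \colon \pi_1(Y_2) \to \pi$, with the injectivity of $\phi$ entering exactly as in the construction of the isomorphism $\mu$ in~\eqref{iso_mu}.
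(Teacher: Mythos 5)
Your overall strategy --- assemble the cap products into a morphism of short exact sequences of $\IZ\pi$-chain complexes, invoke a two-out-of-three principle for (simple) chain homotopy equivalences via additivity of Whitehead torsion, and descend to the universal cover of $Y_2$ through the injectivity of $\pi_1(Y_2,y_2)\to\pi_1(Y,y_2)$ using the adjunction of Subsection~\ref{subsec:arbitrary_coverings} --- is exactly the paper's. The descent step and the torsion bookkeeping are handled correctly.

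The gap is in the choice of short exact sequences: the two Mayer--Vietoris sequences you write down do not carry the ladder. First, the displayed cochain sequence is not exact as stated: since $\overline{Y_1}\cup\overline{Y_2}=\widetilde{Y}$, the relative Mayer--Vietoris sequence has $C^{d-*}(\widetilde{Y},\overline{Y_1}\cup\overline{Y_2})=0$ as its left-hand term, so $C^{d-*}(\widetilde{Y},\overline{Y_2})\oplus C^{d-*}(\widetilde{Y},\overline{Y_1})\to C^{d-*}(\widetilde{Y},\overline{Y_0})$ is already an isomorphism; and if you instead mean the dual of the chain-level Mayer--Vietoris sequence, its middle term is $C^{d-*}(\overline{Y_1})\oplus C^{d-*}(\overline{Y_2})$, which is not what excision identifies with $C^{d-*}(\overline{Y_2},\overline{Y_0})\oplus C^{d-*}(\overline{Y_1},\overline{Y_0})$. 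Second, and decisively, even with corrected sequences the cap products do not give a morphism between them: $-\cap u_i$ sends $C^{d-*}(\overline{Y_i})$ to $C_*(\overline{Y_i},\overline{Y_0})$ and $C^{d-*}(\overline{Y_i},\overline{Y_0})$ to $C_*(\overline{Y_i})$, so it cannot map the middle term of the cochain Mayer--Vietoris sequence to the middle term $C_*(\overline{Y_1})\oplus C_*(\overline{Y_2})$ of the chain one, and the piece involving $-\cap\partial u_i$ sits in degree $d-1$, off by a shift. The repair is the paper's (Wall's) decomposition: take as source the cohomology sequence of the pair $(\widetilde{Y},\overline{Y_1})$ and as target the homology sequence of the pair $(\widetilde{Y},\overline{Y_2})$, and use excision to identify $C^{d-*}(\widetilde{Y},\overline{Y_1})\cong C^{d-*}(\overline{Y_2},\overline{Y_0})$ and $C_*(\overline{Y_1},\overline{Y_0})\cong C_*(\widetilde{Y},\overline{Y_2})$. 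Then the three vertical maps are precisely $-\cap u_2$, $-\cap u$, and $-\cap u_1$, no boundary classes enter, the rows are based exact sequences (so the two-out-of-three principle applies to simple equivalences as well), and the remainder of your argument goes through unchanged.
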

\begin{proof}
  Consider the following diagram of $\IZ\pi$-chain complexes
  \[\xymatrix{ 0 \ar[r] & C^{d-*}(\widetilde{Y},\overline{Y_1}) \ar[r] \ar[d]_{\cong} &
      C^{d-*}(\widetilde{Y}) \ar[r] \ar[ddd]^{- \cap u} & C^{d-*}(\overline{Y_1}) \ar[r]
      \ar[dd]^{- \cap u_1} & 0
      \\
      & C^{d-*}(\overline{Y_2},\overline{Y_0}) \ar[dd]^{- \cap u_2} & & &
      \\
      & & & C_*(\overline{Y_1},\overline{Y_0}) \ar[d]^{\cong}
      &\\
      0 \ar[r] & C_*(\overline{Y_2}) \ar[r] & C_*(\widetilde{Y}) \ar[r] &
      C_*(\widetilde{Y},\overline{Y_2}) \ar[r] & 0 }
  \]
  One can arrange the representatives of the $\IZ\pi$-chain maps $- \cap u_1$, $- \cap u$,
  and $- \cap u_2$ such that the diagram commutes. The two rows are based exact sequences
  of pairs.  The two arrows marked with $\cong$ are the base preserving isomorphisms given
  by excision.  We conclude that all three $\IZ\pi$-chain maps $- \cap u_1$, $- \cap u$,
  and $- \cap u_2$ are (simple) $\IZ\pi$-chain homotopy equivalences, if and only if two of
  them are.  Provided that for every $y_2 \in Y_2$ the map
  $\pi_1(Y_2,y_2) \to \pi_1(Y,y_2)$ is injective, then $(Y_2,Y_0)$ is a Poincar\'e pair, if
  and only if
  $- \cap u_2 \colon C^{d-*}(\overline{Y_2},\overline{Y_0}) \to C_*(\overline{Y_2})$ is a
  $\IZ\pi$-chain homotopy equivalence

  Now the claims follows from the conderations appearing in
  Subsection~\ref{subsec:arbitrary_coverings}.
\end{proof}

    
\subsection{Special Poincar\'e complexes}\label{subsec:Special_Poincare_complexes}

\begin{definition}[Special Poincar\'e pair]\label{def:special_Poincare_pair}
  Let $(X,\partial X)$ be a finite (simple) Poincar\'e pair of dimension $n \ge5$.  It is
  called \emph{special}%
\index{special Poincar\'e pair}\index{Poincare pair@Poincar\'e pair!special} 
if there  exists
  \begin{itemize}

  \item An $n$-dimensional compact smooth manifold $H$ with boundary $\partial H$ such that the
    inclusion $i \colon \partial H \to H$ induces an epimorphism
    $\pi_0(\partial H) \to \pi_0(H)$, and for every component $D$ of $H$ an epimorphism
    $\ast_{\substack{C \in \pi_0(\partial H)\\ i(C) \subseteq D}}\; \pi_1(C) \to
    \pi_1(D)$;

  \item A finite $CW$-sub complex $ \widehat{X}$ of $X$ containing $\partial X$
    such that $\widehat{X} \setminus \partial X$ contains only cells of dimension $\le (n-2)$;

  \item A cellular map $z \colon \partial H \to \widehat{X}$, which induces a bijection on $\pi_0$
    and for every choice of base point in $\partial H$ an epimorphism on $\pi_1$,
    where we consider $(H,\partial H)$ as a simple $CW$-pair by a smooth triangulation,

\end{itemize}

such that $X$ is the pushout
\[
  \xymatrix{\partial H \ar[r]^{z} \ar[d] & \widehat{X} \ar[d]
    \\
    H \ar[r] & X = H \cup_{z} \widehat{X}}
\]
\end{definition}

\begin{lemma}\label{lem:H_inside_a_Poincare_pair}
  Let $(X,\partial X)$ be an $n$-dimensional finite (simple) Poincar\'e pair of dimension
  $n \ge5$.  Then there exists a special Poincar\'e pair $(X',\partial X')$ with
  $\partial X = \partial X'$ together with a (simple) homotopy equivalence
  $g \colon X \to X'$ inducing the identity on $\partial X$.
\end{lemma}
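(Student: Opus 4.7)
The plan is to construct the special Poincar\'e pair $(X',\partial X)$ by replacing the high-dimensional part of a CW-decomposition of $X$ by a smooth manifold, using Poincar\'e duality to arrange the correct handle indices.

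First I would fix a CW-decomposition of $(X,\partial X)$ in which cells of $X$ are attached in order of increasing dimension, and let $K \subseteq X$ be the subcomplex consisting of $\partial X$ together with all cells of $X$ of dimension at most $n-2$. Then $X = K \cup E$, where $E$ is the union of the $(n-1)$- and $n$-cells of $X$. The complex $K$ will be the starting point for $\widehat{X}$; possibly further cells of dimension $\le n-2$ will be attached later.

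Next I would produce a compact smooth $n$-manifold $H$ with boundary $\partial H$ whose handle decomposition dualizes the cellular data of $E$ in the sense of Poincar\'e duality for $(X,\partial X)$. The guiding principle is that an $n$-cell of $X$ should contribute a $0$-handle of $H$ (a separate $n$-ball) and an $(n-1)$-cell should contribute a $1$-handle, where the co-core spheres of these handles realize the Poincar\'e duals of the original attaching data. Since $n\ge 5$, general-position arguments allow one to realize the dual attaching spheres and framings by smooth embeddings. The resulting $H$ has a handle decomposition with handles of index $\le 1 \le n-2$, so $\pi_0(\partial H)\to \pi_0(H)$ is surjective and for every component $D$ of $H$ the free product of the $\pi_1$'s of the boundary components of $D$ surjects onto $\pi_1(D)$, exactly the hypotheses required in Definition~\ref{def:special_Poincare_pair}. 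Simultaneously I would build a cellular map $z\colon \partial H\to \widehat X$, enlarging $K$ by finitely many cells of dimension $\le n-2$ as needed to ensure that $z$ is a bijection on $\pi_0$ and an epimorphism on $\pi_1$; such enlargements are always available since they only affect $\pi_0$ and $\pi_1$ data, not the dimension constraints on $\widehat X$.

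Finally I would verify that the pushout $X' := H\cup_z \widehat X$ is (simply) homotopy equivalent to $X$ relative to $\partial X$, thereby obtaining the map $g$. By construction the cellular $\IZ\pi_1(X)$-chain complexes of the universal covers of $X$ and $X'$ are $\IZ\pi_1(X)$-chain homotopy equivalent via a natural comparison map induced by $g$; together with Whitehead's theorem this yields the homotopy equivalence. In the simple case one tracks Whitehead torsion throughout: the chain-level identifications provided by Poincar\'e duality and by Lemma~\ref{lem:subtracting_a_Poincare_pair} are simple, so the composite equivalence is simple as well.

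The principal obstacle is to realize the cellular duality geometrically, that is, to promote the chain-level dualization of the top cells of $X$ to an actual smooth handle decomposition of $H$ together with a consistent gluing $z$. This is precisely where the hypothesis $n\ge 5$ is essential: general position is needed to replace attaching maps by embeddings, and the Whitney trick is needed to perform the handle manipulations that ensure the dual handle indices are all $\le n-2$ and that the resulting chain complex matches that of $X$.
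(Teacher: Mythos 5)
The paper does not actually prove this lemma; it is quoted from \cite[Lemma~2.8 and the following paragraph]{Wall(1999)}, so your task was in effect to reconstruct Wall's argument. Your overall shape --- split $X$ into a low-dimensional piece $\widehat{X}$ and a smooth handlebody $H$ built from handles of index $\le 1$ --- is the right one, but the mechanism you propose does not hold up. The appeal to general position and the Whitney trick is misplaced: $X$ is only a Poincar\'e pair, not a manifold, so there is no ambient smooth structure in which to put ``dual attaching spheres'' in general position; and inside $H$ itself no such tool is needed, since a compact $n$-manifold obtained from disjoint $n$-discs by attaching $1$-handles is just a boundary connected sum of copies of $D^n$ and (possibly twisted) $S^1\times D^{n-1}$ and exists for free. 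The real role of $n\ge 5$ is the inequality $n-2\ge 3$, which is what makes the codimension-$\ge 2$ complex $\widehat{X}$ realizable with the correct fundamental group data.

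The decisive gap is in the passage from $H$ to $\widehat{X}$ and $g$. The content of the lemma is that, after choosing $H$ together with a map to $X$ inducing a bijection on $\pi_0$ and surjections on $\pi_1$, the ``complement'' of $H$ in $X$ has, relative to $\partial X\amalg\partial H$, the homotopy type of a complex with cells of dimension $\le n-2$. This is where Poincar\'e--Lefschetz duality $C^{n-*}(\widetilde{X},\overline{\partial X})\simeq C_*(\widetilde{X})$ must be used: it identifies the top two degrees of the relative chain complex with the $0$- and $1$-cochains of $X$, which are exactly what attaching $H$ kills, leaving a relative chain complex equivalent to one concentrated in degrees $\le n-2$; one then needs a realization theorem for chain complexes by CW-pairs (and, in the simple case, vanishing of the torsion of the duality equivalence) to produce $\widehat{X}$, the gluing map $z$, and the equivalence $g$ rel $\partial X$. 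Your write-up instead asserts that the chain complexes of $X$ and $X'$ are ``chain homotopy equivalent via a natural comparison map induced by $g$'' --- but $g$ is precisely what has to be constructed, so the argument is circular at the key step. To repair it you would have to carry out the duality computation and the chain-complex realization explicitly, as Wall does.
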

\begin{proof}
  See~\cite[Lemma~2.8 on page~30 and the following 
  paragraph]{Wall(1999)}.
\end{proof}


\typeout{-------------------- Section 6: Fixing the orientation homomorphisms and the
  fundamental classes --------}

\section{Fixing the orientation homomorphisms and the fundamental classes}~%
\label{sec:Fixing_the_orientation_homomorphisms_and_the_fundamental_classes}


\subsection{Transfer}\label{subsec:Transfer}

In this subsection we recall the notion of a transfer. Consider a $\IZ\Gamma$-chain
complex $C_*$ and a right $\IZ\Gamma$-module $M$.  We make no assumptions about $C_*$ of
the kind that its chain modules are projective or finitely generated.  Let $i^*C_*$ and
$i^*M$ be the $\IZ\pi$-chain complex and $\IZ \pi$-module obtained by restriction with
the inclusion $i \colon \pi \to \Gamma$.

Define $\IZ$-chain maps
\begin{equation}
  i_* \colon i^*M \otimes_{\IZ \pi} i^*C_*\to M \otimes_{\IZ \Gamma} C_*
  \quad m \otimes x \mapsto m \otimes x
  \label{i_ast_for_C_ast_and_M}
\end{equation}
and
\begin{equation}
  \trf_*  \colon M \otimes_{\IZ \Gamma} C_* \to i^*M \otimes_{\IZ \pi} i^*C_*,
  \quad m \otimes x \mapsto \sum_{g \in G} m\widehat{g} \otimes \widehat{g}^{-1}  x,
  \label{trf_ast_for_C_ast_and_M}
\end{equation}
where $\widehat{g}$ is any element in $\Gamma$, which is mapped under the projection
$\Gamma \to G$ to $g$.  The definition~\eqref{trf_ast_for_C_ast_and_M} is independent of
the choice of $\widehat{g}$, since for $\omega \in \pi$, $m \in M$ and $x \in C_*$ we get
in $i^*M \otimes_{\IZ \pi} i^*C_*$
\[
  m\widehat{g}\omega \otimes (\widehat{g}\omega)^{-1} x = m\widehat{g}\omega \otimes
  \omega^{-1}\widehat{g}^{-1} = m\widehat{g} \otimes \widehat{g}^{-1}x.
\]
One easily checks that both definitions are compatible with the tensor relations and
define indeed $\IZ$-chain maps.  Moreover, $i_*$ and $\trf_*$ are natural in both $C_*$
and $M$ and satisfy
\begin{equation}
  i_* \circ \trf_* = |G| \cdot \id_{M \otimes_{\IZ \Gamma} C_*}
  \label{i_ast_circ_trf_ast_is_|g|_cdot_id}
\end{equation}

Applying this to the $\IZ\Gamma$-chain complexes $C_*(X,\partial X)$ for a slice complement model
$(X,\partial X)$, to $C_*(E\Gamma,\partial E\Gamma)$, to
$C_*(\eub{\Gamma},\partial \eub{\Gamma})$ and to $C_*(\eub{\Gamma})$ and take the $d$-th
homology, we obtain the following commutative diagram of $\IZ$-modules, whose vertical
arrows are the obvious maps
\begin{equation}
  \xymatrix@!C=10em{H_d^{\Gamma}(X,\partial X;\IZ^w)
    \ar[r]^-{H_d(\trf_*)}  \ar[d]^{\cong}
    &
    H_d^{\pi}(i^*X,i^*\partial X;\IZ^v)
    \ar[r]^-{H_d(i_*)}  \ar[d]^{\cong}
    &
    H_d^{\Gamma}(X,\partial X;\IZ^w)  \ar[d]^{\cong}
    \\
    H_d^{\Gamma}(E\Gamma,\partial E\Gamma;\IZ^w)
    \ar[r]^-{H_d(\trf_*)}  \ar[d]^{\cong}
    &
    H_d^{\pi}(i^*E\Gamma,i^*\partial E\Gamma;\IZ^v)
    \ar[r]^-{H_d(i_*)}  \ar[d]^{\cong}
    &
    H_d^{\Gamma}(E\Gamma,\partial X;\IZ^w)  \ar[d]^{\cong}
    \\
    H_d^{\Gamma}(\eub{\Gamma},\partial \eub{\Gamma};\IZ^w)
    \ar[r]^-{H_d(\trf_*)}  
    &
    H_d^{\pi}(i^*\eub{\Gamma},i^*\partial \eub{\Gamma};\IZ^v)
    \ar[r]^-{H_d(i_*)}  
    &
    H_d^{\Gamma}(\eub{\Gamma},\partial \eub{\Gamma};\IZ^w)  \ar[d]^{\cong}
    \\
    H_d^{\Gamma}(\eub{\Gamma};\IZ^w)
    \ar[r]^-{H_d(\trf_*)}  \ar[u]_{\cong}
    &
    H_d^{\pi}(i^*\eub{\Gamma};\IZ^v)
    \ar[r]^-{H_d(i_*)}  \ar[u]_{\cong}
    &
    H_d^{\Gamma}(\eub{\Gamma};\IZ^w)  \ar[u]_{\cong}
  }
  \label{diagram_of_the_H_d-s}
\end{equation}
such that in each row the composite of the two horizontal maps is $|G| \cdot \id$.  The
vertical arrows from the second row to the third row and the composite of the arrows from
the first row to the second row with the arrows from the second row to the third row are all
bijective by excision applied to the homotopy $\Gamma$-pushouts appearing in
Proposition~\ref{pro:constructing_underline(E)Gamma)} and
in~\eqref{canonical_homotopy_Gamma_pushout_for_a_slice_model}. The horizontal arrows
from the fourth row to the third row are bijective, since $\partial \eub{\Gamma}$ is
zero-dimensional.  Hence all vertical arrows are bijective.


\subsection{Some necessary conditions}\label{subsec:Some_necessary_conditions}

In this subsection we assume that we have a slice complement model $(X,\partial X)$ for
$\eub{\Gamma}$ with respect to the free $d$-dimensional slice system $\cals$ such that the
quotient space $(X/\Gamma,\partial X\Gamma)$ carries the structure of a finite Poincar\'e
pair. Recall from Remark~\ref{rem:extension_of_the_orientation_homomorphism_for_pi} that
there is only one choice possible for the orientation homomorphism
$w \colon \Gamma \to \{\pm 1\}$. Namely, the abelian group
$H_0(C^{d-*}_{\untw}(X,\partial X))$ must be infinite cyclic and as a $\IZ\Gamma$-module
it must be isomorphic to $\IZ^w$.  Since $(X,\partial X)$ is a slice complement model for
$\eub{\Gamma}$, the projection
$\pr \colon (X,\partial X) \to (\eub{\Gamma},\partial \eub{\Gamma})$ is a
$\IZ \Gamma$-chain homotopy equivalence because of the homotopy
$\Gamma$-pushout~\eqref{canonical_homotopy_Gamma_pushout_for_a_slice_model} and induces an
$\IZ\Gamma$-isomorphism
$H_0(C^{d-*}_{\untw}(X,\partial X)) \xrightarrow{\cong_{\IZ\Gamma}}
H_0(C^{d-*}_{\untw}(\eub{\Gamma},\partial \eub{\Gamma}))$.  Since $\partial \eub{\Gamma}$
is zero-dimensional, the obvious inclusion induces an isomorphism of $\IZ\Gamma$-modules
$H_0(C^{d-*}_{\untw}(\eub{\Gamma},\partial \eub{\Gamma})) \xrightarrow{\cong}
H_0(C^{d-*}_{\untw}(\eub{\Gamma}))$.  Hence we obtain an isomorphism of
$\IZ\Gamma$-modules
\[
  H_0(C^{d-*}_{\untw}(X,\partial X)) \xrightarrow{\cong_{\IZ\Gamma}}
  H_0(C^{d-*}_{\untw}(\eub{\Gamma})).
\]
If we define $w \colon \Gamma \to \{\pm \}$ by requiring that the $\IZ\Gamma$-module
$H_0(C^{d-*}_{\untw}(\eub{\Gamma}))$ is isomorphic to $\IZ^w$, then $w$ is defined in
terms of $\Gamma$ only and has to be the orientation homomorphism for any structure of a
finite Poincar’e pair on $(X/\Gamma,\partial X/\Gamma)$ for any slice complement model
$(X,\partial X)$.

Note that for a finite Poincar\'e pair $(X/\Gamma,\partial X/\Gamma)$ there is the
induced structure of a finite Poincare complex on $\partial X/\Gamma$ and the orientation
homomorphism for $\partial X/\Gamma$ is obtained by the one for
$(X/\Gamma,\partial X/\Gamma)$ by restriction. Recall that $\partial X/\Gamma$ is
$\coprod_{F \in \calm} S_F/F$ and that we have figured out the first Stiefel-Whitney
class for $S_F/F$ in Lemma~\ref{lem:basics_about_free_slice_systems}.

Since $(X/\Gamma,\partial X/\Gamma)$ is a finite $d$-dimensional Poincar\'e pair, the same
is true for $(X/\pi,\partial X /\pi)$ by~\cite[Theorem~H]{Klein-Qin_Su(2019)} and the
homomorphisms of infinite cyclic groups
$H_d(\trf_*) \colon H_d^{\Gamma}(X,\partial X;\IZ^w) \to H_d^{\pi}(i^*X,i^*\partial
X;\IZ^w)$ sends the fundamental classes to one another and hence is bijective.  We
conclude from~\eqref{diagram_of_the_H_d-s} that the map
$H_d(\trf_*) \colon H_d^{\Gamma}(\eub{\Gamma};\IZ^w) \xrightarrow{\cong}
H_d^{\pi}(i^*\eub{\Gamma};\IZ^w)$ is bijective. Since $i^*\eub{\Gamma}$ is a model for
$B\pi$, we conclude from
Lemma~\ref{lem:subtracting_a_Poincare_pair}~\eqref{lem:subtracting_a_Poincare_pair:conclusion_Y}
applied to the restriction to $\pi$ of the $\Gamma$-pushout
\[
  \xymatrix{\coprod_{F \in \calm} \Gamma\times_F S_F \ar[r] \ar[d] & X \ar[d]
    \\
    \coprod_{F \in \calm} \Gamma\times_F D_F\ar[r] & \eub{\Gamma}.}
\]
and Remark~\ref{rem:extension_of_the_orientation_homomorphism_for_pi} that the restriction
of $w$ to $\pi$ must be the orientation homomorphism $v \colon \pi \to \{\pm 1\}$ of
$B\pi$. To summarize, we have the following necessary conditions for the existence of a
slice complement model $(X,\partial X)$ such that the quotient space $(X/\Gamma,\partial X/\Gamma)$
carries the structure of a finite Poincar\'e pair:

\renewcommand{\labelenumi}{(\roman{enumi})}

\begin{enumerate}

\item The abelian group $H_0(C^{d-*}_{\untw}(\eub{\Gamma}))$ is infinite cyclic. Let
  $w \colon \Gamma \to \{\pm 1\}$ be the homomorphisms uniquely determined by the property
  that the $\IZ\Gamma$-module $H_0(C^{d-*}_{\untw}(\eub{\Gamma}))$ is
  $\IZ\Gamma$-isomorphic to $\IZ^w$.

\item There is a finite $d$-dimensional Poincar\'e $CW$-complex model for $B\pi$ with
  respect to the orientation homomorphisms $v \colon \pi \to \{\pm 1 \}$;

\item We have $v = w|_{\pi}$;

\item Consider any $F \in \calm$. The restriction of $w$ to $F$ is trivial, if $d$ is even.
If $d$ is odd, $F \cong \IZ/2$ and restriction of $w$ to $F$ is non-trivial;

\item The transfer map
  \begin{eqnarray*}
    H_d(\trf_*) \colon H_d^{\Gamma}(\eub{\Gamma};\IZ^w)
    & \xrightarrow{\cong}  &
                             H_d^{\pi}(i^*\eub{\Gamma};\IZ^v) = H_d^{\pi}(E\pi;\IZ^v)
  \end{eqnarray*}
  is bijective.

\end{enumerate}

\renewcommand{\labelenumi}{(\arabic{enumi})}

These necessary conditions motivate the material of the next subsections.


\subsection{The orientation homomorphism}%
\label{subsec:The_orientation_homomorphism}

For the remainder of this section we will make the following assumptions

\begin{assumption}\label{ass:orientation_homomorphism}\

\begin{itemize}
\item There exists a finite $\Gamma$-$CW$-model for $\eub{\Gamma}$ of dimension $d$ such
  that its singular $\Gamma$-subspace $\eub{\Gamma}^{> 1}$ is
  $\coprod_{F\in\calm} \Gamma/F$;

\item There is a finite $d$-dimensional Poincar\'e $CW$-complex model for $B\pi$ with
  respect to the orientation homomorphisms $v \colon \pi \to \{\pm 1 \}$ and fundamental
  class $[B\pi] \in H_d^{\pi}(E\pi;\IZ^v)$.

\end{itemize}

\end{assumption}

Let $\trunc \colon \IZ \Gamma \to \IZ \pi$ be the homomorphisms of
$\IZ\pi$-$\IZ \pi$-bimodules, which sends
$\sum_{\gamma \in \Gamma} \lambda_{\gamma} \cdot \gamma$ to
$\sum_{\gamma \in \pi} \lambda_{\gamma} \cdot \gamma$.  Consider $\gamma_0 \in
\Gamma$. Let $r_{\gamma_0^{-1}} \colon \IZ\Gamma \to \IZ \Gamma$ be the $\IZ\Gamma$
automorphism of left $\IZ \Gamma$-modules sending
$\sum_{\gamma} \lambda_{\gamma} \cdot \gamma$ to
$\sum_{\gamma} \lambda_{\gamma} \cdot \gamma \gamma_0^{-1}$.  Denote by
$l_{\gamma_0^{-1}} \colon M \to M $ the automorphism of abelian groups sending $u$ to
$\gamma_0^{-1} u$.  Let $c_{\gamma_0} \colon \pi \xrightarrow{\cong} \pi$ be the group
automorphism sending $u$ to $\gamma_0u\gamma_0^{-1}$.  Denote by
$\IZ[c_{\gamma_0}] \colon \IZ \pi \xrightarrow{\cong} \IZ \pi$ the induced ring
automorphism. Let $M$ be a $\IZ\Gamma$-module.  We get by composition and precomposition
maps of abelian groups
\begin{eqnarray*}
  \trunc_* \colon \hom_{\IZ \Gamma}(M,\IZ\Gamma)
  & \to &
          \hom_{\IZ \pi}(i^*M,\IZ\pi);
  \\
  (r_{\gamma_0^{-1}})_* \colon \hom_{\IZ \Gamma}(M,\IZ\Gamma)
  & \to &
          \hom_{\IZ \Gamma}(M,\IZ\Gamma);
  \\
  \IZ[c_{\gamma_0}]_* \circ (l_{\gamma_0^{-1}})^* \colon   \hom_{\IZ \pi}(i^*M,\IZ\pi)
  & \to &
          \hom_{\IZ \pi}(i^*M,\IZ\pi).   
\end{eqnarray*}

The map $\IZ[c_{\gamma_0}]_* \circ (l_{\gamma_0^{-1}})^*$ is indeed well-defined as the
following calculation shows for $f \in \hom_{\IZ \pi}(i^*M,\IZ\pi)$, $\omega \in \pi$, and
$x \in M$
\begin{eqnarray*}
  \bigl(\IZ[c_{\gamma_0}]_* \circ (l_{\gamma_0^{-1}})^*(f)\bigr)(\omega x)
  & = &
        \IZ[c_{\gamma_0}](f(\gamma_0^{-1}\omega x))
  \\
  & = &
        \IZ[c_{\gamma_0}](f(\gamma_0^{-1}\omega \gamma_0\gamma_0^{-1}x))
  \\
  & = &
        \IZ[c_{\gamma_0}](\gamma_0^{-1}\omega \gamma_0 \cdot f(\gamma_0^{-1}x))
  \\
  & = &
        \IZ[c_{\gamma_0}](\gamma_0^{-1}\omega \gamma_0) \cdot \IZ[c_{\gamma_0}](f(\gamma_0^{-1}x))       
  \\
  & = &
        \omega\cdot \bigl(\IZ[c_{\gamma_0}]_* \circ (l_{\gamma_0^{-1}})^*(f)\bigr)(x).
\end{eqnarray*}
  
\begin{lemma}\label{lem:trunc}
  \begin{enumerate}
  \item\label{lem:trunc:bijective} The map
    $\trunc_* \colon \hom_{\IZ \Gamma}(M,\IZ\Gamma) \xrightarrow{\cong} \hom_{\IZ
      \pi}(i^*M,\IZ\pi)$ is bijective;
  \item\label{lem:trunc:diagram}
  
    The following diagram commutes
    \[
      \xymatrix@!C=10em{\hom_{\IZ \Gamma}(M,\IZ\Gamma) \ar[r]^-{\trunc_*}_-{\cong}
        \ar[d]_{(r_{\gamma_0^{-1}})_*} & \hom_{\IZ \pi}(i^*M,\IZ\pi)
        \ar[d]^{\IZ[c_{\gamma_0}]_* \circ (l_{\gamma_0^{-1}})^*}
        \\
        \hom_{\IZ \Gamma}(M,\IZ\Gamma) \ar[r]_-{\trunc_*}^-{\cong} & \hom_{\IZ
          \pi}(i^*M,\IZ\pi).  }
    \]
  
  \end{enumerate}
\end{lemma}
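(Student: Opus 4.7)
For part~\eqref{lem:trunc:bijective} the plan is to identify $\trunc_*$ with the Frobenius/Shapiro adjunction between restriction along $i$ and coinduction from $\pi$ to $\Gamma$, which collapses to induction because $[\Gamma:\pi] = |G| < \infty$. Concretely, I would fix a set-theoretic section $s\colon G \to \Gamma$ of $p$ with $s(e)=e$, giving the decomposition $\IZ\Gamma = \bigoplus_{g \in G}\IZ\pi \cdot s(g)$ as left $\IZ\pi$-modules. For $f \in \hom_{\IZ\Gamma}(M,\IZ\Gamma)$ I write uniquely
\[
f(m) = \sum_{g \in G} a_g(m) \cdot s(g), \qquad a_g(m) \in \IZ\pi,
\]
and observe that $\pi$-linearity of $f$ forces each $a_g$ into $\hom_{\IZ\pi}(i^*M,\IZ\pi)$, while $\trunc_*(f)=a_e$ by definition. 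Applying $\IZ\Gamma$-linearity to the element $s(g)\in\Gamma$ and using that $\pi$ is normal in $\Gamma$ so that $c_{s(g)}$ restricts to an automorphism of $\IZ\pi$, I would derive the identity $a_g(m) = c_{s(g)}\bigl(a_e(s(g)^{-1}m)\bigr)$. This exhibits $f$ as being fully determined by $a_e = \trunc_*(f)$, giving injectivity; conversely, any $\phi \in \hom_{\IZ\pi}(i^*M,\IZ\pi)$ yields a well-defined $\IZ\Gamma$-homomorphism via
\[
f(m) := \sum_{g \in G} c_{s(g)}\bigl(\phi(s(g)^{-1}m)\bigr)\cdot s(g),
\]
providing surjectivity.

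For part~\eqref{lem:trunc:diagram} the plan is a direct computation. Given $f \in \hom_{\IZ\Gamma}(M,\IZ\Gamma)$, write $f(m)=\sum_{\gamma \in \Gamma}\lambda_\gamma(m)\gamma$. The down-then-right composite sends $f$ to the map $m \mapsto \trunc(f(m)\gamma_0^{-1}) = \sum_{\gamma \in \pi\gamma_0}\lambda_\gamma(m)\cdot \gamma\gamma_0^{-1}$, using $\gamma\gamma_0^{-1}\in\pi \iff \gamma \in \pi\gamma_0$. The right-then-down composite sends $f$ to $m \mapsto \IZ[c_{\gamma_0}]\bigl(\trunc(f(\gamma_0^{-1}m))\bigr)$; expanding $f(\gamma_0^{-1}m)=\gamma_0^{-1}f(m)=\sum_\gamma \lambda_\gamma(m)\gamma_0^{-1}\gamma$, applying the truncation (the condition $\gamma_0^{-1}\gamma\in\pi$ is equivalent to $\gamma \in \gamma_0\pi = \pi\gamma_0$ by normality of $\pi$ in $\Gamma$), and then applying $\IZ[c_{\gamma_0}]$ yields $\sum_{\gamma \in \pi\gamma_0}\lambda_\gamma(m)\cdot \gamma_0(\gamma_0^{-1}\gamma)\gamma_0^{-1} = \sum_{\gamma \in \pi\gamma_0}\lambda_\gamma(m)\cdot\gamma\gamma_0^{-1}$, matching the other side.

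There is no serious obstacle here; the entire argument is bookkeeping about bimodule actions. The only points that actually require thought are keeping track of the bimodule structure on $\IZ\Gamma$ appearing in both sides of $\trunc$ and using the normality of $\pi$ in $\Gamma$ (guaranteed by the extension~\eqref{group_extension_intro}) to ensure that conjugation by $\gamma_0$ restricts to an automorphism of $\IZ\pi$, so that the composite $\IZ[c_{\gamma_0}]_*\circ(l_{\gamma_0^{-1}})^*$ really lands in $\hom_{\IZ\pi}(i^*M,\IZ\pi)$, as already verified in the pre-statement calculation.
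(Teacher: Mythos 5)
Your proposal is correct. Part~\eqref{lem:trunc:diagram} is the same computation as in the paper: write $f(x)=\sum_\gamma\lambda_\gamma\gamma$, compare the two composites termwise, and use normality of $\pi$ in $\Gamma$ to convert the condition $\gamma_0^{-1}\gamma\in\pi$ into $\gamma\gamma_0^{-1}\in\pi$. For part~\eqref{lem:trunc:bijective} the paper just invokes the $\IZ\Gamma$-isomorphism $i_!\IZ\pi\cong\IZ\Gamma$ and the adjunction $(i^*,i_!)$ from Brown and asserts that the resulting isomorphism is $\trunc_*$; your explicit argument via the coset decomposition $\IZ\Gamma=\bigoplus_{g\in G}\IZ\pi\cdot s(g)$ and the identity $a_g(m)=c_{s(g)}\bigl(a_e(s(g)^{-1}m)\bigr)$ is precisely the verification the paper leaves to the reader, so the two routes coincide in substance, with yours being the more self-contained.
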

\begin{proof}~\eqref{lem:trunc:bijective} Since $\pi$ has finite index in $\Gamma$, the
  coinduction $i_!\IZ \pi$ of $\IZ\pi$ is $\IZ\Gamma$-isomorphic to $\IZ\Gamma$,
  see~\cite[Proposition~5.8 in III.5 on page~70]{Brown(1982)}.  One has the adjunction
  isomorphism $(i^*,i_!)$, see~\cite[(3.6) in III.3 on page~64]{Brown(1982)}
  \[
    \hom_{\IZ \Gamma}(M,\IZ\Gamma) \xrightarrow{\cong} \hom_{\IZ \pi}(i^*M,\IZ\pi).
  \]
  One easily checks by going through the definitions, that this isomorphism is the map
  $\trunc_*$.  \\[1mm]~\eqref{lem:trunc:diagram} This follows from the following
  computation for $f \in \hom_{\IZ \Gamma}(M,\IZ\Gamma)$ and $x \in M$.  Write
  $f(x) = \sum_{\gamma \in \Gamma} \lambda_{\gamma} \cdot \gamma$.  Then we get
  $f(x) \gamma_0^{-1} = \sum_{\gamma \in \Gamma} \lambda_{\gamma} \cdot \gamma
  \gamma_0^{-1}$ and
  $f(\gamma_0^{-1} x) = \sum_{\gamma \in \Gamma} \lambda_{\gamma} \cdot \gamma_0^{-1}
  \gamma$. This implies
  \begin{eqnarray*}
    \bigl(\trunc_* \circ (r_{\gamma_0^{-1}})_*(f)\bigr)(x)
    & = & 
          \sum_{\substack{\gamma \in \Gamma\\\gamma \gamma_0^{-1} \in \pi}} \lambda_{\gamma} \cdot \gamma \gamma_0^{-1};
    \\
    \bigl((l_{\gamma_0^{-1}})^* \circ \trunc_*(f)\bigr)(x)
    & = & 
          \sum_{\substack{\gamma \in \Gamma\\ \gamma_0^{-1} \gamma \in \pi}} \lambda_{\gamma} \cdot \gamma_0^{-1} \gamma.
  \end{eqnarray*}
  Now the claim follows from the computation
  \begin{eqnarray*}
    \bigl(\IZ[c_{\gamma_0}]\circ l_{\gamma_0^{-1}}^* \circ \trunc_*(f)\bigr)(x)
    & = & 
          \IZ[c_{\gamma_0}]\biggl(\sum_{\substack{\gamma \in \Gamma\\\gamma_0^{-1} \gamma \in \pi}}
    \lambda_{\gamma} \cdot \gamma_0^{-1} \gamma\biggr)
    \\
    & = &
          \sum_{\substack{\gamma \in \Gamma\\\gamma_0^{-1} \gamma \in \pi}} \lambda_{\gamma} \cdot \gamma  \gamma_0^{-1}
    \\
    & = &
          \sum_{\substack{\gamma \in \Gamma\\\gamma \gamma_0^{-1} \in \pi}} \lambda_{\gamma} \cdot \gamma  \gamma_0^{-1}
    \\
    & = &
          \bigl(\trunc_* \circ (r_{\gamma_0^{-1}})_*(f)\bigr)(x).
  \end{eqnarray*}
\end{proof}

Note that Lemma~\ref{lem:trunc}~\eqref{lem:trunc:bijective} implies that the abelian group
underlying the $\IZ\pi$-module $H_0(i^*C^{d-*}_{\untw}(\eub{\Gamma}))$ is infinite
cyclic, since $H_0(C^{d-*}_{\untw}(E\pi))$ is infinite cyclic.  Hence the following
notation makes sense

\begin{notation}\label{not:w} The $\Gamma$-homomorphism $w \colon \Gamma \to \{\pm 1\}$ is
  uniquely determined by the property that the $\IZ\Gamma$-module
  $H_0(C^{d-*}_{\untw}(\eub{\Gamma}))$ is $\IZ\Gamma$-isomorphic to $\IZ^w$.
\end{notation}

\begin{lemma}\label{lem:W_pi_is_v}
  The restriction $i^*w$ of $w$ to $\pi$ is $v \colon \pi \to \{\pm 1\}$.
\end{lemma}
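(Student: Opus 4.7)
The plan is to deduce Lemma~\ref{lem:W_pi_is_v} directly from Lemma~\ref{lem:trunc} together with the explicit observation that for $\gamma_0\in\pi$ the combined operation appearing on the right of the diagram in Lemma~\ref{lem:trunc}~\eqref{lem:trunc:diagram} coincides with the natural action of $\gamma_0$ via the untwisted involution on $\hom_{\IZ\pi}(i^*C_*,\IZ\pi)$. I will apply this cochainwise to the chain complex $C_* = C_*(\eub{\Gamma})$.

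First I would set up the comparison between the two candidates for $H_0$. Since $\pi$ is torsionfree, $i^*\eub{\Gamma}$ is a model for $E\pi$, so $C_*(E\pi)$ is $\IZ\pi$-chain homotopy equivalent to $i^*C_*(\eub{\Gamma})$, which identifies $C^{d-*}_{\untw}(E\pi)$ with $\hom_{\IZ\pi}(i^*C_*(\eub{\Gamma}),\IZ\pi)$. Then I would apply $\trunc_*$ levelwise; by Lemma~\ref{lem:trunc}~\eqref{lem:trunc:bijective} this gives a $\IZ$-chain isomorphism
\[
  \trunc_*\colon C^{d-*}_{\untw}(\eub{\Gamma}) \xrightarrow{\cong} \hom_{\IZ\pi}(i^*C_*(\eub{\Gamma}),\IZ\pi).
\]
In particular, the induced map on $H_0$ is a $\IZ$-isomorphism between $H_0(i^*C^{d-*}_{\untw}(\eub{\Gamma}))\cong \IZ^{w|_\pi}$ (by Notation~\ref{not:w}) and $H_0(C^{d-*}_{\untw}(E\pi))\cong \IZ^v$ (by Remark~\ref{rem:extension_of_the_orientation_homomorphism_for_pi} and Assumption~\ref{ass:orientation_homomorphism}).

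The key step is to promote this to a $\IZ\pi$-equivariant isomorphism. Recall that the $\IZ\Gamma$-action via the untwisted involution on $\hom_{\IZ\Gamma}(C_*,\IZ\Gamma)$ sends $f\mapsto (r_{\gamma_0^{-1}})_*(f)$, and likewise on the $\pi$-side. By Lemma~\ref{lem:trunc}~\eqref{lem:trunc:diagram} applied to $\gamma_0\in\pi\subseteq\Gamma$,
\[
  \trunc_*\circ(r_{\gamma_0^{-1}})_* \;=\; \IZ[c_{\gamma_0}]_*\circ(l_{\gamma_0^{-1}})^*\circ\trunc_*.
\]
For $\gamma_0\in\pi$ and $f\in\hom_{\IZ\pi}(i^*C_*,\IZ\pi)$, a short computation using $\IZ\pi$-linearity of $f$ gives
\[
  \bigl(\IZ[c_{\gamma_0}]_*\circ(l_{\gamma_0^{-1}})^*(f)\bigr)(x) \;=\; \gamma_0\,f(\gamma_0^{-1}x)\,\gamma_0^{-1} \;=\; \gamma_0\gamma_0^{-1}f(x)\gamma_0^{-1} \;=\; f(x)\gamma_0^{-1},
\]
so the right-hand operator equals $(r_{\gamma_0^{-1}})_*$ on the nose. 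Combined with the previous display, this shows that $\trunc_*$ is $\IZ\pi$-equivariant. Passing to $H_0$ yields a $\IZ\pi$-isomorphism $\IZ^{w|_\pi}\cong \IZ^v$, whence $w|_\pi=v$.

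I do not expect any serious obstacle here; the whole lemma is essentially a bookkeeping consequence of Lemma~\ref{lem:trunc}. The only thing to be careful about is the sign/convention check that the combined action $\IZ[c_{\gamma_0}]_*\circ(l_{\gamma_0^{-1}})^*$ collapses for $\gamma_0\in\pi$ to plain right translation, which is the unlabeled but crucial fact that makes $\trunc_*$ actually equivariant rather than merely intertwining some twisted action.
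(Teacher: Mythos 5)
Your proof is correct and follows essentially the same route as the paper: both use Lemma~\ref{lem:trunc}~\eqref{lem:trunc:diagram} together with the observation that for $\gamma_0\in\pi$ the operator $\IZ[c_{\gamma_0}]_*\circ(l_{\gamma_0^{-1}})^*$ collapses to $(r_{\gamma_0^{-1}})_*$, and then identify $i^*C^{d-*}_{\untw}(\eub{\Gamma})$ with $C^{d-*}_{\untw}(E\pi)$ as $\IZ\pi$-chain complexes to compare the $H_0$'s. The only difference is that you spell out the one-line computation verifying the collapse, which the paper merely asserts.
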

\begin{proof}
  If $\gamma_0$ belongs to $\pi$, then the following diagram commutes
  \[
    \xymatrix@!C=10em{\hom_{\IZ \Gamma}(M,\IZ\Gamma) \ar[r]^-{\trunc_*}_-{\cong}
      \ar[d]_{(r_{\gamma_0^{-1}})_*} & \hom_{\IZ \pi}(i^*M,\IZ\pi)
      \ar[d]^{(r_{\gamma_0^{-1}})_*}
      \\
      \hom_{\IZ \Gamma}(M,\IZ\Gamma) \ar[r]_-{\trunc_*}^-{\cong} & \hom_{\IZ
        \pi}(i^*M,\IZ\pi) }
  \]
  by Lemma~\ref{lem:trunc}~\eqref{lem:trunc:diagram}, since in this special case
  $\IZ[c_{\gamma_0}]_* \circ (l_{\gamma_0^{-1}})^*$ reduces to $(r_{\gamma_0^{-1}})_*$.
  Recall that the free $\pi$-$CW$-complex $i^*\eub{\Gamma}$ is a model for $E\pi$.  Hence
  we get an isomorphism of $\IZ\pi$-chain complexes
  \[
    i^*C^{d-*}_{\untw}(\eub{\Gamma}) \xrightarrow{\cong_{\IZ \pi}} C^{d-*}_{\untw}(E\pi),
  \]
  which induces an isomorphism of $\IZ\pi$-modules
  \[
    H_0(i^*C^{d-*}_{\untw}(\eub{\Gamma}))\xrightarrow{\cong_{\IZ \pi}}
    H_0(C^{d-*}_{\untw}(E\pi)).
  \]
  This implies $ w|_{\pi} = v$.
\end{proof}

Next we want to express $w$ in terms of $\pi$, where we take only the $\Gamma$-action on $\pi$ by
conjugation into account and do not refer to $\eub{\Gamma}$.

Fix $\gamma_0 \in \Gamma$. The group automorphism
$c_{\gamma_0^{-1}} \colon \pi \xrightarrow{\cong} \pi$ induces a $\pi$-homotopy
equivalence $Ec_{\gamma_0^{-1}} \colon E\pi \to c_{\gamma_0^{-1}}^*E\pi$, where
$\omega \in \pi$ acts on $c_{\gamma_0^{-1}}^*E\pi$ by the action of
$\gamma_0^{-1}\omega \gamma_0$ on $E\pi$.  If $\gamma_0$ belongs to $\pi$, then
$Ec_{\gamma_0^{-1}}$ is $\pi$-homotopic to the map
$l_{\gamma_0^{-1}}  \colon E\pi \to c_{\gamma_0^{-1}}^*E\pi$ given by left
multiplication with $\gamma_0^{-1}$.  By assumption there is a finite Poincar\'e structure
on $B\pi$ with respect to the orientation homomorphism $v \colon \pi \to \{\pm 1\}$
and fundamental class $[B\pi] \in H_d^{\pi}(E\pi;\IZ^v)$. The $\pi$-homotopy
equivalence $Ec_{\gamma_0^{-1}}$ induces an isomorphism of abelian groups
\begin{equation}
  H_d^{\pi}(Ec_{\gamma_0^{-1}};\IZ^v) \colon H_d^{\pi}(E\pi;\IZ^v) \xrightarrow{\cong_{\IZ}}  H_d^{\pi}(c_{\gamma_0^{-1}}^*E\pi;\IZ^v).
  \label{H_d_upper_pi(Ec_(gamma_0_upper_1);Z_upper_v)}
\end{equation}
There is an obvious isomorphism of abelian groups
\begin{equation}
  a' \colon H_d^{\pi}(c_{\gamma_0^{-1}}^*E\pi;\IZ^{c_{\gamma_0^{-1}}^*(v)}) \xrightarrow{\cong} H_d^{\pi}(E\pi;\IZ^v)
  \label{identification_of_homology_a'}
\end{equation}
coming from the identification
\[
  \IZ^v \otimes_{\IZ \pi} C_*(E\pi) \xrightarrow{\cong} \IZ^{c_{\gamma_0^{-1}}^*(v)}
  \otimes_{\IZ \pi} C_*(c_{\gamma_0^{-1}}^* E\pi), \quad n \otimes x \mapsto n \otimes x.
\]
We get
\begin{equation}
  (c_{\gamma_0}^{-1})^* (v) = v
  \label{c_(gamma_upper_-1)(v)_is_v}
\end{equation}
from the following calculation for $\omega \in \pi$
\[
  (c_{\gamma_0}^{-1})^* (v)(\omega) = v(\gamma_0^{-1} \omega \gamma_0) = w(\gamma_0^{-1}
  \omega \gamma_0)
  \\
  = w(\gamma_0^{-1}) \cdot w(\omega) \cdot w(\gamma_0) = w(\omega) = v(\omega).
\]
Putting~\eqref{identification_of_homology_a'} and~\eqref{c_(gamma_upper_-1)(v)_is_v}
together yields an isomorphism of abelian groups
\begin{equation}
  a \colon H_d^{\pi}(c_{\gamma_0^{-1}}^*E\pi;\IZ^v) \xrightarrow{\cong} H_d^{\pi}(E\pi;\IZ^v).
  \label{identification_of_homology_a}
\end{equation}
We obtain an automorphism
\begin{equation}
  a \circ H_d^{\pi}(Ec_{\gamma_0^{-1}};\IZ^v) \colon H_d^{\pi}(E\pi;\IZ^v) \xrightarrow{\cong} H_d^{\pi}(E\pi;\IZ^v)
  \label{auto_a_circ_H_d_upper_pi(Ec_(gamma_0_upper_1);Z_upper_v)}
\end{equation}
of an infinite cyclic group $H_d^{\pi}(Ec_{\gamma_0^{-1}};\IZ^v)$ with the generator
$[B\pi]$ by composing the
isomorphisms~\eqref{H_d_upper_pi(Ec_(gamma_0_upper_1);Z_upper_v)}
and~\eqref{identification_of_homology_a}.  Now define
\begin{equation}
  u(\gamma_0) \in \{\pm 1\}
  \label{u(gamma_0)}
\end{equation}
by the equation
\[
  a \circ H_d^{\pi}(c_{\gamma_0^{-1}}^*E\pi;\IZ^v) ([B\pi]) = u(\gamma_0) \cdot
  [B\pi].
\]

\begin{lemma}\label{lem:w(gamma_0)_is_u(gamma_0)}
  We have $ w(\gamma_0) = u(\gamma_0)$ for every $\gamma_0 \in \Gamma$.
\end{lemma}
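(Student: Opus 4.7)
The plan is to compute the action of $\gamma_0$ on the infinite cyclic abelian group underlying $H_0(C^{d-*}_{\untw}(\eub{\Gamma}))$ in two different ways: once directly using Notation~\ref{not:w}, giving $w(\gamma_0)$, and once transporting through the isomorphism of Lemma~\ref{lem:trunc}(1) to $H_0(C^{d-*}_{\untw}(E\pi))$ and relating it to $u(\gamma_0)$ via Poincar\'e duality for $B\pi$.

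First, I would recall that the left $\IZ\Gamma$-structure on $C^{d-*}_{\untw}(\eub{\Gamma}) = \hom_{\IZ\Gamma}(C_{d-*}(\eub{\Gamma}), \IZ\Gamma)$ (using the untwisted involution) is such that left multiplication by $\gamma_0$ acts precisely as the map $(r_{\gamma_0^{-1}})_*$ from Subsection~\ref{subsec:The_orientation_homomorphism}. By Notation~\ref{not:w}, the induced action on $H_0$ is multiplication by $w(\gamma_0)$. Applying Lemma~\ref{lem:trunc}(2) to $M = C_{d-*}(\eub{\Gamma})$ and taking $H_0$ gives a commutative square in which $H_0(\trunc_*)$ is an isomorphism (by Lemma~\ref{lem:trunc}(1)) intertwining $H_0((r_{\gamma_0^{-1}})_*)$ with $H_0(\IZ[c_{\gamma_0}]_* \circ (l_{\gamma_0^{-1}})^*)$. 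Using that $i^*\eub{\Gamma}$ is a $\pi$-$CW$-model for $E\pi$, this shows that $H_0(\IZ[c_{\gamma_0}]_* \circ (l_{\gamma_0^{-1}})^*)$ acts on the infinite cyclic group $H_0(C^{d-*}_{\untw}(E\pi))$ as multiplication by $w(\gamma_0)$.

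Next, I would invoke Poincar\'e duality for $B\pi$: the $v$-twisted cap product with $[B\pi]$ induces a canonical isomorphism of abelian groups $PD \colon H_d^{\pi}(E\pi;\IZ^v) \xrightarrow{\cong} H_0(C^{d-*}_{\untw}(E\pi))$ sending $[B\pi]$ to a distinguished generator. The naturality of the cap product under the $c_{\gamma_0^{-1}}$-equivariant map $l_{\gamma_0^{-1}}$ on $i^*\eub{\Gamma}$ (which, up to $\pi$-homotopy, is $Ec_{\gamma_0^{-1}}$) yields a commutative square whose cohomological side features exactly the composition $\IZ[c_{\gamma_0}]_* \circ (l_{\gamma_0^{-1}})^*$, while its homological side, after applying the untwisting isomorphism $a$ of~\eqref{identification_of_homology_a} to compensate for the $c_{\gamma_0^{-1}}^*$-pullback of coefficients, becomes $a \circ H_d^{\pi}(Ec_{\gamma_0^{-1}};\IZ^v)$. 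By definition~\eqref{u(gamma_0)}, the latter acts on $[B\pi]$ as multiplication by $u(\gamma_0)$.

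Comparing the two computations on the same infinite cyclic group gives $w(\gamma_0) = u(\gamma_0)$. The main obstacle is the chain-level diagram chase in the second step: one must carefully match the role of $\IZ[c_{\gamma_0}]_*$ in compensating for the $c_{\gamma_0^{-1}}$-twist on the cohomological side with the role of $a$ on the homological side, so that Poincar\'e duality intertwines the two compositions. Once this naturality is in place, the conclusion is immediate.
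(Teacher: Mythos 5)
Your proposal is correct and follows essentially the same route as the paper's proof: the paper likewise uses Lemma~\ref{lem:trunc} to transport the $\gamma_0$-action on $H_0(C^{d-*}_{\untw}(\eub{\Gamma}))$ (which is multiplication by $w(\gamma_0)$ by Notation~\ref{not:w}) down to the composite $\IZ[c_{\gamma_0}]_*\circ C^{d-*}(Ec_{\gamma_0^{-1}})$ acting on $H_0(C^{d-*}(E\pi))$, and then carries out exactly the chain-level cap-product diagram chase you flag as the main obstacle (its displays~\eqref{diagram_for_epi_and_c_(gamma_0_upper_1)_Epi_1}--\eqref{diagram_for_epi_and_c_(gamma_0_upper_1)_Epi_4}, matching $\IZ[c_{\gamma_0}]_*$ on the cochain side against the untwisting isomorphism $a$ on the homology side) to identify that composite with multiplication by $u(\gamma_0)$. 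The only difference is the order of the two steps, which does not change the substance.
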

\begin{proof}
  Let $[c_{\gamma_0}^*E\pi] \in H_d^{\pi}(c_{\gamma_0^{-1}}^*E\pi;\IZ^v)$ be the image of
  $[B\pi]$ under $H_d^{\pi}(Ec_{\gamma_0^{-1}};\IZ^v)$. Then we get a diagram of
  $\IZ\pi$-chain complexes which commutes up to $\pi$-homotopy
  \begin{equation}
    \xymatrix@!C=12em{C^{d-*}(E\pi) \ar[d]^{\simeq_{\pi}}_{- \cap[B\pi]}
      &
      C^{d-*}(c_{\gamma_0^{-1}}^*E\pi)
      \ar[l]_{C^{d-*}(Ec_{\gamma_0^{-1}})}
      \ar[d]_{\simeq_{\pi}}^{- \cap [B\pi]}
      \\
      C_*(E\pi)
      \ar[r]_{C_*(Ec_{\gamma_0^{-1}})} &
      C_*(c_{\gamma_0^{-1}}^*E\pi)
    }
    \label{diagram_for_epi_and_c_(gamma_0_upper_1)_Epi_1}
  \end{equation}
  where the dual $\IZ\pi$-chain complexes are taken with respect to the $v$-twisted
  involution and $B\pi$ on the left side corresponds to $E\pi/\pi$ and on the right side
  to $(c_{\gamma_0^{-1}}E\pi)/\pi$

  The following diagram of $\IZ\pi$-chain complexes commutes up to $\IZ\pi$-chain homotopy
  \begin{equation}
    \xymatrix@!C=12em{C^{d-*}(c_{\gamma_0^{-1}}^*E\pi)
      \ar[d]^{\simeq_{\IZ\pi}}_{- \cap[B\pi]}
      &
      c_{\gamma_0^{-1}}^* C^{d-*}(E\pi) \ar[d]^{c_{\gamma_0^{-1}}^* (-\cap a([B\pi]))}_{\simeq_{\IZ\pi}}
      \ar[l]_{\IZ[c_{\gamma_0}]_*}
      \\
      C_*(c_{\gamma_0^{-1}}^*  E\pi) = c_{\gamma_0^{-1}}^* C_*(E\pi) \ar[r]_{=}
      &
      c_{\gamma_0^{-1}}^* C_*(E\pi)
    }
    \label{diagram_for_epi_and_c_(gamma_0_upper_1)_Epi_2}
  \end{equation}
  where $\IZ[c_{\gamma_0^{-1}}]_*$ is given by composing with the ring automorphism
  $\IZ[c_{\gamma_0^{-1}}] \colon \IZ \pi \xrightarrow{\cong} \IZ \pi$ induced by the group
  automorphism $c_{\gamma_0^{-1}} \colon \pi \xrightarrow{\cong} \pi$.

  By putting~\eqref{u(gamma_0)},~\eqref{diagram_for_epi_and_c_(gamma_0_upper_1)_Epi_1},
  and~\eqref{diagram_for_epi_and_c_(gamma_0_upper_1)_Epi_2} together and using the equality
  $\IZ[c_{\gamma_0}]_* \circ C^{d-*}(Ec_{\gamma_0^{-1}}) = C^{d-*}(Ec_{\gamma_0^{-1}}) \circ \IZ[c_{\gamma_0}]_*$,
  yields a commutative diagram of $\IZ\pi$-chain complex
  which commutes up to $\IZ \pi$-chain homotopy
  \begin{equation}
    \xymatrix@!C=16em{C^{d-*}(E\pi) \ar[d]^{\simeq_{\IZ\pi}}_{- \cap[B\pi]}
      &
      c_{\gamma_0^{-1}}^* C^{d-*}(E\pi)
      \ar[l]_-{\IZ[c_{\gamma_0}]_* \circ C^{d-*}(Ec_{\gamma_0^{-1}})}
      \ar[d]_{\simeq_{\IZ\pi}}^{u(\gamma_0) \cdot  c_{\gamma_0^{-1}}^*(- \cap [B\pi])}
      \\
      C_*(E\pi)
      \ar[r]_-{C_*(Ec_{\gamma_0^{-1}})} &
      c_{\gamma_0^{-1}}^* C_*(E\pi) = C_*(c_{\gamma_0^{-1}}^* E\pi).
    }
    \label{diagram_for_epi_and_c_(gamma_0_upper_1)_Epi_3}
  \end{equation}
  If we apply $H_0$ to the diagram above, we obtain a commutative diagram of
  $\IZ\pi$-modules taking into account that the $\IZ\pi$-module $H_0(C_*(E\pi))$ is
  isomorphic to the $\IZ\pi$-module $\IZ$ given by $\IZ$ equipped with the trivial
  $\pi$-action and the map $H_0(C_*(Ec_{\gamma_0^{-1}})) \colon H_0(E\pi) \to H_0(E\pi)$
  is the identity
  \begin{equation}
    \xymatrix@!C=16em{H_0(C^{d-*}(E\pi)) \ar[d]^{\simeq_{\IZ\pi}}_{H_0(- \cap[B\pi])}
      &
      c_{\gamma_0^{-1}}^* H_0(C^{d-*}(E\pi))
      \ar[l]_-{H_0(\IZ[c_{\gamma_0}]_* \circ C^{d-*}(Ec_{\gamma_0^{-1}}))}
      \ar[d]_{\simeq_{\IZ\pi}}^{u(\gamma_0) \cdot  c_{\gamma_0^{-1}}^*H_0(- \cap [B\pi])}
      \\
      \IZ
      \ar[r]_-{\id_{\IZ}} &
      \IZ.
    }
    \label{diagram_for_epi_and_c_(gamma_0_upper_1)_Epi_4}
  \end{equation}
  
  This implies that the map of infinite cyclic groups
  \[H_0(\IZ[c_{\gamma_0}]_* \circ C^{d-*}(Ec_{\gamma_0^{-1}})) \colon
    H_0(C^{d-*}(E\pi)) \xrightarrow{\cong} H_0(C^{d-*}(E\pi))
  \]
  is multiplication with $u(\gamma_0)$. 
  
  The $\Gamma$-map
  $l_{\gamma_0^{-1}} \colon \eub{\Gamma} \to c_{\gamma_0^{-1}}^*\eub{\Gamma}$ is after restriction with $i$
  $\pi$-homotopic to the $\pi$-map $Ec_{\gamma_0^{-1}} \colon E\pi \to c_{\gamma_0^{-1}}^* E\pi$
  taking into account that $i^*\eub{\Gamma}$ is a model for $E\pi$. 
  Hence the following diagram of
  $\IZ$-chain complexes commutes up to $\IZ$-chain homotopy
  \[
    \xymatrix@!C=13em{C^{d-*}(\eub{\Gamma})
      \ar[r]^-{\IZ[c_{\gamma_0}]_* \circ C^{d-*}(l_{\gamma_0^{-1}})} \ar[d]_{\cong}
      &
      C^{d-*}(\eub{\Gamma}) \ar[d]^{\cong}
      \\
      C^{d-*}(E\pi) \ar[r]_-{\IZ[c_{\gamma_0}]_* \circ C^{d-*}(Ec_{\gamma_0^{-1}})}
      &
      C^{d-*}(E\pi)
    }
  \]
  where both  vertical arrows are the $\IZ$-chain isomorphism coming
  from Lemma~\ref{lem:trunc}~\eqref{lem:trunc:bijective}. The upper arrow
  is  multiplication with $\gamma_0$ on $H_0(C^{d-*}_{\untw}(\eub{\Gamma}))$
  by Lemma~\ref{lem:trunc}~\eqref{lem:trunc:diagram}. Hence it induces on $H_0$
  multiplication with $w(\gamma_0)$ by definition.  We have already shown that the lower arrow induces
  on $H_0$ multiplication with $u(\gamma_0)$. Hence  $w(\gamma_0) = u(\gamma_0)$.
\end{proof}

The Hochschild-Serre spectral sequence applied to the group extension
$1 \to \pi \xrightarrow{i} \Gamma \xrightarrow{p} G \to 1$ and the $\IZ\Gamma$-module
$\IZ^w$ has a $E^2$-term $E^2_{p,q} = H_p^G(EG, H_q^{\pi}(E\pi;\IZ^v))$ and converges to
$H^{\Gamma}_{p+q}(E\Gamma;\IZ^w)$. The $G$-action on $H_q^{\pi}(E\pi;\IZ^v)$ can be
computed as follows.

The composite $E\Gamma \to B\Gamma \to BG$ has a preimage of the base point in $BG$ the
space $\Gamma \times_{\pi} E\pi$. The $\Gamma$-equivariant fiber transport along loops in
$BG$ assigns to each $g \in G = \pi_1(BG)$ a unique $\Gamma$-homotopy class $[t_g]$ of
$\Gamma$-maps $\Gamma \times_{\pi} E\pi \to \Gamma \times _{\times} E\pi$.  A
representative $t_g$ is given for $g \in G$ by the $\Gamma$-map
\[
  t_g \colon \Gamma \times_{\pi} E\pi \to \Gamma \times_{\pi} E\pi, \quad (\gamma, x) \to
  (\gamma \widehat{g}^{-1},Ec_{\widehat{g}}(x))
\]
for any $\widehat{g} \in \Gamma$, which is mapped under the projection $\Gamma \to G$ to
$g$, see for instance~\cite[Sections~1 and Subsection~7A]{Lueck(1986)}.  The
$\IZ\Gamma$-map $t_g$ induces a $\IZ$-chain map
\[
  \id_{\IZ^w} \otimes_{\IZ\Gamma} C_*(t_g) \colon \IZ^w \otimes_{\IZ\Gamma} C_*(\Gamma
  \times_{\pi} E\pi) \to \IZ^w \otimes_{\IZ\Gamma} C_*(\Gamma \times_{\pi} E\pi).
\]
Under the obvious identification
\[
  \IZ^w \otimes_{\IZ\Gamma}C_*(\Gamma \times_{\pi} E\pi) = \IZ^w \otimes_{\IZ\Gamma}
  \IZ\Gamma \otimes_{\IZ \pi} C_*(E\pi) = \IZ^v \otimes_{\IZ \pi} C_*(E\pi)
\]
this becomes the $\IZ$-chain map
\[
  w(\gamma_0^{-1}) \cdot \id_{\IZ^v} \otimes_{\IZ\pi} C_*(Ec_{\widehat{g}}) \colon \IZ^v
  \otimes_{\IZ\pi} C_*(E\pi) \to \IZ^v \otimes_{\IZ\pi} C_*(E\pi).
\]
Hence multiplication with $g \in G$ on $H_d^{\pi}(E\pi;\IZ^v)$ is given by
\[w(\gamma_0^{-1}) \cdot \bigl(a \circ H_d^{\pi}(Ec_{\gamma_0^{-1}};\IZ^v)\bigr)^{-1}
\]
for the automorphism $a \circ H_d^{\pi}(Ec_{\gamma_0^{-1}};\IZ^v)$ defined
in~\eqref{auto_a_circ_H_d_upper_pi(Ec_(gamma_0_upper_1);Z_upper_v)}. From the
definition~\eqref{u(gamma_0)} of $u(\gamma_0)$ we get that
$a \circ H_d^{\pi}(c_{\gamma_0^{-1}}^*E\pi;\IZ^v)$ is multiplication with $u(\gamma_0)$.
Lemma~\ref{lem:w(gamma_0)_is_u(gamma_0)} implies that the $G$-action on
$H_d^{\pi}(E\pi;\IZ^v)$ is trivial.  Hence
$H_0(BG,H_d^{\pi}(E\pi;\IZ^v) )= H_d^{\pi}(E\pi;\IZ^v)$.  Since
$H_p(BG;H_q^{\pi}(E\pi;\IZ^v))[1/|G|]$ vanishes for $p \ge 1$ and $i^*E\Gamma$ is a model
for $E\pi$, we conclude from the Hochschild-Serre spectral sequence that the map
$H_d(i_*) \colon H_d^{\pi}(i^*E\Gamma ;\IZ^v) \to H_d^{\Gamma}(E\Gamma;\IZ^w)$
of~\eqref{i_ast_for_C_ast_and_M} induces an isomorphism
\begin{equation}
  H_d(i_*)[1/|G|] \colon H_d^{\pi}(E\pi;\IZ^v)[1/|G|] \xrightarrow{\cong} H_d^{\Gamma}(E\Gamma;\IZ^w)[1/|G|].
  \label{iso_pi_to_Gamma_after_inverting_|G|}
\end{equation}


\subsection{The fundamental classes}%
\label{subsec:The_fundamental_classes}

\begin{lemma}\label{lem;everything_is_infinite_cyclic}
  Let $\pr \colon E\Gamma \to \eub{\Gamma}$ be the projection. Then the  maps
  \begin{eqnarray*}
    H_d(i_*) \colon H_d^{\pi}(E\pi;\IZ^v) & \to & H_d^{\Gamma}(E\Gamma;\IZ^w);
    \\
    H_d(i_*) \colon H_d^{\pi}(E\pi;\IZ^v) & \to & H_d^{\Gamma}(\eub{\Gamma};\IZ^w);
    \\
    H_d(\pr) \colon H_d^{\Gamma}(E\Gamma;\IZ^w) &\to &
                                                       H_d^\Gamma(\eub{\Gamma};\IZ^w),
  \end{eqnarray*}
  are inclusion of infinite cyclic groups such that the index of the image in the target
  divides $|G|$.
\end{lemma}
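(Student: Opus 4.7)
My plan is to combine Poincar\'e duality for $B\pi$ with the special shape of $\eub{\Gamma}$ and the transfer identities established in the preceding subsections. First, since $B\pi$ is a finite $d$-dimensional Poincar\'e complex, $H_d^{\pi}(E\pi;\IZ^v)\cong\IZ$ with generator $[B\pi]$. Second, the $d$-dimensional $\Gamma$-$CW$-model for $\eub{\Gamma}$ has $0$-dimensional singular part, so every $d$-cell of $\eub{\Gamma}$ is free; hence $\IZ^w\otimes_{\IZ\Gamma}C_d(\eub{\Gamma})$ is finitely generated free abelian, and the top homology $H_d^{\Gamma}(\eub{\Gamma};\IZ^w)$ is a subgroup of it, hence itself free abelian. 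Its rank equals one by~\eqref{iso_pi_to_Gamma_after_inverting_|G|}, so it is infinite cyclic. Third, by the end of Subsection~\ref{subsec:The_orientation_homomorphism} the $G$-action on $H_d^{\pi}(E\pi;\IZ^v)$ is trivial; combined with~\eqref{i_ast_circ_trf_ast_is_|g|_cdot_id} this forces both $H_d(i_*)\circ H_d(\trf_*)$ and $H_d(\trf_*)\circ H_d(i_*)$ to equal $|G|\cdot\id$ on their respective target.

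I will first treat the second map $H_d(i_*)\colon H_d^{\pi}(E\pi;\IZ^v)\to H_d^{\Gamma}(\eub{\Gamma};\IZ^w)$: both sides are $\IZ$, and since the two compositions with $H_d(\trf_*)$ both equal $|G|\cdot\id$ on a torsion-free group, the map is injective with image of index dividing $|G|$. The first map is then handled by factorization: $i^*\pr\colon i^*E\Gamma\to i^*\eub{\Gamma}$ is a $\pi$-homotopy equivalence between two models for $E\pi$, so the second map factors as $H_d(\pr)\circ H_d(i_*)$ with inner factor the first map of the lemma; injectivity of the composite therefore yields injectivity of the first map, and the same transfer identities applied to $E\Gamma$ give the index statement.

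For the third map, the same factorization shows that $H_d(\pr)$ has image containing that of the second map, so in the target $\IZ=H_d^{\Gamma}(\eub{\Gamma};\IZ^w)$ its image has index dividing $|G|$. The main obstacle will be the injectivity of $H_d(\pr)$, equivalently the statement that $H_d^{\Gamma}(E\Gamma;\IZ^w)$ is itself infinite cyclic. For this I would invoke diagram~\eqref{diagram_of_the_H_d-s}: its bijective vertical arrows identify $H_d^{\Gamma}(\eub{\Gamma};\IZ^w)$ with $H_d^{\Gamma}(E\Gamma,\partial E\Gamma;\IZ^w)$, under which $H_d(\pr)$ becomes the natural map from the long exact sequence of the pair $(E\Gamma,\partial E\Gamma)$. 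Its kernel is the image of the finite group $\bigoplus_{F\in\calm}H_d(BF;\IZ^{w|_F})=H_d^{\Gamma}(\partial E\Gamma;\IZ^w)$; the commutativity of~\eqref{diagram_of_the_H_d-s} together with the bijectivity of all its vertical arrows should force this image to vanish, which simultaneously shows $H_d^{\Gamma}(E\Gamma;\IZ^w)\cong\IZ$ and completes the proof.
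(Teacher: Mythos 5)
Most of your argument runs parallel to the paper's: the infinite cyclicity of $H_d^{\Gamma}(\eub{\Gamma};\IZ^w)$ via the freeness of the top cellular chain group of the cocompact model with $0$-dimensional singular set together with~\eqref{iso_pi_to_Gamma_after_inverting_|G|}, and the injectivity/index statements via the two transfer composites (using the triviality of the $G$-action on $H_d^{\pi}(E\pi;\IZ^v)$ to identify $\trf_*\circ i_*$ with $|G|\cdot\id$) are sound and are essentially what the paper does, as is the factorization of the second map through the first and third.

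The gap is the injectivity of $H_d(\pr)$. Your reduction is fine: via the excision isomorphisms in~\eqref{diagram_of_the_H_d-s}, $H_d(\pr)$ is identified with $H_d^{\Gamma}(E\Gamma;\IZ^w)\to H_d^{\Gamma}(E\Gamma,\partial E\Gamma;\IZ^w)$, whose kernel is the image of $H_d^{\Gamma}(\partial E\Gamma;\IZ^w)=\bigoplus_{F\in\calm}H_d^F(EF;\IZ^{w|_F})$. But ``the commutativity and bijectivity of the vertical arrows should force this image to vanish'' is not an argument: diagram~\eqref{diagram_of_the_H_d-s} contains no information about $H_d^{\Gamma}(\partial E\Gamma;\IZ^w)$ or where it maps. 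A priori your transfer identities only give $H_d^{\Gamma}(E\Gamma;\IZ^w)\cong\IZ\oplus T$ with $T$ finite $|G|$-torsion, and the kernel of $H_d(\pr)$ is exactly $T$; nothing you have said rules out $T\neq 0$. The missing input, which the paper supplies, is the concrete vanishing $H_d^F(EF;\IZ^{w|_F})=0$ for every $F\in\calm$: for $d$ even, $w|_F$ is trivial and $H_d(BF)=0$ because $F$ has periodic (co)homology (it acts freely on a homotopy $(d-1)$-sphere) and $d>0$ is even; for $d$ odd, $F\cong\IZ/2$ with $w|_F$ nontrivial and a direct computation gives $0$. With this vanishing, the Mayer--Vietoris sequence of the pushout in Proposition~\ref{pro:constructing_underline(E)Gamma)} (equivalently, your long exact sequence of the pair) yields the injectivity, and the rest of your argument then goes through.
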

\begin{proof}
  From the Mayer-Vietoris sequence associated to the $\Gamma$-pushout with an inclusion of
  free $\Gamma$-$CW$-complexes as upper horizontal arrow, see
  Proposition~\ref{pro:constructing_underline(E)Gamma)}
  \[
    \xymatrix@!C=8em{\coprod_{F \in \calm} \Gamma \times_FEF \ar[r] \ar[d] & E\Gamma\ar[d]
      \\
      \coprod_{F \in \calm} \Gamma/F\ar[r] & \eub{\Gamma} }
  \]

  we obtain the exact sequence sequence
  \[\prod_{F \in F} H^F_{d}(EF;\IZ^{w|_F}) \to H_d^{\Gamma}(E\Gamma;\IZ^w)
    \to H_d^{\Gamma}(\eub{\Gamma};\IZ^w) \to \prod_{F \in F} H^F_{d-1}(EF;\IZ^{w|_F}).
  \]
  If $d$ is even, then $H^F_{d}(EF;\IZ^{w|_F}) = H_d(BF)$ by Lemma~\ref{lem:basics_about_free_slice_systems} and
  $H_{d}(BF) = 0$ holds, as 
  $d$ is even and $F$ has periodic homology, see~\cite[Exercise~4 in Section~VI.9 on page~159]{Brown(1982)}.
  If $d$ is odd, $F \cong \IZ/2$ and $w|_F$ is non-trivial by Lemma~\ref{lem:basics_about_free_slice_systems}
  and a direct  computation shows $H^F_{d}(EF;\IZ^{w|_F}) = 0$. Hence we get the short exact sequence
  \[1 \to H_d^{\Gamma}(E\Gamma;\IZ^w) \to H_d^{\Gamma}(\eub{\Gamma};\IZ^w) \to \prod_{F \in F} H^F_{d-1}(EF;\IZ^{w|_F}).
  \]
  Since there
  is a cocompact $d$-dimensional model for $\eub{\Gamma}$ with zero-dimensional
  $\eub{\Gamma}^{>1}$, the abelian group
  $H_d(C_*(\eub{\Gamma}) \otimes_{\IZ\Gamma} \IZ^{w})$ is finitely generated free.  Hence
  also the group $H_d^{\Gamma}(E\Gamma;\IZ^w)$ is finitely generated free and has the same
  rank as $H_d(C_*(\eub{\Gamma}) \otimes_{\IZ\Gamma} \IZ^{w})$. The group
  $H_d^{\pi}(E\pi;\IZ^v)$ is infinite cyclic.  Since $i^*E\Gamma$ and $i^*\eub{\Gamma}$
  are models for $E\pi$, we conclude from~\eqref{iso_pi_to_Gamma_after_inverting_|G|}
  that all three groups  $H_d^{\Gamma}(E\Gamma;\IZ^w)$, $H_d(C_*(\eub{\Gamma}) \otimes_{\IZ\Gamma} \IZ^{w})$ and 
  $H_d^{\pi}(E\pi;\IZ^v)$ is infinite cyclic. The index of the inclusion
  $H_d(i_*) \colon H_d^{\pi}(E\pi;\IZ^v) \to  H_d^{\Gamma}(\eub{\Gamma};\IZ^w)$ divides $|G|$
  by~\eqref{diagram_of_the_H_d-s}. Since it factorizes as the composite
  of  $H_d(i_*) \colon H_d^{\pi}(E\pi;\IZ^v)  \to  H_d^{\Gamma}(E\Gamma;\IZ^w)$
  and $H_d(\pr) \colon H_d^{\Gamma}(E\Gamma;\IZ^w) \to H_d^\Gamma(\eub{\Gamma};\IZ^w)$,
  also these two maps are inclusions of infinite cyclic groups, whose index divides $|G|$.
  This finishes the proof of e Lemma~\ref{lem;everything_is_infinite_cyclic}.
\end{proof}

We will improve Lemma~\ref{lem;everything_is_infinite_cyclic} in
Theorem~\ref{the:Checking_Poincare_duality}~\eqref{the:Checking_Poincare_duality:indices}.

\begin{notation}\label{not:fundamental_classes}
  The choice of the fundamental class $[B\pi] \in H_d^{\pi}(E\pi;\IZ^v)$ determines
  preferred choices of generators of infinite cyclic groups
  \begin{eqnarray*}
    {[}\eub{\Gamma}/\Gamma {]}
    & \in &
            H_d^{\Gamma}(\eub{\Gamma};\IZ^w);
    \\
    {[}\eub{\Gamma}/\Gamma,\partial \eub{\Gamma}/\Gamma {]}
    & \in &
            H_d^{\Gamma}(\eub{\Gamma},\partial \eub{\Gamma};\IZ^w);
    \\
    {[}E\Gamma/\Gamma{]}
    & \in &
            H_d^{\Gamma}(E\Gamma;\IZ^w);
    \\
    {[}E\Gamma/\Gamma,\partial E\Gamma/\Gamma{]}
    & \in &
            H_d^{\Gamma}(E\Gamma,\partial E\Gamma;\IZ^w),
  \end{eqnarray*}
  by the injections or bijections of infinite cyclic groups appearing
  in~\eqref{diagram_of_the_H_d-s} and in Lemma~\ref{lem;everything_is_infinite_cyclic} by
  requiring that the injection
  $H_d(i_*) \colon H_d^{\pi}(E\pi;\IZ^v) \to H_d^{\Gamma}(\eub{\Gamma};\IZ^w)$ sends
  $[B\pi]$ to $n \cdot [\eub{\Gamma}/\Gamma]$ for some integer $n$ satisfying
  $n \ge 1$.

  If $(X,\partial X)$ is a slice complement model for $\eub{\Gamma}$, it also inherits a generator of
  an infinite cyclic group
  \begin{eqnarray*}
    [X/\Gamma,\partial X/\Gamma] & \in & H_d^{\Gamma}(X,\partial X;\IZ^w)
  \end{eqnarray*}                        
  from~\eqref{diagram_of_the_H_d-s} and $[\eub{\Gamma},\partial \eub{\Gamma}]$.  We also
  obtain a preferred generator
  \begin{eqnarray*}
    [X/\pi,\partial X/\pi] & \in & H_d^{\pi}(X,\partial X;\IZ^v),
  \end{eqnarray*}      
  namely, the one, which is mapped under the isomorphism
  \[
    H_d^{\pi}(X,\partial X;\IZ^v) \xrightarrow{\cong} H_d^{\pi}(\eub{\Gamma},\partial
    \eub{\Gamma};\IZ^v) \xleftarrow{\cong} H_d^{\pi}(\eub{\Gamma};\IZ^v) =
    H_d(E\pi;\IZ^v).
  \]
  to $[B\pi]$. Equivalently, one can define $[X/\pi,\partial X/\pi]$ by requiring that
  the injection of infinite cyclic groups
  $H_d(i_*) \colon H_d^{\pi}(X,\partial X;\IZ^v) \to H_d^{\Gamma}(X,\partial X;\IZ^w)$
  sends $[X/\pi,\partial X/\pi]$ to $n \cdot [X/\Gamma,\partial X/\Gamma]$ for some
  integer $n$ satisfying $n \ge 1$.

  We call these generators fundamental classes as well.
\end{notation}

\begin{example}[Special case of trivial $v$]\label{exa:special_case_of_trvial_v}
  As an illustration we explain, what happens in the special case that $v$ is trivial.
  Then $[B\pi]$ is a generator of the infinite cyclic group $H_d^{\pi}(E\pi;\IZ^v) = H_d(B\pi)$. The
  $\Gamma$-action on $\pi$ by conjugation induces a $G$-action on $H_d(B\pi)$. Since
  $H_d(B\pi)$ is infinite cyclic, there is precisely one group homomorphism
  $\overline{w} \colon G \to \{ \pm 1\}$, for which the $\IZ G$-module $H_d(B\pi)$ is
  $\IZ G$-isomorphic to $\IZ^{\overline{w}}$.  Then $w$ is the composite
  $\Gamma \xrightarrow{\pr} G \xrightarrow{\overline{w}} \{ \pm 1\}$.

  The homomorphisms appearing in Lemma~\ref{lem;everything_is_infinite_cyclic} boil down
  to homomorphisms
  \begin{eqnarray*}
    H_d(i_*) \colon H_d(B\pi) & \to & H_d^G(B\pi;\IZ^{\overline{w}});
    \\
    H_d(i_*) \colon H_d(B\pi)  & \to & H_d^G(\eub{\Gamma}/\pi;\IZ^{\overline{w}});
    \\
    H_d(\pr) \colon H_d^G(B\pi;\IZ^{\overline{w}}) &\to &
                                                          H_d^G(\eub{\Gamma}/\pi;\IZ^{\overline{w}}).
  \end{eqnarray*}

  The string of isomorphisms given by the left column in the
  diagram~\eqref{diagram_of_the_H_d-s} reduces to the string of isomorphisms
  \begin{multline*}
    H_d^G(X/\pi,\partial X/\pi;\IZ^{\overline{w}}) \xrightarrow{\cong}
    H_d^G(E\Gamma/\pi,\partial E\Gamma/\pi;\IZ^{\overline{w}})
    \\
    \xrightarrow{\cong} H_d^G(\eub{\Gamma}/\pi,\partial
    \eub{\Gamma}/\pi;\IZ^{\overline{w}}) \xleftarrow{\cong}
    H_d^G(\eub{\Gamma}/\pi;\IZ^{\overline{w}}).
  \end{multline*}
  If we furthermore assume that $\overline{w}$ is trivial, this reduces further to
  \begin{eqnarray*}
    H_d(Bi) \colon H_d(B\pi) & \to & H_d(B\Gamma);
    \\
    H_d(\pr \circ Bi) \colon H_d(B\pi)  & \to & H_d(\bub{\Gamma});
    \\
    H_d(\pr) \colon H_d(B\Gamma) &\to &
                                        H_d(\eub{\Gamma}),
  \end{eqnarray*}
  and
  \[H_d(X/\Gamma,\coprod_{F \in \calm} S_F/F) \xrightarrow{\cong} H_d^G(B\Gamma,
    \coprod_{F \in \calm} BF) \xrightarrow{\cong} H_d(\bub{\Gamma},\coprod_{F \in \calm}
    \pt) \xleftarrow{\cong} H_d(\bub{\Gamma}).
  \]
\end{example}


\typeout{-------------------- Section 7: Constructing and classifying slice complement models
  --------}

\section{Constructing slice complement models}~%
\label{subsec:Constructing_slice-models}

In this section we construct appropriate slice complement models, for which we will later show that
they carry the desired structure of a finite $d$-dimensional Poincar\'e pair.  Not every
slice complement model has such a structure. Moreover, we will classify slice complement models up to (simple)
$\Gamma$-homotopy equivalence in terms of the underlying free $d$-dimensional slice system
in Sections~\ref{sec:homotopy_classification_of_slice_models}
and~\ref{sec:Simple_homotopy_classification_of_slice_models}.

Throughout this section we will make the following assumptions:

\begin{assumption}\label{ass:general_assumptions}\

\begin{itemize}

\item The natural number $d$ is even and satisfies $d \ge 4$;

\item The group $\Gamma$ satisfies  conditions (M) and (NM), see Notation~\ref{not:(M)_and_(NM)_intro};

  \item The homomorphism $w \colon \Gamma \to \{\pm 1\}$ of Notation~\ref{not:w} has the
  property that $w|_F$ is trivial for every $F \in \calm$;

\item The composite
  \[
    H_d^{\Gamma}(E\Gamma,\partial E\Gamma;\IZ^w) \xrightarrow{\partial}
    H^{\Gamma}_{d-1}(\partial E\Gamma;\IZ^w ) \xrightarrow{\cong} \bigoplus_{F \in \calm}
    H_{d-1}(BF) \to H_{d-1}(BF)
  \]
  of the boundary map, the inverse of the obvious isomorphism and the projection to the
  summand of $F \in \calm$ is surjective for all $F \in \calm$;

\item There exists a finite $\Gamma$-$CW$-model for $\eub{\Gamma}$ of dimension $d$ such
  that its singular $\Gamma$-subspace $\eub{\Gamma}^{> 1}$ is
  $\coprod_{F\in\calm} \Gamma/F$.
  (This condition is discussed and simplified in Theorem~\ref{the:Models_for_the_classifying_space_for_proper_Gamma-actions_intro}
  and implies conditions (M) and (NM), see Remark~\ref{Theorem_models_and_necessary_conditions}.)

\item There is a finite $d$-dimensional Poincar\'e $CW$-complex model for $B\pi$ with
  respect to the orientation homomorphisms $v = w|_{\pi}\colon \pi \to \{\pm 1 \}$. We fix a
  choice of a fundamental class $[B\pi] \in H_d^{\pi}(E\pi;\IZ^v)$.

\end{itemize}

\

\end{assumption}

Note that the assumption that $d$ is even implies, that $F$ acts orientation preserving on $S_F$ by
Lemma~\ref{lem:basics_about_free_slice_systems}.

\begin{remark}[Reformulation of (H)]\label{rem:reformulation_(H)}
  One can easily check using~\eqref{diagram_of_the_H_d-s} that the map appearing in
  condition (H) can be identified with the map
  \[
    H_d^{\Gamma}(\eub{\Gamma};\IZ^w) \xrightarrow{\partial} H^{\Gamma}_{d-1}(\partial
    E\Gamma;\IZ^w ) \xrightarrow{\cong} \bigoplus_{F \in \calm} H_{d-1}(BF) \to
    H_{d-1}(BF),
  \]
  where the first map is the boundary map of the Mayer-Vietoris sequence associated to the
  $\Gamma$-pushout appearing in Proposition~\ref{pro:constructing_underline(E)Gamma)} and
  the second map is the projection onto the summand belonging to $F$. Recall that
  $H_d^{\Gamma}(\eub{\Gamma};\IZ^w)$ is infinite cyclic.  From this Mayer
  Vietoris sequence, we also conclude that condition (H) is satisfied, if and only if the kernel of the
  map $\bigoplus_{F \in \calm} H_{d-1}(BF) \to H_{d-1}^{\Gamma}(E\Gamma;\IZ^w)$ induced by the
  various inclusions $F \to \Gamma$ contains an element $\{\kappa_F \mid F \in \calm\}$
  such that each $\kappa_F \in H_{d-1}(BF)$ is a generator of the finite cyclic group $H_{d-1}(BF)$
  of order $|F|$.
\end{remark}


\subsection{Invariants associated to slice complement models}%
\label{subsec:Invariants_associated_to_slice_models}

\begin{notation}\label{not:kapp}
  Let $\kappa_F \in H_d(BF)$ be the image of $[E\Gamma/\Gamma,\partial E\Gamma/\Gamma]$
  defined in Notation~\ref{not:fundamental_classes} under the composite
  \[
    H_d^{\Gamma}(E\Gamma,\partial E\Gamma;\IZ^w) \xrightarrow{\partial}
    H^{\Gamma}_{d-1}(\partial E\Gamma;\IZ^w ) \xrightarrow{\cong} \bigoplus_{F \in \calm}
    H_{d-1}(BF) \to H_{d-1}(BF)
  \]
\end{notation}
Note that $\kappa_F$ is a generator of the finite cyclic group $H_{d-1}(BF)$ of order
$|F|$, since the composite above is surjective by assumption.

Let $(X,\partial X)$ be a slice complement model with respect to the slice system $\cals$. Fix an
orientation on $\cals$ in the sense of
Definition~\ref{def:free_d-dimensional_slide_system}, i.e., a choice of fundamental class
$[S_F] \in H_{d-1}(S_F)$ for every $F \in \calm$. Recall that this is the same as a choice
of fundamental class $[S_F/F] \in H_{d-1}(S_F/F)$ for every $F \in \calm$, as explained
after Definition~\ref{def:free_d-dimensional_slide_system}.

Define
\begin{equation}
  \mu(X,\partial X) = (\mu(X,\partial X)_F)_{F \in \calm}  \in H_{d-1}(\partial X/\Gamma)
  = \bigoplus_{F \in \calm} H_d(S_F/F)
  \label{mu(X,partial_X)_in_H_(d-1)(partial_X/Gamma)}
\end{equation}
to be the image of $[X/\Gamma,\partial X/\Gamma]$ defined in
Notation~\ref{not:fundamental_classes} under the boundary map
$H_d^{\Gamma}(X,\partial X;\IZ^w) \to H_{d-1}^{\Gamma}(\partial X;\IZ^w) =
H_{d-1}(\partial X/\Gamma)$.  Define the element
\begin{equation}
  s \in H_{d-1}^{\Gamma}(\partial X;\IZ^w) = \bigoplus_{F \in \calm} H_d(S_F/F)
  \label{element_s}
\end{equation}
by $([S_F/F])_{F \in \calm}$.  For $F \in \calm$ denote by
\begin{equation}
  m_F(X,\partial X) \in \IZ
  \label{integer_m_F}
\end{equation}
the integer, for which $\mu(X,\partial X)_F = m_F(X,\partial X) \cdot [S_F/F]$ holds.

    \begin{lemma}\label{lem:kappa_F_is_m_F(X,partial_X)_cdot_d(S_F)}
      Let $(X,\partial X)$ be a slice complement model with respect to the oriented free
      $d$-dimensional slice system $\cals = \{S_F \mid F \in \calm\}$.  Then we get for
      every $F \in \calm$
      \[
        \kappa_F = m_F(X,\partial X) \cdot d(S_F)
      \]
      in $H_d(BF)$, where the invariant $d(S_F)$ has been defined in~\eqref{d(S_F)}.
    \end{lemma}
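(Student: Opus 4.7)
The plan is to chase the element $[X/\Gamma,\partial X/\Gamma]$ through a commutative square that relates the boundary map for the slice complement model to the boundary map for $(E\Gamma,\partial E\Gamma)$, and that realises the classifying map $c_F\colon S_F/F\to BF$ on each boundary component.

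First I would note that since $(X,\partial X)$ is a slice complement model, the canonical classifying $\Gamma$-map $X\to E\Gamma$ can be chosen so that its restriction to $\partial X=\coprod_{F\in\calm}\Gamma\times_F S_F$ is $\coprod_{F\in\calm}\id_\Gamma\times_F c(S_F)\colon\coprod_F\Gamma\times_F S_F\to\coprod_F\Gamma\times_F EF=\partial E\Gamma$. Passing to $\Gamma$-quotients and applying $H_*(-;\IZ^w)$ gives the commutative square
\[
\xymatrix@C=3em{H_d^{\Gamma}(X,\partial X;\IZ^w)\ar[r]^-{\partial}\ar[d]_{\cong}
& \bigoplus_{F\in\calm}H_{d-1}(S_F/F)\ar[d]^{\oplus_F H_{d-1}(c_F)}\\
H_d^{\Gamma}(E\Gamma,\partial E\Gamma;\IZ^w)\ar[r]^-{\partial}
& \bigoplus_{F\in\calm}H_{d-1}(BF),}
\]
where the left vertical arrow is the isomorphism appearing in the left column of diagram~\eqref{diagram_of_the_H_d-s}, and the right vertical arrow uses $w|_F$ trivial so that $H_{d-1}^\Gamma(\Gamma\times_F S_F;\IZ^w)\cong H_{d-1}(S_F/F)$ etc.

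Next I would trace $[X/\Gamma,\partial X/\Gamma]$ along both paths. Going right then down: by the definition~\eqref{mu(X,partial_X)_in_H_(d-1)(partial_X/Gamma)} of $\mu(X,\partial X)$ and~\eqref{integer_m_F} of $m_F(X,\partial X)$, the boundary sends $[X/\Gamma,\partial X/\Gamma]$ to the tuple $(m_F(X,\partial X)\cdot[S_F/F])_{F\in\calm}$. Applying $H_{d-1}(c_F)$ componentwise and invoking the definition~\eqref{d(S_F)} of $d(S_F)$ as $H_{d-1}(c_F)([S_F/F])$, the image in $H_{d-1}(BF)$ is $m_F(X,\partial X)\cdot d(S_F)$. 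Going down then right: by Notation~\ref{not:fundamental_classes} the left vertical isomorphism carries $[X/\Gamma,\partial X/\Gamma]$ to $[E\Gamma/\Gamma,\partial E\Gamma/\Gamma]$, and then the bottom boundary followed by projection to the $F$-summand yields $\kappa_F$ by Notation~\ref{not:kapp}.

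Comparing the two resulting elements of $H_{d-1}(BF)$ gives $\kappa_F=m_F(X,\partial X)\cdot d(S_F)$ for each $F\in\calm$, which is the claim. There is essentially no obstacle beyond carefully identifying the fundamental classes and confirming that the induced maps on boundary components really are the classifying maps $c_F$; both are immediate from the construction of the slice complement model and the definition of the fundamental classes in Notation~\ref{not:fundamental_classes}.
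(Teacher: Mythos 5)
Your proof is correct and follows essentially the same route as the paper: the same commutative square relating the boundary maps of $(X,\partial X)$ and $(E\Gamma,\partial E\Gamma)$ via classifying maps, followed by the same diagram chase of $[X/\Gamma,\partial X/\Gamma]$ using Notation~\ref{not:fundamental_classes}, Notation~\ref{not:kapp}, and the definitions of $\mu(X,\partial X)$, $m_F(X,\partial X)$, and $d(S_F)$. Your explicit arrangement of the classifying map so that it restricts to $\coprod_F \id_\Gamma\times_F c(S_F)$ on the boundary is just a slightly more detailed statement of what the paper leaves implicit.
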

    \begin{proof}
      We have the commutative diagram
      \[
        \xymatrix{H_d^{\Gamma}(X,\partial X;\IZ^w) \ar[r]^-{\partial}
          \ar[d]^{\cong}_{H_d^{\Gamma}(c(X),\partial c(\partial X);\IZ^w)} &
          H_{d-1}^{\Gamma}(\partial X;\IZ^w) = \bigoplus_{F \in \calm} H_d(S_F/F)
          \ar[d]^{\bigoplus_{F \in \calm} H_d(c_{S_F})}
          \\
          H_d^{\Gamma}(E\Gamma,\partial E\Gamma;\IZ^w) \ar[r]_-{\partial} &
          H_{d-1}^{\Gamma}(\partial E\Gamma;\IZ^w) = \bigoplus_{F \in \calm} H_d(BF),}
      \]
      where the maps $c(X)$, $\partial c(\partial X)$, and  $c(S_F)$ are given by
      classifying maps.  The left vertical arrow is an isomorphism,
      see~\eqref{diagram_of_the_H_d-s} and sends $[X/\Gamma,\partial X/\Gamma]$ to
      $[E\Gamma/\Gamma,\partial E\Gamma/\Gamma]$ by definition. Now
      $\kappa_F = m_F(X,\partial X) \cdot d(S_F)$ follows from the definitions of $d(S_F)$,
      $\kappa_F$, and $m_F(X,\partial X)$.
    \end{proof}

    \begin{definition}[Poincar\'e slice complement model]\label{def:Poincare_slice_model}
      We call a slice complement model $(X,\partial X)$ a \emph{Poincar\'e slice complement model} if
      $(X/\Gamma,\partial X/\Gamma)$ carries the structure of a finite Poincar\'e pair.
    \end{definition}

    Recall that the orientation homomorphism underlying the Poincar\'e structure on
    $(X/\Gamma,\partial X/\Gamma)$ must be the map $w \colon \Gamma \to \{\pm1 \}$ defined
    in~\eqref{not:w} by  Remark~\ref{rem:extension_of_the_orientation_homomorphism_for_pi}.
    Moreover, we have a preferred fundamental class
    $[X/\Gamma,\partial X/\Gamma] \in H_d^{\Gamma}(X,\partial X;\IZ^w)$, see
    Notation~\ref{not:fundamental_classes}.

    \begin{definition}[Condition (S)]\label{def:condition_(S)}
      An oriented free $d$-dimensional slice system satisfies condition (S) if
      $d(S_F) = \kappa_F$ holds for all $F \in \calm$.
    \end{definition}

    Note that condition (S) determines each $S_F$ up to oriented $F$-homotopy equivalence.
    It determines also the orientation $[S_F]$ for those $F \in \calm$, for which
    $|F| \ge 3$ holds.  If $|F|$ has order $2$, then replacing $[S_F]$ by $-[S_F]$ does
    not affect condition (S).

     \begin{lemma}\label{lem:necessary_condition_for_slice_model_to_be_Poincare}
       Let $(X,\partial X)$ be a Poincar\'e slice complement model with respect to the free
       $d$-dimensional slice system $\cals = \{S_F \mid F \in \calm\}$. Then there is an
       orientation on $\cals$ such that $m_F(X,\partial X) = 1$ holds for  every
       $F \in \calm$ and $\cals$ satisfies condition (S).
     \end{lemma}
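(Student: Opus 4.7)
The plan is to exploit the fact that in a Poincar\'e pair the boundary inherits a Poincar\'e structure whose fundamental class is precisely the boundary of the ambient one. First I would observe that, by Notation~\ref{not:fundamental_classes}, $[X/\Gamma,\partial X/\Gamma]$ is a generator of the infinite cyclic group $H_d^{\Gamma}(X,\partial X;\IZ^w)$. Since $(X/\Gamma,\partial X/\Gamma)$ is by hypothesis a finite $d$-dimensional Poincar\'e pair and the orientation homomorphism is forced to be $w$ by Remark~\ref{rem:extension_of_the_orientation_homomorphism_for_pi}, the Poincar\'e fundamental class must be $\epsilon \cdot [X/\Gamma,\partial X/\Gamma]$ for some $\epsilon \in \{\pm 1\}$.

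Next I would invoke the compatibility of the cap-product chain maps~\eqref{Poincare_chain_map;rel_to_absolute},~\eqref{Poincare_chain_map_absolute_rel} and~\eqref{Poincare_chain_map_boundary} with the boundary homomorphism to conclude that $\partial(\epsilon \cdot [X/\Gamma,\partial X/\Gamma]) = \epsilon \cdot \mu(X,\partial X)$ is a Poincar\'e fundamental class for $\partial X/\Gamma$, which inherits the structure of a finite $(d-1)$-dimensional Poincar\'e complex. Since $\partial X/\Gamma = \coprod_{F \in \calm} S_F/F$ and a fundamental class on a disjoint union decomposes as the collection of fundamental classes of the components, each component $\epsilon \cdot m_F(X,\partial X) \cdot [S_F/F]$ must be a generator of the infinite cyclic group $H_{d-1}(S_F/F)$. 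This forces $m_F(X,\partial X) \in \{+1,-1\}$ for every $F \in \calm$.

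To arrange $m_F(X,\partial X) = 1$ throughout, I then adjust the orientation on $\cals$: for every $F$ with $m_F(X,\partial X) = -1$ replace $[S_F]$ by $-[S_F]$, which flips the induced generator $[S_F/F]$ and hence the integer $m_F(X,\partial X)$ defined by $\mu(X,\partial X)_F = m_F(X,\partial X) \cdot [S_F/F]$. After this re-orientation, $m_F(X,\partial X) = 1$ for all $F \in \calm$, and then Lemma~\ref{lem:kappa_F_is_m_F(X,partial_X)_cdot_d(S_F)} directly gives $\kappa_F = m_F(X,\partial X) \cdot d(S_F) = d(S_F)$, which is condition (S) in the sense of Definition~\ref{def:condition_(S)}. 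The only step that requires genuine care, rather than sign bookkeeping, is the assertion that the boundary of the fundamental class of a Poincar\'e pair is a fundamental class of the boundary; this is standard but relies on the three chain maps~\eqref{Poincare_chain_map;rel_to_absolute}--\eqref{Poincare_chain_map_boundary} being compatible in the homological five-lemma sense, so this is where I would be most explicit when writing out the full argument.
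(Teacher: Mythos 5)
Your proposal is correct and follows essentially the same route as the paper's proof: both deduce $m_F(X,\partial X)\in\{\pm 1\}$ from the fact that the boundary map sends the Poincar\'e fundamental class to a collection of generators of the groups $H_{d-1}(S_F/F)$, then replace $[S_F]$ by $m_F(X,\partial X)\cdot[S_F]$ and invoke Lemma~\ref{lem:kappa_F_is_m_F(X,partial_X)_cdot_d(S_F)} to obtain condition (S). Your extra care with the sign $\epsilon$ and with the justification that the boundary inherits a Poincar\'e structure is sound but does not change the argument.
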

     \begin{proof}
       Since $(X/\Gamma,\partial X/\Gamma)$ admits the structure of a finite Poincar\'e
       pair, it is part of the definition that the boundary map
       \[H_d^{\Gamma}(X,\partial X;\IZ^w) \to H_{d-1}^{\Gamma}(\partial X;\IZ^w) =
         \bigoplus_{F \in \calm} H_d(S_F/F)
       \]
       sends the fundamental class $[X,\partial X]$ to an element whose component for
       $F \in \calm$ is a generator of $H_{d-1}(S_F/F)$ for every $F \in \calm$.  Choose
       some orientation on $\cals$. With respect to it we get
       $m_F(X,\partial X) \in \{ \pm 1\}$.  Then we get the desired orientation by
       replacing $[S_F]$ by $m_F(X,\partial X) \cdot [S_F]$. Namely, with this new
       orientation we have $m_F(X,\partial X) = 1$ for every $F \in \calm$ and $(S)$ holds
       by Lemma~\ref{lem:kappa_F_is_m_F(X,partial_X)_cdot_d(S_F)}.
     \end{proof}

     \begin{remark}[On the condition (S)]\label{rem:On_the_condition_(S)}
       Our goal is to construct a Poincar\'e slice complement model $(X,\partial X)$ with respect to
       some free $d$-dimensional slice system $\cals = \{S_F \mid F \in \calm\}$ By
       Lemma~\ref{lem:necessary_condition_for_slice_model_to_be_Poincare} there exists an
       orientation on $\cals$ such that condition (S) holds. 
       This motivates that in the sequel we will consider only oriented systems $\cals$
       satisfying condition (S). Note that condition (S) implies by
       Lemma~\ref{lem:kappa_F_is_m_F(X,partial_X)_cdot_d(S_F)}
       \[
         m_F(X,\partial X) \equiv 1 \mod |F|
       \]
       for every $F \in \calm$, since $\kappa_F$ is a generator of the finite cyclic group
       $H_d(BF)$ of order $|F|$ for every $F \in \calm$.
     \end{remark}

     The main result of this section is

     \begin{theorem}[Existence of Poincar\'e slice complement models]\label{the:existence_Poincare_slice_models}
       Suppose that Assumption~\ref{ass:general_assumptions} is satisfied and let $\cals$ be 
       an oriented free $d$-dimensional slice system $\cals$ satisfying condition (S).

       Then there exists a  Poincar\'e slice complement model
       $(X,\partial X)$ with respect to $\cals$ such that $m_F(X,\partial X) = 1$ holds  for every
       $F \in \calm$.
     \end{theorem}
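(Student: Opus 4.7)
The proof divides into three tasks: (i) construct a finite $d$-dimensional slice complement model $(X, \partial X)$ with respect to the given $\cals$, (ii) upgrade $(X/\Gamma, \partial X/\Gamma)$ to a finite $d$-dimensional Poincar\'e pair, and (iii) normalize so that $m_F(X, \partial X) = 1$ for every $F \in \calm$.

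For task (i), starting from the finite $d$-dimensional $\Gamma$-$CW$-model $Y$ for $\eub{\Gamma}$ with $Y^{>1} = \coprod_{F \in \calm} \Gamma/F$ supplied by Assumption~\ref{ass:general_assumptions}, $\Gamma$-equivariant cone neighborhoods of the singular $0$-cells produce an initial finite $d$-dimensional slice complement model $(X_0, \partial X_0)$ with respect to an auxiliary slice system $\cals_0$ whose $F$-components $L_F$ are the local links of the fixed points. Any choice of cellular $\Gamma$-map $\partial X_0 \to \coprod_{F \in \calm} \Gamma \times_F S_F =: \partial X$ then allows me to invoke Lemma~\ref{lem:slice-models_and_pushouts}, which converts $(X_0, \partial X_0)$ via $\Gamma$-pushout into a finite $d$-dimensional slice complement model $(X, \partial X)$ with respect to $\cals$.

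For task (ii), I consider the decomposition $\eub{\Gamma}/\pi = X/\pi \cup C(\partial X)/\pi$ with intersection $\partial X/\pi$. The total space $\eub{\Gamma}/\pi$ is a model for $B\pi$ and hence a finite $d$-dimensional Poincar\'e complex with orientation $v$ and fundamental class $[B\pi]$ by Assumption~\ref{ass:general_assumptions}. Since $\pi \cap F = 1$ for every $F \in \calm$, the piece $(C(\partial X)/\pi, \partial X/\pi)$ is a disjoint union of the standard Poincar\'e pairs $(D_F, S_F)$, and the map $\pi_1(C(\partial X)/\pi) \to \pi_1(\eub{\Gamma}/\pi)$ is injective for the trivial reason that its components are simply connected cones. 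Applying Lemma~\ref{lem:subtracting_a_Poincare_pair}~\eqref{lem:subtracting_a_Poincare_pair:conclusion_Y_2}, after checking the fundamental-class compatibility hypothesis via the identifications of diagram~\eqref{diagram_of_the_H_d-s}, I conclude that $(X/\pi, \partial X/\pi)$ is a finite $d$-dimensional Poincar\'e pair with fundamental class $[X/\pi, \partial X/\pi]$. The framework of Subsection~\ref{subsec:arbitrary_coverings}, together with Lemma~\ref{lem:W_pi_is_v} giving $w|_\pi = v$, then promotes this to a Poincar\'e structure on $(X/\Gamma, \partial X/\Gamma)$ with orientation $w$ and fundamental class $[X/\Gamma, \partial X/\Gamma]$.

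For task (iii), being Poincar\'e, the boundary $\partial [X/\Gamma, \partial X/\Gamma]$ restricts to a fundamental class on each component of $\partial X/\Gamma$, so $m_F(X, \partial X) \in \{\pm 1\}$. Lemma~\ref{lem:kappa_F_is_m_F(X,partial_X)_cdot_d(S_F)} combined with condition (S) gives $m_F \equiv 1 \pmod{|F|}$, which forces $m_F = 1$ whenever $|F| \ge 3$. For the remaining $F$ with $|F| = 2$, if $m_F = -1$ I modify the cellular $\Gamma$-map chosen in task (i) by precomposing its $F$-component with a degree $-1$ $F$-self-equivalence of $S_F$; this flips the sign of $m_F$ and leaves condition (S) intact since $d(S_F) \in H_{d-1}(BF) = \IZ/2$ is insensitive to signs. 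The main technical obstacle throughout is the verification of the fundamental-class compatibility required by Lemma~\ref{lem:subtracting_a_Poincare_pair}, which demands careful tracking of preferred generators along the identifications in diagram~\eqref{diagram_of_the_H_d-s} and most directly uses condition (H) together with the assumption that $w|_F$ is trivial for every $F \in \calm$.
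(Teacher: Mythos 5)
Your proposal has two genuine gaps, and the second one is fatal to the overall logic. First, in task (i) you obtain the initial slice complement model by taking ``cone neighborhoods of the singular $0$-cells'' of a finite $d$-dimensional $\Gamma$-$CW$-model for $\eub{\Gamma}$ and declaring the local links $L_F$ to be a slice system. For a $CW$-complex (as opposed to a manifold) there is no reason that a $0$-cell has a cone neighbourhood whose link is a free $F$-$CW$-complex homotopy equivalent to $S^{d-1}$; that the complement of the singular set can be bounded off by homotopy spheres is essentially what the theorem is trying to produce, not something available from the model for free. The paper avoids this by building $(X,\partial X)$ from scratch: starting from the prescribed $\partial X=\coprod_{F}\Gamma\times_F S_F$ it attaches free cells skeleton by skeleton over $\eub{\Gamma}$ to obtain a $(d-1)$-connected $\Gamma$-map $u\colon X\to\eub{\Gamma}$ (Theorem~\ref{the:constructing_slice_models}); Lemma~\ref{lem:slice-models_and_pushouts} is only used to compare models, never to create the first one.

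Second, and more seriously, tasks (ii) and (iii) are in the wrong order and the argument is circular. For an arbitrary slice complement model with respect to an oriented $\cals$ satisfying (S), all one knows is $m_F(X,\partial X)\equiv 1\bmod |F|$ (Remark~\ref{rem:On_the_condition_(S)}); the value $1+k|F|$ with $k\neq 0$ is perfectly possible, and in that case $(X/\Gamma,\partial X/\Gamma)$ is \emph{not} a Poincar\'e pair, by Lemma~\ref{lem:necessary_condition_for_slice_model_to_be_Poincare}. Concretely, the fundamental-class compatibility you must check in order to apply Lemma~\ref{lem:subtracting_a_Poincare_pair}~\eqref{lem:subtracting_a_Poincare_pair:conclusion_Y_2} forces the class induced on each cone piece $(D_F,S_F)$ to be a generator, and this holds exactly when the local multiplicities are $\pm 1$; so you cannot first prove Poincar\'e duality and then deduce $m_F=\pm1$ from it. The missing ingredient is the modification step of Theorem~\ref{the:constructing_slice_models}: one changes the attaching maps of the top-dimensional free cells by classes $a_i\in H_{d-1}(\partial X)$, computes that this changes $\mu(X,\partial X)$ by an arbitrary element of the image of $H_{d-1}(\partial X)\to H_{d-1}(\partial X/\Gamma)$, identifies that image as $\bigoplus_F |F|\cdot H_{d-1}(S_F/F)$, and uses $m_F\equiv 1\bmod |F|$ to correct $\mu(X,\partial X)$ to $s$, i.e.\ to force $m_F=1$ exactly. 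Only after that does the duality check of Theorem~\ref{the:Checking_Poincare_duality} go through. Your sign-flip for $|F|=2$ in task (iii) is fine as far as it goes, but it only resolves an ambiguity of $\pm1$ and cannot bridge the gap from $1+k|F|$ to $1$.
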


     \begin{remark}[Basic strategy]\label{rem:basic_strategy}
       Consider any oriented slice system $\cals$ satisfying condition (S).
       Let $(X,\partial X)$ be a slice complement model with respect to $\cals$. Then 
       $m_F(X,\partial X) \equiv 1 \mod |F|$
       holds for every $F \in \calm$ as explained in Remark~\ref{rem:On_the_condition_(S)}.
      So our basic strategy will be to construct some slice complement model $(X,\partial X)$ with respect to $\cals$
       and  then to modify  it using $m_F(X,\partial X) \equiv 1 \mod |F|$
       such that we have get $m_F(X,\partial X) = 1$ for every $F \in \calm$.
       This will be done in
       Theorem~\ref{the:constructing_slice_models}~\eqref{the:Checking_Poincare_duality:duality}.
        Then we will show in Theorem~\ref{the:Checking_Poincare_duality} that
       $(X,\partial X)$ carries the structure of a finite $d$-dimensional Poincar\'e pair.

       Hence Theorem~\ref{the:existence_Poincare_slice_models} will be a direct
       consequence of Theorem~\ref{the:constructing_slice_models} and
       Theorem~\ref{the:Checking_Poincare_duality}~\eqref{the:Checking_Poincare_duality:duality}.
     \end{remark}


     \subsection{Constructing slice complement models}\label{subsec:constructing_slice_models}

     \begin{theorem}[Constructing slice complement models]\label{the:constructing_slice_models}
       Suppose that Assumption~\ref{ass:general_assumptions} is satisfied. Let $\cals$ be an
       oriented free $d$-dimensional slice system $\cals$ satisfying condition (S).
       
       Then there exists a slice complement model $(X,\partial X)$ for $\eub{\Gamma}$ with respect to
       $\cals$ such that $m_F(X,\partial X) = 1$ holds for every $F \in \calm$.
     \end{theorem}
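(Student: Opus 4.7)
The plan is to proceed in two stages. First, use the finite $d$-dimensional $\Gamma$-$CW$-model $Z$ for $\eub{\Gamma}$ provided by Assumption~\ref{ass:general_assumptions} (which satisfies $Z^{>1} = \coprod_{F\in\calm}\Gamma/F$) to blow up the singular set into $\coprod_{F\in\calm}\Gamma\times_F D_F$, producing a preliminary slice complement model $(X_0,\partial X_0)$ with respect to $\cals$; for this model the invariant $m_F(X_0,\partial X_0)$ will turn out to be $\equiv 1\pmod{|F|}$ by Lemma~\ref{lem:kappa_F_is_m_F(X,partial_X)_cdot_d(S_F)} and condition $(S)$. Second, adjust $m_F$ down to $1$ by modifying the $\Gamma$-equivariant attaching map of a single $d$-cell via an equivariant connect-sum with a map into $\partial X_0$. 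Since $\calm$ is finite by (M), (NM), and cocompactness of $\eub{\Gamma}$, only finitely many such adjustments are needed.

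For the blow-up, attach to $Z$ free $\Gamma$-cells realizing $\Gamma\times_F(D_F\setminus\{\text{cone point}\})$ at each singular $0$-cell $\Gamma/F$, with the cone point identified with $eF$. For every free cell $\Gamma\times D^k$ of $Z$ whose attaching map hits $\Gamma/F$, $F$-equivariantly lift the attaching map through the contraction $D_F\to\{\text{cone point}\}$, then deform through the radial collar of $D_F$ so that the lift terminates in $\partial X := \coprod_{F\in\calm}\Gamma\times_F S_F$. Denote the resulting $\Gamma$-$CW$-complex by $W$; collapsing each $D_F$ back to its cone point is a $\Gamma$-homotopy equivalence $W\simeq_\Gamma Z\simeq_\Gamma\eub{\Gamma}$. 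Let $X_0\subset W$ be the free $\Gamma$-$CW$-subcomplex spanned by $\partial X$ and the (deformed) free cells of $Z$; then $(X_0,\partial X_0)$ is a finite free $\Gamma$-$CW$-pair of dimension $d$ with $X_0\cup_{\partial X_0}C(\partial X_0) = W$, and hence a slice complement model. By Lemma~\ref{lem:kappa_F_is_m_F(X,partial_X)_cdot_d(S_F)} applied together with condition $(S)$, $m_F(X_0,\partial X_0)\cdot\kappa_F = \kappa_F$ in $H_{d-1}(BF)$, and since $\kappa_F$ has order $|F|$ we obtain $m_F(X_0,\partial X_0)\equiv 1\pmod{|F|}$ for every $F\in\calm$.

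Write $m_F(X_0,\partial X_0) = 1 + k_F\cdot|F|$. Choose a $d$-cell $c$ of $X_0$ whose coefficient $\epsilon_c\in\{\pm 1\}$ in any fundamental cycle representing the generator $[X_0/\Gamma,\partial X_0/\Gamma]$ of $H_d^{\Gamma}(X_0,\partial X_0;\IZ^w)\cong\IZ$ is non-zero (such a cell exists, possibly the same for all $F$). Modify the $\Gamma$-equivariant attaching map $f\colon \Gamma\times S^{d-1}\to X_0^{(d-1)}$ of $c$ by equivariant connect-sum with a $\Gamma$-map $g_F\colon \Gamma\times S^{d-1}\to\partial X_0$ whose non-equivariant fiber $S^{d-1}\to\Gamma\times_F S_F$ lands in $S_F$ with degree $-\epsilon_c\cdot k_F$, and is trivial on all other $F'$-summands. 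The new pair $(X,\partial X)$, obtained from $X_0$ by re-attaching $c$ via $f\# g_F$, remains a slice complement model: the map $g_F$ takes values in $\partial X_0\hookrightarrow C(\partial X_0)=\coprod_{F\in\calm}\Gamma\times_F D_F$, and each inclusion $\Gamma\times_F S_F\hookrightarrow\Gamma\times_F D_F$ is $F$-null-homotopic through the cone structure, so $f$ and $f\# g_F$ are $\Gamma$-homotopic as maps into $X_0^{(d-1)}\cup_{\partial X_0}C(\partial X_0)$, which implies $X\cup_{\partial X}C(\partial X)\simeq_\Gamma X_0\cup_{\partial X_0}C(\partial X_0)\simeq_\Gamma\eub{\Gamma}$. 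On the other hand, the boundary of the fundamental cycle picks up an additional contribution from $g_F$ equal to $\epsilon_c\cdot(-\epsilon_c\cdot k_F\cdot|F|)\cdot[S_F/F] = -k_F|F|\cdot[S_F/F]$, using that the covering $S_F\to S_F/F$ has degree $|F|$, so $m_F(X,\partial X) = 1$. Performing this adjustment simultaneously for every $F\in\calm$ completes the construction. The main technical point is the $\Gamma$-homotopy between $f$ and $f\# g_F$ inside $X_0^{(d-1)}\cup C(\partial X_0)$, which is obtained by extending the fiberwise $F$-null-homotopy in each $D_F$ equivariantly across the $\Gamma$-action on $\Gamma\times_F D_F$.
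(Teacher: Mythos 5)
Your overall strategy (build some slice complement model over the given finite $d$-dimensional model for $\eub{\Gamma}$, observe that condition (S) forces $m_F\equiv 1\bmod |F|$, then correct $m_F$ to $1$ by altering top-dimensional attaching maps by classes coming from $H_{d-1}(\partial X)$) is exactly the paper's strategy, and your first stage, while sketchy about how the free cells of $Z$ get lifted off the singular points, is essentially the paper's skeleton-by-skeleton construction of a $(d-1)$-connected map $u\colon X\to\eub{\Gamma}$. The correction step, however, has a genuine gap.

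You choose a single $d$-cell $c$ whose coefficient $\epsilon_c$ in the fundamental cycle lies in $\{\pm 1\}$ and assert parenthetically that such a cell exists. This is not justified. What is true is the following: the image of the generator of $H_d^{\Gamma}(X_0,\partial X_0;\IZ^w)$ in $\IZ^w\otimes_{\IZ\Gamma}C_d(X_0,\partial X_0)\cong\IZ^{I_d}$ is a \emph{primitive} vector $(\lambda_i)_{i\in I_d}$ (it generates the kernel of the differential into a free abelian group), so $\gcd_i(\lambda_i)=1$; but a primitive integer vector need not have any coordinate equal to $\pm 1$ (e.g.\ $(2,3)$), and there is no geometric reason here, since $X_0\cup_{\partial X_0}C(\partial X_0)$ is merely a contractible finite $\Gamma$-$CW$-complex and not a manifold, for the top cells to carry coefficients $\pm 1$. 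Since the change your surgery induces on $\mu(X,\partial X)_F$ is weighted by the coefficient $\lambda_c$ of the modified cell, with only one cell at your disposal you can only shift $m_F$ by multiples of $\lambda_c\cdot|F|$, which does not suffice to reach $1$ when $|\lambda_c|\ge 2$. The paper's proof repairs exactly this point: it chooses integers $\mu_i$ with $\sum_i\lambda_i\mu_i=1$ and modifies \emph{every} top cell $i$ by $\mu_i\cdot a$ for a fixed class $a\in H_{d-1}(\partial X)$ mapping onto $s-\mu(X_0,\partial X_0)$, so that the total change is $\bigl(\sum_i\lambda_i\mu_i\bigr)\cdot H_{d-1}(\pr_{\partial X})(a)=H_{d-1}(\pr_{\partial X})(a)$. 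Your argument becomes correct once you replace the single connect-sum by this Bezout-weighted simultaneous modification; the remaining ingredients of your step two (the connect-summed map lands in $\partial X_0$, hence is null-homotopic after attaching the cones, so the new pair is still a slice complement model; and the image of $H_{d-1}(\pr_{\partial X})$ is $\bigoplus_F |F|\cdot H_{d-1}(S_F/F)$, which together with $m_F\equiv 1\bmod|F|$ makes the target value $1$ reachable) agree with the paper.
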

     \begin{proof}
       We begin with constructing a $\Gamma$-$CW$-pair $(X,\partial X)$ together with a
       cellular $\Gamma$-map of $\Gamma$-$CW$-pairs
       \[
         (u,\partial u) \colon (X,\partial X) \to (\eub{\Gamma},\partial \eub{\Gamma}).
       \]
       Note for the sequel that $\eub{\Gamma}_n \cup \partial \eub{\Gamma}$ is
       $\partial \eub{\Gamma}$ for $n = -1$ and is $\eub{\Gamma}_n$ for
       $n = 0,1,2 \ldots, d$.

       We will construct by induction over $n = -1,0,1, \ldots, d$ $\Gamma$-$CW$-pairs
       $(X_n,\partial X)$ and $\Gamma$-maps
       $u_n \colon X_n \to \eub{\Gamma}_n \cup \partial \eub{\Gamma}$ satisfying
       \begin{itemize}

       \item $X_{n-1} \subseteq X_n$ holds for $n = 0,1,2 \ldots, d$;
       \item $u_n|_{X_{n-1}} = u_{n-1}$ holds for $n = 0,1,2 \ldots, d$;
       \item $u_n$ is $(d-1)$-connected for $n = -1, 0,1,2 \ldots, d$;
       \item
         $H_n(u_n,u_{n-1}) \colon H_n(X_n,X_{n-1}) \to H_n(\eub{\Gamma}_n,
         \eub{\Gamma}_{n-1} \cup \partial \eub{\Gamma})$ is bijective for
         $n = 0,1,2 \ldots, d$.  (Actually, the source and target will come with explicit
         $\IZ \pi$-bases, which are respected by this map.)
       \end{itemize}
       Since $\eub{\Gamma} = \eub{\Gamma}_d$, we then can and will  define $X$ to be $X_d$, and $u$
       to be $u_d$.

       The induction beginning $n = -1$ is given by $X_{-1} = \partial X$ and
       $u_{-1} = \partial u\colon \partial X \to \partial \eub{\Gamma}$ is the coproduct
       over $F \in \calm$ of the projections $\Gamma\times_F S_F \to \Gamma/F$. The
       induction step from $(n-1)$ to $n$ for $n = 0,1,2, \ldots, d$ is done as follows.

       Choose a finite index set $I_n$ and for $i \in I_n$ a map of pairs
       \[
         (Q_{i}^n,q_{i}^n) \colon (D^n,S^{n-1}) \to (\eub{\Gamma}_n,\eub{\Gamma}_{n-1}
         \cup \partial \eub{\Gamma})
       \]
       such that for the induced $\Gamma$-maps
       \[
         (\widehat{Q_{i}^n},\widehat{q_{i}^n}) \colon (\Gamma \times D^n, \Gamma \times
         S^{n-1}) \to (\eub{\Gamma}_n,\eub{\Gamma_{n-1}} \cup \partial \eub{\Gamma})
       \]
       sending $(\gamma, x)$ to $\gamma \cdot Q_{i}^n(x)$ for $\gamma \in \Gamma $ and
       $x \in D^n$ we get a $\Gamma$-pushout
       \[
         \xymatrix@!C=11em{\coprod_{i \in I_n} \Gamma \times S^{n-1} \ar[r]^-{\coprod_{i
               \in I_n} \widehat{q_{i}^n}} \ar[d] & \eub{\Gamma_{n-1}} \cup \partial
           \eub{\Gamma} \ar[d]
           \\
           \coprod_{i \in I_n} \Gamma \times D^n \ar[r]^-{\coprod_{i \in I_n}
             \widehat{Q_{i}^n}} & \eub{\Gamma}_n.  }
       \]
       Since $u_{n-1}$ is $(d-1)$-connected by induction hypothesis, we can find for
       $i \in I_n$ a map $p_{i}^n \colon S^{n-1} \to X_{n-1}$ and a homotopy
       \[
         h^n_{i} \colon S^n \times [0,1] \to \eub{\Gamma}_{n-1} \cup \partial \eub{\Gamma}
       \]
       from $q_{i}^n$ to $u_{n-1} \circ p_i^n$. By the Cellular Approximation Theorem, we
       can assume without loss of generality that the image of $p_{i}^n$ is contained in
       $X_{n-1} \cap (\partial X)_{n-1}$.  Now define $X_n$ by the $\Gamma$-pushout
       \[
         \xymatrix@!C=11em{\coprod_{i \in I_n} \Gamma \times S^{n-1} \ar[r]^-{\coprod_{i
               \in I_n} \widehat{p_{i}^n}} \ar[d] & X_{n-1}\ar[d]
           \\
           \coprod_{i \in I_n} \Gamma \times D^n \ar[r]^-{\coprod_{i \in I_n}
             \widehat{P_{i}^n}} &X_n }
       \]
       for an appropriate extension $P_{i}^n \colon D^n \to X_n$ of $p_{i}^n$.  In order
       to define the extension $u_n \colon X_n \to \eub{\Gamma}_n$ of
       $u_{n-1} \colon X_{n-1} \to \eub{\Gamma}_{n-1} \cup \partial \eub{\Gamma}$, we have
       to specify for each $i \in I_n$ a map $v_{i}^n \colon D^n \to \eub{\Gamma}_n$ whose
       restriction to $S^{n-1}$ is $u_{n-1} \circ p_{i}^n$. It is given by sending
       $x \in D^n$ to $h^n_{i}(x,2 \cdot |x| -1 )$ if $1/2 \le |x| \le 1$ and to
       $Q_{i}^n(2 \cdot x)$ if $0 \le |x| \le 1/2$.

       We get for each $i \in I_n$ an element
       $e_{i}^n \in H_n(\eub{\Gamma}_n, \eub{\Gamma}_{n-1} \cup \partial \eub{\Gamma})$,
       namely the image of the class of $(Q_{i}^n,q_{i}^n)$ under the Hurewicz
       homomorphism
       $\pi_n(\eub{\Gamma}_n, \eub{\Gamma}_{n-1} \cup \partial \eub{\Gamma}) \to
       H_n(\eub{\Gamma}_n, \eub{\Gamma}_{n-1} \cup \partial \eub{\Gamma})$.  Analogously
       get for each $i \in I_n$ an element $x_{i}^n \in H_n(X_n, X_{n-1})$, namely the
       image of the class of $(P_{i}^n,p_{i}^n)$ under the Hurewicz homomorphism
       $\pi_n(X_n, X_{n-1}) \to H_n(X_n,X_{n-1})$. One easily checks that
       $\{e^n_{i} \mid i \in I_n\}$ and $\{x^n_{i} \mid i \in I_n\}$ are
       $\IZ \Gamma$-basis for the finitely generated free $\IZ\Gamma$-modules
       $H_n(\eub{\Gamma}_n, \eub{\Gamma}_{n-1} \cup \partial \eub{\Gamma})$ and
       $H_n(X_n,X_{n-1})$ and that $H_n(u_n,u_{n-1})$ sends $x_{i}^n$ to $e_{i}^n$. In
       particular $H_n(u_n,u_{n-1})$ is bijective.

       It is not hard to show that the commutative diagram
       \begin{equation}
         \xymatrix{X_{n-1} \ar[r]^-{u_{n-1}} \ar[d]
           &
           \eub{\Gamma}_{n-1} \cup \partial \eub{\Gamma} \ar[d]
           \\
           X_n \ar[r]^-{u_n}&
           \eub{\Gamma}_n
         }
         \label{homotopy_pushout_for_X_(n-1)_and_X_n}
       \end{equation}
       is a homotopy pushout, actually a $\Gamma$-homotopy pushout.  The reason is
       essentially that changing the attaching maps of the $\Gamma$-cells for a
       $\Gamma$-$CW$-complex by a $\Gamma$-homotopy does not change the $\Gamma$-homotopy
       type. Now one easily checks that $u_n$ is $(d-1)$-connected using the induction
       hypothesis that $u_{n-1}$ is $(d-1)$-connected. This finishes the construction of the map $(u,\partial u)$.

       Next we analyze this construction in the top dimension closer.
       Since $\eub{\Gamma}$ is a finite $d$-dimensional $\Gamma$-$CW$-complex, the
       inclusion
       $(\eub{\Gamma},\partial \eub{\Gamma}) \to (\eub{\Gamma}, \eub{\Gamma}_{d-1})$
       induces an exact sequence of finitely generated abelian groups
       \begin{multline*}
         0 \to H_d^{\Gamma}(\eub{\Gamma},\partial \eub{\Gamma};\IZ^w) \to
         H_d^{\Gamma}(\eub{\Gamma},\eub{\Gamma}_{d-1};\IZ^w) = \IZ^w \otimes_{\IZ\Gamma}
         C_d(\eub{\Gamma},\partial \eub{\Gamma})
         \\
         \xrightarrow{c_d \otimes_{\IZ \pi} \id_{\IZ^w}} \IZ^w \otimes_{\IZ\Gamma}
         C_{d-1}(\eub{\Gamma}, \partial \eub{\Gamma}),
       \end{multline*}
       where
       $c_d \colon C_d(\eub{\Gamma},\partial \eub{\Gamma}) \to C_{d-1}(\eub{\Gamma},
       \partial \eub{\Gamma})$ is the $d$-th differential of the cellular
       $\IZ\Gamma$-chain complex of the $\Gamma$-$CW$-pair
       $(\eub{\Gamma},\partial \eub{\Gamma})$.  Since the image of
       $c_d \otimes_{\IZ \pi} \id_{\IZ^w}$ is a free $\IZ$-module, the $\IZ$-map
       $H_d^{\Gamma}(\eub{\Gamma},\partial \eub{\Gamma};\IZ^w) \to
       H_d^{\Gamma}(\eub{\Gamma},\eub{\Gamma}_{d-1};\IZ^w)$ is split injective.  Recall
       that $H_d^{\Gamma}(\eub{\Gamma},\partial \eub{\Gamma};\IZ^w)$ is an infinite cyclic
       group and comes with a preferred generator
       $[\eub{\Gamma}/\Gamma,\partial \eub{\Gamma}/\Gamma]$ and that
       $\{1 \otimes e_{i}^d \mid i \in I_d\}$ is a $\IZ$-basis for
       $\IZ^w \otimes_{\IZ\Gamma} C_d(\eub{\Gamma},\partial \eub{\Gamma})$.  Hence we can
       find integers $\lambda_{i}$ and $\mu_{i}$ for $i \in I_d$ such that the image of
       $[\eub{\Gamma}/\Gamma,\partial \eub{\Gamma}/\Gamma]$ under
       \[
         H_d^{\Gamma}(\eub{\Gamma},\partial \eub{\Gamma};\IZ^w) \to
         H_d^{\Gamma}(\eub{\Gamma},\eub{\Gamma}_{d-1};\IZ^w) = \IZ^w \otimes_{\IZ\Gamma}
         C_d(\eub{\Gamma},\partial \eub{\Gamma})
       \]
       is $\sum_{i \in I_d} \lambda_{i} \cdot (1 \otimes e^d_{i})$ and
       $\sum_{i \in I_{d}} \lambda_{i} \cdot \mu_i = 1 $ holds.

       The map $u$ induces a commutative diagram
       \[
         \xymatrix{ H_d^{\Gamma}(X,\partial X;\IZ^w) \ar[r]
           \ar[d]^{H_d^{\Gamma}(u,\partial u;\IZ^w)}_{\cong} & H_d^{\Gamma}(X,X_{d-1}
           ;\IZ^w) = \IZ^w \otimes_{\IZ\Gamma} C_d(X,\partial X)
           \ar[d]^{H_d^{\Gamma}(u,u_{d-1};\IZ^w) = \id_{\IZ^w} \otimes_{\IZ\Gamma}
             C_d(u,\partial u) }_{\cong}
           \\
           H_d^{\Gamma}(\eub{\Gamma},\partial \eub{\Gamma};\IZ^w) \ar[r] &
           H_d^{\Gamma}(\eub{\Gamma},\eub{\Gamma}_{d-1};\IZ^w) = \IZ^w \otimes_{\IZ\Gamma}
           C_d(\eub{\Gamma},\partial \eub{\Gamma}_{d-1}) }
       \]
       where the left vertical arrow sends $[X/\Gamma,\partial X/\Gamma]$ to
       $[\eub{\Gamma}/\Gamma,\partial \eub{\Gamma}/\Gamma]$ and the right vertical arrow
       $1 \otimes x_{i}^d$ to $1 \otimes e_{i}^d$. Hence the map
       \[
         H_d^{\Gamma}(X,\partial X;\IZ^w) \to H_d^{\Gamma}(X,X_{d-1};\IZ^w) = \IZ^w
         \otimes_{\IZ\Gamma} C_d(X,\partial X)
       \]
       sends $[X/\Gamma,\partial X/\Gamma]$ to
       $\sum_{i \in I_d} \lambda_{i} \cdot (1 \otimes x^d_{i})$.  The following diagram
       commutes
       \[
         \xymatrix{H_d^{\Gamma}(X,\partial X;\IZ^w) \ar[r] \ar[d] &
           H_{d-1}^{\Gamma}(\partial X;\IZ^w) = H_{d-1}(\partial X/\Gamma)
           \ar[d]^{H_{d-1}^{\Gamma}(j;\IZ^w)}
           \\
           H_d^{\Gamma}(X,X_{d-1};\IZ^w) \ar[r] & H_{d-1}^{\Gamma}(X_{d-1};\IZ^w),}
       \]
       where $j \colon \partial X \to X_{d-1}$ is the inclusion and the horizontal arrows
       are boundary homomorphism of pairs.  Let
       $\pr_{\partial X} \colon \partial X \to \partial X_{d-1}/\Gamma$ be the projection.
       One easily checks that the lower horizontal arrow sends $1 \otimes x_{i}^d$ to the
       image of the class $[p_{i}^d]$ of $p_{i}^d$ under the composite
       \[
         \pi_{d-1}(X_{d-1}) \xrightarrow{h_{d-1}[X_{d-1}]} H_{d-1}(X_{d-1})
         \xrightarrow{H_d(f_*)} H_d^{\Gamma}(X_{d-1};\IZ^w),
       \]
       where $h_{d-1}[X_{d-1}]$ is the Hurewicz homomorphism and $f_*$ is the obvious
       chain map $f_* \colon C_*(X_{d-1}) \to C_*(X_{d-1}) \otimes_{\IZ \Gamma}
       \IZ^w$. Recall that $\mu(X,\partial X)$ is the image of
       $[X/\Gamma,\partial X/\Gamma]$ under the upper horizontal arrow in the diagram
       above. Hence we get for the image of $\mu(X,\partial X)$ under the map
       \[
         H_{d-1}^{\Gamma}(j;\IZ^w) \colon H_{d-1}^{\Gamma}(\partial X;\IZ^w) =
         H_{d-1}(\partial X/\Gamma) \to H_{d-1}^{\Gamma}(X_{d-1};\IZ^w)
       \]
       the equality
       \begin{equation}
         H_{d-1}^{\Gamma}(j;\IZ^w)(\mu(X,\partial X)) = \sum_{i \in I_d} \lambda_i \cdot H_{d-1}(f_*) \circ h_{d-1}[X_{d-1}]([p_{i}^d]).
         \label{relating_mu(X,partial_X)_to_the_attaching_maps}
       \end{equation}
       Note that $H_{d-1}(j;\IZ^w)$ is injective, as $j \colon \partial X \to X_{d-1}$ is
       an inclusion of $(d-1)$-dimensional $\Gamma$-$CW$-complexes.  Hence we have
       expressed $\mu(X,\partial X)$ in terms of the attaching maps
       $p_{i}^d \colon S^{d-1} \to X_{d-1}$.

       Next we investigate how we can change the maps $p_{i}^d$ and how this change
       affects $\mu(X,\partial X)$.  Since we have for $n = 0,1,2, \ldots, (d-1)$ the homotopy
       pushout~\eqref{homotopy_pushout_for_X_(n-1)_and_X_n}, the following diagram is a
       homotopy pushout
       \[
         \xymatrix{\partial X \ar[r]^-{u_{-1}} \ar[d]_{j} & \partial \eub{\Gamma} \ar[d]
           \\
           X_{d-1} \ar[r]^-{u_{d-1}}& \eub{\Gamma}_{d-1}.  }
       \]
       Since $H_d(\eub{\Gamma}_{d-1})$, $H_{d-2}(\partial X)$, and
       $H_{d-1}(\partial \eub{\Gamma})$ vanish, we get from the associated Mayer-Vietoris
       sequence a short exact sequence of $\IZ \Gamma$-modules
       \[
         0 \to H_{d-1}(\partial X) \xrightarrow{H_{d-1}(j)} H_{d-1}(X_{d-1})
         \xrightarrow{H_{d-1}(u_{d-1})} H_{d-1}(\eub{\Gamma}_{d-1}) \to 0.
       \]
       Since $\eub{\Gamma}$ is contractible and
       $u_{d-1} \colon X_{d-1} \to \eub{\Gamma}_{d-1}$ is $(d-1)$-connected, we conclude
       that $X_{d-1}$ and $\eub{\Gamma}_{d-1}$ are $(d-2)$-connected.  Hence we can extend
       the short exact sequence above to a commutative diagram whose vertical maps are
       Hurewicz isomorphisms
       \[
         \xymatrix@!C=10em{ & \pi_{d-1}(X_{d-1}) \ar[r]^{\pi_{d-1}(u_{d-1})}
           \ar[d]_{h_{d-1}[X_{d-1}]}^{\cong} & \pi_{d-1}(\eub{\Gamma}_{d-1})
           \ar[d]_{h_{d-1}[\eub{\Gamma}_{d-1}]}^{\cong}
           \\
           H_{d-1}(\partial X) \ar[r]^{H_{d-1}(j)} & H_{d-1}(X_{d-1})
           \ar[r]^{H_{d-1}(u_{d-1})} & H_{d-1}(\eub{\Gamma}_{d-1}) }
       \]
       Note that the only requirement about $p_{i}^d$ is that the image of the class
       $[p_{i}^d]$ under
       $\pi_{d-1}(u_{d-1}) \colon \pi_{d-1}(X_{d-1}) \to \pi_{d-1}(\eub{\Gamma}_{d-1})$ is
       $[q_i^d]$. Hence we can choose for every $i \in I_d$ an element
       $a_{i} \in H_{d-1}(\partial X)$ and replace $p_{i}^d$ by any map
       $\widehat{p_{i}^d} \colon S^{d-1} \to X_{d-1}$ satisfying
       \[
         h_{d-1}[X_{d-1}]([\widehat{p_{i}^d}] - [p_{i}^d]) = H_{d-1}(j)(a_{i}).
       \]
       If we use the maps $\widehat{p_{i}^d}$, then we get a $\Gamma$-$CW$-pair
       $(\widehat{X},\partial X)$ and
       equation~\eqref{relating_mu(X,partial_X)_to_the_attaching_maps} becomes
       \begin{equation}
         H_{d-1}^{\Gamma}(j;\IZ^w)(\mu(\widehat{X},\partial X))
         = \sum_{i \in I_d} \lambda_i \cdot H_{d-1}(f_*) \circ h_{d-1}[X_{d-1}]([\widehat{p_{i}^d}]).
         \label{relating_mu(widehat(X),partial_widehat(X))_to_the_attaching_maps}
       \end{equation}
       One easily checks
       \begin{multline*}
         H_{d-1}(f_*) \circ h_{d-1}[X_{d-1}]([\widehat{p_{i}^d}]) - H_{d-1}(f_*) \circ
         h_{d-1}[X_{d-1}]([p_{i}^d])
         \\
         = H_{d-1}(f_*) \circ  H_{d-1}(j)(a_i) = H_{d-1}^{\Gamma}(j;\IZ^w) \circ
         H_{d-1}(\pr_{\partial X})(a_i).
       \end{multline*}
       Since $H_{d-1}^{\Gamma}(j;\IZ^w)$ is injective, we conclude
       from\eqref{relating_mu(X,partial_X)_to_the_attaching_maps}
       and~\eqref{relating_mu(widehat(X),partial_widehat(X))_to_the_attaching_maps}
       \[
         \mu(\widehat{X},\partial X) - \mu(X,\partial X) = \sum_{i \in I_d} \lambda_i
         \cdot H_{d-1}(\pr_{\partial X})(a_{i}).
       \]
       Now consider any element $a \in H_{d-1}(\partial X)$. If we choose
       $a_{i} = \mu_{i} \cdot a$, we get
       \begin{eqnarray}
         \mu(\widehat{X},\partial X) - \mu(X,\partial X)
         & = &
               \sum_{i \in i_d} \lambda_{i} \cdot H_{d-1}( \pr_{\partial X})(\mu_{i} \cdot a)
               \label{mu_difference}
         \\
         & = &
               \sum_{i \in I_d} \lambda_{i} \cdot \mu_{i} \cdot H_{d-1}( \pr_{\partial X})(a)
               \nonumber
         \\
         & = &
               \left(\sum_{i \in I_d} \lambda_{i} \cdot \mu_{i} \right) \cdot H_{d-1}( \pr_{\partial X})(a)             
               \nonumber
         \\
         & = &
               H_{d-1}( \pr_{\partial X})(a).
               \nonumber
       \end{eqnarray}
       The map
       $H_{d-1}(\pr_{\partial X}) \colon H_{d-1}(\partial X) \to H_{d-1}(\partial
       X/\Gamma)$ can be identified with the map
       \[\bigoplus_{F \in \calm} \pi_X[F] \colon \bigoplus_{F \in \calm} \IZ \Gamma
         \otimes_{\IZ F} H_d(S_F) \to \bigoplus_{F \in \calm} H_d(S_F/F)
       \]
       where $\pi_X[F] \colon \IZ \Gamma \otimes_{\IZ F} H_{d-1}(S_F) \to H_{d-1} (S_F/F)$
       sends $(\gamma, x)$ to the image of $x$ under
       $H_{d-1}(\pr_{S_F}) \colon H_{d-1}(S_F) \to H_{d-1}(S_F/F)$ for the projection
       $\pr_{S_F} \colon S_F \to S_F/F$.  Since
       $H_{d-1}(\pr_{S_F}) \colon H_{d-1}(S_F) \to H_{d-1}(S_F/F)$ is the inclusion of
       infinite cyclic groups of index $[F]$, we conclude that an element
       $b = (b_F)_{F \in \calm} \in \bigoplus_{F \in \calm} H_{d-1}(S_F/F) =
       H_{d-1}(\partial X/\Gamma)$ lies in the image of
       $H_{d-1}(\pr_{\partial X}) \colon H_{d-1}(\partial X) \to H_{d-1}(\partial
       X/\Gamma)$, if and only if $b_F = m_F \cdot [S_F/F]$ for some integer $m_F$
       satisfying $m_F \equiv 0 \mod |F|$.

       Recall from Remark~\ref{rem:On_the_condition_(S)} that the integer
       $m_F(X,\partial X)$ defined by
       $\mu(X,\partial X)_F = m_F(X,\partial X) \cdot [S_F/F]$ satisfies
       $m_F(X,\partial X) \equiv 1 \mod |F|$. Hence the difference $s - \mu(X,\partial X)$
       lies in the image of the map $H_{d-1}(\pr_{\partial X})$, where $s$ has been
       defined in~\eqref{element_s}. If $a$ is such a preimage, we have associated to it
       the new pair $(\widehat{X},\partial X)$.  We conclude
       $m_F(\widehat{X},\partial X) = 1$ for every $F \in \calm$
       from~\eqref{mu_difference}.  This finishes the proof of
       Theorem~\ref{the:constructing_slice_models}.
     \end{proof}


     \subsection{Checking Poincar'e duality}\label{subsec:Checking_Poincare_duality}

\begin{theorem}[Checking Poincar\'e duality]\label{the:Checking_Poincare_duality}
  Suppose that  Assumption~\ref{ass:general_assumptions} is satisfied. Let $\cals$ be an
  oriented free $d$-dimensional slice system  satisfying condition (S).
  Let $(X,\partial X)$ be a slice complement model for $\eub{\Gamma}$ with respect
  to $\cals$ such that $m_F(X,\partial X) = 1$ holds for every $F \in \calm$. Then:

  \begin{enumerate}
  \item\label{the:Checking_Poincare_duality:duality} The $\Gamma$-$CW$-pair
    $(X/\Gamma,\partial X/\Gamma)$ carries the structure of a finite Poincar\'e pair with
    respect to the orientation homomorphism $w \colon \Gamma \to \{\pm 1\}$ of
    Notation~\ref{not:w} and the fundamental class
    $[X/\Gamma, \partial X/\Gamma] \in H_d^{\Gamma}(X,\partial X;\IZ^w)$ defined in
    Notation~\ref{not:fundamental_classes};
         
  \item\label{the:Checking_Poincare_duality:indices} The map
    \[
      H_d(i_*(X,\partial X))\colon H_{d}^{\pi}(X,\partial X;\IZ^v) \to
      H_{d}^{\Gamma}(X,\partial X;\IZ^w)
    \]
    induced by the chain map $i_*(X,\partial X)$ of~\eqref{i_ast_for_C_ast_and_M} is an
    inclusion of infinite cyclic groups of index $|G|$ and the map induced by the transfer
    chain map of~\eqref{trf_ast_for_C_ast_and_M}
    \[
      H_d(\trf_*(X,\partial X)) \colon H_{d}^{\Gamma}(X,\partial X;\IZ^w)
      \xrightarrow{\cong} H_{d}^{\pi}(X,\partial X;\IZ^v)
    \]
    is an isomorphism. Moreover, the map
    \[
    H_d(i_*(\eub{\Gamma})) \colon H_{d}^{\pi}(E\pi;\IZ^v) \to
      H_{d}^{\Gamma}(\eub{\Gamma};\IZ^w)
    \]
    is an inclusion of infinite cyclic groups with index $|G|$.
  \end{enumerate}
\end{theorem}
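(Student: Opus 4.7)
The plan is to deduce both parts from Poincar\'e duality for the intermediate $\pi$-quotient $(X/\pi, \partial X/\pi)$, which will follow from Lemma~\ref{lem:subtracting_a_Poincare_pair} applied to the decomposition $M/\pi = (X/\pi) \cup_{\partial X/\pi} C(\partial X)/\pi$, where $M = X \cup_{\partial X} C(\partial X)$ is our model for $\eub{\Gamma}$ and hence $M/\pi$ is a model for $B\pi$. The key new ingredient is a $G$-equivariance argument that pins down the integer $n$ of Notation~\ref{not:fundamental_classes} to be exactly $|G|$; the rest will follow by standard manipulations with the transfer formula~\eqref{i_ast_circ_trf_ast_is_|g|_cdot_id}.

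To prove $n = |G|$, I start from $i_*[X/\pi, \partial X/\pi] = n \cdot [X/\Gamma, \partial X/\Gamma]$ with $1 \leq n$ and $n \mid |G|$ by Lemma~\ref{lem;everything_is_infinite_cyclic}. Applying the boundary map and using $m_F(X, \partial X) = 1$ yields $i_*\partial[X/\pi, \partial X/\pi] = n \sum_{F \in \calm} [S_F/F]$. Since $\pi \cap F = \{1\}$, the $\pi$-quotient of $\Gamma \times_F S_F$ splits as $\coprod_{g \in G/F} S_F$, and in the resulting basis of $H_{d-1}^\pi(\partial X;\IZ^v)$ the map $i_*$ sends $[S_F]_g$ to $|F|\cdot[S_F/F]$. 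The $G$-action on $H_d^\pi(X, \partial X;\IZ^v)$ is trivial, as the $\Gamma$-equivariant chain of isomorphisms in~\eqref{diagram_of_the_H_d-s} identifies it with the action on $H_d^\pi(E\pi;\IZ^v)$, which was shown to be trivial in Subsection~\ref{subsec:The_orientation_homomorphism}. Hence $\partial[X/\pi, \partial X/\pi]$ is $G$-invariant, and since $G$ permutes the $|G/F|$ copies of $S_F$ transitively it has the form $\sum_F c^F \sum_g [S_F]_g$; the relation $c^F \cdot |G| = n$ forces $|G| \mid n$, giving $n = |G|$ and $c^F = 1$. Combining with~\eqref{i_ast_circ_trf_ast_is_|g|_cdot_id} and the injectivity of $i_*$ between infinite cyclic groups, I get $\trf_*[X/\Gamma, \partial X/\Gamma] = [X/\pi, \partial X/\pi]$. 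This establishes~\eqref{the:Checking_Poincare_duality:indices} for $(X, \partial X)$; the statement for $H_d(i_*(\eub{\Gamma}))$ follows by applying the same argument to the bijections in the left column of~\eqref{diagram_of_the_H_d-s}, which transport fundamental classes compatibly.

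For part~\eqref{the:Checking_Poincare_duality:duality} I first obtain Poincar\'e duality for $(X/\pi, \partial X/\pi)$ via Lemma~\ref{lem:subtracting_a_Poincare_pair}\eqref{lem:subtracting_a_Poincare_pair:conclusion_Y_2} with $Y = M/\pi$, $Y_1 = C(\partial X)/\pi$, $Y_2 = X/\pi$, $Y_0 = \partial X/\pi$. Each $(\Gamma \times_F D_F)/\pi$ is a disjoint union of $|G/F|$ copies of $(D_F, S_F) \simeq (D^d, S^{d-1})$, so $(Y_1, Y_0)$ is a finite $d$-dimensional Poincar\'e pair. The $\pi_1$-injectivity hypothesis holds because $X$ is simply connected by Lemma~\ref{lem:characterization_of_slice-models}, so $\pi_1(Y_2) = \pi = \pi_1(Y)$. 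The compatibility $u_1 = \mu_1([Y_1, Y_0])$ follows from the boundary computation above: $\partial u_1 = -\partial u_2 = \pm\sum_F\sum_g [S_F]_g$, whose component in each $H_d(D_F, S_F)_g \cong \IZ$ is a generator, so after orienting each summand appropriately $u_1$ is the Poincar\'e fundamental class of $(Y_1, Y_0)$. The lemma then shows that cap product with a generator of $H_d^\pi(X, \partial X;\IZ^v)$, namely $\pm[X/\pi, \partial X/\pi]$, is a $\IZ\pi$-chain homotopy equivalence.

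Finally, to bootstrap from $\pi$ to $\Gamma$, I use Lemma~\ref{lem:trunc} to canonically identify $\IZ\Gamma$- and $\IZ\pi$-duals of finitely generated free modules. Under this identification the restriction of the $\IZ\Gamma$-cap product $-\cap[X/\Gamma, \partial X/\Gamma]$ to $\IZ\pi$ becomes the $\IZ\pi$-cap product with $\trf_*[X/\Gamma, \partial X/\Gamma] = [X/\pi, \partial X/\pi]$, and is therefore a $\IZ\pi$-chain equivalence by the previous paragraph. Since $[\Gamma:\pi] < \infty$, restriction to $\IZ\pi$ preserves and detects chain equivalence between bounded complexes of finitely generated free $\IZ\Gamma$-modules (both conditions reduce to inducing isomorphisms on the underlying abelian groups in homology), so $-\cap[X/\Gamma, \partial X/\Gamma]$ is itself a $\IZ\Gamma$-chain equivalence, completing the proof of~\eqref{the:Checking_Poincare_duality:duality}. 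The main obstacle will be the careful bookkeeping of orientations and signs, in particular verifying the Mayer-Vietoris compatibility $u_1 = \mu_1([Y_1, Y_0])$ and the precise identification via Lemma~\ref{lem:trunc} of the restricted $\IZ\Gamma$-cap product with cap product by the transfer of the fundamental class.
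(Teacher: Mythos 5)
Your proof is correct, and for the heart of the argument it coincides with the paper's: the triviality of the $G$-action on $H_d^{\pi}(X,\partial X;\IZ^v)$ (transported from $H_d^{\pi}(E\pi;\IZ^v)$ via~\eqref{diagram_of_the_H_d-s}) forces the boundary of $[X/\pi,\partial X/\pi]$ to have constant coefficients $c^F$ over each orbit $G/\pr(F)$, and combining $c^F\cdot|G|=n$ with $n\mid |G|$ gives $n=|G|$ and $c^F=1$; this is exactly the paper's computation yielding $k=|G|$ and $n_F=1$. The application of Lemma~\ref{lem:subtracting_a_Poincare_pair}~\eqref{lem:subtracting_a_Poincare_pair:conclusion_Y_2} with $Y=B\pi$, $Y_1=C(\partial X)/\pi$, $Y_2=X/\pi$, $Y_0=\partial X/\pi$, including the verification that $u_1$ is a fundamental class for $Y_1$ via $\partial u_1=-\partial u_2$, is also the paper's route. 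The one place you genuinely diverge is the final ascent from $\pi$ to $\Gamma$: the paper simply invokes \cite[Theorem~H]{Klein-Qin_Su(2019)} to conclude that $(X/\Gamma,\partial X/\Gamma)$ is a Poincar\'e pair once $(X/\pi,\partial X/\pi)$ is, whereas you argue directly that under the identification $\trunc_*$ of Lemma~\ref{lem:trunc} the $\IZ\pi$-restriction of $-\cap[X/\Gamma,\partial X/\Gamma]$ is chain homotopic to $-\cap\trf_*[X/\Gamma,\partial X/\Gamma]=-\cap[X/\pi,\partial X/\pi]$, and then use that a $\IZ\Gamma$-chain map between bounded complexes of finitely generated free modules is a chain homotopy equivalence as soon as its mapping cone is acyclic, a condition visible already over $\IZ\pi$. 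Both steps of your argument are sound (the cap-product/transfer compatibility is a routine computation with a $\Gamma$-equivariant diagonal approximation, consistent with the discussion in Subsection~\ref{subsec:arbitrary_coverings}), so your version buys self-containedness at the cost of the sign and orientation bookkeeping you flag, while the paper's citation outsources exactly this verification. Two trivial notational slips: the copies of $S_F$ in $\partial X/\pi$ are indexed by $G/\pr(F)$, not $G/F$, and the identity $|G|=|F|\cdot|G/\pr(F)|$ is what makes your coefficient count come out.
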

\begin{proof}
  Let
  $i_*(X,\partial X) \colon \IZ^v \otimes_{\IZ\pi} i^*C_*(X,\partial X) \to \IZ^w
  \otimes_{\IZ\Gamma} C_*(X,\partial X)$ be the $\IZ$-chain map defined
  in~\eqref{i_ast_for_C_ast_and_M}. Define
  $i_*(\partial X) \colon \IZ^v \otimes_{\IZ\pi} i^*C_*(\partial X) \to \IZ^w
  \otimes_{\IZ\Gamma} C_*(\partial X)$ and
  $i_*(\eub{\Gamma},\partial \eub{\Gamma}) \colon \IZ^v \otimes_{\IZ\pi}
  i^*C_*(\eub{\Gamma},\partial \eub{\Gamma}) \to \IZ^w \otimes_{\IZ\Gamma}
  C_*(\eub{\Gamma},\partial \eub{\Gamma})$ analogously.  Consider the following
  commutative diagram
  \[
    \xymatrix{H_{d}^{\pi}(X,\partial X;\IZ^v) \ar[r]^-{\partial}
      \ar[d]_{H_d(i_*(X,\partial X))} & H_{d-1}^{\pi}(\partial X;\IZ^v) = H_{d-1}(\partial
      X/\pi) \ar[d]^{H_d(i_*(\partial X)) = H_{d-1}(q)}
      \\
      H_{d}^{\Gamma}(X,\partial X;\IZ^w) \ar[r]_-{\partial} & H_{d-1}^{\Gamma}(\partial
      X;\IZ^w) = H_{d-1}(\partial X/\Gamma),}
  \]
  where the horizontal arrows are boundary maps and
  $q \colon \partial X/\pi \to \partial X/\Gamma$ is the projection. We can determine
  $H_{d-1}(q)$ by
  \[
    \xymatrix{\bigoplus_{F \in \calm} \bigoplus_{G/\pr(F)} H_{d-1}(S_F) \ar[r]^-{\cong}
      \ar[d]_{\bigoplus_{F \in \calm} \bigoplus_{G/\pr(F)} H_{d-1}(\pr_{S_F})} &
      H_{d-1}(\partial X/\pi) \ar[d]^{H_{d-1}(q)}
      \\
      \bigoplus_{F \in \calm} H_{d-1}(S_F/F) \ar[r]^-{\cong} & H_{d-1}(\partial X/\Gamma).
    }
  \]
  
  We know already that $H_{d}^{\Gamma}(\eub{\Gamma},\partial \eub{\Gamma};\IZ^w)$ is an
  infinite cyclic group. We conclude from~\eqref{i_ast_circ_trf_ast_is_|g|_cdot_id} that
  the map
  \[
    H_d(i_*(X,\partial X)_*) \colon H_{d}^{\Gamma}(X,\partial X;\IZ^w) \to
    H_{d}^{\Gamma}(\eub{\Gamma},\partial \eub{\Gamma};\IZ^w)
  \]
  is injective and its image has finite index which divides $|G|$. Let the element
  $k \in \IZ$ with $k \ge 0$ be uniquely determined by the equation
  \begin{equation*}
    H_{d}(i_*(\eub{\Gamma},\partial \eub{\Gamma}))([\eub{\Gamma}/\pi,\partial \eub{\Gamma/\pi}])
    = k \cdot [\eub{\Gamma}/\Gamma,\partial \eub{\Gamma/\Gamma}].
  \end{equation*}
  Note that $k$ divides $|G|$.  We can identify $H_{d}(i_*(X,\partial X))$ and
  $H_{d}(i_*(\eub{\Gamma},\partial \eub{\Gamma}))$ by
  \[
    \xymatrix@!C=19em{ H_{d}^{\pi}(X,\partial X;\IZ^v) \ar[d]_{H_{d}(i_*(X,\partial X))}
      \ar[r]^-{H_d^{\pi}(p,\partial p;\IZ^v)}_-{\cong} & H_{d}^{\pi}(\eub{\Gamma},\partial
      \eub{\Gamma};\IZ^v)\ar[d]^{H_{d}(i_*(\eub{\Gamma},\partial \eub{\Gamma}))}
      \\
      H_{d}^{\Gamma}(X,\partial X;\IZ^w) \ar[r]_-{H_d^{\Gamma}(p,\partial
        p;\IZ^w)}^-{\cong} & H_{d}^{\Gamma}(\eub{\Gamma},\partial \eub{\Gamma};\IZ^w)}
  \]
  where $(p,\partial p) \colon (X,\partial X) \to (\eub{\Gamma},\partial \eub{\Gamma})$ is
  the projection.  We conclude
  \begin{equation}
    H_{d}(i_*(X,\partial X))([X/\pi,\partial X/\pi]) = k \cdot [X/\Gamma,\partial X/\Gamma].
    \label{k_for-(X,partial_X)}
  \end{equation}

  Consider the composite
  \[H_{d}^{\pi}(X,\partial X;\IZ^v) \xrightarrow{\partial } H_{d}^{\pi}(\partial X;\IZ^v)
    \xrightarrow{\cong} \bigoplus_{F \in \calm} \bigoplus_{G/\pr(F)} H_{d-1}(S_F).
  \]
  There is the $G$-action on its source given for $g\in G$ and any element
  $\widehat{g} \in \Gamma$, which is sent by the projection $\Gamma \to G$ to $g$, by the
  $\IZ$-chain map
  \[
    \IZ^v \otimes_{\IZ \pi} C_*(X,\partial X) \to \IZ^v \otimes_{\IZ \pi} C_*(X,\partial
    X), \quad (m \otimes x) \mapsto (m \cdot w(\widehat{g}) \otimes \widehat{g}^{-1}x).
  \]
  There is the $G$-action on the target given by permuting the summand according to the
  canonical $G$-action on $G/\pr(F)$.  One easily checks that the composite above is
  compatible with these $G$-actions.

  We have defined a specific $G$-action on $H_d^{\pi}(E\pi;\IZ^v)$ at the end of
  Subsection~\ref{subsec:The_orientation_homomorphism} and shown that it is trivial. The
  isomorphism $H_d^{\pi}(E\pi;\IZ^v) \xrightarrow{\cong} H_{d}^{\pi}(X,\partial X;\IZ^v)$
  obtained by the composite of the isomorphisms (or their inverses) appearing in the
  middle column of~\eqref{diagram_of_the_H_d-s} is compatible with these $G$-actions. Therefore   the
  $G$-action on $H_{d}^{\pi}(X,\partial X;\IZ^v)$ is trivial as well. Hence we can find a
  collection of integers $\{n_F \mid F \in \calm\}$ such that the image of
  $[X/\pi,\partial X/\pi]$ under the composite above has as entry in the summand
  $H_n(S_F)$ for $F \in \calm$ and $g\pr(F) \in G/\pr(F)$ the element $n_F \cdot [S_F]$.
  This element is sent under
  \[\bigoplus_{F \in \calm} \bigoplus_{G/\pr(F)} H_{d-1}(\pr_{S_F}) \colon \bigoplus_{F
      \in \calm} \bigoplus_{G/\pr(F)} H_{d-1}(S_F) \to \bigoplus_{F \in \calm}
    H_{d-1}(S_F/F)
  \]
  to $\{n_F \cdot |G| \cdot [S_F/F] \mid F \in \calm\}$, since
  $|G| = |F| \cdot |G/\pr(F)|$. Since the composite
  \[H_{d}^{\Gamma}(X,\partial X;\IZ^w) \xrightarrow{\partial} H_{d}^{\Gamma}(\partial X;\IZ^w)
    \xrightarrow{\cong} \bigoplus_{F \in \calm} H_{d-1}(S_F/F)
  \]
  sends $[X/\Gamma,\partial X/\Gamma]$ to $\{[S_F]\mid F \in \calm\}$, we get
  $k = n_F \cdot [G]$ for every $F \in \calm$. Since $k$ divides $[G]$ and $k$ and $n_F$
  are positive, we conclude $n_F = 1$ for every $F \in \calm$ and $k = |G|$.

  This implies that the maps
  \begin{eqnarray*}
    H_d(i_*(X,\partial X)) \colon H_{d}^{\pi}(X,\partial X;\IZ^v)
    & \to &
    H_{d}^{\Gamma}(X,\partial X;\IZ^w);
    \\
    H_d(i_*(\eub{\Gamma})) \colon H_{d}^{\pi}(E\pi;\IZ^v)
    & \to &
    H_{d}^{\Gamma}(\eub{\Gamma};\IZ^w),
  \end{eqnarray*}
  are inclusions of infinite cyclic groups with index
  $|G|$. We conclude from~\eqref{i_ast_circ_trf_ast_is_|g|_cdot_id} that the map
  $H_d(\trf_*) \colon H_{d}^{\Gamma}(X,\partial X;\IZ^w) \xrightarrow{\cong}
  H_{d}^{\pi}(X,\partial X;\IZ^v)$ is an isomorphism.

  We also conclude that the composite
  \[
    H_{d}^{\pi}(X,\partial X;\IZ^v) \xrightarrow{\partial } H_{d}^{\pi}(\partial X;\IZ^v)
    \xrightarrow{\cong} \bigoplus_{F \in \calm} \bigoplus_{G/\pr(F)} H_{d-1}(S_F)
  \]
  sends $[X/\pi,\partial X/\pi]$ to the element, which is given in any of the summands by
  the fundamental class $[S_F]$.

  It remains to show that $(X/\Gamma,\partial X/\Gamma)$ carries the structure of a finite
  Poincar\'e pair with respect to the orientation homomorphism
  $w \colon \Gamma \to \{\pm 1\}$ and the fundamental class
  $[X/\Gamma, \partial X/\Gamma] \in H_d^{\Gamma}(X,\partial X;\IZ^w)$.  Because
  of~\cite[Theorem~H]{Klein-Qin_Su(2019)} it suffices to show that $(X/\pi,\partial X\pi)$
  carries the structure of a finite Poincar\'e pair with respect to the orientation
  homomorphism $v \colon \pi \to \{\pm 1\}$ and the fundamental class
  $[X/\pi, \partial X/\pi] \in H_d^{\pi}(X,\partial X;\IZ^v)$. This follows from
  Lemma~\ref{lem:subtracting_a_Poincare_pair}~\eqref{lem:subtracting_a_Poincare_pair:conclusion_Y_2}
  applied in the case
  \begin{eqnarray*}
    Y
    & = &
          B\pi;
    \\
    Y_1
    & = &
          \coprod_{F \in \calm} G/\pr(F) \times D^d;
    \\
    Y_2
    & = &
          X/\pi;
    \\
    Y_0
    & = &
          \partial X/\pi = \coprod_{F \in \calm}  G/\pr(F) \times S^{d-1},
  \end{eqnarray*}
  using the assumption that there is a finite $d$-dimensional Poincar\'e $CW$-complex
  model for $B\pi$ with respect to the orientation homomorphism
  $v \colon \pi \to \{\pm 1 \}$ and fundamental classes
  $[B\pi] \in H_d^{\pi}(E\pi;\IZ^v)$, the fundamental class $[X/\pi,\partial X/\pi] \in H_d^{\pi}(X,\partial X;\IZ^v)$,
  and the preimage under the composite of isomorphisms
  \begin{multline*}
   \partial \colon H_d(\coprod_{F \in \calm} G/\pr(F) \times
   (D^d,S^{d-1}))  \xrightarrow{\cong} H_{d-1}(\coprod_{F \in \calm} G/\pr(F) \times  S^{d-1})
   \\
   \xrightarrow{\cong} \bigoplus_{F \in \calm} \bigoplus_{G/\pr(F)} H_{d-1}(S_F)
 \end{multline*}
 of the obvious element in $H_{d-1}(\coprod_{F \in \calm} G/\pr(F) \times  S^{d-1})$,
 which is given in any of the summands by  the fundamental class $[S_F]$. Now  the conditions
 about the fundamental classes
  appearing in Lemma~\ref{lem:subtracting_a_Poincare_pair}
  follow from the following commutative diagram with exact right row
  \[
    \xymatrix{& 0 \ar[d]
      \\
      & H_d^{\pi}(\widetilde{B\pi};\IZ^v) \ar[d]
      \\
      H_d^{\pi}(X,\partial X;\IZ^v) \oplus H_d(\coprod_{F \in \calm} G/\pr(F) \times
      (D^d,S^{d-1})) \ar[r]^-{\cong} \ar[d]^{\partial \oplus \partial} &
      H_d^{\pi}(\widetilde{B\pi},\partial X;\IZ^v) \ar[d]^{\partial}
      \\
      H_{d-1}^{\pi}(\partial X;\IZ^v) \oplus H_{d-1}(\coprod_{F \in \calm} G/\pr(F) \times
      S^{d-1}) \ar[r]^-{\id \oplus -\id} & H_{d-1}^{\pi}(\partial X;\IZ^v),}
  \]
  where we identify $H_{d-1}^{\pi}(\partial X;\IZ^v)$ and $H_{d-1}(\coprod_{F \in \calm} G/\pr(F) \times S^{d-1})$
  by the obvious isomorphism.
\end{proof}


\typeout{-------------------------- Section 8: Homotopy classification of slice complement models
  --------------------------}

\section{Homotopy classification of slice complement models}%
\label{sec:homotopy_classification_of_slice_models}

Throughout this section we make Assumption~\ref{ass:general_assumptions}.

For the remainder of this section we fix two oriented free $d$-dimensional slice systems $\cals$
and $\cals'$  satisfying condition (S), see Notation~\ref{def:condition_(S)}.
Recall that we have defined an orientation homomorphism
$w\colon \Gamma \to \{\pm 1\}$ in Notation~\ref{not:w}.
Let $(X,\partial X)$ and $(X',\partial X')$ respectively be slice complement models for
  $\eub{\Gamma}$ with respect to $\cals$ and $\cals'$ respectively. Then we have defined
fundamental classes
$[X/\Gamma,\partial X/\Gamma]$ and $[X/\Gamma,\partial X/\Gamma]$ in
Notation~\ref{not:fundamental_classes}, and integers $m(X,\partial X)_F$ and
$m(X',\partial X')_F$ satisfying
$m(X,\partial X)_F \equiv m(X',\partial X')_F \equiv 1 \mod |G|$ for $F \in \calm$
in~\eqref{integer_m_F}. The main result of   this
section will be

\begin{theorem}[Homotopy classification of slice
  models]\label{the:Homotopy_classification_of_slice_models}
   Suppose that  Assumption~\ref{ass:general_assumptions} is satisfied. 
  Let $(X,\partial X)$ and $(X',\partial X')$ respectively be slice complement models for
  $\eub{\Gamma}$ with respect to $\cals$ and $\cals'$ respectively.

  Then the following two assertions are equivalent:

  \begin{enumerate}
  \item\label{the:homotopy_classification_of_slice_models:existence:maps} There exists a
    $\Gamma$-homotopy equivalence
    $(f,\partial f) \colon (X,\partial X) \to (X',\partial X')$ of free
    $\Gamma$-$CW$-pairs with the properties that the $\Gamma$-map $\partial f$ extends to
    a $\Gamma$-homotopy equivalence of $\Gamma$-$CW$-pairs
    $ C(\partial X) \to C(\partial X')$ and the isomorphism
    $H^{\Gamma}_d(X,\partial X;\IZ^w)\xrightarrow{\cong} H^{\Gamma}_d(X',\partial
    X';\IZ^w)$ induced by $(f,\partial f)$ sends $[X,\partial X]$ to $[X',\partial X']$;

  \item\label{the:homotopy_classification_of_slice_models:existence:m_F} We have
    $m(X,\partial X)_F = m(X',\partial X')_F$ for every $F \in \calm$ with $|F| \ge 3$
    and $m(X,\partial X)_F = \epsilon_F  \cdot m(X',\partial X')_F$ for every $F \in \calm$ with $|F| = 2$
    for some $\epsilon_F \in \{\pm1 \}$.
  \end{enumerate}
\end{theorem}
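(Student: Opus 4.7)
For the direction (1) $\Rightarrow$ (2), suppose $(f,\partial f)$ is as in~(1). By naturality of the long exact sequence of the pair together with fundamental-class preservation, $H_{d-1}(\partial f/\Gamma)$ sends $\mu(X,\partial X)$ to $\mu(X',\partial X')$. Lemma~\ref{lem:partial_f_yields_partial_f_F}~\eqref{lem:partial_f_yields_partial_f_F:S_F_to_S_F_prime} identifies $\partial f$ up to $\Gamma$-homotopy with an $\calm$-indexed family of $F$-homotopy equivalences $\partial f_F\colon S_F\to S_F'$. For $|F|\ge 3$, Lemma~\ref{lem:basics_about_free_slice_systems}~\eqref{lem:basics_about_free_slice_systems:uniquness_of_homotopy_equivalences} forces $\partial f_F$ to have degree $+1$, so the corresponding summand of $H_{d-1}(\partial f/\Gamma)$ sends $[S_F/F]$ to $[S_F'/F]$, whence $m_F(X,\partial X)=m_F(X',\partial X')$. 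For $|F|=2$ one only concludes $\deg(\partial f_F)=:\epsilon_F\in\{\pm 1\}$, producing the announced sign.

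For $(2)\Rightarrow(1)$ the strategy is to reduce via a pushout to the case of equal boundaries and then invoke the uniqueness of $\eub{\Gamma}$ up to $\Gamma$-homotopy. First, condition~(S) gives $d(S_F)=d(S_F')=\kappa_F$, so Lemma~\ref{lem:basics_about_free_slice_systems}~\eqref{lem:basics_about_free_slice_systems:homotopy_equivalent} provides orientation-preserving $F$-homotopy equivalences $S_F\to S_F'$ for every $F\in\calm$; for $F$ with $|F|=2$ and $\epsilon_F=-1$, I compose with an orientation-reversing self-equivalence of $S_F'$. Assembling via Lemma~\ref{lem:partial_f_yields_partial_f_F} produces a $\Gamma$-homotopy equivalence $\partial f\colon\partial X\to\partial X'$ whose induced map on $H_{d-1}(-/\Gamma)$ carries $\mu(X,\partial X)$ to $\mu(X',\partial X')$. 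Next, I form the $\Gamma$-pushout $X'':=X\cup_{\partial X}\partial X'$ along $\partial f$; the equivariant gluing lemma makes $(X,\partial X)\to(X'',\partial X')$ a $\Gamma$-homotopy equivalence of pairs, and Lemma~\ref{lem:slice-models_and_pushouts} identifies $(X'',\partial X')$ as another slice complement model with respect to $\cals'$. Because both $[X/\Gamma,\partial X/\Gamma]$ and $[X''/\Gamma,\partial X'/\Gamma]$ correspond to the same preferred generator of $H_d^\Gamma(\eub{\Gamma},\partial\eub{\Gamma};\IZ^w)$ through the homotopy $\Gamma$-pushout~\eqref{canonical_homotopy_Gamma_pushout_for_a_slice_model}, the induced isomorphism respects the fundamental classes.

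It therefore suffices to construct a $\Gamma$-homotopy equivalence of pairs $(X'',\partial X')\to(X',\partial X')$ rel $\partial X'$ that respects fundamental classes. Put $Z:=X''\cup_{\partial X'}C(\partial X')$ and $Z':=X'\cup_{\partial X'}C(\partial X')$; both are $\Gamma$-CW-models for $\eub{\Gamma}$ by construction. The inclusion $C(\partial X')\hookrightarrow Z$ is a $\Gamma$-cofibration whose complement $X''\setminus\partial X'$ consists of free $\Gamma$-cells; since $Z'$ is contractible, standard equivariant obstruction theory extends $\id_{C(\partial X')}$ to a cellular $\Gamma$-map $g\colon Z\to Z'$. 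Because $Z^H$ and $(Z')^H$ are contractible for every finite $H\subseteq\Gamma$, the map $g$ is automatically a $\Gamma$-homotopy equivalence. The $\Gamma$-subspace $Z'\setminus\partial\eub{\Gamma}$ admits a $\Gamma$-deformation retraction onto $X'$ obtained by collapsing the cone coordinate; using this together with $\Gamma$-cellular approximation, I modify $g$ by a $\Gamma$-homotopy rel $C(\partial X')$ so that $g(X'')\subseteq X'$, and restricting yields the desired $f\colon(X'',\partial X')\to(X',\partial X')$ rel $\partial X'$. Fundamental-class preservation follows from the construction, since $g|_{C(\partial X')}=\id$ and the fundamental classes are characterised by the same generator in $H_d^\Gamma(\eub{\Gamma},\partial\eub{\Gamma};\IZ^w)$ as in Notation~\ref{not:fundamental_classes}.

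The main obstacle is the modification of $g$ in Step~3 so that $g(X'')\subseteq X'$: it requires keeping $g$ the identity on $C(\partial X')$ while simultaneously pushing the image off the zero-dimensional singular locus $\partial\eub{\Gamma}$, which has codimension $d$ in $Z'$. This push-off must be organised equivariantly via the cone coordinate and relies essentially on the discreteness of the singular set together with the free action on $X''\setminus\partial X'$, so that an equivariant transversality argument applies.
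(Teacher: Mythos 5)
Your argument for \eqref{the:homotopy_classification_of_slice_models:existence:maps} $\implies$ \eqref{the:homotopy_classification_of_slice_models:existence:m_F} is correct and essentially the paper's. The converse, however, has a genuine gap at exactly the step you flag as ``the main obstacle'', and that step is the entire content of the hard direction. After forming $X''=X\cup_{\partial X}\partial X'$ and the models $Z=X''\cup_{\partial X'}C(\partial X')$, $Z'=X'\cup_{\partial X'}C(\partial X')$ for $\eub{\Gamma}$, you obtain $g\colon Z\to Z'$ with $g|_{C(\partial X')}=\id$ and want to homotope $g$ rel $C(\partial X')$ so that $g(X'')\subseteq X'$, i.e.\ push $g|_{X''}$ off the cone points. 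The relevant coefficient groups $\pi_k\bigl(Z',Z'\setminus\eub{\Gamma}^{>1}\bigr)\cong\pi_{k-1}(S'_F)$ vanish for $k\le d-1$ but equal $\IZ$ for $k=d$, so there is a nontrivial obstruction class in $H^d_{\Gamma}\bigl(X'',\partial X';\bigoplus_F\IZ[\Gamma/F]\bigr)$ carried precisely by the local degrees of the top cells at the cone points. To finish you must identify this class with the difference of the $\mu$-invariants and show it can be killed; neither is done. This is exactly the identification that Remark~\ref{rem:obstruction} of the paper singles out as ``not so easy to check'' and deliberately avoids. Moreover, your appeal to ``equivariant transversality'' is not available as stated: $X''$ and $Z'$ are $\Gamma$-$CW$-complexes, not manifolds, so there is no general position theorem to invoke, and even when the total local degree at a cone point vanishes one still needs a Whitney-trick-type cancellation of individual $\pm1$ intersection points, which requires an honest smooth piece in the source.

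The paper resolves this differently and with essential extra input. It first produces an auxiliary \emph{Poincar\'e} slice complement model $(Y,\partial Y)$ with $m_F(Y,\partial Y)=1$ (Theorems~\ref{the:constructing_slice_models} and~\ref{the:Checking_Poincare_duality}) and maps it to both $(X,\partial X)$ and $(X',\partial X')$. The extension of the boundary map over $Y$ is obtained in two stages: Lemma~\ref{lem:extensions_over_Gamma_and_pi} reduces from $\Gamma$ to $\pi$ by a transfer argument whose key point --- that the obstruction group $H^d(\hom_{\IZ\Gamma}(C_*(X,\partial X),\pi_{d-1}(X')))$ is torsionfree --- uses Poincar\'e duality of the source (note that your argument never invokes duality anywhere, a further sign the hard input is missing); and Lemma~\ref{lem:extending_partial_v} constructs the $\pi$-extension geometrically after replacing $B\pi$ by a special Poincar\'e pair $H\cup_z\widehat{B\pi}$ (Lemma~\ref{lem:H_inside_a_Poincare_pair}), so that transversality and the arc-and-tube cancellation of opposite local degrees take place inside the compact smooth manifold $H$. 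To repair your proof you would either have to import this geometric machinery or carry out the obstruction-cocycle computation of Remark~\ref{rem:obstruction}, and in addition justify why $(X'',\partial X')$ rel $\partial X'$ versus $(X',\partial X')$ rel $\partial X'$ with equal $m_F$-invariants forces the top obstruction class to vanish rather than merely to be a cocycle of total degree zero.
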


Its proof needs some preparations.
Recall that up $F$-homotopy there is precisely one orientation preserving
$F$-homotopy equivalence, see Lemma~\ref{lem:basics_about_free_slice_systems},
\begin{equation}
  s_F \colon S_F\to S'_F.
  \label{s_F}
\end{equation}

\begin{lemma}\label{lem:maps_of_slice_models_homotopy_equivalence}
  Let $\partial u \colon \partial X \to \partial X'$ be the $\Gamma$-homotopy equivalence
  given by the disjoint union of the $\Gamma$-homotopy equivalences
  $\id_{\Gamma} \times _F s_F \colon \Gamma \times_F S_F \to \Gamma \times_F S_F'$ for
  $F \in \calm$.  Suppose that there is a $\Gamma$-map $u \colon X \to X'$ extending
  $\partial u \colon \partial X \to \partial X'$.

  Then $(u,\partial u) \colon (X,\partial X) \to (X',\partial X')$ is a $\Gamma$-homotopy
  equivalence of $\Gamma$-$CW$-pairs and the isomorphism
  $H_d(X,\partial X;\IZ^w) \xrightarrow{\cong} H_d(X',\partial X';\IZ^w)$ induced by
  $(u,\partial u)$ sends $[X/\Gamma,\partial X/\Gamma]$ to
  $[X'/\Gamma,\partial X'/\Gamma]$.
\end{lemma}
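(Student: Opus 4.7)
The plan is to extend $\partial u$ by coning to a $\Gamma$-map $C(\partial u)\colon C(\partial X) \to C(\partial X')$, which will be a $\Gamma$-homotopy equivalence because each component $s_F\colon S_F \to S_F'$ cones to an $F$-homotopy equivalence of contractible $F$-CW-complexes $D_F \to D_F'$. Together with the given $u$, this glues to a $\Gamma$-map of $\Gamma$-pushouts
\[
\bar u\colon \bar X := X \cup_{\partial X} C(\partial X) \longrightarrow \bar X' := X' \cup_{\partial X'} C(\partial X')
\]
between two $\Gamma$-CW-models for $\eub{\Gamma}$. Since $\eub{\Gamma}$ is characterized up to $\Gamma$-homotopy by the contractibility of its fixed point sets under finite subgroups (all other isotropy being empty), any $\Gamma$-map between two $\Gamma$-CW-models for it is automatically a $\Gamma$-homotopy equivalence; hence $\bar u$ is a $\Gamma$-homotopy equivalence.

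Next I would deduce that $u\colon X \to X'$ is a $\Gamma$-homotopy equivalence, by using the $\Gamma$-cofibration $C(\partial X) \hookrightarrow \bar X$ and its primed analogue. Since both $\bar u$ and its restriction $C(\partial u)$ are $\Gamma$-homotopy equivalences, the induced map on quotients $\bar X/C(\partial X) \to \bar X'/C(\partial X')$ is a $\Gamma$-homotopy equivalence. Under the excision identification $\bar X/C(\partial X) \cong X/\partial X$ (and similarly for the primes), this means that $X/\partial X \to X'/\partial X'$ is a $\Gamma$-homotopy equivalence. Combining this with the fact that $\partial u$ is a $\Gamma$-homotopy equivalence, the equivariant five-lemma applied to the long exact sequence of the pair (or the equivariant Whitehead theorem via the cofibre sequence) yields that $u$, and hence the map of pairs $(u,\partial u)$, is a $\Gamma$-homotopy equivalence.

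For the statement about fundamental classes, I would observe that $\bar u$ is a $\Gamma$-homotopy equivalence between two $\Gamma$-CW-models for $\eub{\Gamma}$, and since $\Gamma$-maps into $\eub{\Gamma}$ are unique up to $\Gamma$-homotopy, $\bar u$ agrees up to $\Gamma$-homotopy with the canonical comparison used in Notation~\ref{not:fundamental_classes} to transport $[\eub{\Gamma}/\Gamma, \partial \eub{\Gamma}/\Gamma]$ to $[X/\Gamma, \partial X/\Gamma]$ and $[X'/\Gamma, \partial X'/\Gamma]$. Thus $\bar u_*$ sends one preferred class to the other, and unwinding the excision isomorphisms $H_d^\Gamma(\bar X, C(\partial X); \IZ^w) \cong H_d^\Gamma(X, \partial X; \IZ^w)$ gives $(u,\partial u)_*[X/\Gamma, \partial X/\Gamma] = [X'/\Gamma, \partial X'/\Gamma]$. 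The main subtlety is bookkeeping rather than conceptual: one must track the various excision isomorphisms and verify that the chain of identifications defining the fundamental classes is compatible with $\bar u$, a point where the uniqueness of $\Gamma$-maps into $\eub{\Gamma}$ is essential.
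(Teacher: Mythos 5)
Your proposal follows essentially the same route as the paper: cone off $\partial u$ to $C(\partial u)$, glue with $u$ to obtain a $\Gamma$-map $\bar u$ between two models for $\eub{\Gamma}$, which is therefore automatically a $\Gamma$-homotopy equivalence; deduce that $u$ is one; and transport the fundamental classes by naturality. Your treatment of the fundamental classes via uniqueness of $\Gamma$-maps into $\eub{\Gamma}$ and the excision identifications is sound and arguably cleaner than the paper's boundary-map diagram. The one step you must tighten is the deduction that $u$ itself is a $\Gamma$-homotopy equivalence. The ``equivariant five-lemma applied to the long exact sequence of the pair'' does not work verbatim: on homotopy groups, $\pi_n(X,\partial X)$ is not $\pi_n(X/\partial X)$ outside the Blakers--Massey range, so the equivalence $X/\partial X \to X'/\partial X'$ you established does not directly feed into the homotopy sequence of the pair; and a five lemma on homotopy groups would in any case only give a weak equivalence after choosing basepoints consistently. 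What works — and is what the paper does, using Mayer--Vietoris in place of your cofibre sequence — is to run the five lemma on \emph{homology} (where $H_n(X,\partial X)\cong \widetilde H_n(X/\partial X)$ for the $CW$-pair), conclude that $H_n(u)$ is bijective for all $n$, then invoke the simple connectivity of $X$ and $X'$ supplied by Lemma~\ref{lem:characterization_of_slice-models} together with the ordinary Whitehead theorem to see that $u$ is a non-equivariant homotopy equivalence, and finally use that $X$ and $X'$ are \emph{free} $\Gamma$-$CW$-complexes to upgrade this to a $\Gamma$-homotopy equivalence. You implicitly lean on the last point by saying ``equivariant Whitehead theorem,'' but the simple connectivity input is nowhere mentioned and is genuinely needed.
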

\begin{proof}
  Each map $s_F \colon S_F \to S_F'$ extends to $F$-map $D_F \colon D_F'$ by taking the
  cone.  Hence there exists a $\Gamma$-homotopy equivalence
  $C(\partial u) \colon C(\partial X) \to C(\partial X')$ extending $\partial u$.  We
  obtain a $\Gamma$-map
  \[u \cup_{\partial u} C(\partial u) \colon X \cup_{\partial X} C(\partial X) \to X'
    \cup_{\partial X'} C(\partial X').
  \]
  Since the source and target of this map are $\Gamma$-homotopy equivalent to
  $\eub{\Gamma}$, it is a $\Gamma$-homotopy equivalence.  Since
  $u \cup_{\partial u} C(\partial u)$, $\partial u$ and $C(\partial u)$ induce homology
  equivalences, the map $H_n(u) \colon H_n(X) \to H_n(X')$ is  bijective for all
  $n \ge 0$.  Since $X$ and $X'$ are simply connected by
  Lemma~\ref{lem:characterization_of_slice-models}, the map $u \colon X \to X'$ is a
  non-equivariant homotopy equivalence.  Since $X$ and $X'$ are free
  $\Gamma$-$CW$-complexes, $u \colon X \to X'$ is a $\Gamma$-homotopy equivalence.  Since
  $\partial u \colon \partial X \to \partial X'$ is a $\Gamma$-homotopy equivalence,
  $(u,\partial u) \colon (X,\partial X) \to (X',\partial X')$ is a $\Gamma$-homotopy
  equivalence of $\Gamma$-$CW$-pairs.

  One easily checks by inspecting the definitions and the commutative diagram
  \[
    \xymatrix{H^{\Gamma}_d(X,\partial X;\IZ^w) \ar[d]^{\cong}_{H^{\Gamma}_d(u,\partial
        u;\IZ^w)} \ar[r]^-{\partial} & H^{\Gamma}_{d-1}(\partial X;\IZ^w)
      \ar[d]_{\cong}^{H^{\Gamma}_d(\partial u;\IZ^w)}
      \\
      H^{\Gamma}_d(X',\partial X';\IZ^w) \ar[r]_-{\partial} & H^{\Gamma}_{d-1}(\partial
      X';\IZ^w) }
  \]
  that the isomorphism
  $H_d(X,\partial X;\IZ^w) \xrightarrow{\cong} H_d(X',\partial X';\IZ^w)$ induced by
  $(u,\partial u)$ sends $[X/\Gamma,\partial X/\Gamma]$ to
  $[X'/\Gamma,\partial X'/\Gamma]$, since the horizontal arrows and the right vertical arrow
  respects the fundamental classes.
\end{proof}

\begin{lemma}\label{lem:extensions_over_Gamma_and_pi}
  Suppose additionally that $(X,\partial X)$ is a Poincar\'e slice complement model and
  $m_F(X,\partial X) = 1$ hold for all $F \in \calm$. (Such $(X,\partial X)$ exists by
  Lemma~\ref{the:existence_Poincare_slice_models}.) 
  Let $v_F \colon S_F \to S_F'$ be the
  $F$-map uniquely determined up to $F$-homotopy by the property that it sends $[S_F/F]$
  to $m_F(X',\partial X') \cdot [S_F'/F]$.  (It exists, since we have
  $\mu_F(X',\partial X') \equiv 1 \mod |G|$.)  Let
  $\partial u \colon \partial X \to \partial X'$ be the $\Gamma$-map given by the disjoint
  union of the $\Gamma$-maps $\Gamma \times_F v_F$.

  Then the following assertions are equivalent

  \begin{enumerate}
  \item\label{lem:extensions_over_Gamma_and_pi:Gamma} There exists a $\Gamma$-map
    $u \colon X \to X'$ extending the $\Gamma$-map
    $\partial u \colon \partial X \to \partial X'$;

  \item\label{lem:extensions_over_Gamma_and_pi:pi} There exists a $\pi$-map
    $u'  \colon i^*X \to i^*X'$ extending the $\pi$-map
    $i^*u \colon i^*\partial X \to i^*\partial X'$.
  \end{enumerate}
\end{lemma}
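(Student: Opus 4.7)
The implication \eqref{lem:extensions_over_Gamma_and_pi:Gamma} $\Rightarrow$ \eqref{lem:extensions_over_Gamma_and_pi:pi} is immediate: restrict the $\Gamma$-map $u$ along the inclusion $i \colon \pi \to \Gamma$. For the converse, my plan is to combine equivariant obstruction theory on the free $\Gamma$-$CW$-pair $(X, \partial X)$ with Poincar\'e-Lefschetz duality supplied by Theorem~\ref{the:Checking_Poincare_duality}. By Lemma~\ref{lem:characterization_of_slice-models} applied with $k = d-1$, the space $X'$ is $(d-2)$-connected, so obstructions to extending $\partial u$ over the successive $\Gamma$-equivariant skeleta of $(X,\partial X)$ relative to $\partial X$ vanish below dimension $d$. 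After choosing such an extension $u_{d-1} \colon X_{d-1} \to X'$, the sole remaining obstruction is a class $\chi \in H^d_\Gamma(X, \partial X; \pi_{d-1}(X'))$, and its image under the restriction $i^* \colon H^d_\Gamma \to H^d_\pi$ is precisely the $\pi$-equivariant obstruction, which vanishes by~\eqref{lem:extensions_over_Gamma_and_pi:pi}. It therefore suffices to show that $i^*$ is injective on $\chi$.

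I would carry this out using Poincar\'e-Lefschetz duality. Since $(X/\Gamma, \partial X/\Gamma)$ is a finite $d$-dimensional Poincar\'e pair with orientation $w$ by Theorem~\ref{the:Checking_Poincare_duality}~\eqref{the:Checking_Poincare_duality:duality}, cap product with the fundamental class $[X/\Gamma, \partial X/\Gamma]$ identifies $H^d_\Gamma(X, \partial X; N)$ with the $\Gamma$-coinvariants $(N \otimes \IZ^w)_\Gamma$ for any $\IZ\Gamma$-module $N$, and analogously $H^d_\pi(X, \partial X; N) \cong (i^*N \otimes \IZ^v)_\pi$. Under these isomorphisms the restriction map $i^*$ on cohomology corresponds to the transfer $(N \otimes \IZ^w)_\Gamma \to (i^*N \otimes \IZ^v)_\pi$, $[n] \mapsto \sum_{\widehat{g}} [\widehat{g}\cdot n]_\pi$, where $\widehat{g}$ ranges over a fixed transversal of $\pi$ in $\Gamma$.

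It remains to identify $N = \pi_{d-1}(X')$ as a $\IZ\Gamma$-module and verify injectivity of the transfer on $(N \otimes \IZ^w)_\Gamma$. By the Hurewicz theorem $\pi_{d-1}(X') \cong H_{d-1}(X')$, and the Mayer-Vietoris sequence of the $\Gamma$-pushout defining the slice complement model, combined with the contractibility of $\eub{\Gamma}$ and of the components of $C(\partial X')$, together with the fact that $F$ acts orientation-preservingly on $S_F'$ for even $d$ (Lemma~\ref{lem:basics_about_free_slice_systems}), yields a $\IZ\Gamma$-isomorphism $\pi_{d-1}(X') \cong \bigoplus_{F \in \calm} \IZ[\Gamma/F]$. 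Using that $w|_F$ is trivial for every $F \in \calm$ by Assumption~\ref{ass:general_assumptions}, a direct computation gives $(\IZ[\Gamma/F] \otimes \IZ^w)_\Gamma \cong \IZ$ (hence torsionfree) and $(i^*\IZ[\Gamma/F] \otimes \IZ^v)_\pi \cong \IZ[G/\pr(F)]$. The transfer restricted to each $F$-summand becomes a map $\IZ \to \IZ[G/\pr(F)]$ whose post-composition with the augmentation is multiplication by $|G|$; it is therefore nonzero, and being a map out of $\IZ$ it is injective. This forces $\chi = 0$ whenever $i^*\chi = 0$, yielding the required $\Gamma$-extension. The main technical point will be the explicit identification of $\pi_{d-1}(X')$ as a $\IZ\Gamma$-module together with the nonvanishing of the transfer on coinvariants; everything else follows from standard equivariant obstruction theory and Poincar\'e duality.
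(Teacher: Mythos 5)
Your proposal is correct and follows essentially the same route as the paper: reduce via equivariant obstruction theory and the $(d-2)$-connectivity of $X'$ to the injectivity of $i^* \colon H^d_{\Gamma}(X,\partial X;\pi_{d-1}(X')) \to H^d_{\pi}(i^*X,i^*\partial X;\pi_{d-1}(i^*X'))$, and prove that injectivity by a transfer argument using $i_* \circ \trf_* = |G| \cdot \id$ together with the fact that the $\Gamma$-cohomology group is torsionfree, which is where the Poincar\'e structure on $(X/\Gamma,\partial X/\Gamma)$ enters. The only (cosmetic) difference is that the paper runs the transfer at the cochain level via the norm element and identifies the group as $\bigoplus_{F\in\calm} H_{d-1}(S'_F/F)$ through a string of isomorphisms, while you dualize first and compute the coinvariants of the explicit coefficient module $\bigoplus_{F\in\calm}\IZ[\Gamma/F]$.
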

\begin{proof}
  Obviously~\eqref{lem:extensions_over_Gamma_and_pi:Gamma}
  implies~\eqref{lem:extensions_over_Gamma_and_pi:pi}.  The
  implication~\eqref{lem:extensions_over_Gamma_and_pi:pi}
  $\implies$~\eqref{lem:extensions_over_Gamma_and_pi:Gamma} is proved by equivariant
  obstruction theory as follows.

  Note that $(X,\partial X)$ is a free $\Gamma$-$CW$-pair and $\dim(X) = d$. The
  $\Gamma$-$CW$-complex $X'$ is $(d-2)$-connected by
  Lemma~\ref{lem:characterization_of_slice-models}~%
\eqref{lem:characterization_of_slice-models:homology}. Hence we get from equivariant
  obstruction theory an exact sequence
  \[[X,X']^{\Gamma} \to [\partial X,X']^{\Gamma} \xrightarrow{o^{\Gamma}}
    H^d_{\Gamma}(X,\partial X;\pi_{d-1}(X')).
  \]
  This is explained for finite $\Gamma$ for instance in~\cite[pages~119 - 120]{Dieck(1987)},
  the condition that $\Gamma$ is finite is not needed at all. The
  construction is compatible with restriction.  So we get a commutative diagram with exact
  rows
  \[
    \xymatrix{[X,X']^{\Gamma} \ar[r] \ar[d]^{i^*} & [\partial X,X']^{\Gamma}
      \ar[r]^-{o^{\Gamma}} \ar[d]^{i^*} & H^d_{\Gamma}(X,\partial X;\pi_{d-1}(X'))
      \ar[d]^{i^*}
      \\
      [i^*X,i^*X']^{\pi} \ar[r] & [i^*\partial X,i^*X']^{\pi} \ar[r]^-{o^{\pi}} &
      H^d_{\pi}(i^*X,i^*\partial X;\pi_{d-1}(i^*X')).  }
  \]
  Hence it suffices to show that
  $i^* \colon H^d_{\Gamma}(X,\partial X;\pi_{d-1}(X')) \to H^d_{\pi}(i^*X,i^*\partial
  X;\pi_{d-1}(i^*X'))$ is injective. This will be done by a cohomological version of the
  transfer argument appearing in Subsection~\ref{subsec:Transfer}, which we explain next.
  Recall from the definitions
  \begin{eqnarray*}
    H^d_{\Gamma}(X,\partial X;\pi_{d-1}(X'))
    & = &
          H^d(\hom_{\IZ \Gamma}(C_*(X,\partial X),\pi_{d-1}(X')));
    \\
    H^d_{\Gamma}(X,\partial X;\pi_{d-1}(X'))
    & = &
          H^d(\hom_{\IZ \pi}(i^*C_*(X,\partial X),i^*\pi_{d-1}(X'))).
  \end{eqnarray*}
  The group $G$-acts on $\hom_{\IZ \pi}(i^*C_*(X,\partial X),i^*\pi_{d-1}(X'))$ in the
  obvious way.  We have
  \[
    \hom_{\IZ \pi}(i^*C_*(X,\partial X),i^*\pi_{d-1}(X'))^G = \hom_{\IZ
      \Gamma}(C_*(X,\partial X),\pi_{d-1}(X')).
  \]
  If we put $D^* = \hom_{\IZ \pi}(i^*C_*(X,\partial X),i^*\pi_{d-1}(X'))$, then
  $i^* \colon H^d_{\Gamma}(X,\partial X;\pi_{d-1}(X')) \to H^d_{\pi}(i^*X,i^*\partial
  X;\pi_{d-1}(i^*X'))$ can be identified with the map
  $H^d(j^*) \colon H^d((D^*)^G) \to H^d(D^*)$ for the inclusion $j^* \colon (D^*)^G \to D^*$.
  Multiplication with the norm element $N := \sum_{g \in G} g \in \IZ G$ defines a
  $\IZ$-chain map $t^* \colon D^* \to (D^*)^G$ such that
  $t^* \circ j^* = |G| \cdot \id_{(D^*)^G}$.  Hence $j^*$ is injective, if
  $|G| \cdot \id \colon H^d((D^*)^G) \to H^d((D^*)^G) $ is injective.  Therefore it
  suffices to show that
  $H^d((D^*)^G) = H^d(\hom_{\IZ \Gamma}(C_*(X,\partial X),\pi_{d-1}(X')))$ is torsionfree.
  
  This follows from the following string of isomorphisms
  \begin{eqnarray*}
    H^d(\hom_{\IZ \Gamma}(C_*(X,\partial X),\pi_{d-1}(X')))
    &\cong &
             H^d(\hom_{\IZ \Gamma}(C_*(X,\partial X),H_{d-1}(X')))
    \\
    &\cong &
             H^d(\hom_{\IZ \Gamma}(C_*(X,\partial X),H_{d-1}(\partial X')))
    \\
    &\cong &
             H_0(C_*(X) \otimes_{\IZ\Gamma} H_{d-1}(\partial X'))
    \\
    &\cong &
             \IZ \otimes_{\IZ \Gamma} H_{d-1}(\partial X') 
    \\
    &\cong &
             H_{d-1}(\partial X'/\Gamma)
    \\
    &\cong &
             \bigoplus_{F \in \calm}H_{d-1}(S'_F/F).
  \end{eqnarray*}
  The first isomorphism comes from the Hurewicz homomorphism
  $\pi_{d-1}(X') \xrightarrow{\cong} H_{d-1}(X')$, which is bijective, as $X'$ is
  $(d-2)$-connected by Lemma~\ref{lem:characterization_of_slice-models}.  The second
  isomorphism comes from the $\IZ\Gamma$-isomorphism
  $H_{d-1}(\partial X') \xrightarrow{\cong} H_{d-1}(X')$, which is bijective, since for
  $n \in \{(d-1),d\}$ we get
  \[
    H_n(X',\partial X') \cong H_n(\eub{\Gamma},\partial \eub{\Gamma}) \cong H_n(\eub{\Gamma}) = 0
  \]
  using the homotopy $\Gamma$-pushout~\eqref{canonical_homotopy_Gamma_pushout_for_a_slice_model}.
  The third   isomorphism is a consequence of the assumption that $(X/\Gamma,\partial X/\Gamma)$ is a
  Poincar\'e pair. The fourth and fifth isomorphism come from the fact that $X$ is a
  connected free $\Gamma$-$CW$-complex and the functor
  $- \otimes_{\IZ \Gamma} H_{d-1}(\partial X')$ is right exact. The last isomorphism is
  obvious.  Note that $H_{d-1}(S_F'/F)$ is infinite cyclic and hence torsionfree.
\end{proof}

\begin{remark}\label{rem:obstruction}
  Suppose that we are in the situation of  Lemma~\ref{lem:extensions_over_Gamma_and_pi}.
  Consider the following composite
  \[
    \alpha \colon [\partial X,\partial X']^{\Gamma} \xrightarrow{j_*} [\partial X,X']^{\Gamma}
    \xrightarrow{o^{\Gamma}} H^d_{\Gamma}(X,\partial X;\pi_{d-1}(X'))
    \xrightarrow{\cong} \bigoplus_{F \in \calm}H_{d-1}(S'_F/F),
  \]
  where the first map is given by composition with the inclusion
  $j \colon \partial X' \to X'$, the second  is given by the equivariant obstruction, and
  the third map is the isomorphism appearing in the proof of
  Lemma~\ref{lem:extensions_over_Gamma_and_pi}.  One may guess what this composition is
  for $\partial u$, namely, we expect
  \begin{equation}
    \alpha([\partial u]) = \mu(X',\partial X') - H_{d-1}(\partial u)(\mu(X,\partial X)).
    \label{putative_equality}
  \end{equation}
  This makes sense, since the existence of an extension of $j \circ \partial u$ to a
  $\Gamma$-map $X \to X'$ implies that
  $\mu(X',\partial X') - H_{d-1}(\partial u)(\mu(X,\partial X))$ vanishes. Since
  $(X,\partial X)$ is by assumption a Poincar\'e slice complement model, we conclude from
  Lemma~\ref{lem:necessary_condition_for_slice_model_to_be_Poincare} that
  $\mu(X,\partial X) = s$. This implies
  $\mu(X',\partial X') - H_{d-1}(\partial u)(\mu(X,\partial X)) = 0$ and hence the
  existence of the $\Gamma$-extension $u$ of the $\Gamma$-map $\partial u$
  follows from obstruction theory.

  However, it is not so easy to check equation~\eqref{putative_equality} and we will not do this here.
  Instead we will construct the desired
  $\pi$-extension of the $\pi$-map $i^*\partial u$ directly
  and use the equivariant obstruction theory only to reduce the
  problem from $\Gamma$ to $\pi$ by Lemma~\ref{lem:extensions_over_Gamma_and_pi}.
\end{remark}

In the next step we construct a specific model for $(i^*X,i^*\partial X)$.

Because of Lemma~\ref{lem:H_inside_a_Poincare_pair} we can assume that we have a special
model for $B\pi = H \cup_z \widehat{B\pi}$ for $B\pi$. Put
\[
  J := \coprod_{F \in \calm} G/\pr(F),
\]
where $\pr \colon \Gamma \to G$ is the projection. Define
\[
  \tau \colon J \to \calf
\]
to be the obvious projection, which sends the summand $G/\pr(F)$ belonging to $F \in \calm$
to $F$.

Choose for every $j \in J$ an embedded disk $D_j^H \subseteq H \setminus \partial H$ such
that the disks for different $j $ are disjoint.  Put $S_j^H = \partial D_j^H$. Note that
the superscript $H$ shall remind the reader that these disks and spheres lie in the interior of $H$. Let
\[
  \overline{Y} = B\pi \setminus \coprod_{j \in J} \inte(D_j^H)
\]
be obtained from $B\pi = H \cup_z \widehat{B\pi}$ by deleting the interiors of these
embedded disks $D_j^H$.  Define
\begin{eqnarray*}
  \overline{\partial Y} & = & \coprod_{j \in J} S_j^H;
  \\
  C(\overline{\partial Y}) & = & \coprod_{j \in J} D_j^H.
\end{eqnarray*}
Then $\overline{Y} \cap H$ is a compact smooth manifold, whose boundary is the disjoint
union of $\partial H$ and $\overline{\partial Y}$ and we have
\[
  B\pi = (\overline{Y} \cap H) \cup_{\partial H \amalg \overline{\partial Y}}
  (\widehat{B\pi} \amalg C(\overline{\partial Y})) = \overline{Y} \cup_{\overline{\partial
      Y}} C(\overline{\partial Y}).
\]
Let $Y$ and $\partial Y$ be the free $\pi$-$CW$-complexes obtained by taking the
preimage of $\overline{Y}$ and $\partial \overline{Y}$ under the universal covering
$E\pi \to B\pi$. Note that then we can identify $\overline{Y} = Y/\pi$ and
$\overline{\partial Y} = \partial Y/\pi$ and $\partial \overline{Y} = \coprod_{j \in J} \pi \times S_j^H$.

Since $\partial X'/\Gamma = \coprod_{F \in \calm} S'_F/F$ holds, the $\pi$-space $i^* \partial X'$
can be written as
\[
  i^*\partial X' = \coprod_{F \in \calm} \bigl(\coprod_{G/\pr(F)} \pi \times S'_F\bigr) =
  \coprod_{j \in J} \pi \times S_j',
\]
where $S'_j$ is a copy of $S_{\tau(j)}'$.  For $j \in J$ choose a map
$\partial v_j \colon S_j^H \to S_j$, whose degree is $m_{\tau(j)}(X',\partial X')$.  Let
$\partial v \colon \partial Y \to i^*\partial X'$ be the disjoint union of the $\pi$-maps
$\id_{\pi} \times \partial v_j \colon \pi \times S_j^H \to \pi \times S_j'$.  Since each
map $\partial v_j \colon S_j^H \to S_j'$ can be extended to a map $D_j^H \to D_j'$ by
coning, we get an extension of $\partial v \colon \partial Y \to \partial X'$ to a $\pi$-map
$C(\partial v) \colon C( \partial Y) \to C(i^*\partial X')$.

\begin{lemma}\label{lem:extending_partial_v}
  The $\pi$-map $\partial v \colon \partial Y \to i^*\partial X'$ extends to a $\pi$-map
  $v \colon Y \to i^*X'$.
\end{lemma}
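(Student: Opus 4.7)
The plan is to construct $v$ directly by restricting a $\pi$-homotopy equivalence $\varphi \colon E\pi \to i^*\eub{\Gamma}$ that is geometrically well-behaved on the disks $D_j^H$. First, both $E\pi$ and $i^*\eub{\Gamma} = i^*X' \cup_{i^*\partial X'} C(i^*\partial X')$ are models for $E\pi$, and $i^*C(\partial X') = \coprod_{j \in J} \pi \times D'_j$ (using that $\pi \cap F = \{1\}$ for each $F \in \calm$, so each $\Gamma \times_F D'_F$ splits as a disjoint union of free $\pi$-orbits of disks indexed by $G/\pr(F)$). I would then arrange $\varphi$ so that the preimage of the $|J|$ cone-point orbits of $i^*\eub{\Gamma}$ is precisely the set of centers of the embedded $\pi$-orbits $\pi \times D_j^H \subset E\pi$, and so that on each $\pi \times D_j^H$ the map $\varphi$ is a $\pi$-equivariant homeomorphism onto the cone summand $\pi \times D'_j$. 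This step uses the smoothness of the manifold $H \subset B\pi$ and standard equivariant general position to move preimages of the discrete cone points into the interiors of the disks $D_j^H$ by a $\pi$-equivariant homotopy.

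Granted such a $\varphi$, the restriction $\varphi|_Y$ automatically lands in $i^*X'$ and has boundary a $\pi$-map of the form $\coprod_j \id_\pi \times \varphi|_{S_j^H} \colon \partial Y \to i^*\partial X'$. The next step is to identify $\deg(\varphi|_{S_j^H})$. I would descend $\varphi$ to a homotopy equivalence $\bar\varphi \colon B\pi \to \bub{\Gamma}$ and track the image of the fundamental class $[B\pi] \in H_d^\pi(E\pi;\IZ^v)$ through the commutative diagram relating it to $\mu(X', \partial X')$, as defined in \eqref{mu(X,partial_X)_in_H_(d-1)(partial_X/Gamma)}. Since by definition $\mu(X', \partial X')_F = m_F(X', \partial X') \cdot [S'_F/F]$, the local degree of $\bar\varphi$ at the cone-point preimage indexed by $j$—which equals $\deg(\varphi|_{S_j^H})$ up to the orientation identifications—must be $m_{\tau(j)}(X', \partial X')$.

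Finally, $\partial v_j$ and $\varphi|_{S_j^H}$ are $F_{\tau(j)}$-maps between free $(d-1)$-dimensional $F_{\tau(j)}$-spheres having the same degree $m_{\tau(j)}(X', \partial X')$, so they are $F_{\tau(j)}$-homotopic by the classification of $F$-maps via degree (as used in the proof of Lemma~\ref{lem:basics_about_free_slice_systems}). Hence $\partial v$ is $\pi$-homotopic to $\varphi|_{\partial Y}$, and the $\pi$-equivariant homotopy extension property for the $\pi$-cofibration $\partial Y \hookrightarrow Y$ allows $\varphi|_Y$ to be $\pi$-homotoped to a $\pi$-map $v \colon Y \to i^*X'$ extending $\partial v$. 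The main technical obstacle is the first step—arranging $\varphi$ to be standard on the disks $D_j^H$—which requires the delicate $\pi$-equivariant general position argument within the smooth manifold $H$ and relies essentially on the special structure of $B\pi = H \cup_z \widehat{B\pi}$ guaranteed by Lemma~\ref{lem:H_inside_a_Poincare_pair}.
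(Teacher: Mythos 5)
Your overall route is the same as the paper's: descend to the quotient $B\pi=\overline{Y}\cup_{\overline{\partial Y}}C(\overline{\partial Y})$, compare it with $\overline{X'}\cup_{\partial\overline{X'}}C(\partial\overline{X'})$ via a homotopy equivalence inducing the identity on $\pi$, use smoothness of $H$ and transversality at the cone points, identify degrees by chasing the fundamental class, and finally restrict to get $v$. But the first step, which you yourself flag as the main obstacle, contains a genuine gap, and in the form you state it the step is actually false. You ask to arrange $\varphi$ so that the preimage of each cone-point orbit is a single center point and $\varphi$ restricts to a $\pi$-homeomorphism $\pi\times D_j^H\to\pi\times D'_j$. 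A homeomorphism of disks restricts to a degree $\pm 1$ map on boundary spheres, whereas your own (correct) degree computation forces $\deg(\varphi|_{S_j^H})=m_{\tau(j)}(X',\partial X')$, which is only known to be $\equiv 1\bmod|F|$ and need not be $\pm1$ (the lemma is applied to $(X',\partial X')$ with no normalization of $m_F$). So the two halves of your argument are incompatible in general.

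Even the weakened version --- a single preimage disk mapping onto $D'_j$ with the correct degree --- does not follow from ``standard equivariant general position''. Transversality only gives that the preimage of each cone point is a \emph{finite} set of points with local degrees $\pm1$; it gives no control on the number of points. Reducing to a single preimage disk is exactly the content of the paper's proof: one first builds $f$ so that it restricts to the cone of $\overline{\partial v}$ on the distinguished disks $D_j^H$, then shows by a fundamental-class computation that the local degrees $\delta_{j,i}$ of the \emph{extra} preimage disks satisfy $\sum_{i\in I_j}\delta_{j,i}=0$, and then cancels opposite pairs by a Whitney-trick style move (join a $+1$ and a $-1$ preimage by an embedded arc in $H$, thicken to a tube, make $f$ constant on the tube, and push the resulting null-homotopic disk off the cone point). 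This cancellation argument, which uses $\dim H=d\ge 4$ and the connectivity hypotheses built into the special structure of $B\pi=H\cup_z\widehat{B\pi}$, is the heart of the proof and cannot be replaced by a general-position appeal. Your concluding steps (identifying $\partial v_j$ and $\varphi|_{S_j^H}$ up to $F$-homotopy by their common degree, and invoking the homotopy extension property for $\partial Y\hookrightarrow Y$) are fine, but they rest on the unproved first step.
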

\begin{proof} We begin with explaining that we can assume without loss of generality that
  each $S_F'$ is the standard sphere with its standard orientation and hence each $D_F'$
  is the standard disk. Namely, we can replace $(i^*X',i^*\partial X')$ by a
  $\pi$-homotopy equivalent pair $(X'',\partial X'')$ such that $\partial X''$ is a
  disjoint union of standard spheres, by the following construction.  Choose for every $j$
  an orientation preserving homotopy equivalence $\partial g_j \colon S_j' \to S_j''$ with a
  copy of  the standard sphere of dimension $(d-1)$ with its standard orientation as target.
  Define $\partial X'' = \coprod_{j \in J} \pi \times S''_j$ and let
  $\partial g\colon i^*\partial X' \to \partial X''$ be the $\pi$-homotopy equivalence
  given by $ \coprod_{j \in J} \id_{\pi} \times \partial g_j$. Define $X''$ and the $\pi$-map $g \colon i^*X' \to X''$
  by the   $\pi$-pushout
  \[
    \xymatrix{i^*\partial X' \ar[r]^{\partial g} \ar[d] & \partial X'' \ar[d]
      \\
      i^* X' \ar[r]_g & X''.  }
  \]
  Since $\partial g$ is a $\pi$-homotopy equivalence,
  $(g,\partial g) \colon (X',\partial X') \to (X'',\partial X'')$ is a $\pi$-homotopy
  equivalence of free $\pi$-$CW$-pairs.  Obviously it suffices to show that the map
  $\partial g \circ \partial v\colon \partial Y \to \partial X''$ extends to a $\pi$-map
  $Y \to X''$. Because we may replace $(i^*X',i^* \partial X')$ with $(X'',\partial X'')$,
  we can assume without loss of generality that each $S'_j$ is the $(d-1)$-dimensional
  standard sphere with its standard orientation and each $D'_j$ is the $d$-dimensional
  standard disk.

  Put $\overline{X} := X'/\pi$ and $\overline{\partial X} := \partial X'/\pi$.  Let
  $\overline{\partial v} \colon \overline{\partial Y} \to \overline{X'}$ be
  $\partial v/\pi$.  Since
  $\overline{Y} \cup_{\partial \overline{Y}} C(\partial \overline{Y})$ and
  $\overline{X'} \cup_{\partial \overline{X'}} C(\partial \overline{X'})$ are models for
  $B\pi$, the map
  \[
    C(\overline{\partial v}) \colon C(\overline{\partial Y}) = \coprod_{j \in J} D_j^H \to
    C(\overline{\partial X'}) = \coprod_{j \in J} D'_j
  \]
  extends to a homotopy equivalence
  \[
    f \colon \overline{Y} \cup_{\partial \overline{Y}} C(\partial \overline{Y})
    \xrightarrow{\simeq} \overline{X'} \cup_{\partial \overline{X'}} C(\partial
    \overline{X'})
  \]
  inducing the identity on $\pi$. Since the inclusion
  $\overline{X'} \to \overline{X'} \cup_{\coprod_{j \in J} S_j'} \coprod_{F} D_j'$ is
  $(d-1)$-connected, $\widehat{B\pi}$ is a ($d-2)$-dimensional $CW$-complex
  and $\partial H \to H$ is a cofibration,
  we can arrange  that $f(\widehat{B\pi})\subseteq \overline{X'}$ holds
  without altering $f$ on $H$ and the homotopy class of $f$.  In particular $f(\partial H) \subseteq X'$.

  Now we can change
  \[
    f|_{\overline{Y} \cap H} \colon \overline{Y} \cap H = H \setminus \coprod_{j \in J}
    \inte(D_j^H) \to \overline{X'} \cup_{\coprod_{j \in J} S_j'} \coprod_{j \in J} D'_j
  \]
  up to homotopy relative $\partial H$ such that it is transversal to each origine
  $0'_j \in D'_j$, since $D_F$ is a compact smooth manifold containing $0_F$ in interior
  and $\overline{Y} \cap H$ is a compact smooth manifold with boundary
  $\partial (\overline{Y} \cap H)$ such that $f(\partial (\overline{Y} \cap H))$ does not
  contain any of the points $o'_F$.  Furthermore we can arrange that for every $j \in J$
  the preimage $(f|_{\overline{Y} \cap H})^{-1}(D_j') = f^{-1}(D_j) \setminus D^H_F$ is a
  disjoint union of disks $\coprod_{i \in I_j} D_{j,i}^H$ for a finite set $I_j$ and
  $f|_{D[H]_{j,i}} \colon (D_{j,i}^H,S_{j,i}^H) \to (D_j',S_j')$ is a homeomorphism of
  pairs for $S_{j,i}^H = \partial D_{j,i}^H$. Let $\delta_{j,i} \in \{\pm 1\}$ be the
  local degree of the homeomorphism $f|_{S^H_{j,i}} \colon S^H_{j,i} \to S'_j$.  Let
  $\inte(D^H_j)$, $\inte(D^H_{j,i})$, and $\inte(D'_j)$ denote the interior of $D^H_j$,
  $D^H_{j,i}$ and $D'_j$. We abbreviate
  \begin{eqnarray*}
    Z'
    & := &
           \overline{X'} \cup_{\partial \overline{X'}} C(\partial \overline{X'})
           = \overline{X'} \cup_{\coprod_{j \in J} S_j'} \coprod_{j \in J} D_j';
    \\
    A
    & := &
           B\pi \setminus \bigl(\coprod_{j \in J} \inte(D^H_j) \amalg \coprod_{i \in I_j} \inte(D^H_{j,i})\bigr).
  \end{eqnarray*}
  Next we construct the following commutative diagram, where $p \colon E\pi \to B\pi$ and
  $p' \colon \widetilde{Z'} \to Z$ are the universal coverings
  \[
    \xymatrix@!C=15em{ H_d^{\pi}(E\pi;\IZ^v) \ar[r] \ar[d] &
      H_d^{\pi}(\widetilde{Z'};\IZ^v) \ar[d]
      \\
      H_d^{\pi}\bigl(E\pi, p^{-1}(A);\IZ^v\bigr) \ar[r] & H_d^{\pi}\bigl(\widetilde{Z'},
      p'^{-1}(Z' \setminus (\coprod_{j \in J} \inte(D_j')));\IZ^v\bigr)
      \\
      H_d\bigl(\coprod_{j \in J} (D^H_j,S^H_j) \amalg (\coprod_{i \in I_j} (D^H_{j,i},
      S^H_{j,i}))\bigr) \ar[u]^{\cong} \ar[r] & H_d\bigl(\coprod_{j \in J}
      (D'_j,S'_j)\bigr) \ar[u]_{\cong}
      \\
      \bigoplus_{j \in J} \left(H_d(D_j^H,S_j^H) \oplus \bigoplus_{i \in I_j}
        H_d(D_{j.i}^H,S_{j.i}^H) \right) \ar[r] \ar[d] \ar[u] & \bigoplus_{j \in J}
      H_d(D'_j,S'_j) \ar[d] \ar[u]
      \\
      \bigoplus_{j \in J} \left(H_{d-1}(S_j^H) \oplus \bigoplus_{i \in I_j}
        H_{d-1}(S_{j.i}^H) \right) \ar[r] & \bigoplus_{j \in J} H_{d-1}(S'_j) }
  \]
  The uppermost two vertical arrows are given by the obvious inclusions.  The vertical
  arrows pointing upwards are the isomorphisms are given by excision or by the disjoint
  union axiom.  The lower most vertical arrows are given by boundary homomorphisms.  All
  vertical arrows are induced by the homotopy equivalence
  $f \colon B\pi = \overline{Y} \cup_{\overline{\partial Y}} C(\overline{\partial Y}) \to
  Z' := \overline{X'} \cup_{\partial \overline{X'}} C(\partial \overline{X'})$.

  The fundamental class $[B\pi]$ is sent under the composite of the four vertical
  arrows (or their inverses) of the left column to the element in the left lower corner
  $\bigoplus_{j \in J} \left(H_{d-1}(S_j^H) \oplus \bigoplus_{i \in I_j}
    H_{d-1}(S_{j.i}^H) \right) $, which is given for each summand by the fundamental class
  of the corresponding sphere.  The fundamental class $[B\pi]$ is sent under the uppermost
  vertical arrow to the fundamental class $[Z']$. The fundamental class of $[Z']$ is sent
  under the under the composite of the four vertical arrows (or their inverses) of the
  right column to $(m_{\tau(j)}(X',\partial X') \cdot [S_j])_{j \in J}$. Given $j \in J$,
  the lowermost vertical arrow sends by construction the fundamental class
  $[S_j^H] \in H_{d-1}(S_j^H) $ to
  $m_{\tau(j)}(X',\partial X') \cdot [S'_j] \in H_{d-1}(S'_j)$ and by the definition of
  $\delta_{j,i}$ the fundamental class $[S_{j,i}^H] \in H_{d-1}(S_{j,i}^H)$ to
  $\delta_{j,i} \cdot [S_j']$ in $H_{d-1}(S'_j)$ for every $i \in F$. Since the diagram
  commutes, we conclude for every $j \in J$
  \[
    m_{\tau(j)}(X',\partial X') + \sum_{i \in I_j} \delta_{j,i} = m_{\tau(j)}(X',\partial
    X').
  \]
  Hence we get $\sum_{i \in I_j} \delta_{j,i} = 0$ for every $j \in J$.

  Next we show that we can change $f$ up to homotopy relative
  $\coprod_{j \in J} D_j^H \amalg \widehat{B\pi}$ such that each $I_j$ is empty.  We use
  induction over the cardinality of $\coprod_{j \in J} I_j$. The induction beginning
  $|\coprod_{j \in J} I_j| = 0 $ is trivial, the induction step done as follows.  Choose
  $j \in J$ with $I_j \not= \emptyset$. Since $\sum_{i \in I_j} \delta_{j,i} = 0$ and each
  element $\delta_{j,i}$ belongs to $\{\pm 1\}$, we can find $i_+$ and $i_- \in I_j$ with
  $\delta_{j,i_+} = 1$ and $\delta_{j,i_-} = -1$. Choose an embedded arc in $H$ joining a
  point $x_+ \in S^H_{j,i_+}$ to a point $x_- \in S^H_{j,i_-}$ such that the intersection
  of the arc with $\coprod_{j \in J} D^H_F\amalg (\coprod_{i \in I_j} D^H_{j,i})$ is
  $\{x_+,x_-\}$ and the arc meets $S^H_{j,i_+}$ and $S^H_{j,i_-}$ transversely. Then we
  can thicken this arc to a small tube $T$ in the obvious way such that the intersection
  of $T$ with $\coprod_{j\in J} D^H_j \amalg (\coprod_{i \in I_j} D^H_{j,i})$ is contained
  in small neighbourhoods of $x_+$ in $S^H_{j,i_+}$ and $x_-$ in $S^H_{j,i_+}$, which are
  diffeomorphic to $(d-1)$-dimensional discs. The union
  $D^H_{j,i_+} \cup T \cup D^H_{i_-,j}$ is diffeomorphic to a disk $D^d$. One can change
  $f$ up to homotopy on a small neighborhood of the tube such that $f$ is constant on the
  tube. Then $f$ induces a map $f_{D^d} \colon (D^d,S^{d-1}) \to (D^H_j,S^H_j)$ such that
  the degree of $f|_{S^{d-1}} \colon S^{d-1} \to S_j^H$ is
  $\delta_{j,i_+} + \delta_{j,i_.} = 0$.  We conclude the map
  $f|_{S^{d-1}} \colon S^{d-1} \to S_F^H$ is nullhomotopic.  Hence we can change $f$ up to
  homotopy relative $B\pi \setminus D^d$ such that $f(D^d)$ does not meet $0_j'$.  Thus we
  get rid of the points $x_+$ and $x_-$ and have made the cardinality of
  $\coprod_{j \in J} I_j$ smaller.  This finishes the proof that we can change $f$ up to
  homotopy  such that each $I_j$ is empty, or, equivalently, such that
  $f^{-1}(0'_F) = \{0^H_F\}$ holds, where $0'_j \in D'_j$ and $0^H_j \in D^H_j$ are the
  origines.

  Since the inclusion
  $\overline{X'} \to \overline{X'} \cup_{\coprod_{j \in J} S'_F} \coprod_{j \in J} D'_j
  \setminus \{0_F\}$ admits a retraction relative $\overline{X'}$ and
  $f(\coprod_{ j\in J } S_j^H) = \coprod_{j \in J} S_j$, we can change
  $f|_{\overline{Y}} \colon \overline{Y} \to \overline{X'} \cup_{\coprod_{j \in J} S^H_j}
  \coprod_{j \in J} D^H_j \setminus \{0^H_j\}$ relative $\coprod_{j \in J} S_j^H$ to a map
  $v \colon \overline{Y} \to \overline{X'}$. By construction $v$ extends $\partial v/\pi$.
  Hence by passing to the universal coverings, we obtain a $\pi$-map $v \colon Y \to i^*X'$
  extending $\partial v$.
\end{proof}

\begin{lemma}\label{lem:extensions_over_Gamma_from_partial_X_to_X}
  Suppose additionally  that $(X,\partial X)$ is a Poincar\'e slice complement model and
  $m_F(X,\partial X) = 1$ holds for every  $F \in \calm$.   Let $v_F \colon S_F \to S_F'$
  be the $F$-map uniquely determined up to $F$-homotopy by the property that it sends
  $[S_F/F]$ to $m_F(X',\partial X') \cdot [S_F'/F]$.  Let
  $\partial u \colon \partial X \to \partial X'$ be the $\Gamma$-map given by the disjoint
  union of the $\Gamma$-maps $\Gamma \times_F v_F$.

  Then there exists a $\Gamma$-map $u \colon X \to X'$ extending the $\Gamma$-map
  $\partial u \colon \partial X \to \partial X'$.
\end{lemma}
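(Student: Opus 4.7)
The plan is to reduce the problem to constructing a $\pi$-extension, and then to transport the $\pi$-extension produced by Lemma~\ref{lem:extending_partial_v} back to $(i^*X,i^*\partial X)$ via a suitable $\pi$-homotopy equivalence of pairs.

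First I would invoke Lemma~\ref{lem:extensions_over_Gamma_and_pi}: its hypotheses hold because $(X,\partial X)$ is assumed to be a Poincar\'e slice complement model with $m_F(X,\partial X)=1$ and the $\Gamma$-map $\partial u$ is by definition the disjoint union of the maps $\Gamma\times_F v_F$ with $v_F$ sending $[S_F/F]$ to $m_F(X',\partial X')\cdot [S'_F/F]$. Thus it suffices to produce a $\pi$-map $u'\colon i^*X\to i^*X'$ extending $i^*\partial u$.

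Next I would set up the auxiliary pair $(Y,\partial Y)$ as in the paragraph preceding Lemma~\ref{lem:extending_partial_v}: choose a special Poincar\'e model $B\pi=H\cup_z\widehat{B\pi}$ via Lemma~\ref{lem:H_inside_a_Poincare_pair}, pick disjoint embedded discs $D_j^H\subseteq H\setminus\partial H$ for $j\in J$, and let $Y,\partial Y$ be the preimages in $E\pi$ of $B\pi\setminus\coprod_j\inte(D_j^H)$ and $\coprod_j S_j^H$. Lemma~\ref{lem:extending_partial_v} then delivers a $\pi$-map $v\colon Y\to i^*X'$ whose restriction to $\partial Y$ is the $\pi$-map $\partial v$ whose component on $\pi\times S_j^H$ is $\id_\pi$ times a degree $m_{\tau(j)}(X',\partial X')$ map to $\pi\times S'_j$.

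The central task is to construct a $\pi$-homotopy equivalence of pairs $(g,\partial g)\colon(i^*X,i^*\partial X)\to(Y,\partial Y)$ such that $\partial v\circ\partial g$ is $\pi$-homotopic to $i^*\partial u$. For each $j\in J$ the target $S_j^H$ is a standard $(d-1)$-sphere and the source $S_{\tau(j)}$ is a finite free $F$-CW-complex which, after forgetting the action, is homotopy equivalent to $S^{d-1}$; I choose an ordinary homotopy equivalence $\alpha_j\colon S_{\tau(j)}\to S_j^H$ of degree $+1$ (adjusting by a reflection if necessary), and let $\partial g$ be the disjoint union of the $\pi$-maps $\id_\pi\times\alpha_j\colon\pi\times S_{\tau(j)}\to\pi\times S_j^H$, which is a $\pi$-homotopy equivalence. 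Coning gives a $\pi$-homotopy equivalence $C(\partial g)\colon i^*C(\partial X)\to C(\partial Y)$. Since $i^*X\cup_{i^*\partial X}i^*C(\partial X)\simeq_\pi E\pi\simeq_\pi Y\cup_{\partial Y}C(\partial Y)$, the pushout property (compare the argument in Lemma~\ref{lem:slice-models_and_pushouts}, together with Whitehead for free $\pi$-CW-complexes) yields a $\pi$-homotopy equivalence $g\colon i^*X\to Y$ extending $\partial g$. On each component the two $\pi$-maps $\partial v\circ\partial g$ and $i^*\partial u$ from $\pi\times S_{\tau(j)}$ to $\pi\times S'_j$ have the same degree $m_{\tau(j)}(X',\partial X')$ on orbit spaces, hence are $\pi$-homotopic. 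The composite $v\circ g\colon i^*X\to i^*X'$ is then a $\pi$-map whose boundary restriction is $\pi$-homotopic to $i^*\partial u$; applying the $\pi$-homotopy extension property to the free $\pi$-CW-pair $(i^*X,i^*\partial X)$ produces the desired $\pi$-extension $u'$ of $i^*\partial u$, completing the proof.

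The hard part is the construction of $g$ extending $\partial g$: one needs to make the pushout/Whitehead argument rigorous in this equivariant setting, ensuring in particular that the two pushout identifications with $E\pi$ can be arranged to be $\pi$-homotopic under the chosen cone extension $C(\partial g)$ so that the induced map on the fourth corner exists and is a $\pi$-homotopy equivalence.
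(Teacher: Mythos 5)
Your overall architecture matches the paper's: reduce to a $\pi$-extension via Lemma~\ref{lem:extensions_over_Gamma_and_pi}, bring in the auxiliary pair $(Y,\partial Y)$ built from a special Poincar\'e model of $B\pi$, use Lemma~\ref{lem:extending_partial_v} to get $v\colon Y\to i^*X'$, and transport along a comparison between $(i^*X,i^*\partial X)$ and $(Y,\partial Y)$. The degree bookkeeping for $\partial v\circ\partial g\simeq_\pi i^*\partial u$ and the final use of the homotopy extension property are fine.

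The gap is exactly the step you flag as ``the hard part'': the existence of a $\pi$-map $g\colon i^*X\to Y$ extending $\partial g$. Knowing that $i^*X\cup_{i^*\partial X}i^*C(\partial X)$ and $Y\cup_{\partial Y}C(\partial Y)$ are both models for $E\pi$ does not produce such a $g$: the squares involved are homotopy pushouts, not strict colimits with a universal property out of the fourth corner, and a $\pi$-homotopy equivalence $E\pi\to E\pi$ need not respect the two decompositions. Arranging that it does is precisely the nontrivial extension problem that Lemma~\ref{lem:extending_partial_v} solves, and its proof relies on the manifold structure of $H$ (transversality to the cone points, cancelling preimage disks along tubes) on the \emph{source} side. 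In your direction the source $i^*X$ is merely a finite free $\pi$-$CW$-complex, so that technique is unavailable, and an obstruction-theoretic substitute would leave a genuinely nonzero obstruction group $H^d_{\pi}(i^*X,i^*\partial X;\pi_{d-1}(Y))$ to analyze. The paper sidesteps this by applying Lemma~\ref{lem:extending_partial_v} a \emph{second} time with target $(i^*X,i^*\partial X)$ instead of $(i^*X',i^*\partial X')$: the hypothesis $m_F(X,\partial X)=1$ makes the resulting boundary map $\partial v_X\colon\partial Y\to i^*\partial X$ a degree-one $\pi$-homotopy equivalence, the argument of Lemma~\ref{lem:maps_of_slice_models_homotopy_equivalence} (run for $\pi$) upgrades $(v_X,\partial v_X)\colon(Y,\partial Y)\to(i^*X,i^*\partial X)$ to a $\pi$-homotopy equivalence of pairs, and your $g$ is then obtained as its homotopy inverse rather than by a direct construction. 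If you replace your pushout/Whitehead step by this second application of Lemma~\ref{lem:extending_partial_v}, the rest of your argument goes through.
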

\begin{proof}
  Firstly we apply Lemma~\ref{lem:extending_partial_v} to $(i^*X,i^*\partial X)$ instead
  of $(i^*X,i^*\partial X)$.  Since $m_F(X',\partial X') = 1$ holds for every $F \in \calm$,
  the map   $\partial v_X \colon \partial Y \to i^*\partial X$ appearing in
  Lemma~\ref{lem:extending_partial_v} is a $\pi$-homotopy equivalence. 
  Moreover, by
  Lemma~\ref{lem:extending_partial_v} we get an extension of $\partial v_X$ to a
  $\Gamma$-map $v_X\colon Y \to X$. The same argument as it appears in
  Lemma~\ref{lem:maps_of_slice_models_homotopy_equivalence},
   but for $\pi$ instead of $\Gamma$, shows
  that $(v_X,\partial v_X) \colon (Y,\partial Y) \to (i^*X,i^*\partial X)$ is a $\pi$-homotopy
  equivalence.

  From Lemma~\ref{lem:extending_partial_v} 
  we obtain an extension of $\partial v \colon \partial Y \to \partial X'$
  to a $\pi$-map $v \colon Y \to i^*X'$.
  Since $\partial  u \circ \partial v_X$ and $\partial v$ are $\pi$-homotopic,
  the $\pi$-map $i^*\partial u \colon \partial X \to \partial X'$ extends to
  a $\pi$-map $u' \colon i^* X \to i^*X'$.  Finally we conclude from
  Lemma~\ref{lem:extensions_over_Gamma_and_pi} that the $\Gamma$-map
  $\partial u \colon \partial X \to \partial X'$ extends to $\Gamma$-map
  $u \colon X \to X'$.
\end{proof}

Now we are ready to give the proof of
Theorem~\ref{the:Homotopy_classification_of_slice_models}.  

\begin{proof}[Proof of Theorem~\ref{the:Homotopy_classification_of_slice_models}]
  The implication\eqref{the:homotopy_classification_of_slice_models:existence:maps}
  $\implies$~\eqref{the:homotopy_classification_of_slice_models:existence:m_F} follows from the
  definitions, Lemma~\ref{lem:partial_f_yields_partial_f_F}~\eqref{lem:partial_f_yields_partial_f_F:S_F_to_S_F_prime}
  and the commutative diagram
  \[
    \xymatrix{H^{\Gamma}_d(X,\partial X;\IZ^w) \ar[d]^{\cong}_{H^{\Gamma}_d(f,\partial
        f;\IZ^w)} \ar[r]^-{\partial} & H^{\Gamma}_{d-1}(\partial X;\IZ^w)
      \ar[d]_{\cong}^{H^{\Gamma}_d(\partial f;\IZ^w)}
      \\
      H^{\Gamma}_d(X',\partial X';\IZ^w) \ar[r]_-{\partial} & H^{\Gamma}_{d-1}(\partial
      X';\IZ^w) }
  \]
  The implication~\eqref{the:homotopy_classification_of_slice_models:existence:m_F}
  $\implies$~\eqref{the:homotopy_classification_of_slice_models:existence:maps} is proved as follows.
  After possibly changing the orientations of $S_F$ for $F \in \calm$ satisfying $|F| = 2$, we can find a 
  Poincar\'e slice complement model $(Y,\partial Y)$ satisfying $m_F(Y,\partial Y) = 1$
  for all $F \in \calm$ by  Theorem~\ref{the:constructing_slice_models} and
  Theorem~\ref{the:Checking_Poincare_duality}.
  Since we may change the orientations of $S_F'$ for $F \in \calm$ satisfying $|F| = 2$,
  we can asume without loss of generality that $m_F(X,\partial X) = m(X',\partial X')$ hold for every $F \in \calm$.
 From  Lemma~\ref{lem:extensions_over_Gamma_from_partial_X_to_X} we obtain $\Gamma$-maps of
  $\Gamma$-$CW$-pairs
  \begin{eqnarray*}
    (U,\partial u) \colon (Y,\partial Y) & \to & (X,\partial X);
    \\
    (U',\partial u') \colon (Y,\partial Y) & \to & (X',\partial X').
  \end{eqnarray*}
  
  Since $m_F(X,\partial X) = m(X',\partial X')$ hold for every $F \in \calm$ and there is
  for every $F \in \calm$ an orientation preserving $F$-homotopy equivalence
  $S_F \to S_F'$, there is a $\Gamma$-homotopy equivalence
  $\partial f \colon \partial X \to \partial X'$ such that $\partial f \circ \partial u$
  is $\Gamma$-homotopic to $\partial u'$ and $\partial f$ extends to a $\Gamma$-homotopy equivalence
  $C(\partial X) \to C(\partial X')$.
  Now define $Z$ and $Z'$ by the $\Gamma$-pushouts
  \[\xymatrix{\partial Y \ar[r]^{\partial u} \ar[d] & \partial X \ar[d]
      \\
      Y \ar[r] & Z}
    \quad \raisebox{-5mm}{\text{and}} \quad \xymatrix{\partial Y \ar[r]^{\partial u'}
      \ar[d] & \partial X' \ar[d]
      \\
      Y \ar[r] & Z'}
  \]
  The $\Gamma$-maps $(U,\partial u)$ and $(U',\partial u')$ yield $\Gamma$-maps
  $(V,\id_{\partial X}) \colon (Z,\partial X) \to (X,\partial X)$ and
  $(V',\id_{\partial X'}) \colon (Z',\partial X') \to (X',\partial X')$.  Note that 
  $ (Z,\partial X)$ and $(Z',\partial X')$ are slice complement models by
  Lemma~\ref{lem:slice-models_and_pushouts} and the canoncial isomorphisms
  $H_d^{\pi}(Y,\partial Y;\IZ^w) \xrightarrow{\cong} H_d^{\pi}(Z,\partial X;\IZ^w)$
  and
  $H_d^{\pi}(Y,\partial Y;\IZ^w) \xrightarrow{\cong} H_d^{\pi}(Z',\partial X';\IZ^w)$
  respect the fundamental classes.   Lemma~\ref{lem:maps_of_slice_models_homotopy_equivalence} implies that
  $(V,\id_{\partial X})$ and $(V',\id_{\partial X'})$ are $\Gamma$-homotopy equivalences of
  $\Gamma$-$CW$-pairs and respect the fundamental classes.
  Obviously the $\Gamma$-homotopy equivalence
  $\partial f \colon \partial X \to \partial X'$ satisfying
  $\partial f \circ \partial u \simeq_{\Gamma} \partial u'$ extends to a $\Gamma$-homotopy
  equivalence of $\Gamma$-$CW$-pairs
  $(f,\partial f) \colon (X,\partial X) \to (X',\partial X')$ such that $(f,\partial f) \circ  (V,\id_{\partial X})$
  and $(V',\id_{\partial X})$ are $\Gamma$-homotopic. 
  This finishes the proof of Theorem~\ref{the:Homotopy_classification_of_slice_models}.
\end{proof}

  \begin{theorem}[Uniqueness of Poincare slice complement models]\label{the:Uniqueness_of_Poincare_slice_models}
    Suppose that  Assumption~\ref{ass:general_assumptions} is satisfied.
    Let $(X,\partial X)$and $(X',\partial X')$ be two Poincar\'e slice
    models with respect to the  slice systems $\cals$ and
    $\cals'$.Then:

    \begin{enumerate}
    \item\label{the:Uniqueness_of_Poincare_slcie_models:slice_systems} The slice systems
      $\cals$ and $\cals'$ can be oriented in such a way that  $m_F(X,\partial X) = m_F(X',\partial X') = 1$ holds for $F \in \calm$
      and both satisfy condition (S). Moreover, $\cals$ and $\cals'$ are oriented homotopy equivalent
      in the sense that for every $F \in \calm$ there exists an orientation preserving
      $F$-homotopy equivalence $S_F \to S_F'$;

     \item\label{the:Uniqueness_of_Poincare_slice_models:slice_models}
    
    There exists a $\Gamma$-homotopy equivalence
    $(f,\partial f) \colon (X,\partial X) \to (X',\partial X')$ of free
    $\Gamma$-$CW$-pairs with the properties that the $\Gamma$-map $\partial f$ extends to
    a $\Gamma$-homotopy equivalence of $\Gamma$-$CW$-pairs
    $ C(\partial X) \to C(\partial X')$ and the isomorphism
    $H^{\Gamma}_d(X,\partial X;\IZ^w)\xrightarrow{\cong} H^{\Gamma}_d(X',\partial
    X';\IZ^w)$ induced by $(f,\partial f)$ sends $[X,\partial X]$ to $[X',\partial X']$.
  \end{enumerate}
\end{theorem}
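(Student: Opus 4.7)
The plan is to reduce both assertions to results already established. For assertion~\eqref{the:Uniqueness_of_Poincare_slcie_models:slice_systems}, I would first invoke Lemma~\ref{lem:necessary_condition_for_slice_model_to_be_Poincare} to obtain orientations on $\cals$ and $\cals'$ such that $m_F(X,\partial X) = 1$ and $m_F(X',\partial X') = 1$ for every $F \in \calm$, and such that both oriented systems satisfy condition (S). With these orientations fixed, condition (S) yields $d(S_F) = \kappa_F = d(S_F')$ in $H_{d-1}(BF)$ for every $F \in \calm$. Since $d$ is even by Assumption~\ref{ass:general_assumptions}, each $F$ acts orientation preservingly on $S_F$ and $S_F'$ by Lemma~\ref{lem:basics_about_free_slice_systems}~\eqref{lem:basics_about_free_slice_systems:orientation_preserving}. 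Lemma~\ref{lem:basics_about_free_slice_systems}~\eqref{lem:basics_about_free_slice_systems:homotopy_equivalent} then yields an orientation preserving $F$-homotopy equivalence $S_F \xrightarrow{\simeq} S_F'$ for every $F \in \calm$, proving the desired oriented homotopy equivalence of slice systems.

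For assertion~\eqref{the:Uniqueness_of_Poincare_slice_models:slice_models}, I would simply apply Theorem~\ref{the:Homotopy_classification_of_slice_models} to the two slice complement models $(X,\partial X)$ and $(X',\partial X')$ equipped with the orientations produced in step~\eqref{the:Uniqueness_of_Poincare_slcie_models:slice_systems}. Since $m_F(X,\partial X) = 1 = m_F(X',\partial X')$ holds for every $F \in \calm$ (in particular the equality $m_F(X,\partial X) = m_F(X',\partial X')$ is satisfied, and trivially up to a sign when $|F| = 2$), condition~\eqref{the:homotopy_classification_of_slice_models:existence:m_F} of Theorem~\ref{the:Homotopy_classification_of_slice_models} is fulfilled. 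Hence condition~\eqref{the:homotopy_classification_of_slice_models:existence:maps} of the same theorem provides a $\Gamma$-homotopy equivalence $(f,\partial f) \colon (X,\partial X) \to (X',\partial X')$ of free $\Gamma$-$CW$-pairs such that $\partial f$ extends to a $\Gamma$-homotopy equivalence $C(\partial X) \to C(\partial X')$ and $(f,\partial f)$ respects the fundamental classes.

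There is essentially no genuine obstacle here since both pieces of the statement are quick consequences of results already proved in the previous sections; the substantive content has been absorbed into Lemma~\ref{lem:necessary_condition_for_slice_model_to_be_Poincare} (which forces the normalization $m_F = 1$ and condition (S) once a Poincar\'e structure is present), Lemma~\ref{lem:basics_about_free_slice_systems}~\eqref{lem:basics_about_free_slice_systems:homotopy_equivalent} (which converts the homological invariant $d(S_F)$ into an $F$-homotopy equivalence of free slice spheres), and Theorem~\ref{the:Homotopy_classification_of_slice_models} (which lifts this matching to a $\Gamma$-homotopy equivalence of the slice complement models, the technical core being Lemma~\ref{lem:extensions_over_Gamma_from_partial_X_to_X} together with the equivariant obstruction-theoretic reduction of Lemma~\ref{lem:extensions_over_Gamma_and_pi}). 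The only mild point worth remarking on is the sign ambiguity when $|F| = 2$, which is harmless: either one absorbs it into the freedom to re-choose the orientation of $S_F$, or one appeals to the $|F| = 2$ clause in condition~\eqref{the:homotopy_classification_of_slice_models:existence:m_F} of Theorem~\ref{the:Homotopy_classification_of_slice_models}.
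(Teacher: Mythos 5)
Your proposal is correct and follows exactly the paper's own (very terse) proof: assertion (1) is deduced from Lemma~\ref{lem:necessary_condition_for_slice_model_to_be_Poincare} together with Lemma~\ref{lem:basics_about_free_slice_systems}, and assertion (2) from assertion (1) together with Theorem~\ref{the:Homotopy_classification_of_slice_models}. You have merely spelled out the details that the paper leaves implicit, and the handling of the sign ambiguity for $|F|=2$ via the $\epsilon_F$ clause is consistent with the statement of Theorem~\ref{the:Homotopy_classification_of_slice_models}.
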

\begin{proof}~\eqref{the:Uniqueness_of_Poincare_slcie_models:slice_systems}
   This follows from Lemma~\ref{lem:basics_about_free_slice_systems} and
  Lemma~\ref{lem:necessary_condition_for_slice_model_to_be_Poincare}.
    \\[1mm]~\eqref{the:Uniqueness_of_Poincare_slice_models:slice_models}
    This follows from assertion~\eqref{the:Uniqueness_of_Poincare_slcie_models:slice_systems}
    and Theorem~\ref{the:Homotopy_classification_of_slice_models}.
  \end{proof}


  \typeout{-------------------------- Section 9: Simple homotopy classification of slice
    models --------------------------}

  \section{Simple homotopy classification of slice complement models}%
\label{sec:Simple_homotopy_classification_of_slice_models}

 \begin{theorem}[Simple homotopy classification]\label{the:Simple_homotopy_classification}
  Suppose that  Assumption~\ref{ass:general_assumptions} is satisfied.
  Let $(X,\partial X)$ and $(X',\partial X')$ be Poincar\'e slice complement models with respect to
  the slice systems $\cals$ and $\cals'$. Suppose that $\cals$ and $\cals'$ satisfy condition (S).
  Assume  that the following conditions are satisfied:

  \begin{itemize}
  \item The Farrell-Jones Conjecture for $K$-theory holds for $\IZ \Gamma$;
  \item For all $F \in \calm$ the $2$-Sylow subgroup of $F$ is cyclic;
  \item The Poincar\'e structures on $(X,\partial X)$ and $(X',\partial X')$ are simple;
  \item For every $F \in \calf$ the $F$-homotopy equivalence $v_F \colon S_F \to S_F'$,
    which is uniquely determined by the property that it sends $[S_F]$ to $[S_F']$, is a
    simple $F$-homotopy equivalence.
  \end{itemize}

  Then the $\Gamma$-homotopy equivalence
  $(f,\partial f) \colon (X,\partial X) \to (X',\partial X')$ of
  Theorem~\ref{the:Uniqueness_of_Poincare_slice_models}~\eqref{the:Uniqueness_of_Poincare_slice_models:slice_models}
  is a simple homotopy equivalence of free $\Gamma$-$CW$-pairs, i.e., both $\partial f$
  and $f$ are simple $\Gamma$-homotopy equivalences.
\end{theorem}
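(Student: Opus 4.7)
The strategy is to handle $\partial f$ and $f$ separately. Simplicity of $\partial f$ is essentially formal, while simplicity of $f$ combines Wall's duality for Whitehead torsion of simple Poincar\'e pairs with a Farrell--Jones computation of $\Wh(\Gamma)$.

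\textbf{Simplicity of $\partial f$.} By Lemma~\ref{lem:partial_f_yields_partial_f_F}, up to $\Gamma$-homotopy the map $\partial f$ equals $\coprod_{F \in \calm} \id_{\Gamma} \times_F v_F$. The Whitehead torsion of $\id_{\Gamma} \times_F v_F$ is the image of $\tau(v_F) \in \Wh(F)$ under the induction map $\Wh(F) \to \Wh(\Gamma)$. Since each $v_F$ is simple by hypothesis, $\tau(v_F) = 0$, and hence $\tau(\partial f) = 0$.

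\textbf{Duality constraint on $\tau(f)$.} Using the simple equivalence $\partial f$ to identify $\partial X$ with $\partial X'$, we regard $f$ as a map of simple finite Poincar\'e pairs $(X/\Gamma,\partial X/\Gamma) \to (X'/\Gamma,\partial X/\Gamma)$ which is the identity on boundaries and carries fundamental class to fundamental class. Wall's duality for Whitehead torsion of simple Poincar\'e pairs then forces
\[
\tau(f) = (-1)^d \, \overline{\tau(f)}
\]
in $\Wh(\Gamma)$, where $\overline{\phantom{x}}$ denotes the $w$-twisted involution. Since $d$ is even, $\tau(f)$ lies in the $+1$-eigenspace $\Wh(\Gamma)^{+}$ of the involution.

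\textbf{Vanishing via Farrell--Jones.} The remaining and main task is to deduce $\tau(f) = 0$ from the constraint $\tau(f) \in \Wh(\Gamma)^{+}$. The Farrell--Jones Conjecture for $K$-theory of $\IZ\Gamma$ identifies $\Wh(\Gamma)$ with an equivariant $K$-homology of $\EGF{\Gamma}{\VCYC}$ compatibly with the $w$-twisted involution, and this homology splits according to conjugacy classes of virtually cyclic subgroups. Since $\pi$ is torsionfree, the essential contributions to the $+1$-eigenspace detected by $\tau(f)$ are concentrated at the finite maximal subgroups $F \in \calm$. The hypothesis that each such $F$ has cyclic $2$-Sylow subgroup is the classical structural condition (drawing on computations in the $K$-theory of finite group rings due to Oliver and others) which forces the $+1$-eigenspace $\Wh(F)^{+}$ of the involution to vanish. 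Combined with the transfer bookkeeping of Subsection~\ref{subsec:Transfer} to locate $\tau(f)$ in the appropriate summand of the assembly decomposition, this yields $\tau(f) = 0$, so that $f$ is simple. The main obstacle is precisely this last step: aligning the duality-constrained class $\tau(f)$ with the Farrell--Jones splitting of $\Wh(\Gamma)$ and verifying the involution computation $\Wh(F)^{+} = 0$ in the generality of $F \in \calm$ with cyclic $2$-Sylow subgroup.
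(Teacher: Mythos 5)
Your treatment of $\partial f$ and the general architecture (duality formula for the torsion plus a computation of $\Wh(\Gamma)$ via the Farrell--Jones isomorphism $\bigoplus_{F\in\calm}\Wh(F)\xrightarrow{\cong}\Wh(\Gamma)$) match the paper, but the two decisive steps are wrong. First, the sign in the duality constraint: the correct relation coming from the square comparing $-\cap[X/\Gamma,\partial X/\Gamma]$ and $-\cap[X'/\Gamma,\partial X'/\Gamma]$ is $\tau(f)+(-1)^d\ast(\tau(f))=\tau(\partial f)=0$, so for $d$ even $\tau(f)$ is \emph{anti}-invariant, $\tau(f)=-\ast(\tau(f))$, not an element of the $(+1)$-eigenspace as you claim. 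This is not a cosmetic issue: the whole point of the argument is that the involution on $\Wh(\Gamma)$ turns out to be \emph{trivial}, so the condition $\tau(f)=\ast(\tau(f))$ you derive is vacuous, whereas the condition $\tau(f)=-\ast(\tau(f))$ becomes $2\tau(f)=0$.

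Second, your claimed vanishing $\Wh(F)^{+}=0$ for $F$ with cyclic $2$-Sylow subgroup is false. What Oliver's results give under that hypothesis (together with periodicity of the cohomology of $F$, which forces the odd Sylow subgroups to be cyclic) is $SK_1(\IZ F)=0$, hence $\Wh(F)$ is a finitely generated \emph{free} abelian group, and the involution on it is \emph{trivial}; thus $\Wh(F)^{+}=\Wh(F)$, which is nonzero already for $F=\IZ/5$. So the concluding step of your argument cannot be repaired as stated. The correct conclusion is the one the paper draws: the isomorphism $\bigoplus_{F}\Wh(F)\cong\Wh(\Gamma)$ is compatible with the involutions (using $w|_F$ trivial), so $\Wh(\Gamma)$ is torsionfree with trivial involution; combined with the correctly signed duality relation and $d$ even this yields $2\cdot\tau(f)=0$ and hence $\tau(f)=0$.
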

\begin{proof}
  For $M \in \calm$, let $i(F) \colon F \to \Gamma$ be the inclusion and
  $i(F)_* \colon \Wh(F) \to \Wh(\Gamma)$ be the induced homomorphism on the Whitehead
  groups.  The map
  \begin{equation}
    \bigoplus_{F \in \calm} i(F)_* \colon \bigoplus_{F \in \calm} \Wh(F) \to \Wh(\Gamma)
    \label{iso__computing_wh(Gamma)}
  \end{equation}
  is bijective.  This follows by inspecting the proof
  of~\cite[Theorem~5.1~(d)]{Davis-Lueck(2003)}, which works also for $\Lambda = \IZ$ in
  the notation used there, or from~\cite[Theorem~5.1]{Davis-Lueck(2022_manifold_models)}.

  We get from the assumptions $\tau(v_F) = 0$ in $\Wh(F)$. Since we have equipped $\cals$
  and $\cals'$ with their canoncial orientations,
  Lemma~\ref{lem:partial_f_yields_partial_f_F}~\eqref{lem:partial_f_yields_partial_f_F:S_F_to_S_F_prime} implies
  $\partial f = \coprod_{F \in \calm} \id_{\Gamma} \times_F v_F$.  Hence we get
  \begin{equation}
    \tau(\partial f) = \sum_{F \in \calm} i(F)_*(\tau(\partial v_F)) = 0.
    \label{tau(partial_f)_is_0}
  \end{equation}
  Equip $\Wh(F)$ and $\Wh(\Gamma)$ with the involutions coming from the $w$-twisted
  involution on $\IZ \Gamma$ and the untwisted involution on $\IZ F$. Recall that
  $w|_F = 0$ holds by assumption.  Hence the isomorphism~\eqref{iso__computing_wh(Gamma)}
  is compatible with the involutions.
  
  Since $(X,\partial X)$ and $(X',\partial X')$ are simple Poincar\'e pairs by assumption,
  we conclude
  \begin{equation}
    \tau(f) + (-1)^d \ast(\tau(f)) = \tau(\partial f) = 0
    \label{formula_fortau(u)}
  \end{equation}
  from equation~\eqref{tau(partial_f)_is_0} and the diagram of $\IZ \Gamma$-chain
  homotopy equivalences, which commutes up to $\IZ \Gamma$-chain homotopy,
  \[
    \xymatrix@!C=8em{C^{d-*}(X) \ar[d]_{-\cap [X/\Gamma, \partial X /\Gamma]} &
      C^{d-*}(X') \ar[d]^{-\cap [X'/\Gamma, \partial X' /\Gamma]} \ar[l]_{C^{d-*}(f)}
      \\
      C_d(X,\partial X) \ar[r]_{C_*(f,\partial f)} & C_*(X',\partial X').  }
  \]
  Since $F$ acts freely on $S_F$ and $S_F$ is homotopy equivalent to the $(d-1)$-dimensional
  standard sphere, the cohomology of $F$ is periodic. This implies that the $p$-Sylow subgroup
  of $F$ is finite cyclic if $p$ is odd, see~\cite[Proposition~9.5 in Chapter VI on page~157]{Brown(1982)}.
  Since the $2$-Sylow subgroup is cyclic by assumption,
  $SK_1(\IZ F)$ vanishes, see Oliver~\cite[Theorem~14.2~(i) on page~330]{Oliver(1988)}.
  Hence $\Wh(F)$ is a finitely generated free abelian group and agrees with
  $\Wh'(F) := \Wh(F)/ \tors(\Wh(F))$. The involution on $\Wh'(F)$ and hence also on
  $\Wh(F)$ is trivial, see~\cite[Corollary~7.5 on page~182]{Oliver(1988)}.  Hence the
  involution on $\Wh(\Gamma)$ is trivial and $\Wh(\Gamma)$ is torsionfree because of the
  isomorphism~\eqref{iso__computing_wh(Gamma)}, which is compatible with the
  involutions. Since $d$ is even, the equation~\eqref{formula_fortau(u)} boils down to
  $2 \cdot \tau(f) = 0$.  Since $\Wh(\Gamma)$ is torsionfree, we conclude $\tau(f) = 0$.
\end{proof}


  \typeout{-------------------------- Section 10: The case of odd $d$ --------------------------}

  \section{The case of odd $d$}\label{sec:The_case_of_odd_d}

  Recall that from Section~\ref{subsec:Constructing_slice-models} on we have assumed that
  $d \ge 4$ and $d$ is even. We want to explain the case that $d$ is odd and $d \ge 3$ in
  this section.  Recall from Lemma~\ref{lem:basics_about_free_slice_systems} that then
  each element $F \in \calm$ is cyclic of order two and $F$ acts orientation reversing on
  $S_F$.  Let $H_{d-1}^F(EF;\IZ^-)$ be the homology of $EF$ with coefficients in the
  $\IZ F$-module $\IZ^-$, whose underlying abelian group is $\IZ$ and on which the
  generator of $F = \IZ/2$ acts by $-\id$.  Hence instead of
  Assumption~\ref{ass:general_assumptions} we will make in this section the following
  assumption:

  \begin{assumption}\label{ass:d_odd}\

\begin{itemize}

\item The natural number $d$ is odd and satisfies $d \ge 3$;
  \item The group $\Gamma$ satisfies  conditions (M) and (NM), see Notation~\ref{not:(M)_and_(NM)_intro};
\item There exists a finite $\Gamma$-$CW$-model for $\eub{\Gamma}$ of dimension $d$ such
  that its singular $\Gamma$-subspace $\eub{\Gamma}^{> 1}$ is
  $\coprod_{F\in\calm} \Gamma/F$.  (This condition is discussed and
  simplified in Theorem~\ref{the:Models_for_the_classifying_space_for_proper_Gamma-actions_intro}
  and Remark~\ref{rem:fin_dom_intro}  and implies conditions (M) and (NM), see Remark~\ref{Theorem_models_and_necessary_conditions}.)

\item There is a finite $d$-dimensional Poincar\'e $CW$-complex model for $B\pi$ with
  respect to the orientation homomorphisms $v \colon \pi \to \{\pm 1 \}$. We have made a
  choice of a fundamental class $[B\pi] \in H_d^{\pi}(E\pi;\IZ^v)$;

\item The homomorphism $w \colon \Gamma \to \{\pm 1\}$ of Notation~\ref{not:w} has the
  property that $w|_F$ is non-trivial for every $F \in \calm$;

\item The composite
  \[
    H_d^{\Gamma}(E\Gamma,\partial E\Gamma;\IZ^w) \xrightarrow{\partial}
    H^{\Gamma}_{d-1}(\partial E\Gamma;\IZ^w ) \xrightarrow{\cong} \bigoplus_{F \in \calm}
    H_{d-1}^F(EF;\IZ^-) \to H_{d-1}^F(EF;\IZ^-)
  \]
  of the boundary map, the inverse of the obvious isomorphism, and the projection to the
  summand of $F \in \calm$ is surjective for all $F \in \calm$.
  
\end{itemize}

\end{assumption}

Besides the simplification that each element $F \in \calm$ is cyclic of order two, it is
also convenient that we have to consider only one slice system as explained next.  Recall
that $S_F$ is homotopy equivalent to $S^{d-1}$. The antipodal action of $F = \IZ/2$ on
$S^{d-1}$ is free and reverses the orientation.  Equivariant obstruction theory implies
that there is an $F$-homotopy equivalence $S_F \xrightarrow{\simeq_F} S^{d-1}$,
see~\cite[Theorem~4.11 on page~126]{Dieck(1987)} or~\cite[Theorem~3.5 on page~139]{Lueck(1988)}.
Moreover, any  $F$-selfhomotopy equivalence $S^{d-1} \xrightarrow{\simeq_F} S^{d-1}$
is homotopic to an $F$-homeomorphisms, namely to the identity  or the antipodal selfmap.
This implies that we only have to consider only one slice system
$\cals^{\operatorname{st}} = \{S_F^{\operatorname{st}} \mid F \in \calm\}$, namely the
one, where each $S_F^{\operatorname{st}}$ is the standard $(d-1)$-dimensional sphere
$S^{d-1}$ with the antipodal $F = \IZ/2$-action.  An orientation for it is a choice of
fundamental class $[S^{\operatorname{st}}_F]$ in the infinite cyclic group
$H_{d-1}^F(S^{\operatorname{st}}_F;\IZ^-)$, which corresponds to a choice of fundamental
class $[S^{\operatorname{st}}_F]$ in the infinite cyclic group
$H_{d-1}(S^{\operatorname{st}}_F)$, since the canonical map
$H_{d-1}(S^{\operatorname{st}}_F) \to H_{d-1}^F(S^{\operatorname{st}}_F;\IZ^-)$
is an inclusion of infinite cyclic groups with index $2$. The third simplification is
that $\Wh_n(F)$ vanishes for $n \le 1$. Hence $\Wh_n(\Gamma)$ vanish for $n \le 1$,
see~\cite[Theorem~5.1~(d)]{Davis-Lueck(2003)} or~\cite[Theorem~5.1]{Davis-Lueck(2022_manifold_models)}.
This is interesting in view of Remark~\ref{rem:fin_dom_intro}.

The proofs of Theorem~\ref{the:existence_Poincare_slice_models} and
Theorem~\ref{the:Uniqueness_of_Poincare_slice_models} carry directly over (and actually
simplify) to the following theorems; one has to replace $H_{d-1}(BF)$ and $H_{d-1}(S_F/F)$
by $H_{d-1}^F(EF;\IZ^-)$ and $H^F_{d-1}(S_F,\IZ^-)$ everywhere.

  \begin{theorem}[Existence of Poincar\'e slice complement models in the odd dimensional case]%
\label{the:existence_Poincare_slice_models_in_the_odd_dimensional_case}
 Suppose that Assumption~\ref{ass:d_odd} is satisfied.
 Then  there exists a Poincar\'e slice complement model $(X,\partial X)$ with respect to
    $\cals^{\operatorname{st}}$. 
  \end{theorem}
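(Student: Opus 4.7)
The plan is to adapt the two-step strategy of the even-dimensional case to odd $d$. The argument will proceed in parallel to Theorem~\ref{the:constructing_slice_models} and Theorem~\ref{the:Checking_Poincare_duality}, systematically replacing the groups $H_{d-1}(BF)$ and $H_{d-1}(S_F/F)$ by their twisted counterparts $H^F_{d-1}(EF;\IZ^-)$ and $H^F_{d-1}(S^{\operatorname{st}}_F;\IZ^-)$, which are the natural recipients of the boundary map when $w|_F$ is non-trivial. First, I would define the invariants $\kappa_F \in H^F_{d-1}(EF;\IZ^-)$ and $\mu(X,\partial X)_F \in H^F_{d-1}(S^{\operatorname{st}}_F;\IZ^-)$ by the same recipe as in Notation~\ref{not:kapp} and~\eqref{mu(X,partial_X)_in_H_(d-1)(partial_X/Gamma)}, check that the surjectivity hypothesis in Assumption~\ref{ass:d_odd} forces $\kappa_F$ to be a generator of the finite cyclic group $H^F_{d-1}(EF;\IZ^-)$ of order $|F|=2$, and observe that condition (S) is automatic in this setting because there is (up to $F$-homotopy) only one choice for $S_F = S_F^{\operatorname{st}}$.

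Second, I would repeat the inductive construction in the proof of Theorem~\ref{the:constructing_slice_models} verbatim: build $(X_n,\partial X)$ and $(d-1)$-connected $\Gamma$-maps $u_n\colon X_n\to \eub{\Gamma}_n\cup\partial\eub{\Gamma}$ by attaching cells that realize a chosen $\IZ\Gamma$-basis of $H_n(\eub{\Gamma}_n,\eub{\Gamma}_{n-1}\cup\partial\eub{\Gamma})$. The parity of $d$ plays no role here. In the top dimension, the same formula expresses $\mu(X,\partial X)$ in terms of the attaching maps $p^d_i\colon S^{d-1}\to X_{d-1}$, and we may modify these maps by elements of $H_{d-1}(\partial X)$ without changing their image in $\pi_{d-1}(\eub{\Gamma}_{d-1})$. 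The projection
\[
H_{d-1}(\pr_{\partial X})\colon H_{d-1}(\partial X)\to H^{\Gamma}_{d-1}(\partial X;\IZ^w) \cong \bigoplus_{F\in\calm}H^F_{d-1}(S^{\operatorname{st}}_F;\IZ^-)
\]
has image equal to the subgroup of elements whose $F$-component is a multiple of $|F|=2$. Since $m_F(X,\partial X)\equiv 1\bmod |F|$ for any slice complement model (a consequence of the fact that $\kappa_F$ is a generator of order $|F|$, exactly as in Remark~\ref{rem:On_the_condition_(S)}), the difference $s-\mu(X,\partial X)$ lies in this image, and an appropriate choice of $a\in H_{d-1}(\partial X)$ produces a new model with $m_F=1$ for every $F\in\calm$.

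Third, I would check Poincar\'e duality by the transfer argument of Theorem~\ref{the:Checking_Poincare_duality}. The diagram~\eqref{diagram_of_the_H_d-s} and the compatibility $|G|\cdot \id = H_d(i_*)\circ H_d(\trf_*)$ reduce the question to $(X/\pi,\partial X/\pi)$; here we apply Lemma~\ref{lem:subtracting_a_Poincare_pair}~\eqref{lem:subtracting_a_Poincare_pair:conclusion_Y_2} with $Y=B\pi$, $Y_1 = \coprod_{F\in\calm} G/\mathrm{pr}(F)\times D^d$, $Y_2 = X/\pi$, $Y_0 = \partial X/\pi$, using the $G$-action on $H_d^{\pi}(E\pi;\IZ^v)$ computed at the end of Subsection~\ref{subsec:The_orientation_homomorphism}. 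Finally,~\cite[Theorem~H]{Klein-Qin_Su(2019)} lifts the Poincar\'e structure from $\pi$ to $\Gamma$.

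The main obstacle will be verifying the analogue of Lemma~\ref{lem;everything_is_infinite_cyclic} with twisted coefficients, in particular confirming that $H^F_d(EF;\IZ^{w|_F})=0$ (so that the relevant Mayer--Vietoris sequence yields a short exact sequence) and that $H^F_{d-1}(EF;\IZ^-)$ is finite cyclic of order $2$. This is a direct Tate-cohomology computation: since $F=\IZ/2$ acts freely and orientation-reversingly on the $(d-1)$-sphere with $d$ odd, the module $\IZ^-$ is the orientation module, and the periodicity argument of~\cite[Theorem~9.1 in Section~VI.9]{Brown(1982)} together with a direct calculation gives the required structure. Once this ingredient is in place, the chain of computations in Subsection~\ref{subsec:Checking_Poincare_duality} goes through verbatim, and Theorem~\ref{the:existence_Poincare_slice_models_in_the_odd_dimensional_case} follows by combining the two steps.
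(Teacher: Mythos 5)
Your proposal is correct and follows essentially the same route as the paper: the paper's own proof of this theorem consists precisely of the remark that the arguments of Theorem~\ref{the:constructing_slice_models} and Theorem~\ref{the:Checking_Poincare_duality} carry over (and simplify) upon replacing $H_{d-1}(BF)$ and $H_{d-1}(S_F/F)$ by $H_{d-1}^F(EF;\IZ^-)$ and $H_{d-1}^F(S_F^{\operatorname{st}};\IZ^-)$, which is exactly the substitution you carry out. Your verification of the twisted-coefficient computations ($H_d^F(EF;\IZ^-)=0$ for odd $d$ and $H_{d-1}^F(EF;\IZ^-)\cong\IZ/2$) and of the index-two inclusion $H_{d-1}(S_F^{\operatorname{st}})\to H_{d-1}^F(S_F^{\operatorname{st}};\IZ^-)$ supplies the details the paper leaves implicit.
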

     
  \begin{theorem}[Homotopy classification of Poincar\'e
    slice complement models in the odd dimensional case]%
\label{the:Homotopy_classification_of_Poincare_slice_models_in_the_odd_dimensional_case}
    Suppose that Assumption~\ref{ass:d_odd} is satisfied. Let $(X,\partial X)$ and
    $(X',\partial X')$ respectively be Poincar\'e slice complement models for $\eub{\Gamma}$
    with respect to  $\cals^{\operatorname{st}}$.

    Then there exists a simple $\Gamma$-homotopy equivalence
    $(f,\partial f) \colon (X,\partial X) \to (X',\partial X')$ of free
    $\Gamma$-$CW$-pairs such that $\partial f$ is the identity on
    $\coprod_{F \in \calm} \Gamma \times _F S_F$ and the isomorphism
    $H^{\Gamma}_d(X,\partial X;\IZ^w)\xrightarrow{\cong} H^{\Gamma}_d(X',\partial
    X';\IZ^w)$ induced by $(f,\partial f)$ sends $[X,\partial X]$ to $[X',\partial X']$.
\end{theorem}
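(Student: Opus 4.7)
The plan is to follow the strategy used in the even-dimensional case (Theorem~\ref{the:Uniqueness_of_Poincare_slice_models}), while exploiting the three simplifications peculiar to odd $d$: that each $F \in \calm$ is cyclic of order two, that there is only one slice system $\cals^{\operatorname{st}}$ up to $F$-homotopy, and that $\Wh_n(F)$ vanishes for $n \le 1$ so that $\Wh(\Gamma) = 0$ by the isomorphism $\bigoplus_{F \in \calm} \Wh(F) \xrightarrow{\cong} \Wh(\Gamma)$ of~\cite[Theorem~5.1~(d)]{Davis-Lueck(2003)}. The latter immediately gives that every $\Gamma$-homotopy equivalence of free $\Gamma$-$CW$-complexes is simple, so the simplicity claim in the theorem will be automatic once a $\Gamma$-homotopy equivalence is produced.

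First I would construct a reference Poincar\'e slice complement model $(Y, \partial Y)$ from Theorem~\ref{the:existence_Poincare_slice_models_in_the_odd_dimensional_case}, where $\partial Y = \coprod_{F \in \calm} \Gamma \times_F S_F^{\operatorname{st}}$ and $m_F(Y,\partial Y) = 1$ for every $F$. Then, in complete parallel to Lemmas~\ref{lem:extending_partial_v} and~\ref{lem:extensions_over_Gamma_from_partial_X_to_X}, I would show that the identity $\Gamma$-map $\partial Y = \partial X = \partial X'$ extends to $\Gamma$-maps $U \colon Y \to X$ and $U' \colon Y \to X'$. The obstruction-theory input is exactly Lemma~\ref{lem:extensions_over_Gamma_and_pi}, which reduces the $\Gamma$-extension problem to a $\pi$-extension problem via the transfer/norm argument; the hypothesis $w|_F$ trivial is used there only to guarantee that the target group $H^d_\Gamma(X,\partial X;\pi_{d-1}(X'))$ is torsion-free, and in the odd-dimensional setting the corresponding calculation identifies this group with $\bigoplus_{F \in \calm} H_{d-1}^F(S_F^{\operatorname{st}};\IZ^-)$, which is still a sum of infinite cyclic groups and hence torsion-free. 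The $\pi$-extension itself is built by the special model $B\pi = H \cup_z \widehat{B\pi}$ and the transversality cancellation argument of Lemma~\ref{lem:extending_partial_v}, which works verbatim once one observes that each $S_F^{\operatorname{st}}/F$ is a standard real projective space.

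Next, the analogue of Lemma~\ref{lem:maps_of_slice_models_homotopy_equivalence} shows that $(U,\id_{\partial Y})$ and $(U',\id_{\partial Y})$ are $\Gamma$-homotopy equivalences of $\Gamma$-$CW$-pairs and respect the fundamental classes, since on both sides $X \cup_{\partial X} C(\partial X)$ and $X' \cup_{\partial X'} C(\partial X')$ are models for $\eub{\Gamma}$, and the map induced by the identity on $\partial Y$ extends to a $\Gamma$-homeomorphism $C(\partial Y) \to C(\partial Y)$. Composing a $\Gamma$-homotopy inverse of $(U,\id_{\partial Y})$ with $(U',\id_{\partial Y})$ produces a $\Gamma$-homotopy equivalence $(f,\partial f) \colon (X,\partial X) \to (X',\partial X')$ with $\partial f$ $\Gamma$-homotopic to the identity; after a $\Gamma$-homotopy we may arrange $\partial f = \id$ on $\partial X$, and the compatibility with the fundamental classes follows from the diagram relating the boundary maps on $H_d^{\Gamma}(-,-;\IZ^w)$, as in the proof of Theorem~\ref{the:Homotopy_classification_of_slice_models}.

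Finally, simplicity is free: since $\Wh(\Gamma) = 0$, the Whitehead torsion $\tau(f)$ vanishes and so does $\tau(\partial f) = 0$, making $(f,\partial f)$ a simple $\Gamma$-homotopy equivalence of free $\Gamma$-$CW$-pairs. The main obstacle I anticipate is the bookkeeping of the twisted coefficient $\IZ^-$ appearing at each $F$, in particular verifying that the chain-level arguments in Subsection~\ref{subsec:Checking_Poincare_duality} and Subsection~\ref{subsec:Transfer} still produce a torsion-free obstruction module and an isomorphism $H_d(\trf_*) \colon H_d^{\Gamma}(X,\partial X;\IZ^w) \xrightarrow{\cong} H_d^{\pi}(X,\partial X;\IZ^v)$ in the odd-dimensional, $w|_F$ nontrivial case; however, this amounts to the elementary observation that $H_{d-1}^F(S_F^{\operatorname{st}};\IZ^-) \cong \IZ$ for $F = \IZ/2$ acting antipodally on $S^{d-1}$ with $d$ odd, which replaces the role of $H_{d-1}(S_F/F) \cong \IZ$ from the even case.
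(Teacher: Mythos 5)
Your proposal is correct and takes essentially the same route as the paper, which disposes of this theorem by asserting that the proofs of Theorem~\ref{the:existence_Poincare_slice_models} and Theorem~\ref{the:Uniqueness_of_Poincare_slice_models} (hence of Theorem~\ref{the:Homotopy_classification_of_slice_models} and its supporting Lemmas~\ref{lem:extensions_over_Gamma_and_pi}--\ref{lem:extensions_over_Gamma_from_partial_X_to_X}) carry over verbatim once $H_{d-1}(S_F/F)$ and $H_{d-1}(BF)$ are replaced by $H_{d-1}^F(S_F;\IZ^-)$ and $H_{d-1}^F(EF;\IZ^-)$. You correctly isolate the two points that actually need checking, namely that the obstruction group in Lemma~\ref{lem:extensions_over_Gamma_and_pi} becomes $\bigoplus_{F \in \calm} H_{d-1}^F(S_F^{\operatorname{st}};\IZ^-) \cong \bigoplus_{F \in \calm} \IZ$ and is still torsion-free, and that simplicity is automatic from $\Wh(\Gamma)=0$ (which is also how the paper obtains it, since the duality argument of Theorem~\ref{the:Simple_homotopy_classification} would only give $\tau(f)-\ast\tau(f)=0$ for odd $d$).
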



\typeout{----------------------------- References ------------------------------}

\addcontentsline{toc<<}{section}{References} \bibliographystyle{abbrv}



\end{document}